\newcommand{\Dm}{{\rm Dom}}
\newcommand{\R}{{\mathbf R}}
\newcommand{\Z}{{\mathbf Z}}
\newcommand{\N}{{\mathbf N}}
\newcommand{\T}{{\mathbf T}}
\newcommand{\lb}{{\mathbf l}}
\newcommand{\hbf}{{\mathbf h}}
\newcommand{\bxi}{{\mathbf \xi}}
\newcommand{\Dc}{{\mathcal D}}
\newcommand{\Gc}{{\mathcal G}}
\newcommand{\Qc}{{\mathcal Q}}
\newcommand{\Oc}{{\mathcal O}}
    \newcommand    {\e}{{\mathbf x}}
    \newcommand    {\by}{{\mathbf y}}
    \newcommand    {\bq}{{\mathbf q}}
\newcommand    {\br}{{\mathbf r}}
    \newcommand    {\bs}{{\mathbf s}}
    \newcommand    {\C}{{\mathbf C}}
    \newcommand    {\B}{{\mathcal H}}
    \newcommand    {\Bc}{{\mathcal B}}
    \newcommand    {\J}{{\mathbf p}}
\newcommand    {\hb}{{\mathbf h}}
    \newcommand    {\w}{{\mathbf k}}
    \newcommand    {\Rs}{{\mathcal R}}
    \newcommand    {\Ec}{{\mathcal E}}
    \newcommand    {\Fc}{{\mathcal F}}
    \newcommand    {\ifr}{{\rm if}}
    \newcommand    {\fr}{{\rm for}}
    \newcommand    {\f}{{\mathcal B}}
\newcommand    {\tb}{{\mathbf t}}
    \newcommand    {\mb}{{\bf m}}
     \newcommand    {\Nc}{{\mathcal N}}
\newtheorem{theorem}{Theorem}[section]
\newtheorem{proposition}[theorem]{Proposition}
\newtheorem{lemma}[theorem]{Lemma}
\newtheorem{corollary}[theorem]{Corollary}
\theoremstyle{definition}
\newtheorem{definition}[theorem]{Definition}
\theoremstyle{remark}
\newtheorem{remark}[theorem]{Remark}
\numberwithin{equation}{section}
    \renewcommand{\theequation}{\thesection.\arabic{equation}}
\title{\Large\bf
Virtual bound levels in a gap of the essential spectrum of the
Schr\"odinger operator with a weakly perturbed periodic potential}
\author
{\large  Leonid Zelenko} \vskip 0.8truecm
\date{}
\begin{document}
\maketitle

 \begin{abstract}
In the space $L_2(\R^d)$ we consider the Schr\"odinger operator
$H_\gamma=-\Delta+ V(\e)\cdot+\gamma W(\e)\cdot$, where
$V(\e)=V(x_1,x_2,\dots,x_d)$ is a periodic function with respect to
all the variables, $\gamma$ is a small real coupling constant and
the perturbation $W(\e)$ tends to zero sufficiently fast as
$|\e|\rightarrow\infty$. We study so called virtual bound levels of
the operator $H_\gamma$, that is those eigenvalues of $H_\gamma$
which are born at the moment $\gamma=0$ in a gap
$(\lambda_-,\,\lambda_+)$ of the spectrum  of the unperturbed
operator $H_0=-\Delta+ V(\e)\cdot$ from an edge of this gap while
$\gamma$ increases or decreases. For a definite perturbation
$(W(\e)\ge 0)$ we investigate the number of such levels and an
asymptotic behavior of them and of the corresponding eigenfunctions
as $\gamma\rightarrow 0$ in two cases: for the case where the
dispersion function of $H_0$, branching from an edge of
$(\lambda_-,\lambda_+)$, is non-degenerate in the Morse sense at its
extremal set and for the case where it has there a non-localized
degeneration of the Morse-Bott type. In the first case in the gap
there is a finite number of virtual eigenvalues if $d<3$ and we
count the number of them, and in the second case in the gap there is
an infinite number of ones, if the codimension of the extremal
manifold is less than $3$. For an indefinite perturbation we
estimate the multiplicity of virtual bound levels. Furthermore, we
show that if the codimension of the extremal manifold is at least
$3$ at both edges of the gap $(\lambda_-,\,\lambda_+)$, then under
additional conditions there is a threshold for the birth of the
impurity spectrum in the gap, that is
$\sigma(H_\gamma)\cap(\lambda_-,\,\lambda_+)=\emptyset$ for a small
enough $|\gamma|$.
\end{abstract}
 \bigskip
\noindent {\bf Mathematics Subject Classification 2000,} Primary:
47F05, Secondary: 47E05, 35Pxx

\bigskip
\bigskip
\noindent {\bf Keywords.} Schr\"odinger operator, perturbed
periodic potential, coupling constant, virtual bound levels,
asymptotic behavior of virtual bound levels

\tableofcontents

\section{Introduction} \label{sec:introduction}
\setcounter{equation}{0}

\subsection{Background} \label{subsec:background}

In this paper we consider the Schr\"odinger operator
\begin{equation}\label{dfHgm}
H_\gamma=-\Delta+ V(\e)\cdot+\gamma W(\e)\cdot
\end{equation}
acting  in the space $L_2(\R^d)$, where $V(\e)=V(x_1,x_2,\dots,x_d)$
is a periodic function with respect to all the variables and
satisfying some mild condition which will be pointed below, the
function $W(\e)$ is measurable and bounded in $\R^d$ and $\gamma$ is
a small real coupling constant. In what follows we shall impose on
the perturbation $ W(\e)$ some conditions, which mean that it tends
to zero sufficiently fast as $|\e|\rightarrow\infty$ in an integral
sense. We study so called virtual bound levels of the operator
$H_\gamma$, that is those eigenvalues of $H_\gamma$ which are born
at the moment $\gamma=0$ in a gap $(\lambda_-,\,\lambda_+)$ of the
spectrum  of the unperturbed operator $H_0=-\Delta+ V(\e)\cdot$ from
an edge of this gap while $\gamma$ increases or decreases. In
physics they are called "resonance levels" or "trapped
levels".\vskip2mm

A wide literature is devoted to the study of discrete spectrum in
gaps of the essential spectrum. In \cite{Rof} and \cite{Rof1} for
the one-dimensional case (d=1) the tests for finiteness and
infiniteness of the number of eigenvalues of the operator
$H_1=H_\gamma\vert_{\gamma=1}$ in the gaps of the spectrum of the
unperturbed operator $H_0$ (the impurity spectrum) were obtained. In
\cite{Zl1} in the one-dimensional case a local dilative perturbation
of the periodic potential $V(x)$ was studied, which under some
condition adds an infinite number of eigenvalues in a gap of the
spectrum of the unperturbed operator. In \cite{Ros} the asymptotic
formula was obtained for the counting function of negative
eigenvalues of the operator $H_1$ (for $d=1$) accumulating to the
bottom of its essential spectrum in the case where $V(\e)\equiv 0$
(a perturbation of the motion of a "free" electron). In the general
case (a perturbation of the motion of an electron in a periodic
lattice) the analogous asymptotic formula was obtained in
\cite{Zel}, \cite{Zel1}, \cite{Khr}, \cite{Khr1}, \cite{Rai}  and
\cite{Schm} for the counting function of eigenvalues accumulating to
an edge of a gap of the essential spectrum of the operator $H_1$.
The asymptotic behavior of the discrete spectrum of the operator
$H_\gamma$ in a gap of its essential spectrum is well studied for a
"strong coupling", that is for $|\gamma|\rightarrow\infty$
(\cite{Bi2}, \cite{Bi-So}, \cite{Sob}).\vskip2mm

But there is a comparatively small number of results concerning the
behavior of the discrete part of the spectrum of $H_\gamma$ for a
"weak coupling", that is for $|\gamma|\rightarrow 0$. In the paper
of M. Sh. Birman \cite{Bi1} (1961) a variational approach has been
worked out for the study of birth of negative eigenvalues under a
small perturbation in the case $V(\e)\equiv 0$. In the $1970$'s in a
series of papers the asymptotic behavior as $\gamma\rightarrow 0$ of
negative eigenvalues and the corresponding eigenfunctions of the
Schr\"odinger operator $H_\gamma$ was studied for $V(\e)\equiv 0$
with the help of analytical methods (\cite{Re-Si}, \cite{S},
\cite{S1}, \cite{Kl}, \cite{B-G-S}). These investigations were based
on the explicit form of the Green function for the unperturbed
operator $H_0=-\Delta$ and on the Birman-Schwinger principle, which
describes the discrete spectrum of the perturbed operator in the
gaps of the spectrum of the unperturbed one with the help of so
called Birman-Schwinger operator, defined by (\ref{BrmSchwop})
(\cite{Bi3}, \cite{Sc}, \cite{Re-Si}, \cite{S}). \vskip2mm

The interest in this subject was renewed in the last two decades. In
\cite{Wei} T. Weidl has developed the Birman approach for the study
of the existence of virtual eigenvalues for a wide class of elliptic
differential operators of high order and even for indefinite
perturbations. In the papers \cite{Ar-Zl1} and \cite{Ar-Zl2} the
negative virtual eigenvalues were studied for the perturbation
$(-\Delta)^l+\gamma W(\e)\cdot$ of the polyharmonic operator
$(-\Delta)^l$ with the help of an analytical method. The Green
function of the unperturbed operator $(-\Delta)^l$ was not
constructed there explicitly, but by using the Fourier transform
(the "momentum representation") a representation for the resolvent
$((-\Delta)^l-\lambda I)^{-1}$ of the unperturbed operator was
obtained near the bottom $\lambda=0$ of its spectrum, which permit
to get asymptotic formulas for the negative virtual eigenvalues of
the perturbed operator with the help of the Birman-Schwinger
principle. Observe that for the unperturbed operator $(-\Delta)^l$
the dispersion function (dependence of the energy $\lambda$ on the
momentum $\J$) has the form $\lambda=|\J|^{2l}$, hence it has one
minimum point $\J=0$. Thanks this fact, for a short range
perturbation $W(\e)$ (that is, or it has a compact support, or tends
to zero sufficiently fast as $\e\rightarrow\infty$) the perturbed
operator $(-\Delta)^l+\gamma W(\e)\cdot$ has a finite number of
negative virtual eigenvalues. In particular (for $l=1$), this
property holds for the perturbation of the Laplacian. But in
\cite{Ch-M} for $d=2$ an axially symmetric Hamiltonian describing a
spin-orbit interaction was considered such that the minima set of
its dispersion function is a circle (non-localized degeneration of
the dispersion function). By the use of the method of separating
variables it was shown in \cite{Ch-M} that in the presence of an
arbitrarily shallow axially symmetric potential well $W(\e)$ an
infinite number of negative eigenvalues of the perturbed Hamiltonian
appear.  With the physical point of view in this case an infinite
number of electrons have the minimal energy level of the unperturbed
Hamiltonian, hence an infinite number of bound energy levels can
appear beneath this level in the presence of some small impurities
in the system. For a more general situation the analogous result was
established in \cite{Br-G-P} and \cite{Pan} by using the variational
method. In \cite{H-S} a general situation of unperturbed Hamiltonian
("kinetic energy") is considered such that the minima set of its
dispersion function is a submanifold of codimension one. The
existence of an infinite number of virtual negative eigenvalues is
proved there and asymptotic formulas are obtained for them and
corresponding eigenfunctions for a small coupling constant. The
method used in \cite{H-S} is close to one used in \cite{Ar-Zl1},
\cite{Ar-Zl2}. \vskip2mm

In \cite{P-L-A-J} with the help of the variational method the
existence of virtual eigenvalues in gaps of the essential spectrum
of the operator $H_\gamma$, defined by (\ref{dfHgm}), is proved. But
this method does not permit to investigate the asymptotic behavior
of virtual eigenvalues and of the corresponding eigenfunctions for
$\gamma\rightarrow 0$. \vskip2mm

\subsection{Description of methods and results}\label{subsec:description}

In the present paper we investigate the number of virtual
eigenvalues in a gap of the essential spectrum of the operator
$H_\gamma$ and obtain asymptotic formulas for them and for the
corresponding eigenfunctions for $\gamma\rightarrow 0$. The method
we use is close to one used in \cite{Ar-Zl1}, \cite{Ar-Zl2} and
\cite{H-S}, but instead of the Fourier transform we apply the so
called Gelfand-Fourier-Floquet transform (\ref{unit}) (a
``quasi-momentum representation''of the Hamiltonian), which realizes
a unitary equivalence between the unperturbed operator
$H_0=-\Delta+V(\e)\cdot$ and a direct integral over the Brillouin
zone $B$ of a family of operators $\{H(\J)\}_{\J\in B}$, generated
on a fundamental domain $\Omega$ of the lattice of periodicity of
$V(\e)$ by the operation $-\Delta+V(\e)\cdot$ and some cyclic
boundary conditions (\cite{Gel}, \cite{Wil}, \cite{Kuch},
\cite{Zl}). In this situation the role of the dispersion function
for the operator $H_0$ plays the dependence of the energy $\lambda$
(the eigenvalue of $H(\J)$) on the quasi-momentum $\J$:
$\lambda=\lambda(\J)$. Like in \cite{Ar-Zl1}, \cite{Ar-Zl2}, our
considerations are based on a representation of the resolvent
$(H_0-\lambda I)^{-1}$ of the unperturbed operator near the edge
$\lambda=\lambda_+$ $(\lambda=\lambda_-)$ of a gap
$(\lambda_-,\,\lambda_+)$ of its spectrum as a sum of a singular
(w.r.t. $\lambda$) part and a regular remainder. We consider this
representation in two cases of behavior of the dispersion function
$\lambda(\J)$ branching from the edge $\lambda_+$ $(\lambda_-)$ of
the gap $(\lambda_-,\,\lambda_+)$ and having the extremal set
$F^+=\lambda^{-1}(\lambda_+)$ $(F^-=\lambda^{-1}(\lambda_-))$: the
case of a non-degenerate edge $\lambda_+$ $(\lambda_-)$ (Proposition
\ref{prrprres}) of the Morse's type in the sense that at any point
of the set $F^+\;(F^-)$ the Hessian operator of $\lambda(\J)$ is
non-degenerate, and the case of a non-localized degeneration of the
Bott-Morse type (Proposition \ref{prrprres1}) in the sense that the
set $F^+\;(F^-)$ is a smooth submanifold of a non-zero dimension and
at each point of $F^+\;(F^-)$ the Hessian operator of $\lambda(\J)$
is non-degenerate along the normal subspace to $F^+\;(F^-)$. In both
cases we assume that $\lambda(\J)$ is a simple eigenvalue of the
operator $H(\J)$ for any $\J\in F^+\;(\J\in F^-)$\footnote{As it was
shown in \cite{K-R}, this situation is generic in the sense that all
the edges of gaps of the spectrum of the operator $H_0$ are simple
for a dense $G_\delta$-set of periodic potentials $V\in
L_\infty(\Omega)$.}. As it is clear, if the edge $\lambda_+$
$(\lambda_-)$ of a gap $(\lambda_-,\,\lambda_+)$ is non-degenerate,
then the set $F^+\;(F^-)$ is finite. Notice that the representation
of the resolvent of $H_0$ in the non-degenerate case given by
Proposition \ref{prrprres} is close to one given by Corollary 4.2 of
\cite{Ger}, but in our case we estimate also the integral kernel of
the remainder of this representation. These representations of the
resolvent enable us to extract a singular portion from the
Birman-Schwinger operator (Propositions \ref{rprBirSchw},
\ref{rprBirSchwdeg}), which yields the leading terms of the
asymptotic formula for the virtual eigenvalues of $H_\gamma$ as
$|\gamma|\rightarrow 0$. It is known that in the one-dimensional
case ($d=1$) the edges of all the spectral gaps of the operator
$H_0$ are non-degenerate (\cite{Titch}). As it was shown in
\cite{Kir-S} (Theorem 2.1), the bottom of the spectrum of the
operator $H_0$ is non-degenerate even in the multi-dimensional case.
Taking the periodic potential in the form
$V(\e)=\sum_{k=1}^dV_k(x_k)\;(\e=(x_1,x_2,\dots,x_d))$ and using the
method of separating variables, it is not difficult to construct an
example of the multi-dimensional operator $H_0$ having a finite
spectral gap with non-degenerate edges.\vskip2mm

In Theorem \ref{thnondegedg} we count the number of virtual
eigenvalues of $H_\gamma$ in a spectral gap
$(\lambda_-,\,\lambda_+)$ of $H_0$ being born from its
non-degenerate edge $\lambda_+\;(\lambda_-)$ and obtain asymptotic
formulas for them as $|\gamma|\rightarrow 0$ for a definite
perturbation (that is, $W(\e)\ge 0$ on $\R^d$) under the assumption
that $W(\e)$ tends to zero sufficiently fast in an integral sense.
For $d=1$ there is only one virtual eigenvalue being born from the
edge $\lambda_+\;(\lambda_-)$ of the spectral gap of $H_0$ for
$\gamma<0\;(\gamma>0)$ and the leading term of the asymptotic
formula for the distance between this virtual eigenvalue and the
edge $\lambda_+\;(\lambda_-)$ has the order $O(\gamma^2)$ as
$\gamma\uparrow 0\;(\gamma\downarrow 0)$. For $d=2$ the number of
virtual eigenvalues being born from the edge
$\lambda_+\;(\lambda_-)$ of the spectral gap of $H_0$ for
$\gamma<0\;(\gamma>0)$ coincides with the number of points of the
extremal set $F^+\;(F^-)$ of the dispersion function branching from
$\lambda_+\;(\lambda_-)$ and the leading terms of the asymptotic
formulas for the distances between these virtual eigenvalues and the
edge $\lambda_+\;(\lambda_-)$ has an exponential order as
$\gamma\uparrow 0\;(\gamma\downarrow 0)$. Furthermore, Theorem
\ref{thnondegedg} claims that for $d\le 2$ the eigenfunctions
corresponding to the virtual eigenvalues of $H_\gamma$ converge  in
some sense as $\gamma\uparrow 0\;(\gamma\downarrow 0)$ with the rate
$O(\gamma)$ to linear combinations of Bloch functions of $H_0$
corresponding to the energy level $\lambda_+\;(\lambda_-)$ and the
quasi-momenta from $F^+\;(F^-)$. For $d\ge 3$ there is a threshold
for the birth of the impurity spectrum of $H_\gamma$ in the spectral
gap $(\lambda_-,\,\lambda_+)$ of $H_0$ for $\gamma<0\;(\gamma>0)$,
that is for a small enough $\gamma<0\;(\gamma>0)$ there is no
eigenvalue of $H_\gamma$ in $(\lambda_-,\lambda_+)$. Furthermore,
for $d=2$ Theorem \ref{thnondegedg} yields an asymptotic formula of
Lieb-Thirring type for the sum of inverse logarithms of distances
between the virtual eigenvalues and the edge
$\lambda_+\;(\lambda_-)$, and its leading term is expressed
explicitly via the perturbation $W(\e)$ and spectral characteristics
of the unperturbed operator $H_0$ at this edge: Bloch functions and
effective masses of electrons at the energy level
$\lambda_+\;(\lambda_-)$. \vskip2mm

Theorem \ref{thdegedg} describes the birth of virtual eigenvalues of
$H_\gamma$, under a definite perturbation, from the edge
$\lambda_+\;(\lambda_-)$ of a spectral gap $(\lambda_-,\,\lambda_+)$
of $H_0$, which has a non-localized degeneration of the Morse-Bott
type. The character of this birth depends on the codimension
$\rm{codim}(F^+)\;(\rm{codim}(F^-))$ of the extremal submanifold
$F^+\;(F^-)$ of the dispersion function branching from
$\lambda_+\;(\lambda_-)$. For $\rm{codim}(F^+)\le
2\;(\rm{codim}(F^-)\le 2)$ there is an infinite number of virtual
eigenvalues of $H_\gamma$ being born from this edge, and the
asymptotic behavior of them as $\gamma\uparrow 0\;(\gamma\downarrow
0)$ for $\rm{codim}(F^+)=1\;(\rm{codim}(F^-)=1)$ and
$\rm{codim}(F^+)=2\;(\rm{codim}(F^-)=2)$ is analogous to one in the
non-degenerate case for $d=1$ and $d=2$ respectively, and
furthermore, the asymptotic behavior of the corresponding
eigenfunctions is analogous to one in the non-degenerate case for
$d\le 2$. If $\rm{codim}(F^+)\ge 3\;(\rm{codim}(F^-)\ge
 3)$, then for $\gamma<0\;(\gamma>0)$ there is a threshold for the birth of the impurity spectrum of
$H_\gamma$ in the spectral gap $(\lambda_-,\,\lambda_+)$ of $H_0$
like in the non-degenerate case for $d\ge 3$. Furthermore, for
$\rm{codim}(F^+)\le 2\;(\rm{codim}(F^-)\le 2)$ Theorem
\ref{thdegedg} yields a weak version of the asymptotic formula of
Lieb-Thirring type, mentioned above, for the sum of square roots of
distances between virtual eigenvalues and the edge
$\lambda_+\;(\lambda_-)$ if
$\rm{codim}(F^+)=1\;(\rm{codim}(F^-)=1)$, and for the sum of inverse
logarithms of ones if $\rm{codim}(F^+)=2\;(\rm{codim}(F^-)=2)$. The
leading terms of these formulas have the form of the integrals over
$F^+\;(F^-)$, whose integrands are expressed explicitly via the
perturbation $W(\e)$ and spectral characteristics of the unperturbed
operator $H_0$ at the edge $\lambda_+\;(\lambda_-)$, mentioned above
(merely in this case the effective masses are computed in the
directions normal to the extremal submanifold). \vskip2mm

Theorem \ref{thestmult} treats the case of an indefinite
perturbation ($W(\e)$ may change the sign) and the non-degenerate
edge $\lambda_+\;(\lambda_-)$. It yields an estimate of the
multiplicity of virtual eigenvalues if $d\le 2$, and for $d\ge 3$ it
claims the existence of a threshold for the birth of the impurity
spectrum from this edge. \vskip2mm

Theorem \ref{ththresholddeg} claims the existence of a threshold for
the birth of the impurity spectrum from the degenerate edge
$\lambda_+\;(\lambda_-)$ for an indefinite perturbation, if
$\rm{codim}(F^+)\ge 3\;(\rm{codim}(F^-)\ge 3)$. \vskip2mm

In the Appendix we prove Theorem \ref{thmainApp}, which  yields some
kind of an elliptic regularity result: under a mild condition for
the periodic potential $V(\e)$ it claims that if a branch of
eigenvalues $\lambda(\J)$ of the family of operators $H(\J)$,
mentioned above, is holomorphic and a branch of corresponding
eigenfunctions $b(\cdot,\J)$ is holomorphic in the
$L_2(\Omega)$-norm, then the latter branch is holomorphic in the
$C(\Omega)$-norm. For $d\le 3$ this claim follows immediately from
results of the paper \cite{Wil}, but for $d\ge 4$ the arguments used
there fail and we use a modification of them. Corollary
\ref{cormainApp} of Theorem \ref{thmainApp} is used in the proof of
the main results.\vskip2mm

The paper is organized as follows. After this Introduction, in
Section \ref{sec:notation} we introduce some basic notation, in
Section \ref{sec:preliminaries} (Preliminaries) we recall some known
facts concerning the operator $H_0$ with the periodic potential and
define the notion of a virtual eigenvalue. In Section
\ref{sec:mainres} we formulate the main results. In Section
\ref{sec:prmainres} we prove the main results. In Section
\ref{sec:resunprop1} we obtain the representation of the resolvent
of the operator $H_0$, mentioned above. Section \ref{sec:appendix}
is the Appendix. We add the label ``A'' to numbers of claims and
formulas from the Appendix.

\section{Basic notation} \label{sec:notation}
\setcounter{equation}{0}

$\e\cdot\by\;(\e,\by\in\R^d)$ is the canonical inner product in the
real vector space $\R^d$; $|\e|=\sqrt{\e\cdot\e}$ is the Euclidean
norm in $\R^d$;\vskip1mm

$S^{d}$ is the $d$-dimensional unit sphere:
$S^d=\{\e\in\R^{d+1}:\;|\e|=1\}$; $s_d$ is the $d$-dimensional
volume of $S^{d}$;\vskip1mm

$\T^d=\times_{k=1}^d S^1$ is the $d$-dimensional torus;\vskip1mm

$\Z$ is the ring of integers;\vskip1mm

$\Z^d=\times_{k=1}^d \Z$;\vskip1mm

$\Dm(f)$ is the domain of a mapping $f$;\vskip1mm

$(f,g)\;(f,g\in\B)$, $\Vert f\Vert$ are the inner product and the
norm in a complex Hilbert space $\B$ (in particular, in
$L_2(\R^d)$); the norm of linear bounded operators, acting in $\B$,
is denoted in the same manner;\vskip1mm

$L_{2,0}(\R^d)$ is the set of all functions from $L_2(\R^d)$ having
compact supports;\vskip2mm

If $A$ is a closed linear operator acting in a Hilbert space $\B$,
then:\vskip1mm

$\ker(A)$ is the kernel of $A$, i.e.
$\ker(A)=\{x\in\B:\;Ax=0\}$;\vskip1mm

$\mathrm{Im}(A)$ is the image of $A$;\vskip1mm

$\sigma(A)$ is the spectrum of $A$;\vskip1mm

$\Rs(A)$ is the resolvent set of $A$, i.e.
$\Rs(A)=\C\setminus\sigma(A)$;\vskip1mm

$R_\lambda(A)\;\;(\lambda\in\Rs(A))$ is the resolvent of $A$, i.e.
$R_\lambda(A)=(A-\lambda I)^{-1}$.\vskip1mm

$\f(E)$ is the Banach space of linear bounded operators, acting in a
Banach space $E$.\vskip2mm

Some specific notation will be introduced in what follows.

\section{Preliminaries}\label{sec:preliminaries}
\setcounter{equation}{0}

\subsection{Spectral characteristics of the unperturbed operator} \label{subsec:spectchar}
\setcounter{equation}{0}

Consider the unperturbed operator $H_0=-\Delta+ V(\e)\cdot$.
 We assume that the potential $V(\e)$ is
periodic on the lattice
$\Gamma=\{\lb\in\R^d\;|\;\lb=(l_1T_1,l_2T_2,\dots,l_dT_d),\;\w=(l_1,l_2,\dots,l_d)\in\Z^d\}$
($T_k>0\,(k=1,2,\dots,d)$), that is $V(\e+\lb)=V(\e)$ for any
$\e\in\R^d$ and $\lb\in\Gamma$. For the simplicity we shall assume
that $T_1=T_2=\dots=T_d=1$, that is $\Gamma=\Z^d$. Denote by
$\Omega$ the fundamental domain of the lattice $\Gamma$:
$\Omega:=\times_{k=1}^d[-1/2,1/2]$. Furthermore, assume that
\begin{equation}\label{condperpotent}
V\in\left\{\begin{array}{ll}
L_2(\Omega), &\rm{if}\quad d\le 3\\
\bigcup_{q>\frac{d}{2}}L_q(\Omega), &\rm{if}\quad d\ge 4.
\end{array}\right.
\end{equation}
By claim (ii) of Proposition \ref{propselfadj}, the operator $H_0$
with the domain $W_2^2(\R^d)$ is self-adjoint and bounded below.

For any $\J\in\T^d$ consider the operator $H(\J)$, generated by the
operation $h=-\Delta+V(\e)\cdot$ in the Hilbert space $\B_{\J}$ of
functions $u\in L_{2,loc}(\R^d)$ satisfying the condition
\begin{equation}\label{Htau}
u(\e+\lb)=\exp(i\J\cdot\lb)u(\e)\quad \forall\;\e\in\R^d,\;
\lb\in\Gamma
\end{equation}
with the inner product and the norm, defined by
\begin{equation}\label{dfinnprd}
(f,g)_2=\int_\Omega f(\e)\overline{g(\e)}\,d\e\;(f,g\in\B_\J),\quad
\|f\|_2=\sqrt{(f,f)_2}.
\end{equation}
The domain of $H(\J)$ is the linear set
$\Dc_\J(\Gamma)=W_{2,loc}^2(\R^d)\cap\B_{\J}$. By claim (i) of
Proposition \ref{propselfadj} and Proposition \ref{prcompres}, the
operator $H(\J)$ is self-adjoint, bonded below uniformly w.r. to
$\J\in\T^d$ and its spectrum is discrete. As it is easy to check,
the operator $(E_\J u)(\e):=\exp(-i\J\cdot\e)u(\e)$, acting from
$\B_\J$ onto $\B_0=L_2(\R^d/\Gamma)$, realizes a unitary equivalence
between the operator $H(\J)$ and the operator
\begin{equation}\label{b16}
\tilde H(\J)=-\Delta_\J+V(\e)\cdot,
\end{equation}
with the domain $W_2^2(\R^d/\Gamma)$, where
\begin{equation}\label{b17}
\Delta_\J=\sum_{j=1}^d\left(D_j +ip_j\right)^2.
\end{equation}
Let $\lambda_1(\J)\le\lambda_2(\J)\le\dots\lambda_n(\J)\le\dots$ be
the eigenvalues of the operator $\tilde H(\J)$ (counting their
multiplicities), and $e_1(\e,\J),e_2(\e,\J),\dots,e_n(\e,\J),\dots$
be the corresponding eigenfunctions of this operator which form an
orthonormal basis in the space $L_2(\R^d/\Gamma)$. Using the
physical terminology, we shall call the vector $\J$ the {\it
quasi-momentum} and each branch of the eigenvalues $\lambda_n(\J)$
will be called the {\it dispersion function}. It is known that
$\lambda_n(\J)$ are continuous functions on $\T^d\;$ and
$\sigma(H_0)=\bigcup_{\J\in\T^d}\sigma(\tilde
H(\J))=\bigcup_{\J\in\T^d}\{\lambda_n(\J)\}_{n=1}^\infty$
(\cite{Gel}, \cite{Eas}, \cite{Eas1} \cite{Kuch}, \cite{Zl}). We
shall consider also the {\it Bloch function} corresponding to a
dispersion function $\lambda_l(\J)$ and a quasi-momentum
$\J\in\T^d$:
\begin{equation}\label{Bloch1}
b_l(\e,\J)=\exp(i\J\cdot\e)e_l(\e,\J).
\end{equation}
It is clear that for each natural $l$ $b_l(\e,\J)$ is an
eigenfunction of the operator $H(\J)$, corresponding to its
eigenvalue $\lambda_l(\J)$ and for any fixed $\J\in\T^d$ the
sequence $\{b_l(\e,\J)\}_{l=1}^\infty$ forms an orthonormal basis in
the space $\B_\J$.

It is easy to check that for any $\J\in\T^d$ $JH(\J)J=H(-\J)$, where
$J$ is the conjugation operator $(Jf)(f)(\e):=\overline {f(\e)}$
(the property of a ``time reversibility''). Hence in this case
$\sigma(H(\J))=\sigma(H(-\J))$ and for any $\mu\in\sigma(H(\J))$ the
corresponding eigenprojections $Q_\mu(\J)$ of $H(\J)$ and
$Q_\mu(-\J)$ of $H(-\J)$ are connected in the following manner:
$Q_\mu(-\J)=JQ_\mu J(\J)$. The same property is valid for the
operator $\tilde H(\J)$.

Assume that $(\lambda_-,\,\lambda_+)$ is a gap of $\sigma(H_0)$,
that is for some $j\ge 1$
$\;\lambda_-=\max_{\J\in\T^d}\lambda_j(\J)<\lambda_+=\min_{\J\in\T^d}\lambda_{j+1}(\J)$,
and $\lambda_+=\min_{\J\in\T^d}\lambda_1(\J)$, $\lambda_-=-\infty$
for $j=0$. Since $j$ will be fixed in our considerations, we shall
denote $\lambda^-(\J):=\lambda_j(\J)$ and
$\lambda^+(\J):=\lambda_{j+1}(\J)$. In other words, $\lambda^-(\J)$
and $\lambda^+(\J)$ are the dispersion functions branching from the
edges $\lambda_-$ and $\lambda_+$ respectively.  We shall denote by
$b^-(\e,\J)$ and $b^+(\e,\J)$ the eigenfunctions of $H(\J)$ (Bloch
functions), corresponding to $\lambda^-(\J)$ and $\lambda^+(\J)$
respectively, i.e. $b^-(\e,\J)=b_j(\e,\J)$ and
$b^+(\e,\J)=b_{j+1}(\e,\J)$. In the analogous manner we denote
$e^-(\e,\J)=e_j(\e,\J)$ and $e^+(\e,\J)=e_{j+1}(\e,\J)$.

Consider the following subsets of $\T^d$:
\begin{eqnarray*}
&&F^-:=\{\J\in\T^d:\;\lambda^-(\J)=\lambda_-\},\\
&&F^+:=\{\J\in\T^d:\;\lambda^+(\J)=\lambda_+\},
\end{eqnarray*}
which are the extremal sets of the functions $\lambda^-(\J)$ and
$\lambda^+(\J)$ respectively. Assume that for the edge
$\lambda_-\;(\lambda_->-\infty)$ or for the edge $\lambda_+$ of the
gap $(\lambda_-,\,\lambda_+)$ the condition is fulfilled:\vskip2mm

(A) The edge $\lambda_+\;(\lambda_-)$ is
 non-degenerate in the Morse's sense, that is\vskip1mm

(a) it is simple in the sense that for any $\J_0\in F^+\;(\J_0\in
F^-)$ the number
$\lambda^+(\J_0)=\lambda_+\;(\lambda^-(\J_0)=\lambda_-)$ is  a
simple eigenvalue of the operator $H(\J_0)$, that is
$(\lambda^+(\J_0)<\lambda_{j+2}(\J_0))$, if $j\ge 0$
($\lambda^-(\J_0)>\lambda_{j-1}(\J_0)$, if $j\ge 2$);\vskip1mm

(b) for any $\J_0\in F^+\;(\J_0\in F^-)$ the
 second differential $d^2\lambda^+(\J_0)\;(d^2\lambda^-(\J_0))$
 is a positive-definite (negative-definite) quadratic form.\vskip2mm

 Observe that if condition (A)-(a) is satisfied for the edge $\lambda_+\;(\lambda_-)$, then by
 claim (i) of Corollary \ref{cormainApp}, for any
 $\J_0\in F^+\;(\J_0\in F^-)$ there exists a neighborhood
 $\Oc^+(\J_0)\;(\Oc^-(\J_0))$ of $\J_0$ such that the function
 $\lambda^+(\J)\;(\lambda^-(\J))$ is real-analytic
 in $\Oc^+(\J_0)\;(\Oc^-(\J_0))$ (hence in particular there exists
 the second differential of this function, taking part in the
 condition (A)-(b)), and furthermore, the corresponding branch of
 eigenfunctions $b^+(\e,\J)\;(b^-(\e,\J))$ of $H(\J)$ can be chosen
 such that for
 each fixed $\J\in\Oc^+(\J_0)\;(\J\in\Oc^-(\J_0))$, $\Vert b^+(\cdot,\J) \Vert_2=1$ ($\Vert b^-(\cdot,\J) \Vert_2=1$),
 the function $b^+(\e,\J)\;(b^-(\e,\J))$ is continuous and the mapping
  $\J\rightarrow b^+(\e,\J)\in C(\Omega)\;(\J\rightarrow b^-(\e,\J)\in
C(\Omega))$ is real-analytic in $\Oc^+(\J_0)\;(\Oc^-(\J_0))$. It is
clear that the corresponding eigenfunction
$e^+(\e,\J)\;(e^-(\e,\J))$ of the operator $\tilde H(\J)$ has the
same properties.

It is clear that if the condition (A) is satisfied for
$\lambda_+\;(\lambda_-)$, then the set $F^+\;(F^-)$ is finite, that
is $F^+=\{\J_1^+,\J_2^+,\dots,\J_{n_+}^+\}$
$(F^-=\{\J_1^-,\J_2^-,\dots,\J_{n_-}^-\})$. In particular, it is
known (\cite{Titch}) that in the case where $d=1$  condition (A) is
always satisfied, all the functions $\lambda_l(\J)\,(l=1,2,\dots)$
are even, $n_+=n_-=1$ and or $p_1^+=p_1^-=0$, or
$p_1^+=p_1^-=\pi$.\footnote{For $d=1$ the gaps of the spectrum of
the operator $H_0$ are or $(-\infty,\lambda_0)$, or
$(\lambda_k,\lambda_{k+1})$, or
$(\mu_k,\mu_{k+1})\;(k=0,1,2,\dots)$, where $\lambda_k$ and $\mu_k$
are the eigenvalues of the operators $H(0)$ and $H(\pi)$
respectively, and
$\lambda_0<\mu_0\le\mu_1<\lambda_1\le\lambda_2<\dots$} Hence in this
case we shall write $p_1$ instead of $p_1^+$ and $p_1^-$. Denote
\begin{equation}\label{dfmkpl}
m_k^+=\big(J\,\rm{Hes}_{\J_k^+}(\lambda^+)\big)^{-1}\quad
\Big(m_k^-=-\big(J\,\rm{Hes}_{\J_k^-}(\lambda^-))\big)^{-1}\Big)
\end{equation}
where
\begin{eqnarray}\label{Hess}
&&J\,\rm{Hes}_\J(\lambda^+)=\rm{det}\left(\frac{\partial^2\lambda^+(\J)}{\partial
p_\mu\partial p_\nu}|_{\J=\J_k^+}\right)_{\mu,\nu=1}^d\quad\\
&&\left(J\,\rm{Hes}_\J(\lambda^-)=\rm{det}\left(\frac{\partial^2\lambda^-(\J)}{\partial
p_\mu\partial
p_\nu}|_{\J=\J_k^-}\right)_{\mu,\nu=1}^d\right).\nonumber
\end{eqnarray}
In particular, in the case where $d=1$
\begin{equation}\label{dfmupl}
(m^+_1)^{-1}=\frac{d^2\lambda^+(p)}{dp^2}|_{p=p_1}\quad
\left((m^-_1)^{-1}=-\frac{d^2\lambda^-(p)}{dp^2}|_{p=p_1}\right)
\end{equation}
$(p_1\in\{0,\pi\})$. With the physical point of view the quantity
$m_k^+\;(m_k^-)$ is (up to a physical constant multiplier) the
modulus of the determinant of the effective-mass tensor, that is the
product of effective masses (in the principal directions) of an
electron having the quasi-momentum $\J_k^+\;(\J_k^-)$ at the energy
level $\lambda_+\;(\lambda_-)$.

Along with the  Bloch functions $b^+(\e,\J)$, $b^-(\e,\J)$  we shall
consider the {\it weighted Bloch functions}
\begin{equation}\label{dfvlxp}
v^+(\e,\J)=\sqrt{W(\e)}b^+(\e,\J),\quad
v^-(\e,\J)=\sqrt{W(\e)}b^-(\e,\J),
\end{equation}
corresponding to them, with $W(\e)\ge 0$ a.e. on $\R^d$, and denote
\begin{equation}\label{dfvlpl}
v_k^+(\e)=v^+(\e,\J_k^+),\quad v_l^-(\e)=v^-(\e,\J_l^-).
\end{equation}
In particular, in the case where $d=1$
\begin{equation}\label{dfvlpld1}
v_1^+(x)=v^+(x,p_1),\quad v_1^-(x)=v^-(x,p_1)\quad
(p_1\in\{0,\pi\}).
\end{equation}

A part of our results concerns the case of degenerate edges of the
gap of the spectrum of the unperturbed operator $H_0$, where
condition (A)-(b) does not fulfilled. Let $\mathrm{Hes}_\J(f)$ be
the Hessian operator of a function $f:\,\T^d\rightarrow\R$ at a
point $\J\in\T^d$, that is this is a linear operator acting in the
tangent space $T_\J(\T^d)$ to the torus $\T^d$ at the point $\J$ and
defined by
\begin{equation}\label{dfHes}
\forall\;s,t\in T_\J(\T^d):\quad d^2f(\J)[s,t]=\rm{Hes}_\J(f)s\cdot
t.
\end{equation}
The more general condition than (A), which we shall consider, is
following:\vskip2mm

(B) The function $\lambda^+\;(\lambda^-)$ and the set $F^+\;(F^-)$
satisfy the Morse-Bott type conditions (\cite{Ban-Hur}):

(a) condition (A)-(a) is satisfied;\vskip1mm

(b) the set $F^+\;(F^-)$ consists of a finite number of disjoint
connected components:
$F^+=\bigcup_{k=1}^{n_+}F^+_k\;(F_-=\bigcup_{k=1}^{n_-}F^-_k)$, such
that each of $F^+_k\;(F^-_k)$ is a $C^\infty$-smooth submanifold of
$\T^d$ of the dimension $d_k^+\;(d_k^-)$;\vskip1mm

(c) for any  point $\J\in F^+\;(\J\in F^-)$ the normal Hessian
\begin{eqnarray}\label{dfnrmHes}
&&JN\,\rm{Hes}_\J(\lambda^+):=\det\big(Hes_\J(\lambda^+)\vert_{N_\J}\big)\\
&&\big(JN\,
\rm{Hes}_\J(\lambda^-):=\det\big(Hes_\J(\lambda^-)\vert_{N_\J}\big)\big)\nonumber
\end{eqnarray}
is not equal to zero. Here $N_\J\subseteq T_\J(\T^d)$  is a normal
subspace to $F^+\;(F^-)$ at $\J$, that is $N_\J^+=T_\J(\T^d)\ominus
T_\J(F^+)\;(N_\J^-=T_\J(\T^d)\ominus T_\J(F^-))$.\vskip1mm

If $d_k^+\ge 1\;(d_k^-\ge 1)$ for at least one $k$, we shall say
that the edge $\lambda_+\;(\lambda_-)$ is {\it degenerate in the
Morse-Bott sense}.\vskip2mm

Denote
\begin{eqnarray}\label{dfmplp}
&&m^+(\J)=\big(JN\,\rm{Hes}_\J(\lambda^+)\big)^{-1}\quad(\J\in
F^+)\\
&&\Big(m^-(\J)=-\big(JN\,\rm{Hes}_\J(\lambda^-)\big)^{-1}\quad(\J\in
F^-)\Big).\nonumber
\end{eqnarray}
In the similar manner as above the quantity $m^+(\J)\;(m^+(\J))$ is
(up to a physical constant multiplier) the product of effective
masses (in the principal directions normal to $F^+\;(F^-)$ ) of an
electron having the quasi-momentum $\J\in F^+\;(\J\in F^-)$ at the
energy level $\lambda_+\;(\lambda_-)$.

Assuming that condition (A)-(a) is satisfied and taking $\J_0\in
F^+$, consider the integral kernel $\Qc^+(\e,\bs,\J)$ of the
eigenprojection $Q^+(\J)$ of $H(\J)$, corresponding to the
dispersion function $\lambda^+(\J)$ branching from the edge
$\lambda_+$ and defined in a neighborhood $\Oc^+(\J_0)$ of $\J_0$.
We shall call it the {\it eigenkernel} of $H(\J)$, corresponding to
$\lambda^+(\J)$. By claims (ii) and (iii) of Corollary
\ref{cormainApp}, after a suitable choice of $\Oc^+(\J_0)$ this
kernel acquires the form
$\Qc^+(\e,\bs,\J)=b^+(\e,\J)\overline{b^+(\bs,\J)}$, it does not
depend on the choice of a branch of Bloch functions $b^+(\e,\J)$
having the properties mentioned above and the mapping
$\J\rightarrow\Qc^+(\cdot,\cdot,\J)\in C(\Omega\times\Omega)$  is
real-analytic in $\Oc^+(\J_0)$. The analogous properties has the
eigenkernel $\Qc^-(\e,\bs,\J)$ of $H(\J)$, corresponding to
$\lambda^-(\J)$. It is clear that the eigenkernel of $\tilde H(\J)$,
corresponding to $\lambda^+(\J)\;(\lambda^-(\J))$ has the form
\begin{eqnarray}\label{connecteigkern}
&&\tilde\Qc^+(\e,\bs,\J)=\exp(-i\J\cdot(\e-\bs))\Qc^+(\e,\bs,\J)\\
&&(\tilde\Qc^-(\e,\bs,\J)=\exp(-i\J\cdot(\e-\bs))\Qc^-(\e,\bs,\J))\nonumber
\end{eqnarray}
and it has the same properties as
$\Qc^+(\e,\bs,\J)\;(\Qc^-(\e,\bs,\J))$, but in addition it is
$\Gamma$-periodic w.r.t. $\e$ and $\bs$.  We shall consider also the
{\it weighted eigenkernels}
\begin{eqnarray}\label{dfWxsp}
&&\Qc_W^+(\e,\bs,\J)=\sqrt{W(\e)}\Qc^+(\e,\bs,\J)\sqrt{W(\bs)},\\
&&\Qc_W^-(\e,\bs,\J)=\sqrt{W(\e)}\Qc^-(\e,\bs,\J)\sqrt{W(\bs)}.\nonumber
\end{eqnarray}

\subsection{The notion of a virtual eigenvalue}
\label{subsec:notionsfacts}

Before formulating the main results, let us recall some notions and
facts from \cite{Ar-Zl1}. Consider an operator $H_\gamma=H_0+\gamma
W$ acting in a Hilbert space $\B$, where $H_0$ and $W$ are
self-adjoint operators and $\gamma$ is a real coupling constant. We
assume that the following conditions are satisfied:\vskip2mm

(1) $(\lambda_-,\,\lambda_+)\;\;(-\infty\le \lambda_-<\lambda_+\le
+\infty)$
    is a gap of the spectrum $\sigma(H_0)$
    of the unperturbed operator $H_0$.\vskip1mm

(2) The operator $W$ is bounded\footnote{In \cite{S1} and \cite{Bi2}
the perturbation $W$ is not bounded operator in general, it is
supposed to be only relatively compact w.r.t. $H_0$ in the sense of
quadratic forms. But in the present paper we consider for the
simplicity only the case of a bounded perturbation and we think that
all our results can be obtained without difficulties under a more
general assumption.} and for some $\lambda_0\in\Rs(H_0)$ the
operator $R_{\lambda_0}(H_0)|W|^{\frac{1}{2}}$ is compact.\vskip2mm

By Proposition \ref{BrScwspct} of the present paper, the set
$\sigma(H_\gamma)\cap (\lambda_-,\lambda_+)$ consists of at most
countable number of eigenvalues having finite multiplicities which
can cluster only to the edges $\lambda_+$ and $\lambda_-$:
$\lambda_-<\dots\le\rho_{-k}(\gamma)\le\dots\le\rho_{-1}(\gamma)
\le\rho_0(\gamma)\le\rho_1(\gamma)\le\dots
\le\rho_k(\gamma)\le\dots<\lambda_+$ (each eigenvalue is repeated
according to its multiplicity).

\begin{definition}\label{gone1}
{\rm Let $\rho(\gamma)\in (\lambda_-,\lambda_+)$ be a branch of
eigenvalues of the operator $H_\gamma$ which enters the gap
$(\lambda_-,\,\lambda_+)$ of $\sigma(H_0)$ across the edge
$\lambda_+<+\infty$ at the moment $\gamma=0$ as $\gamma$ decreases
(increases) from $0$ to a negative (positive) value. We call it a}
{\it branch of virtual eigenvalues} {\rm of the operator $H_\gamma$
created in $(\lambda_-,\lambda_+)$ at the edge $\lambda_+$. This
means that $(-\bar\gamma,\,0)\subseteq\Dm(\rho)$
$\big((0,\,\bar\gamma)\subseteq\Dm(\rho)\big)$ for some
$\bar\gamma>0$ and $ \lim_{\gamma\rightarrow
0}\rho(\gamma)=\lambda_+$. In the analogous manner we define a} {\it
branch of virtual eigenvalues} {\rm of the operator $H_\gamma$
created in $(\lambda_-,\lambda_+)$ at the edge $\lambda_->-\infty$}.
\end{definition}

\section{Formulation of main results}
\label{sec:mainres} \setcounter{equation}{0}

\subsection{The birth of virtual eigenvalues from a non-degenerate edge}
\label{subsec:mainresbirthnondeg}

Let us return to the Schr\"odinger operator $H_\gamma=H_0+\gamma W$
($H_0=-\Delta+ V(\e)\cdot$, $W=W(\e)\cdot$, $\B=L_2(\R^d)$),
considered in Section \ref{sec:preliminaries}. Recall that $V(\e)$
is measurable, bounded and periodic on some lattice $\Gamma$. In
this section we consider the case of a definite perturbation and of
a non-degenerate edge of a gap $(\lambda_-,\,\lambda_+)$ of the
spectrum of the unperturbed operator $H_0$, that is at least one of
the edges $\lambda_+$ or $\lambda_-$ satisfies condition (A) of
Section \ref{subsec:spectchar}. Consider the finite rank operator
\begin{equation}\label{dfGW}
G_W^+=\sum_{k=1}^{n_+}\sqrt{m_k^+}\,(\,\cdot,\,v_k^+)v_k^+\quad\Big(G_W^-=
\sum_{k=1}^{n_-}\sqrt{m_k^-}\,(\,\cdot,\,v_k^-)v_k^-\Big),
\end{equation}
$m_k^+\;(m_k^+)$ is defined by (\ref{dfmkpl})-(\ref{Hess}) and
$v_k^+(\e)\;(v_k^-(\e))$ is the weighted Bloch function defined by
(\ref{dfvlpl}), (\ref{dfvlxp}) and (\ref{Bloch1}). We shall show in
what follows (Lemma \ref{lmGWnondeg}) that $G_W^+\;(G_W^-)$ has
$n_+\;(n_-)$ positive eigenvalues
\begin{equation}\label{eigvalGWnondeg}
\nu_1^+\ge \nu_2^+\ge\dots\ge
\nu_{n_+}^+>0\;\big(\nu_1^-\ge\nu_2^-\ge\dots\ge \nu_{n_-}^->0\big)
\end{equation}
(counting their multiplicities), which are eigenvalues of the matrix
\begin{eqnarray}\label{Gram}
&&\left((m_k^+m_l^+)^{\frac{1}{4}}(v_l^+,v_k^+)\right)_{k,l=1}^{n^+}\\
&&\left(\left(
(m_k^-m_l^-)^{\frac{1}{4}}(v_l^-,v_k^-)\right)_{k,l=1}^{n^-}\right).\nonumber
\end{eqnarray}
Let
$g_1^+(\e),g_2^+(\e),\dots,g_{n_+}^+(\e)\;\big(g_1^-(\e),g_2^-(\e),\dots,g_{n_+}^-(\e)\big)$
be an orthonormal sequence of eigenfunctions of the operator
$G_W^+\;(G_W^-)$ corresponding to its eigenvalues
(\ref{eigvalGWnondeg}).

If $d\le 2 $, we shall impose on the non-negative perturbation
$W(\e)$ the following conditions of its fast decay as
$|\e|\rightarrow\infty$: for $d=1$
\begin{equation}\label{cndWd1}
\int_{-\infty}^\infty\int_{-\infty}^\infty
W(x)(x-s)^2W(s)\,dx\,ds<\infty
\end{equation}
and for $d=2$
\begin{equation}\label{cndWd2}
\int_{\R^2}\int_{\R^2}
W(\e)(\ln(1+|\e-\bs|)^2W(\bs)\,d\e\,d\bs<\infty.
\end{equation}

Our result about virtual eigenvalues in the non-degenerate case is
following:

\begin{theorem}\label{thnondegedg}
Assume that the unperturbed potential $V(\e)$ satisfies the
strengthened version of condition (\ref{condperpotent}):
\begin{equation}\label{Hold}
V\in\left\{\begin{array}{ll}
L_2(\Omega), &\rm{if}\quad d=1\\
\bigcup_{q>d}L_q(\Omega), &\rm{if}\quad d\ge 2,
\end{array}\right.
\end{equation}
the perturbation $W(\e)$ is measurable and bounded in $\R^d$,
$\lim_{|\e|\rightarrow\infty} W(\e)=0$, $W(\e)\ge 0$ a.e. on $\R^d$
and  $W(\e)>0$ on a set of positive measure. Furthermore, assume
that if the edge $\lambda_+\;(\lambda_-)$ is non-degenerate, for
$d=1$ the condition (\ref{cndWd1}) is satisfied and for $d=2$ the
condition (\ref{cndWd2}) is satisfied. Then\vskip2mm

 (i) for $\gamma<0$ the operator
$H_\gamma$ can have in $(\lambda_-,\lambda_+)$ a virtual eigenvalue
only at the edge $\lambda_+$;\vskip2mm

(ii) if the edge $\lambda_+$ is non-degenerate,  $d\le 2$ and
$\gamma<0$, the operator $H_\gamma$ has in $(\lambda_-,\lambda_+)$
virtual eigenvalues at $\lambda_+$ having the properties:\vskip1mm

\indent\indent (a) if $d=1$, there is a unique virtual eigenvalue
$\rho^+_1(\gamma)$ in $(\lambda_-,\lambda_+)$ , having the following
asymptotic representation for $\gamma\uparrow 0$:
\begin{eqnarray}\label{asympteigvd1}
\sqrt{\lambda_+-\rho^+_1(\gamma)}=|\gamma|\big(\sqrt{m^+_1}\frac{\|v^+_1\|^2}{\sqrt{2}}+O(\gamma)\big),
\end{eqnarray}
where $m^+_1$ is defined by (\ref{dfmupl}-a) and $v_1^+(x)$ is
defined by (\ref{dfvlpld1}-a); \vskip1mm

\indent\indent (b) if $d=2$, there are $\;n_+$ virtual eigenvalues
\begin{equation}\label{virteignondeg}
\rho_1^+(\gamma)\le\rho_2^+(\gamma)\le\dots\le \rho_{n_+}^+(\gamma)
\end{equation}
in $(\lambda_-,\lambda_+)$ (counting their multiplicities), and the
following asymptotic representation is valid for them for
$\gamma\uparrow 0$:
\begin{eqnarray}\label{asympteigvd2}
\left(\ln\left(\frac{1}{\lambda_+-\rho_k^+(\gamma)}\right)\right)^{-1}=
|\gamma|\Big(\frac{\nu_k^+}{2\pi}+O(\gamma)\Big)\quad(k=1,2,\dots,n_+),
\end{eqnarray}
and furthermore, the asymptotic formula of Lieb-Thirring type is
valid for $\gamma\uparrow 0$:
\begin{eqnarray}\label{asymptLbThrd2}
\sum_{k=1}^{n_+}\left(\ln\left(\frac{1}{\lambda_+-\rho_k^+(\gamma)}\right)\right)^{-1}=
\frac{|\gamma|}{2\pi}\sum_{k=1}^{n_+}\|v_k^+\|^2\sqrt{m_k^+}+O(\gamma^2);
\end{eqnarray}
\vskip2mm

(iii) if the edge $\lambda_+$ is non-degenerate and $d\le 2$, the
eigenfunctions corresponding to the virtual eigenvalues, considered
above, have the properties:\vskip1mm

\indent\indent (a) if $d=1$, there exists $\bar\gamma>0$ such that
for any $\gamma\in[-\bar\gamma,0)$ it is possible to choose an
eigenfunction $\psi^+_{\gamma,1}(\e)$ of the operator $H_\gamma$
corresponding to its eigenvalue $\rho^+_1(\gamma)$ such that
$\Vert\sqrt{W}\psi^+_{\gamma,1}- g_1^+\Vert=O(\gamma)$ for
$\gamma\uparrow 0$, where $g^+_1=\frac{v^+_1}{\Vert
v^+_1\Vert}$;\vskip1mm

\indent\indent (b) if $d=2$ and $m(j)\;(j\in\{1,2,\dots,n_+\})$ is
the multiplicity of an eigenvalue $\nu_j^+$ of the operator $G_W^+$
and
$\rho_{l(j)}^+(\gamma)\le\rho_{l(j)+1}^+(\gamma)\le\dots\le\rho_{l(j)+m(j)-1}^+(\gamma)$
$(l(j)\in\{1,2,\dots,n_+\})$ is the group of virtual eigenvalues of
$H_\gamma$, for which $\lim_{\gamma\uparrow
0}\ln\left(\frac{1}{\lambda_+-\rho_k^+(\gamma)}\right)|\gamma|=\frac{2\pi}{\nu_j^+}$,
then there exists $\bar\gamma>0$ such that for any
$\gamma\in[-\bar\gamma,0)$ there are numbers
$\gamma_0(\gamma)=\gamma$, $\{\gamma_k(\gamma)\}_{k=1}^{m(j)-1}$
having the properties: $\;\gamma-\gamma_k(\gamma)=O(\gamma^2)$ as
$\gamma\uparrow 0$, for each $k\in\{0,1,\dots,m(j)-1\}$ the number
$\rho_{l(j)}^+(\gamma)$ is an eigenvalue of the operator
$H_{\gamma_k(\gamma)}$, and it is possible to choose a basis
\begin{equation*}
\psi_{\gamma,\,l(j)}^+(\e),\psi_{\gamma,\,l(j)+1}^+(\e),\dots,\psi_{\gamma,\,l(j)+m(j)-1}^+(\e)
\end{equation*}
in the linear span of eigenspaces of all the operators
$H_{\gamma_k(\gamma)}\;(k\in\{0,1,\dots,m(j)-1\})$, corresponding to
their eigenvalue $\rho_{l(j)}^+(\gamma)$, for which the property
\begin{equation}\label{asympteigvec1}
\Vert\sqrt{W}\psi_{\gamma,k}^+-g_k^+\Vert=O(\gamma) \quad
\rm{as}\quad \gamma\uparrow 0
\end{equation}
is valid for any $k\in\{l(j),l(j)+1,\dots,l(j)+m(j)-1\}$; \vskip2mm

(iv) if the edge $\lambda_+$ is non-degenerate, $d\ge 3$, $W\in
L_1(\R^d)$ and $\gamma<0$ the operator $H_\gamma$ has in
$(\lambda_-,\lambda_+)$ no virtual eigenvalue at
$\lambda_+$;\vskip2mm

(v) for $\gamma>0$ the claims (i)-(iv) are valid with $\lambda_-$,
$n_-$, $\rho_k^-(\gamma)$, $\nu_k^-$, $m_k^-$, $v_k^-(\e)$,
 $\psi_{\gamma,k}^-(\e)$, $g_k^-(\e)$, $\gamma>0$ and $\gamma\downarrow 0$ instead of, respectively,
$\lambda_+$, $n_+$, $\rho_k^+(\gamma)$, $\nu_k^+$, $m_k^+$,
$v_k^+(\e)$, $\psi_{\gamma,k}^+(\e)$, $g_k^+(\e)$, $\gamma<0$ and
$\gamma\uparrow 0$;\vskip2mm

(vi) if both edges $\lambda_+$ and $\lambda_-$ are non-degenerate,
$d\ge 3$ and $W\in L_1(\R^d)$, there is a threshold for the birth of
the impurity spectrum in the gap $(\lambda_-,\,\lambda_+)$, that is
$\sigma(H_\gamma)\cap(\lambda_-,\lambda_+)=\emptyset$ for a small
enough $|\gamma|$.
\end{theorem}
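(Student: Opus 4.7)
The plan is to reduce the problem to a spectral analysis of the Birman-Schwinger operator $X_W(\lambda) := \sqrt{W}\,R_\lambda(H_0)\,\sqrt{W}$, and then extract its singular behaviour at the spectral edge. By the Birman-Schwinger principle (Proposition \ref{BrScwspct}), a number $\lambda\in(\lambda_-,\lambda_+)$ is an eigenvalue of $H_\gamma$ of multiplicity $m$ if and only if $-1/\gamma$ is an eigenvalue of $X_W(\lambda)$ of the same multiplicity. Thus virtual eigenvalues entering the gap from $\lambda_+$ as $\gamma\uparrow 0$ correspond to positive eigenvalues of $X_W(\lambda)$ that tend to $+\infty$ as $\lambda\uparrow\lambda_+$, and the claims in (i) and (v) follow from the symmetry between the two edges and from the non-negativity of $W$.

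The core of the argument is the expansion of $X_W(\lambda)$ near $\lambda_+$ provided by Proposition \ref{prrprres} together with Proposition \ref{rprBirSchw}. In the Gelfand-Fourier-Floquet representation,
\begin{equation*}
R_\lambda(H_0)=\frac{1}{(2\pi)^d}\int_{\T^d}R_\lambda(\tilde H(\J))\,d\J,
\end{equation*}
and near $\lambda=\lambda_+$ the only divergent contribution is the rank-one term $(\lambda^+(\J)-\lambda)^{-1}\tilde\Qc^+(\cdot,\cdot,\J)$ localized near each point $\J_k^+\in F^+$. Using condition (A), the Morse normal form for $\lambda^+(\J)-\lambda_+$ at $\J_k^+$, and the $C(\Omega\times\Omega)$-real-analyticity of the weighted eigenkernels guaranteed by Corollary \ref{cormainApp}, a change of variable turns the singular part of $X_W(\lambda)$ into
\begin{equation*}
X_W^{\mathrm{sing}}(\lambda)=I_d(\lambda_+-\lambda)\sum_{k=1}^{n_+}(\,\cdot\,,v_k^+)v_k^++O(1),
\end{equation*}
where $I_1(t)=1/\sqrt{2t}$, $I_2(t)=(2\pi)^{-1}\ln(1/t)$, and $I_d(t)=O(1)$ for $d\ge 3$; the scalar weight already bundles the factor $\sqrt{m_k^+}$ after the affine change straightening the Hessian. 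The integrability hypotheses \eqref{cndWd1}, \eqref{cndWd2} and, in the case $d\ge 3$, $W\in L_1(\R^d)$, combined with the hypothesis \eqref{Hold} on $V$, are precisely what is needed to guarantee that the remainder is a Hilbert-Schmidt operator bounded uniformly as $\lambda\uparrow\lambda_+$.

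From this asymptotic, the large positive eigenvalues of $X_W(\lambda)$ are, up to an $O(1)$ perturbation, the numbers $I_d(\lambda_+-\lambda)\nu_k^+$, where $\nu_k^+$ are the eigenvalues of $G_W^+$ (Lemma \ref{lmGWnondeg}), i.e.\ the eigenvalues of the Gram-type matrix \eqref{Gram}. Substituting into the Birman-Schwinger relation $I_d(\lambda_+-\rho_k^+(\gamma))\nu_k^++O(1)=-1/\gamma$ and inverting yields \eqref{asympteigvd1} for $d=1$ and \eqref{asympteigvd2} for $d=2$; summing the relation for $d=2$ and recognising that $\sum_k\nu_k^+=\mathrm{tr}\,G_W^+=\sum_k\sqrt{m_k^+}\,\|v_k^+\|^2$ produces the Lieb-Thirring-type identity \eqref{asymptLbThrd2}. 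For $d\ge 3$, the operator $X_W(\lambda)$ remains bounded up to $\lambda_+$, so $\|X_W(\lambda)\|\cdot|\gamma|<1$ for small $|\gamma|$, which is (iv); combining the same argument at both edges gives (vi). For (iii), the corresponding eigenvectors $\phi_\gamma$ of $X_W(\rho_k^+(\gamma))$ are close (in $L_2$) to eigenvectors of $G_W^+$, and applying the Birman-Schwinger identification $\phi_\gamma=\sqrt{W}\psi^+_{\gamma,k}$ yields the convergence to $g_k^+$ at rate $O(\gamma)$.

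The main obstacle is two-fold. First, one must control quantitatively the integral kernel of the regular remainder of the resolvent expansion over the full Brillouin zone, not only near the points $\J_k^+$; this is where \eqref{cndWd1}, \eqref{cndWd2}, and the real-analyticity of $b^+(\cdot,\J)$ in the $C(\Omega)$-norm (Corollary \ref{cormainApp}) are crucial, since for $d\ge 4$ the naive $L_2$-estimates are insufficient. Second, when an eigenvalue $\nu_j^+$ of $G_W^+$ has multiplicity $m(j)>1$, the finite-dimensional singular part splits into $m(j)$ branches under the $O(\gamma)$ perturbation, and these branches need not collide into a single eigenvalue of $H_\gamma$ of multiplicity $m(j)$; this forces the auxiliary coupling constants $\gamma_k(\gamma)$ with $\gamma-\gamma_k(\gamma)=O(\gamma^2)$ in (iii)(b) and is the most delicate bookkeeping step of the proof.
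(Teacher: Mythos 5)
Your proposal is correct and follows essentially the same route as the paper: Birman--Schwinger reduction, extraction of the singular part of $X_W(\lambda)$ at the edge via the Gelfand--Fourier--Floquet representation, Morse normal form, and $C(\Omega)$-analyticity of the weighted eigenkernels, then perturbation theory for the resulting compact family and inversion of the characteristic branches. The only cosmetic difference is that you phrase the key step as an additive $O(1)$ perturbation of $X_W(\lambda)$, whereas the paper introduces the rescaled family $\Phi(t)=tX_W(\lambda)$ with $t=\sqrt{\lambda_+-\lambda}$ or $t=(\ln\frac{1}{\lambda_+-\lambda})^{-1}$ so as to invoke Proposition \ref{prpertcompoper} and Lemma \ref{lmasymp} directly; the two formulations are equivalent, and you correctly identify both delicate points (uniform control of the remainder kernel requiring \eqref{cndWd1}, \eqref{cndWd2} and Corollary \ref{cormainApp}, and the splitting of multiple eigenvalues that necessitates the auxiliary couplings $\gamma_k(\gamma)$ in (iii)(b)).
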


\subsection{The birth of virtual eigenvalues from a degenerate edge}
\label{subsec:mainresbirthdeg}

Assume that at least one of the edges $\lambda_+$ or $\lambda_-$ of
a gap $(\lambda_-,\,\lambda_+)$ of the spectrum of the unperturbed
operator $H_0$ satisfies the condition (B) of Section
\ref{subsec:spectchar} such that the dimension of at least one of
the connected components of the extremal set
$F^+=(\lambda^+)^{-1}(\lambda_+)\;(F_-=(\lambda^-)^{-1}(\lambda_-))$
of the dispersion function $\lambda^+(\J)\;(\lambda^-(\J))$,
branching from the edge $\lambda_+\;(\lambda_-)$, is non-zero. In
this case we have a non-localized degeneration of the dispersion
function $\lambda_{j+1}(\J)\;(\lambda_j(\J))$ at the edge
$\lambda_+\;(\lambda_-)$ of the forbidden zone
$(\lambda_-,\lambda_+)$. For the simplicity we shall assume in this
section that the following condition is satisfied for at least one
of the edges of $(\lambda_-,\lambda_+)$:

(C) The edge $\lambda_+\;(\lambda_-)$ of a gap
$(\lambda_-,\,\lambda_+)$ of the spectrum of the unperturbed
operator $H_0$ satisfies condition (B) of Section
\ref{subsec:spectchar} with $n_+=1\,(n_-=1)$ (that is $F^+\,(F^-)$
is a connected smooth submanifold of $\T^d$)  and $d_+=\dim(F^+)\ge
1\;(d_-=\dim(F^-)\ge 1)$.

Consider the integral operator acting in $L_2(\R^d)$
\begin{equation}\label{dfintopGW}
G_W^+f=\int_{\R^d}\Gc_W^+(\e,\bs)f(\bs)\,d\bs\quad\Big(G_W^-f=\int_{\R^d}\Gc_W^-(\e,\bs)f(\bs)\,d\bs\Big)
\end{equation}
with
\begin{eqnarray}\label{dfkerGW}
&&\Gc_W^+(\e,\bs)=\int_{F^+}\Qc_W^+(\e,\bs,\J)\sqrt{m^+(\J)}\,dF(\J)\\
&&
\Big(\Gc_W^-(\e,\bs)=\int_{F^-}\Qc_W^-(\e,\bs,\J)\sqrt{m^-(\J)}\,dF(\J)\Big),
\nonumber
\end{eqnarray}
where $\Qc_W^+(\e,\bs,\J)\;(\Qc_W^-(\e,\bs,\J))$ is the weighted
eigenkernel, corresponding to $\lambda^+(\J)\;(\lambda^-(\J))$ and
defined by (\ref{dfWxsp}), $m^+(\J)\;(m^-(\J))$ is defined by
(\ref{dfmplp}) and $dF(\J)$ is the volume form on the submanifold
$F^+\,(F^-)$. In what follows we shall prove (Lemma
\ref{lmspecGWpl}) that if $W\in L_1(\R^d)$, $W(\e)\ge 0$ a.e. on
$\R^d$ and $W(\e)> 0$ on a set of the positive measure, then the
integral operator $G_W^+\,(G_W^-)$ is self-adjoint, nonnegative,
belongs to the trace class and has an infinite number of positive
eigenvalues
\begin{equation}\label{eigvalGW}
\nu_1^+\ge\nu_2^+\ge\dots\ge\nu_n^+\ge\dots\;(\nu_1^-\ge\nu_2^-\ge\dots\ge\nu_n^-\ge\dots)
\end{equation}
(each eigenvalue is repeated according to its multiplicity). Let
\begin{equation*}
g_1^+(\e),g_2^+(\e),\dots,g_n^+(\e),\dots\;\big(g_1^-(\e),g_2^-(\e),\dots,g_n^-(\e),\dots\big)
\end{equation*}
be an orthonormal sequence of eigenfunctions of the operator
$G_W^+\;(G_W^-)$ corresponding to its eigenvalues (\ref{eigvalGW}).

If $d-d_+\le 2\;(d-d_-\le 2)$, we shall impose on the non-negative
perturbation $W(\e)$ the following conditions: for
$d-d_+=1\;(d-d_-=1)$
\begin{equation}\label{cndWd1deg}
\int_{\R^d}\int_{\R^d} W(\e)(\e-\bs)^2W(\bs)\,d\e\,d\bs<\infty
\end{equation}
and for $d-d_+=2\;(d-d_-=2)$
\begin{equation}\label{cndWcod2}
\int_{\R^d}\int_{\R^d}
W(\e)(\ln(1+|\e-\bs|)^2W(\bs)\,d\e\,d\bs<\infty
\end{equation}

Our result about virtual eigenvalues in the degenerate case is
following:

\begin{theorem}\label{thdegedg}
Assume that the unperturbed potential $V(\e)$ satisfies the
condition (\ref{Hold}), the perturbation $W(\e)$ is measurable and
bounded in $\R^d$, $\lim_{|\e|\rightarrow\infty}\\ W(\e)=0$,
$W(\e)\ge 0$ a.e. on $\R^d$ and  $W(\e)>0$ on a set of positive
measure. Furthermore, assume that if the edge
$\lambda_+\;(\lambda_-)$ satisfies the condition (C), for
$d-d_+=1\;(d-d_-=1)$ the condition (\ref{cndWd1deg}) is satisfied
and for $d-d_+=2\;(d-d_-=2)$ the condition (\ref{cndWcod2}) is
satisfied. Then\vskip2mm

(i) for $\gamma<0$ the operator $H_\gamma=H_0+\gamma W\cdot$ can
have in $(\lambda_-,\lambda_+)$ a virtual eigenvalue only at the
edge $\lambda_+$;\vskip2mm

(ii) if the edge $\lambda_+$ satisfies the condition (C), $d-d_+\le
2$ and $\gamma<0$, the operator $H_\gamma$ has in
$(\lambda_-,\lambda_+)$ an infinite number of virtual eigenvalues
$\rho_1^+(\gamma)\le\rho_2^+(\gamma)\le\dots\rho_n^+(\gamma)\le\dots$
at $\lambda_+$, and moreover the following asymptotic representation
is valid for $\gamma\uparrow 0$:
\begin{equation}\label{asympteigvd2deg}
\Psi\big(\lambda_+-\rho_n^+(\gamma)\big)=|\gamma|\big(\nu_n^++O(\gamma)\big),
\end{equation}
where
\begin{displaymath}
\Psi(s)=\left\{\begin{array}{ll}
\frac{(2\pi)^d}{\sqrt{2}\pi}\sqrt{s},&\rm{if}\quad d-d_+=1,\\
(2\pi)^{d-1}\Big(\ln\big(\frac{1}{s}\big)\Big)^{-1},&\rm{if}\quad
d-d_+=2
\end{array}\right.
\end{displaymath}
$(s\in(0,\;\lambda_+-\lambda_-))$, and furthermore, the asymptotic
formula of Lieb-Thirring type is valid:
\begin{eqnarray}\label{asymptLbThrd2deg}
&&\lim_{n\rightarrow\infty}\lim_{\gamma\uparrow
0}\frac{1}{|\gamma|}\sum_{k=1}^n
\Psi\big(\lambda_+-\rho_k^+(\gamma)\big)=\nonumber\\
&&\int_{F^+}\int_{\R^d}\Qc_W^+(\bs,\bs,\J)\;d\bs\sqrt{m^+(\J)}\,dF(\J);
\end{eqnarray}
\vskip2mm

(iii) if the edge $\lambda_+$ satisfies the condition (C) and
$d-d_+\le 2$, the eigenfunctions corresponding to the virtual
eigenvalues, considered above, have the property: if $\;m(j)$ is the
multiplicity of an eigenvalue $\nu_j^+$ of the operator $G_W^+$ and
$\rho_{l(j)}^+(\gamma)\le\rho_{l(j)+1}^+(\gamma)\le\dots\le\rho_{l(j)+m(j)-1}^+(\gamma)$
is the group of virtual eigenvalues of $H_\gamma$, for which
$\lim_{\gamma\uparrow
0}\frac{1}{|\gamma|}\Psi\big(\lambda_+-\rho_k^+(\gamma)\big)=\nu_j^+$
$(k\in\{l(j),l(j)+1,\dots,l(j)+m(j)-1\})$, then there exists
$\bar\gamma>0$ such that for any $\gamma\in[-\bar\gamma,0)$  there
are numbers $\gamma_0(\gamma)=\gamma$,
$\{\gamma_k(\gamma)\}_{k=1}^{m(j)-1}$ such that they and the
eigenvectors of the operators
$H_{\gamma_k(\gamma)}\;(k\in\{l(j),l(j)+1,\dots,l(j)+m(j)-1\})$,
corresponding to their eigenvalue $\rho_{l(j)}^+(\gamma)$, have the
same asymptotic properties as in claim (iii-b) of Theorem
\ref{thnondegedg}; \vskip2mm

(iv) if the edge $\lambda_+$ satisfies the condition (C), $d-d_+\ge
3$, $W\in L_1(\R^d)$ and $\gamma<0$, the operator $H_\gamma$ has in
$(\lambda_-,\lambda_+)$ no virtual eigenvalue at
$\lambda_+$;\vskip2mm

(v) for $\gamma>0$ the claims (i)-(iv) are valid with $d_-$,
$\lambda_-$, $\rho_k^-(\gamma)$, $\nu_k^-$, $\Qc_W^-(\e,\bs,\J)$,
$m^-(\J)$, $\psi_{\gamma,k}^-(\e)$, $g_k^-(\e)$, $\gamma>0$ and
$\gamma\downarrow 0$ instead of, respectively, $d_+$, $\lambda_+$,
$\rho_k^+(\gamma)$, $\nu_k^+$, $\Qc_W^+(\e,\bs,\J)$, $m^+(\J)$,
$\psi_{\gamma,k}^+(\e)$, $g_k^+(\e)$, $\gamma<0$ and $\gamma\uparrow
0$;\vskip2mm

(vi) if both edges $\lambda_+$ and $\lambda_-$ satisfy the condition
(C), $d-d_+\ge 3$, $d-d_-\ge 3$ and  $W\in L_1(\R^d)$, there is a
threshold for the birth of the impurity spectrum in the gap
$(\lambda_-,\,\lambda_+)$, that is
$\sigma(H_\gamma)\cap(\lambda_-,\,\lambda_+)=\emptyset$ for a small
enough $|\gamma|$.
\end{theorem}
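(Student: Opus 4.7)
The plan is to adapt the Birman--Schwinger scheme underlying the non-degenerate Theorem \ref{thnondegedg} to the Morse--Bott geometry, replacing the two key ingredients---the resolvent expansion at the band edge (Proposition \ref{prrprres}) and the extraction of the singular part of the Birman--Schwinger operator (Proposition \ref{rprBirSchw})---with their degenerate counterparts, Propositions \ref{prrprres1} and \ref{rprBirSchwdeg}. Since $W\ge 0$, Proposition \ref{BrScwspct} identifies $\rho\in(\lambda_-,\lambda_+)$ as an eigenvalue of $H_\gamma$ of multiplicity $k$ exactly when $-1/\gamma$ is an eigenvalue, of the same multiplicity, of the Birman--Schwinger operator $T(\lambda)=W^{1/2}R_\lambda(H_0)W^{1/2}$. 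The whole theorem is then reduced to tracking the eigenvalues of $T(\lambda)$ that escape to $+\infty$ or $-\infty$ as $\lambda$ approaches a gap edge.

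For claim (i), I would use the Floquet direct-integral form of $R_\lambda(H_0)$: the bands with index $n\le j$ contribute a uniformly bounded negative operator to $T(\lambda)$ (as $\lambda_n(\J)-\lambda<0$ on $(\lambda_-,\lambda_+)$) while the bands with $n\ge j+1$ contribute a positive operator that is singular only at $\lambda_+$. As $\lambda\downarrow\lambda_-$ the only diverging eigenvalues of $T(\lambda)$ are therefore negative, so the value $+1/|\gamma|$ cannot enter its spectrum near $\lambda_-$ when $\gamma<0$. Claim (v) follows by the same argument with signs reversed using the time-reversibility of $H(\J)$, and claim (vi) is an immediate combination of claim (iv) applied at both edges.

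The core of the work is in claims (ii) and (iv). Using Proposition \ref{prrprres1} I would write, for $\lambda$ just below $\lambda_+$,
\begin{equation*}
T(\lambda)=\Psi(\lambda_+-\lambda)^{-1}\,G_W^+ \;+\; S(\lambda),
\end{equation*}
where $G_W^+$ is the trace-class integral operator defined in (\ref{dfintopGW})--(\ref{dfkerGW}) and $S(\lambda)$ is a compact remainder bounded in operator norm uniformly as $\lambda\uparrow\lambda_+$. The function $\Psi$ arises by integrating $(\lambda^+(\J)-\lambda)^{-1}$ through Morse--Bott normal coordinates at $F^+$: Gaussian integration in the codimension produces an inverse square root when $d-d_+=1$, an inverse logarithm when $d-d_+=2$, and a finite integral when $d-d_+\ge 3$. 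The moment conditions (\ref{cndWd1deg}) and (\ref{cndWcod2}) on $W$ are exactly what is needed to keep $S(\lambda)$ bounded uniformly. When $d-d_+\ge 3$ the leading term disappears, so $\|T(\lambda)\|\le C$ and $+1/|\gamma|>\|T(\lambda)\|$ as soon as $|\gamma|<1/C$, which proves claim (iv). When $d-d_+\le 2$, Lemma \ref{lmspecGWpl} supplies infinitely many positive eigenvalues $\nu_k^+$ of $G_W^+$, and matching $\Psi(\lambda_+-\rho_k^+(\gamma))^{-1}\nu_k^+=1/|\gamma|+O(1)$ via min--max yields (\ref{asympteigvd2deg}). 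Summing over $k$ and using $\mathrm{tr}(G_W^+)=\int_{F^+}\int_{\R^d}\Qc_W^+(\bs,\bs,\J)\,d\bs\sqrt{m^+(\J)}\,dF(\J)$ gives the Lieb--Thirring identity (\ref{asymptLbThrd2deg}).

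For claim (iii) I would invoke analytic perturbation theory for the eigenprojections of $T(\lambda)$ at the cluster of eigenvalues coming from a degenerate $\nu_j^+$ of multiplicity $m(j)$: the small perturbation $S(\lambda)$ splits the group, but because it depends on the spectral parameter rather than on $\gamma$ alone, witnessing each member requires picking a coupling constant $\gamma_k(\gamma)=\gamma+O(\gamma^2)$, in precise analogy with Theorem \ref{thnondegedg}(iii-b). The Birman--Schwinger correspondence $\sqrt{W}\psi_\gamma^+\leftrightarrow$ (eigenvector of $T$) then transports the $L_2$-convergence of the perturbed eigenprojections to the orthonormal system $\{g_k^+\}$ of $G_W^+$, giving the analogue of (\ref{asympteigvec1}). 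The main technical obstacle I anticipate is controlling $S(\lambda)$ in norm uniformly as $\lambda\uparrow\lambda_+$: this requires matching the moment bounds on $W$ to the precise singularity produced by the normal integration across $F^+$ and then using the $C(\Omega\times\Omega)$-real-analyticity of the eigenkernel $\Qc^+(\e,\bs,\J)$ supplied by Corollary \ref{cormainApp} to pass from pointwise estimates on the eigenkernel to genuine operator-norm estimates on $T(\lambda)$.
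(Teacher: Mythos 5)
Your proposal follows essentially the same route as the paper: reduce to the Birman--Schwinger operator via Proposition \ref{BrScwspct}, use Proposition \ref{rprBirSchwdeg} to write $X_W(\lambda)=\Psi(\lambda_+-\lambda)^{-1}G_W^+ + S(\lambda)$ with $S$ uniformly bounded, invoke Lemma \ref{lmspecGWpl} for the infinitude and trace-class nature of the positive spectrum of $G_W^+$, and run the min--max/analytic-perturbation machinery to obtain the asymptotics, the Lieb--Thirring trace formula, and the eigenfunction convergence. This is precisely what the paper does, packaged through the change of variable $t=\big(\Psi(\lambda_+-\lambda)\big)$ and Proposition \ref{prpertcompoper}.

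One small inaccuracy worth flagging: in your argument for claim (i) you describe the contribution of the bands $n\le j$ to $T(\lambda)$ as ``a uniformly bounded negative operator.'' Negativity is what matters and is correct, but uniform boundedness is false --- that contribution blows up as $\lambda\downarrow\lambda_-$. What saves the argument is that $T(\lambda)\le T_+(\lambda)$, where $T_+(\lambda)$ (the contribution of bands $n\ge j+1$) is uniformly bounded near $\lambda_-$, so by min--max the positive eigenvalues of $T(\lambda)$ stay bounded there and cannot hit $+1/|\gamma|$ for small $|\gamma|$. The paper sidesteps this by arguing directly from the monotonicity of the positive characteristic branches $\mu_k^+(\lambda)$ (Remark \ref{rembranch} and Proposition \ref{ordl}), which gives the same conclusion slightly more cleanly.
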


\subsection{The estimate of the multiplicity of virtual eigenvalues for an indefinite perturbation
in the case of a non-degenerate edge}
\label{subsec:mainresestmult}

In the case of an indefinite perturbation $W(\e)$ we only can, under
some conditions, estimate from above the multiplicity of virtual
eigenvalues of $H_\gamma$ in the gap $(\lambda_-,\,\lambda_+)$ and,
in particular, establish the existence of a threshold for the birth
of the impurity spectrum for $d\ge 3$. Denote by $W_+(\e)$ and
$W_-(\e)$ the positive and negative parts of $W(\e)$, that is
$W_+(\e)=\frac{1}{2}(W(\e)+|W(\e)|)$ and
$W_-(\e)=\frac{1}{2}(W(\e)-|W(\e)|)$. We shall consider the
following conditions: in the case where $d=1$
\begin{equation}\label{cndWpld1}
\int_{-\infty}^\infty\int_{-\infty}^\infty
W_+(x)(x-s)^2W_+(s)\,dx\,ds<\infty,
\end{equation}
\begin{equation}\label{cndWmnd1}
\int_{-\infty}^\infty\int_{-\infty}^\infty
W_-(x)(x-s)^2W_-(s)\,dx\,ds<\infty,
\end{equation}
and in the case where $d=2$
\begin{equation}\label{cndWpld2}
\int_{\R^2}\int_{\R^2}
W_+(\e)(\ln(1+|\e-\bs|)^2W_+(\bs)\,d\e\,d\bs<\infty,
\end{equation}
\begin{equation}\label{cndWmnd2}
\int_{\R^2}\int_{\R^2}
W_-(\e)(\ln(1+|\e-\bs|)^2W_+(\bs)\,d\e\,d\bs<\infty.
\end{equation}

Recall that if the edge $\lambda_+\;(\lambda_-)$ is non-degenerate,
then the extremal set $F^+\;(F^-)$ of the dispersion function
$\lambda^+(\J)\;(\lambda^-(\J))$, branching from this edge, is
finite and we denote by $n_+\;(n_-)$ the number of its points.

The following result is valid:

\begin{theorem}\label{thestmult}
Let $(\lambda_-,\lambda_+)$ be a gap of the spectrum of the
unperturbed operator $H_0$. Assume that the unperturbed potential
$V(\e)$ satisfies the condition (\ref{Hold}), the perturbation
$W(\e)$ is measurable and bounded in $\R^d$ and
$\lim_{|\e|\rightarrow\infty} W(\e)=0$. Let us take
$\delta\in(0,\,\lambda_+-\lambda_-)$. Then\vskip2mm

(i) for $d=1$: if the edge $\lambda_+$ is non-degenerate and
condition (\ref{cndWpld1}) is satisfied, then there exists
$\bar\gamma>0$ such that for any $\gamma\in (-\bar\gamma,0)$ all the
virtual eigenvalues of the operator $H_\gamma$, being born from the
edge $\lambda_+$ and lying in $(\lambda_+-\delta,\;\lambda_+)$, are
simple; if the edge $\lambda_-$ is non-degenerate and condition
(\ref{cndWmnd1}) is satisfied, then there exists $\bar\gamma>0$ such
that for any $\gamma\in (-\bar\gamma,0)$ all the virtual eigenvalues
of the operator $H_\gamma$, being born from the edge $\lambda_-$ and
lying in $(\lambda_-,\;\lambda_-+\delta)$ are simple;\vskip2mm

(ii) for $d=2$: if the edge $\lambda_+$ is non-degenerate the
condition (\ref{cndWpld2}) is satisfied, then there exists
$\bar\gamma>0$ such that for any $\gamma\in (-\bar\gamma,0)$ the
multiplicity of each virtual eigenvalue of the operator $H_\gamma$,
being born from the edge $\lambda_+$ and lying in
$(\lambda_+-\delta,\;\lambda_+)$ is not bigger than $n_+$; if the
edge $\lambda_-$ is non-degenerate and condition (\ref{cndWmnd2}) is
satisfied, then there exists $\bar\gamma>0$ such that for any
$\gamma\in (-\bar\gamma,0)$ the multiplicity of each virtual
eigenvalue of the operator $H_\gamma$, being born from the edge
$\lambda_-$ and lying in $(\lambda_-,\;\lambda_-+\delta)$, is not
bigger than $n_-$;\vskip2mm

(iii) for $d\ge 3$: if the edge $\lambda_+$ is non-degenerate and
$W_+\in L_1(\R^d)$, then there exists $\bar\gamma>0$ such that for
any $\gamma\in (-\bar\gamma,0)$ no eigenvalue of the operator
$H_\gamma$ lies in $(\lambda_+-\delta,\;\lambda_+)$; if the edge
$\lambda_-$ is non-degenerate and $W_-\in L_1(\R^d)$, then there
exists $\bar\gamma>0$ such that for any $\gamma\in (-\bar\gamma,0)$
no eigenvalue of the operator $H_\gamma$ lies in
$(\lambda_-,\lambda_-+\delta)$; \vskip2mm

(iv) for $\gamma>0$ all the above claims are valid with $\lambda_-$,
$\lambda_+$, $W_-(\e)$, $W_+(\e)$, $\gamma\in (0,\bar\gamma)$
$(\lambda_-,\;\lambda_-+\delta)$, $(\lambda_+-\delta,\lambda_+)$,
$n_-$ and $n_+$ instead of, respectively, $\lambda_+$, $\lambda_-$,
$W_+(\e)$, $W_-(\e)$, $\gamma\in (-\bar\gamma,0)$,
$(\lambda_+-\delta,\lambda_+)$, $(\lambda_-,\;\lambda_-+\delta)$,
 $n_+$ and $n_-$;\vskip2mm

(v) for $d\ge 3$: if  both edges $\lambda_-$ and $\lambda_+$ are
non-degenerate and $W\in L_1(\R^d)$, then there is a threshold for
the birth of the impurity spectrum in the gap
$(\lambda_-,\,\lambda_+)$, that is
$\sigma(H_\gamma)\cap(\lambda_-,\,\lambda_+)=\emptyset$ for a small
enough $|\gamma|$.
\end{theorem}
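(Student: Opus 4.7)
The proof applies the Birman--Schwinger principle together with a finite--rank perturbation argument. Factor the perturbation as $W = w\cdot U\cdot w$ with $w:=|W|^{1/2}$ and $U:=\operatorname{sgn} W$ a bounded self--adjoint sign operator. By the Birman--Schwinger correspondence (Proposition \ref{BrScwspct}), a point $\lambda\in(\lambda_-,\lambda_+)$ is an eigenvalue of $H_\gamma$ of geometric multiplicity $m$ if and only if $-1/\gamma$ is an eigenvalue of geometric multiplicity $m$ of the compact operator
$$T(\lambda):= w\,R_\lambda(H_0)\,w\cdot U$$
on $L_2(\R^d)$. Compactness follows for each fixed $\lambda\in\Rs(H_0)$ from the decay of $W$ at infinity; the moment hypotheses are used below to control the behaviour of $T(\lambda)$ as $\lambda$ approaches an edge.

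Near the edge $\lambda_+$ apply Propositions \ref{prrprres} and \ref{rprBirSchw} to split
$$T(\lambda)=T_s(\lambda)+T_r(\lambda),\qquad \lambda\in[\lambda_+-\delta,\lambda_+),$$
where $T_s(\lambda)$ is a finite--rank singular part whose range lies in the linear span of the weighted Bloch functions $v_1^+,\dots,v_{n_+}^+$, and $T_r(\lambda)$ is a regular remainder bounded uniformly in $\lambda$. Conditions (\ref{cndWpld1}) for $d=1$ and (\ref{cndWpld2}) for $d=2$ provide precisely the integrability needed for this uniform bound on $T_r$. For $d\ge 3$ the singular part is absent, $T_s(\lambda)\equiv 0$, and the assumption $W_+\in L_1(\R^d)$ suffices to make $T(\lambda)=T_r(\lambda)$ uniformly bounded on $[\lambda_+-\delta,\lambda_+)$ by some constant $M$.

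Set $M:=\sup_\lambda\|T_r(\lambda)\|<\infty$ and choose $\bar\gamma<1/M$, so that $\mu:=-1/\gamma$ satisfies $|\mu|>M$ whenever $\gamma\in(-\bar\gamma,0)$. The operator $\mu I-T_r(\lambda)$ is then invertible, and the factorization
$$T(\lambda)-\mu I = -(\mu I-T_r(\lambda))\bigl(I-A(\lambda)\bigr),\qquad A(\lambda):=(\mu I-T_r(\lambda))^{-1}T_s(\lambda),$$
gives $\ker(T(\lambda)-\mu I)=\ker(I-A(\lambda))$. Every $\phi$ in this kernel satisfies $\phi=A(\lambda)\phi\in\operatorname{Im} A(\lambda)\subseteq\operatorname{Im} T_s(\lambda)$, whence $\dim\ker(T(\lambda)-\mu I)\le\operatorname{rank} T_s(\lambda)\le n_+$. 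This immediately yields (i) (since $d=1$ forces $n_+=1$), (ii), and (iii) (where $T_s\equiv 0$ forces a trivial kernel, excluding all virtual eigenvalues in $(\lambda_+-\delta,\lambda_+)$). Claim (iv) follows by swapping the edges and the roles of $W_\pm$. For (v), claim (iii) applied at each edge rules out virtual eigenvalues in $(\lambda_+-\delta,\lambda_+)\cup(\lambda_-,\lambda_-+\delta)$; on the middle interval $[\lambda_-+\delta,\lambda_+-\delta]$ one has $\ds(\lambda,\sigma(H_0))\ge\delta$, so $\|T(\lambda)\|\le\|W\|_\infty/\delta$ uniformly, and the same threshold argument closes the gap.

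The principal obstacle is establishing the uniform boundedness of $T_r(\lambda)$ under hypotheses imposed separately on $W_\pm$ rather than on $W$ itself: the positive and negative parts of $W$ do not respect the structure of the eigenkernel $\Qc_W^+$, so the weighted resolvent estimates underlying Proposition \ref{rprBirSchw} must be re--derived for $w=\sqrt{W_++W_-}$ using the separate decay information on $W_+$ and $W_-$. Once that bound is in hand, the rank bookkeeping above is purely algebraic and yields every part of the theorem in a uniform way.
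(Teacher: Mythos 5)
Your algebraic rank argument is clean and would in fact be correct, but it hinges on a step you yourself flag as the ``principal obstacle'' and then leave unfilled, and that step is precisely the one the hypotheses of the theorem do not allow you to fill by the route you propose. Concretely: to split $T(\lambda)=T_s(\lambda)+T_r(\lambda)$ with a finite-rank $T_s$ and a uniformly bounded remainder, you must estimate $\,|W|^{1/2}\tilde K^+(\lambda)|W|^{1/2}$, and the natural bound (Hilbert--Schmidt, as in the proof of Proposition \ref{rprBirSchw}) needs
\begin{equation*}
\int_{\R^d}\int_{\R^d}|W(\e)|\,(1+|\e-\bs|)^2\,|W(\bs)|\,d\e\,d\bs<\infty
\end{equation*}
(for $d=1$; similarly with the logarithmic weight for $d=2$, or $|W|\in L_1$ for $d\ge 3$). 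Since $|W|=W_++W_-$, this requires control of the cross terms in $W_+$ and $W_-$ and of $W_-$ itself. But for the claims about the edge $\lambda_+$ with $\gamma<0$ the theorem assumes \emph{only} (\ref{cndWpld1}) (resp.\ (\ref{cndWpld2}), resp.\ $W_+\in L_1$): nothing beyond boundedness and decay at infinity is imposed on $W_-$, which could for example fail to be integrable. Your decomposition therefore cannot be made uniform under the stated hypotheses, and the rank argument never gets off the ground.

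The missing idea is to work not with the full (non-self-adjoint) Birman--Schwinger operator but with the identity $\Re\big(X_W(\lambda)\big)=X_{W_+}(\lambda)-X_{W_-}(\lambda)$ (Lemma \ref{lmrprBrSchw}) and with the quadratic form $\Re\big((I+\gamma X_W(\lambda))u,u\big)$. On a family of subspaces $\Fc(\lambda)$ of codimension $\le M(\lambda_+,H_0,W_+)$, the form of $X_{W_+}$ is bounded above uniformly because one can discard its finitely many main characteristic branches; the form of $X_{W_-}$, for which one has \emph{no} decomposition, is bounded below on the whole interval simply by the monotonicity of $\lambda\mapsto X_{W_-}(\lambda)$ in the quadratic-form order. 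This gives, for $|\gamma|$ small, the uniform lower bound $\Re\big((I+\gamma X_W(\lambda))u,u\big)\ge \tfrac12\|u\|^2$ on $\Fc(\lambda)$, hence $\Fc(\lambda)\cap\ker(I+\gamma X_W(\lambda))=\{0\}$ and the multiplicity bound by $\cdm\,\Fc(\lambda)$ (this is Proposition \ref{prestmult}). Your factorization $T-\mu I=-(\mu I-T_r)(I-A)$ with the injection $\ker(I-A)\hookrightarrow\operatorname{Im}T_s$ is a nice alternative \emph{if} a uniform bound on the remainder were available; the paper's point is exactly that with only one-sided hypotheses one cannot decompose $X_W$ but can still bound the real part from below on a subspace of controlled codimension, which only requires the decomposition for $X_{W_+}$ and monotonicity for $X_{W_-}$.
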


\subsection{A threshold for the birth of virtual eigenvalues for an indefinite perturbation
in the case of a degenerate edge} \label{subsec:mainresthresdeg}

\begin{theorem}\label{ththresholddeg}
Let $(\lambda_-,\,\lambda_+)$ be a gap of the spectrum of the
unperturbed operator $H_0$. Assume that the unperturbed potential
$V(\e)$ satisfies the condition (\ref{Hold}), the perturbation
$W(\e)$ is measurable and bounded in $\R^d$ and
$\lim_{|\e|\rightarrow\infty} W(\e)=0$. Let us take
$\delta\in(0,\,\lambda_+-\lambda_-)$. Then\vskip2mm

(i) if the edge $\lambda_+$ satisfies condition (C) of Section
\ref{subsec:mainresbirthdeg}, $d-d_+\ge 3$ and  $W_+\in L_1(\R^d)$,
there exists $\bar\gamma>0$ such that for any $\gamma\in
(-\bar\gamma,0)$ no eigenvalue of the operator $H_\gamma$ lies in
$(\lambda_+-\delta,\;\lambda_+)$; if the edge $\lambda_-$ satisfies
condition (C), $d-d_-\ge 3$ and $W_-\in L_1(\R^d)$, there exists
$\bar\gamma>0$ such that  for any $\gamma\in (-\bar\gamma,0)$ no
eigenvalue of the operator $H_\gamma$ lies in
$(\lambda_-,\;\lambda_-+\delta)$; \vskip2mm

(ii) for $\gamma>0$ claim (i) is valid with $d_-$, $d_+$,
$\lambda_-$, $\lambda_+$, $W_-(\e)$, $W_+(\e)$, $\gamma\in
(0,\bar\gamma)$ $(\lambda_-,\;\lambda_-+\delta)$ and
$(\lambda_+-\delta,\lambda_+)$ instead of, respectively, $d_+$,
$d_-$, $\lambda_+$, $\lambda_-$, $W_+(\e)$, $W_-(\e)$, $\gamma\in
(-\bar\gamma,0)$, $(\lambda_+-\delta,\lambda_+)$ and
$(\lambda_-,\;\lambda_-+\delta)$;\vskip2mm

(iii) if condition (C) is satisfied for both edges $\lambda_-$ and
$\lambda_+$, $d-d_+\ge 3$, $d-d_-\ge 3$  and $W\in L_1(\R^d)$, there
is a threshold for the birth of the impurity spectrum in the gap
$(\lambda_-,\lambda_+)$, that is
$\sigma(H_\gamma)\cap(\lambda_-,\,\lambda_+)=\emptyset$ for a small
enough $|\gamma|$.
\end{theorem}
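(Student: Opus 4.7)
The plan is to reduce the one-sided threshold in claim (i) to a uniform operator-norm bound on a Birman--Schwinger type operator, in the same spirit as the analysis underlying Theorem \ref{thdegedg}(iv), and then to deduce claim (ii) from (i) by the symmetry $\gamma\mapsto-\gamma$, $\lambda_+\leftrightarrow\lambda_-$, $W_+\leftrightarrow W_-$ already exploited throughout the paper. Claim (iii) is the conjunction of the two one-sided thresholds applied simultaneously at both edges.

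For the first half of (i), fix $\gamma<0$ and the edge $\lambda_+$, with $W_+\in L_1(\R^d)$ and $d-d_+\ge 3$. By Birman--Schwinger (Proposition \ref{BrScwspct}), a number $\rho\in(\lambda_+-\delta,\lambda_+)$ is an eigenvalue of $H_\gamma$, with multiplicity $m$, if and only if $-1/\gamma$ is an eigenvalue of multiplicity $m$ of
\[
B(\rho):=\sqrt{|W|}\,R_\rho(H_0)\sqrt{|W|}\,J,\qquad J:=\mathrm{sgn}(W).
\]
Hence it is enough to show
\[
C_\delta:=\sup_{\rho\in(\lambda_+-\delta,\lambda_+)}\|B(\rho)\|<\infty,
\]
for then every $\gamma$ with $|\gamma|<1/C_\delta$ makes $|{-1/\gamma}|>\|B(\rho)\|$ and rules out eigenvalues of $H_\gamma$ in $(\lambda_+-\delta,\lambda_+)$.

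To establish this uniform bound I would invoke the resolvent representation of Proposition \ref{prrprres1}, splitting $R_\rho(H_0)=S(\rho)+T(\rho)$ into a singular part $S(\rho)$ concentrated on a tubular neighborhood of the extremal submanifold $F^+$ and a remainder $T(\rho)$ holomorphic at $\rho=\lambda_+$. The contribution of $T(\rho)$ to $B(\rho)$ is uniformly bounded because $|W|$ is bounded and vanishes at infinity. The singular part is the core issue: the codimension hypothesis $d-d_+\ge 3$ makes its integral kernel, after integration in the $d-d_+$ normal directions to $F^+$, uniformly bounded as $\rho\uparrow\lambda_+$ (this is the same elementary integrability that renders $(-\Delta-\rho I)^{-1}$ a bounded map between weighted $L_2$ spaces for $d\ge 3$). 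Combined with $W_+\in L_1(\R^d)$ and the boundedness/decay of $W_-$, this yields a uniform Hilbert--Schmidt bound on the diagonal blocks $\sqrt{W_\pm}\,S(\rho)\,\sqrt{W_\pm}$ of $B(\rho)$ in the $2\times 2$ decomposition induced by $\mathrm{sgn}(W)$.

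The main obstacle is the indefinite structure: $B(\rho)$ is not self-adjoint and one must also control the off-diagonal blocks of the form $\sqrt{W_+}\,R_\rho(H_0)\,\sqrt{W_-}$. I would handle these by first decomposing $R_\rho(H_0)$ along the spectral projections of $H_0$ above and below $\rho$, on each of which the resolvent is of definite sign, and then applying a Cauchy--Schwarz/Schur-type estimate that bounds each off-diagonal operator by the geometric mean of the diagonal ones already controlled. Once $C_\delta<\infty$ has been obtained in this way, the threshold conclusion of (i) is immediate, and (ii), (iii) follow by the indicated symmetry and by juxtaposition of the two unilateral thresholds, respectively.
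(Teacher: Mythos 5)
Your plan has a genuine gap at the step ``yields a uniform Hilbert--Schmidt bound on the diagonal blocks $\sqrt{W_\pm}\,S(\rho)\,\sqrt{W_\pm}$.'' The hypothesis of claim (i) only gives $W_+\in L_1(\R^d)$; nothing at all is assumed about the integrability of $W_-$, beyond boundedness and decay to zero at infinity. The reason the kernel bound of Proposition \ref{prrprres1}(ii) (or \ref{rprBirSchwdeg}(iii)) translates into a uniform operator bound on $\sqrt{W_+}S(\rho)\sqrt{W_+}$ is precisely that $W_+\in L_1$, so the Hilbert--Schmidt norm is dominated by $\|K\|_\infty\|W_+\|_{L_1}$. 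The same computation applied to the $(-,-)$ block requires $W_-\in L_1$, which you do not have. Boundedness and decay of $W_-$ give compactness of $\sqrt{W_-}R_\rho(H_0)\sqrt{W_-}$ for each fixed $\rho$ (Proposition \ref{compBirSchw}) but do not prevent $\|\sqrt{W_-}S(\rho)\sqrt{W_-}\|$ from blowing up as $\rho\uparrow\lambda_+$. Your subsequent Cauchy--Schwarz handling of the off-diagonal blocks inherits the same problem, since it also needs the $(-,-)$ diagonal to be under control. So the quantity $C_\delta=\sup_\rho\|B(\rho)\|$, on which your threshold hinges, cannot be shown finite from the stated hypotheses; the one-sided nature of the theorem (a separate $L_1$ hypothesis on $W_+$ for $\gamma<0$ and on $W_-$ for $\gamma>0$) is already a signal that a sign-symmetric operator-norm bound is not available.

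The paper's route (Proposition \ref{prestmult} combined with $M(\lambda_+,H_0,W_+)=0$) deliberately avoids any upper bound on $X_{W_-}(\lambda)$. The key is Lemma \ref{lmrprBrSchw}, $\Re(X_W(\lambda))=X_{W_+}(\lambda)-X_{W_-}(\lambda)$, so that for $u\in\B$,
\begin{equation*}
\Re\big((I+\gamma X_W(\lambda))u,u\big)=\|u\|^2+\gamma(X_{W_+}(\lambda)u,u)-\gamma(X_{W_-}(\lambda)u,u).
\end{equation*}
For $\gamma<0$ the last term is $+|\gamma|(X_{W_-}(\lambda)u,u)$, which only needs a \emph{lower} bound uniform in $\lambda\in(\lambda_+-\delta,\lambda_+)$; this follows from the monotone increase of the quadratic form $(X_{W_-}(\lambda)u,u)$ in $\lambda$ (\cite{Ar-Zl1}, Lemma 3.5), so $(X_{W_-}(\lambda)u,u)\ge(X_{W_-}(\lambda_+-\delta)u,u)$, a bound that is independent of the rate at which $W_-$ decays. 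Meanwhile the $X_{W_+}$ quadratic form is uniformly bounded above on the whole space (here $M(\lambda_+,H_0,W_+)=0$ thanks to Proposition \ref{rprBirSchwdeg}(iii) with $W_+$ in place of $W$), and these two one-sided estimates give $\|(I+\gamma X_W(\lambda))u\|\ge\tfrac12\|u\|$ for small $|\gamma|$, hence a trivial kernel. Claims (ii) and (iii) then do follow by the symmetry and juxtaposition you describe. In short: replace the attempt at a two-sided norm bound on the full Birman--Schwinger operator by a one-sided bound on the real part of the quadratic form, exploiting the monotonicity of $\lambda\mapsto X_{W_-}(\lambda)$ for the half that your argument cannot control.
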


\section{Proof of main results}
\label{sec:prmainres}
\setcounter{equation}{0}

\subsection{General results on the birth of virtual eigenvalues}
\label{subsec:genres}

Let $H_0$,$\;H_\gamma=H_0+\gamma W$ and $(\lambda_-,\lambda_+)$ are
the same as in Section \ref{subsec:spectchar}. For
$\lambda\in(\lambda_-,\lambda_+)$ consider the operator
\begin{equation}\label{BrmSchwop}
X_W(\lambda)=W^{\frac{1}{2}}R_\lambda(H_0)|W|^{\frac{1}{2}},
\end{equation}
which is called in the literature the {\it Birman-Schwinger
operator} (\cite{S1}, \cite{Sc}). Here
\begin{eqnarray}\label{dfsqrtW}
W^{1/2}x=\left\{\begin{array}{ll}
W(|W||_{(\ker(W))^{\perp}})^{-1/2}x,& \rm{if}\; x\in
(\ker(W))^{\perp},\\
0,& \rm{if}\; x\in \ker(W),
\end{array}\right.
\end{eqnarray}
hence $W=|W|^{\frac{1}{2}}W^{\frac{1}{2}}$. In \cite{S1} (Lemma 7.1)
a connection between the spectrum of the operator pencil $I+\gamma
X_W(\lambda)\,(\lambda\in(-\infty, 0))$ and the spectrum of
$H_\gamma$ in the gap $(-\infty,0)$ of $\sigma(H_0)$ was established
for the case where $\B=L_2(\R^d)$ and $H_0=-\Delta$. For the case of
a definite perturbation $(W\ge 0)$ of a periodic potential the
analogous result was obtained in \cite{Bi2} (Proposition 1.5). The
following claim is a generalization of this claim to the case of an
indefinite perturbation. We also have indicated an operator which
realizes a linear isomorphism between $\ker(H_\gamma-\lambda I)$ and
$\ker(I+\gamma X_W(\lambda))$.
\begin{proposition}\label{BrScwspct} Assume that for some
$\lambda_0\in\Rs(H_0)$ the operator
$R_{\lambda_0}(H_0)|W|^{\frac{1}{2}}$ is compact and
$(\lambda_-,\,\lambda_+)$ is a gap of the spectrum of the operator
$H_0$. Then the set
$\sigma_\gamma(\lambda_-,\lambda_+)=\sigma(H_\gamma)\cap(\lambda_-,\lambda_+)$
consists of at most countable number of eigenvalues of finite
multiplicity of the operator $H_\gamma$ and they can cluster only to
the edges of the gap $(\lambda_-,\,\lambda_+)$. Furthermore,
$\sigma_\gamma(\lambda_-,\lambda_+)$ coincides with the spectrum of
the pencil of operators $\Phi(\gamma)=I+\gamma
X_W(\lambda)\,(\lambda\in(\lambda_-,\lambda_+))$. Moreover, the
operator function $X_W(\lambda)$ is holomorphic in $\Rs(H_0)$ in the
operator norm, each of the operators $X_W(\lambda)$ is compact and
any point $\lambda\in\sigma_\gamma(\lambda_-,\lambda_+)$ is an
eigenvalue of the pencil $\Phi(\gamma)$ such that the operator
$W^{1/2}\vert_{E_\lambda}$ realizes a linear isomorphism between the
subspaces $E_\lambda=\ker(H_\gamma-\lambda I)$ and
$L_\lambda=\ker(I+\gamma X_W(\lambda))$, hence
$\dim(E_\lambda)=\dim(L_\lambda)$.
\end{proposition}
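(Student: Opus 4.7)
The plan is to verify the four assertions of the proposition in sequence: (a) compactness and operator-norm holomorphy of $X_W(\lambda)$ on $\Rs(H_0)$; (b) the Birman--Schwinger set-equality $\sigma_\gamma(\lambda_-,\lambda_+) = \{\lambda\in(\lambda_-,\lambda_+): \ker(I+\gamma X_W(\lambda))\ne\{0\}\}$; (c) the isomorphism $W^{1/2}|_{E_\lambda}:E_\lambda\to L_\lambda$; and (d) the discreteness, finite multiplicity and the clustering at $\lambda_\pm$.

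For (a) I would exploit the boundedness of $W$ (hence of both $W^{1/2}$ and $|W|^{1/2}$) and the first resolvent identity $R_\lambda(H_0) = [I+(\lambda-\lambda_0)R_\lambda(H_0)]\,R_{\lambda_0}(H_0)$, which yields $X_W(\lambda) = W^{1/2}[I+(\lambda-\lambda_0)R_\lambda(H_0)]\,R_{\lambda_0}(H_0)|W|^{1/2}$. Since the factor $R_{\lambda_0}(H_0)|W|^{1/2}$ is compact by hypothesis and the rest is bounded and operator-norm holomorphic in $\lambda\in\Rs(H_0)$, both compactness and holomorphy of $X_W(\lambda)$ drop out immediately.

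For (b) and (c), which I would prove together, I would use the polar decomposition $W=|W|^{1/2}W^{1/2}$ implicit in the definition (\ref{dfsqrtW}), together with the fact that $\ker W^{1/2} = \ker W$. Given $u\in E_\lambda$, the equation $(H_0-\lambda)u = -\gamma|W|^{1/2}W^{1/2}u$ combined with $\lambda\in\Rs(H_0)$ gives $u=-\gamma R_\lambda(H_0)|W|^{1/2}W^{1/2}u$; applying $W^{1/2}$ and setting $v=W^{1/2}u$ yields $(I+\gamma X_W(\lambda))v=0$, i.e.\ $v\in L_\lambda$. Injectivity of $W^{1/2}|_{E_\lambda}$ follows because $W^{1/2}u=0$ forces $Wu=0$, whence $(H_0-\lambda)u=0$, whence $u=0$ by $\lambda\in\Rs(H_0)$. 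For surjectivity, given $v\in L_\lambda$ I would define $u:=-\gamma R_\lambda(H_0)|W|^{1/2}v$ and verify by direct substitution that $W^{1/2}u = -\gamma X_W(\lambda)v = v$ and $(H_\gamma-\lambda)u=0$, so $u\in E_\lambda$ with $W^{1/2}u=v$. This gives the isomorphism, the equality of dimensions, and the set equality of spectra.

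For (d) I would invoke the analytic Fredholm theorem on the connected open set $\Rs(H_0)=\C\setminus\sigma(H_0)$, on which $X_W(\lambda)$ is a compact-valued holomorphic family. For $\lambda\in\C$ with $|\Im\lambda|$ sufficiently large one has $\|R_\lambda(H_0)\|\le|\Im\lambda|^{-1}$, so $\|\gamma X_W(\lambda)\|<1$ and $I+\gamma X_W(\lambda)$ is invertible; the nontrivial alternative of the analytic Fredholm theorem is thereby excluded. It follows that the exceptional set $S=\{\lambda\in\Rs(H_0):\ker(I+\gamma X_W(\lambda))\ne\{0\}\}$ is discrete in $\Rs(H_0)$ with finite-dimensional kernels. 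Intersecting with $(\lambda_-,\lambda_+)$ and using the isomorphism of step (c) finishes the proof, since accumulation points of $S$ inside $(\lambda_-,\lambda_+)$ can only lie on $\partial\Rs(H_0)\cap\overline{(\lambda_-,\lambda_+)}=\{\lambda_-,\lambda_+\}$. The main subtlety I anticipate is the careful bookkeeping around the definition (\ref{dfsqrtW}) of $W^{1/2}$: one must check $W=|W|^{1/2}W^{1/2}$ and $\ker W^{1/2}=\ker W$ from the polar-decomposition definition to secure both the computation of (c) and the non-vanishing arguments of the injectivity/surjectivity proofs.
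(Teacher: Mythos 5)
Your treatment of parts (a) and (c) is correct and in fact cleaner than the paper's: for (a) you use the same resolvent-identity argument; for (c), rather than the paper's detour through the projection $P_W$, the subspace $G_\lambda$ of solutions of the reduced equation, and the auxiliary isomorphism $\Psi=(I-\gamma\Xi)|_{G_\lambda}$, you write the inverse map to $W^{1/2}|_{E_\lambda}$ directly as $v\mapsto -\gamma R_\lambda(H_0)|W|^{1/2}v$ and verify both $W^{1/2}u=v$ and $(H_\gamma-\lambda)u=0$ by substitution. Your injectivity argument (that $W^{1/2}u=0\Rightarrow Wu=0\Rightarrow(H_0-\lambda)u=0\Rightarrow u=0$) is also correct and shorter. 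This is a genuine and useful simplification of what the paper does.

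However, your proof of the set equality in (b) is incomplete, and this is a real gap rather than an omitted routine step. Your kernel isomorphism in (c) shows that $\lambda\in(\lambda_-,\lambda_+)$ is an \emph{eigenvalue} of $H_\gamma$ if and only if $\ker(I+\gamma X_W(\lambda))\neq\{0\}$, and the analytic Fredholm argument in (d) shows that the latter set is discrete with finite-dimensional kernels. But to prove $\sigma(H_\gamma)\cap(\lambda_-,\lambda_+)=\{\lambda:\ker(I+\gamma X_W(\lambda))\neq\{0\}\}$, you also need the converse inclusion: if $L_\lambda=\{0\}$ then $\lambda\in\Rs(H_\gamma)$, i.e.\ the perturbed operator has \emph{no} spectrum (not just no eigenvalues) at such $\lambda$. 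Nothing in your steps (b)--(d) rules out that part of $\sigma(H_\gamma)$ in the gap belongs to the essential spectrum, where $\ker(H_\gamma-\lambda)$ can be trivial. You would need to show that $\lambda\notin S$, $\lambda\in(\lambda_-,\lambda_+)$ implies $H_\gamma-\lambda$ is boundedly invertible, for example by noting that $R_\lambda(H_0)W=R_\lambda(H_0)|W|^{1/2}\,W^{1/2}$ is compact, so in the factorization $H_\gamma-\lambda=(H_0-\lambda)\bigl(I+\gamma R_\lambda(H_0)W\bigr)$ the second factor is Fredholm of index zero with $\ker\bigl(I+\gamma R_\lambda(H_0)W\bigr)=E_\lambda$, whence $E_\lambda=\{0\}$ forces invertibility of $H_\gamma-\lambda$. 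The paper accomplishes this by explicitly solving $(H_\gamma-\lambda)x=f$ for arbitrary $f$ and invoking self-adjointness of $H_\gamma$; either route closes the gap, but as written your argument establishes only that the eigenvalue set is discrete, not that it exhausts $\sigma(H_\gamma)\cap(\lambda_-,\lambda_+)$.
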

\begin{proof}
Assume that $\lambda\in\Rs(H_0)$. The subspace $E_\lambda$ is the
set of solutions of the equation $H_0x-\lambda x+\gamma Wx=0$, which
is equivalent to the equation $x+\gamma R_\lambda(H_0)Wx=0$. Denote
$y=P_Wx$, $z=(I-P_W)x$, where $P_W$ is the orthogonal projection on
the subspace $\B_W=\B\ominus\ker(W)$. Since $W(I-P_W)=0$, the last
equation is equivalent to the system
\begin{equation}\label{systeq}
y+\gamma P_WR_\lambda(H_0)Wy=0, \quad z+\gamma\Xi y=0,
\end{equation}
where
\begin{equation}\label{dfXi}
\Xi=(I-P_W)R_\lambda(H_0)W.
\end{equation}
Hence the operator $\Psi=(I-\gamma\Xi)\vert_{G_\lambda}$ realizes a
linear isomorphism between the subspace $G_\lambda$ of solutions of
equation (\ref{systeq}-a) and the subspace $E_\lambda$. As it is
clear, $\Psi^{-1}=P_W\vert_{E_\lambda}$. Consider the operator
$S=W^{1/2}\vert_{\B_W}$. Since $\ker(W^{1/2})=\ker(W)$, $\B_W$ is an
invariant subspace for the operator $W^{1/2}$. Hence $S$ maps
injectively $\B_W$ into itself. Let us show that
$S(G_\lambda)=L_\lambda$. Assume that $y\in G_\lambda$, that is $y$
is a solution of the equation (\ref{systeq}-a). Multiplying the both
sides of (\ref{systeq}-a) by $W^{1/2}$, we get that $v=Sy=W^{1/2}y$
is a solution of the equation $v+\gamma
W^{1/2}P_WR_\lambda(H_0)|W|^{1/2}v=0$. Since $W^{1/2}(I-P_W)=0$, the
last equation is equivalent to
\begin{equation}\label{kerXW}
v+\gamma W^{1/2}R_\lambda(H_0)|W|^{1/2}v=0.
\end{equation}
This means that $v\in\ker(I+\gamma X_W(\lambda))=L_\lambda$, hence
$S(G_\lambda)\subseteq L_\lambda$. Let us prove the inverse
inclusion. Assume that $v\in L_\lambda$, that is $v$ is a solution
of equation (\ref{kerXW}). Then we see that $v\in
\mathrm{Im}(W^{1/2})\subseteq H_W$. Hence there exists $y\in H_W$
such that $v=Sy=W^{1/2}y$ and $W^{1/2}y+\gamma
W^{1/2}R_\lambda(H_0)Wy=0$. The last equality and $W^{1/2}(I-P_W)=0$
imply that $Sy+\gamma SP_WR_\lambda(H_0)Wy=0$. Since the operator
$S$ is injective, we get that $y$ satisfies equation
(\ref{systeq}-a), that is $y\in G_\lambda$. We have shown that
$L_\lambda\subseteq S(G_\lambda)$. Since we have proved above the
inverse inclusion, we get: $L_\lambda= S(G_\lambda)$. Observe that
$W^{1/2}(I-P_W)=0$. Thus, the operator
$S\Psi^{-1}=W^{1/2}P_W\vert_{E_\lambda}=W^{1/2}\vert_{E_\lambda}$
realizes a linear isomorphism between $E_\lambda$ and $L_\lambda$.
Hence, in particular, $\dim(E_\lambda)=\dim(L_\lambda)$.

The compactness of the operator
$R_{\lambda_0}(H_0)|W|^{\frac{1}{2}}$, the boundedness of the
operator $W^{\frac{1}{2}}$ and the Hilbert identity
$R_{\lambda}(H_0)-R_{\lambda_0}(H_0)=(\lambda-\lambda_0)R_{\lambda}(H_0)R_{\lambda_0}(H_0)$
imply that for any $\lambda\in\Rs(H_0)$ the operator $X_W(\lambda)$
is compact and it is a holomorphic on $\C\setminus\sigma(H_0)$
operator function w.r.t. the operator norm. Then, as it is known,
$\alpha(\lambda)=\dim(\ker(I+\gamma X_W(\lambda)))<\infty$ for any
$\lambda\in\Rs(H_0)$ and $\alpha(\lambda)$ takes a constant value
$m$ at any point $\lambda\in\Rs(H_0)$, except a set of isolated
points in $\C\setminus\sigma(H_0)$ at which $\alpha(\lambda)>m$
(\cite{Gh-Kr}, Chapt. 1, Sect. 5, Theorem 5.4). On the other hand,
since $H_0$ is self-adjoint, for $\Im(\lambda)\neq 0$ the estimate
$\Vert X_W(\lambda)\Vert\le\Vert W\Vert\Vert
R_{\lambda_0}(H_0)\Vert\le \frac{\Vert W\Vert}{|\Im(\lambda)|)}$ is
valid, hence for a large enough $|\Im(\lambda)|$ the operator
$I+\gamma X_W(\lambda)$ is continuously invertible. This means that
$m=0$. Therefore, in particular, $\alpha(\lambda)=0$ for any
$\lambda\in(\lambda_-,\lambda_+)$, except a set
$\Lambda\subset(\lambda_-,\lambda_+)$ of isolated in
$(\lambda_-,\lambda_+)$ points. Taking into account that
$\dim(E_\lambda)=\dim(L_\lambda)$ for
$\lambda\in(\lambda_-,\lambda_+)$, we obtain that each point of
$\Lambda$ is an isolated eigenvalue of the operator $H_\gamma$ of a
finite multiplicity. On the other hand, if
$\lambda\in(\lambda_-,\lambda_+)\setminus\Lambda$, the operator
$I+\gamma X_W(\lambda)$ is continuously invertible, because the
operator $X_W(\lambda)$ is compact. Hence $\mathrm{Im}(I+\gamma
X_W(\lambda))=\B$. Therefore, in particular, for any $f\in\B$ the
equation $v+\gamma X_W(\lambda)v=W^{1/2}P_Wf$ has a solution $v$
which, as it is clear, belongs to $\mathrm{Im}(W^{1/2})$. Hence the
vector $y=\big((W^{1/2}\vert_{\B_W}\big)^{-1}v\in\B_W$ satisfies the
equation $W^{1/2}y+\gamma X_W(\lambda)W^{1/2}y=W^{1/2}P_Wf$. Since
the operator $(W^{1/2}\vert_{\B_W}$ is injective, the last equality
implies that $y+\gamma P_WR_\lambda(H_0)Wy=P_Wf$. Hence the vector
$x=(I-\gamma\Xi)y+(I-P_W)f$ satisfies the equation
$x+R_\lambda(H_0)W=f$, which is equivalent to $H_\gamma x-\lambda
x=f$. Recall that the operator $\Xi$ is defined by (\ref{dfXi}).
Thus, we have proved that for any
$\lambda\in(\lambda_-,\lambda_+)\setminus\Lambda$
$\;\mathrm{Im}(H_\gamma-\lambda I)=\B$. Since $H_\gamma$ is
self-adjoint, this means that
$(\lambda_-,\lambda_+)\setminus\Lambda\subseteq\Rs(H_\gamma)$. Thus,
the set $\Lambda$ which is the spectrum of the pencil
$X_W(\lambda)\;(\lambda\in(\lambda_-,\lambda_+))$, coincides with
the part of the spectrum of the operator $H_\gamma$ lying in
$(\lambda_-,\lambda_+)$ and it consists of isolated eigenvalues of
$H_\gamma$ having finite multiplicities.
\end{proof}

In what follows we shall assume that the condition of Proposition
\ref{BrScwspct} is fulfilled.

Consider the case where $W\ge 0$, hence
$X_W(\lambda)=W^{\frac{1}{2}}R_\lambda(H_0)W^{\frac{1}{2}}$. Let us recall some notions and results from \cite{Ar-Zl1} used in
this section. By  Proposition \ref{BrScwspct}, for each $\lambda\in(\lambda_-,\lambda_+)$ the operator
    $X_W(\lambda)$ is compact. Then since it is self-adjoint, its spectrum
    consists of at most a countable number of real eigenvalues
    which can cluster only to the point $0$. Furthermore, each its
    non-zero eigenvalue has a finite multiplicity.
\begin{definition}\label{gde1}(\cite{Ar-Zl1})
    {\rm For any fixed $\lambda\in(\lambda_-,\lambda_+)$ let us number all the
    positive eigenvalues ${\mu_k^+(\lambda)}_{k\in\N}$ of the operator $X_W(\lambda)$ in the non-increasing
    ordering $\mu_1^+(\lambda)\ge\mu_2^+(\lambda)\ge\dots\ge\mu_k^+(\lambda)\ge
    \dots$ and all the
    negative ones ${\mu_k^-(\lambda)}_{k\in\N}$ in the non-decreasing
    ordering $\mu_1^-(\lambda)\le\mu_2^-(\lambda)\le\dots\le\mu_k^-(\lambda)\le
    \dots$ (each eigenvalue is repeated according to its multiplicity).
    So, by such ordering we have chosen one-valued branches of eigenvalues
    of the operator function $X_W(\lambda)$.
     We call these branches the} {\it characteristic branches} {\rm (positive and negative)
    of the operator $H_0$ with respect to the operator $W$ on a gap $(\lambda_-,\,\lambda_+)$
    of $\sigma(H_0)$}.
    \end{definition}
\begin{remark}\label{rembranch}
    As it was shown in \cite{Ar-Zl1} (Proposition 3.7), if $W\ge 0$ the functions
    $\mu_k^+(\lambda)$ and $\mu_k^-(\lambda)$ are continuous
    and increasing. Since they can ``go to zero'' at some points of the gap
    $(\lambda_-,\,\lambda_+)$, each of them has its domain $\Dm(\mu_k^+)$ and $\Dm(\mu_k^-)$
    which have the form $\Dm(\mu_k^+)=(\eta_k^+,\lambda_+),\;\eta_k^+\in[\lambda_-,\lambda_+]$,
$\Dm(\mu_k^-)=(\lambda_-,\eta_k^-),\;\eta_k^-\in[\lambda_-,\lambda_+]$
and the following property is valid:
$\eta_k^+\in(\lambda_-,\lambda_+)\Rightarrow
    \lim_{\lambda\downarrow\eta_k^+}\mu_k^+(\lambda)=0$, $\eta_k^-\in(\lambda_-,\lambda_+)\Rightarrow
    \lim_{\lambda\uparrow\eta_k^-}\mu_k^-(\lambda)=0$.
Furthermore, the sequence $\{\eta_k^+\}$ is non-decreasing and the
    sequence $\{\eta_k^-\}$ is non-increasing.
\end{remark}

Denote $\tilde\mu_k^+=\lim_{\lambda\uparrow
\lambda_+}\mu_k^+(\lambda)$,
$\;\tilde\mu_k^-=\lim_{\lambda\downarrow \lambda_-}\mu_k^-(\lambda)$
(the values $+\infty$ and $-\infty$ are allowed for these limits).

    \begin{definition}\label{gde2}
    {\rm Consider $l(\lambda_-), l(\lambda_+)\in\Z_+\cup\{+\infty\}$
    defined by the conditions:
    \begin{eqnarray*}
    &&-\infty=\tilde\mu_1^-=\tilde\mu_2^-=\dots=\tilde\mu_{l(\lambda_-)}^-
    <\tilde\mu_{l(\lambda_-)+1}^-\le\tilde\mu_{l(\lambda_-)+2}^-\le\dots,\\
    && \ifr\quad\lambda_->-\infty,
    \end{eqnarray*}
\begin{eqnarray*}
   && +\infty=\tilde\mu_1^+=\tilde\mu_2^+=\dots=\tilde\mu_{l(\lambda_+)}^+
    >\tilde\mu_{l(\lambda_+)+1}^+\ge\tilde\mu_{l(\lambda_+)+2}^+\ge\dots,\\
    && \ifr\quad\lambda_+<+\infty.
\end{eqnarray*}
We call $l(\lambda_-)$ and $l(\lambda_+)$ the} {\it asymptotic
multiplicities
    of the edges $\lambda_-$ and $\lambda_+$} {\rm of the gap $(\lambda_-,\,\lambda_+)$ of
    $\sigma(H_0)$ with respect to the operator $W$
    and denote them
    $
    l(\lambda_-)=M(\lambda_-,H_0,W),\;\;l(\lambda_+)=M(\lambda_+,H_0,W).
    $
    The branches
    $\{\mu_k^-(\lambda)\}_{k=1}^{l(\lambda_-)}$,
    $\{\mu_k^+(\lambda)\}_{k=1}^{l(\lambda_+)}$
    are called the} {\it main characteristic branches} {\rm of the operator
    $H_0$ with respect to the operator $W$ near the edges
    $\lambda_-$ and $\lambda_+$
    respectively.}
    \end{definition}

    We shall use the following results from \cite{Ar-Zl1}:

\begin{proposition}\label{ordl}
The set $\sigma(H_\gamma)\cap(\lambda_-,\lambda_+)$ has the
representation:
\begin{eqnarray*}
\sigma(H_\gamma)\cap(\lambda_-,\lambda_+)=\left\{\begin{array}{ll}
\bigcup_{k=1}^\infty\{\rho_k^+(\gamma)\} &\fr\;\;\gamma<0,\\
\bigcup_{k=1}^\infty\{\rho_k^-(\gamma)\} &\fr\;\;\gamma>0,
\end{array}\right.
\end{eqnarray*}
where
$\rho_k^+(\gamma)=(\mu_k^+)^{-1}\left(-\frac{1}{\gamma}\right)$,
$\rho_k^-(\gamma)=(\mu_k^-)^{-1}\left(-\frac{1}{\gamma}\right)$ and
$(\mu_k^+)^{-1},\;\;(\mu_k^-)^{-1}$ are the inverses of the
functions $\mu_k^+(\lambda),\;\mu_k^-(\lambda)$, the positive and
negative characteristic branches of $H_0$ with respect to $V$  on
the gap $(\lambda_-,\,\lambda_+)$.
\end{proposition}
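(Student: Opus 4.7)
The plan is to reduce the statement directly to the Birman--Schwinger correspondence from Proposition \ref{BrScwspct} combined with the monotonicity/continuity properties of the characteristic branches recorded in Remark \ref{rembranch}. By Proposition \ref{BrScwspct}, a point $\lambda\in(\lambda_-,\lambda_+)$ lies in $\sigma(H_\gamma)$ if and only if $-1\in\gamma\,\sigma(X_W(\lambda))$, that is, $-1/\gamma$ is a nonzero eigenvalue of the compact self-adjoint operator $X_W(\lambda)$. Since $W\ge 0$, the entire nonzero point spectrum of $X_W(\lambda)$ is exhausted by the two sequences $\{\mu_k^+(\lambda)\}$ and $\{\mu_k^-(\lambda)\}$ introduced in Definition \ref{gde1}, and moreover multiplicities are tracked correctly because the branches are defined with repetition according to multiplicity.

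First I would split into cases according to the sign of $\gamma$. For $\gamma<0$ we have $-1/\gamma>0$, so the equation $-1/\gamma\in\sigma(X_W(\lambda))$ is equivalent to the existence of some $k\in\N$ with $\mu_k^+(\lambda)=-1/\gamma$. By Remark \ref{rembranch}, $\mu_k^+$ is continuous and strictly increasing on its domain $\Dm(\mu_k^+)=(\eta_k^+,\lambda_+)$, hence it is a homeomorphism onto its image $(0,\tilde\mu_k^+)$; therefore the equation $\mu_k^+(\lambda)=-1/\gamma$ has at most one solution, and precisely one provided $-1/\gamma<\tilde\mu_k^+$. This gives $\lambda=(\mu_k^+)^{-1}(-1/\gamma)=\rho_k^+(\gamma)$. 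For $\gamma>0$ the identical argument applied to the negative branches $\mu_k^-(\lambda)$ (continuous and increasing on $(\lambda_-,\eta_k^-)$, ranging through $(\tilde\mu_k^-,0)$) gives $\lambda=\rho_k^-(\gamma)$.

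Collecting the two cases yields the required representation, with the union index $k$ taken over all $k\in\N$: when $-1/\gamma$ falls outside the range of $\mu_k^\pm$ the corresponding $\rho_k^\pm(\gamma)$ is simply undefined and contributes nothing, which is consistent with the statement since then no eigenvalue of $H_\gamma$ is produced by that branch. To finish, I would verify that multiplicities are matched: the multiplicity of $\lambda\in\sigma(H_\gamma)\cap(\lambda_-,\lambda_+)$ equals $\dim\ker(I+\gamma X_W(\lambda))$ by Proposition \ref{BrScwspct}, which in turn equals the number of indices $k$ with $\mu_k^\pm(\lambda)=-1/\gamma$; since each branch is listed with multiplicity in Definition \ref{gde1}, this exactly matches the number of indices $k$ with $\rho_k^\pm(\gamma)=\lambda$.

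The argument is essentially a bookkeeping exercise, so there is no real obstacle; the only subtlety to be careful about is the handling of the domains $\Dm(\mu_k^\pm)$ and the asymptotic values $\tilde\mu_k^\pm$, so that the inverses $(\mu_k^\pm)^{-1}$ are unambiguously defined on precisely the right subsets of $\R\setminus\{0\}$ and branches that have already ``gone to zero'' inside $(\lambda_-,\lambda_+)$ are correctly excluded. Once this is observed, the proposition follows by direct combination of Proposition \ref{BrScwspct} and Remark \ref{rembranch}.
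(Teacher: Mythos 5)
Your argument is correct and is essentially the only natural proof: the paper itself does not prove Proposition \ref{ordl} but cites it from \cite{Ar-Zl1}, and your reduction — via Proposition \ref{BrScwspct} to the condition that $-1/\gamma$ be a nonzero eigenvalue of the compact self-adjoint operator $X_W(\lambda)$, then sorting by the sign of $\gamma$ and inverting the monotone characteristic branches from Remark \ref{rembranch} — is exactly the Birman--Schwinger bookkeeping that underlies the cited result. One small imprecision worth flagging: you describe the image of $\mu_k^+$ as $(0,\tilde\mu_k^+)$, but this relies on $\lim_{\lambda\downarrow\eta_k^+}\mu_k^+(\lambda)=0$, which Remark \ref{rembranch} guarantees only when $\eta_k^+\in(\lambda_-,\lambda_+)$; when $\eta_k^+=\lambda_-$ the lower limit may be a positive number (or $-\infty$ is excluded since $\mu_k^+>0$), so the image is some subinterval of $(0,\tilde\mu_k^+)$. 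This does not affect the conclusion — the equation $\mu_k^+(\lambda)=-1/\gamma$ still has at most one root, and $\rho_k^+(\gamma)$ is defined precisely when a root exists — but the description of the range should be stated more carefully. You also tacitly use that ``increasing'' in Remark \ref{rembranch} means strictly increasing; this is implicit in the proposition's own notation $(\mu_k^\pm)^{-1}$ and is established in \cite{Ar-Zl1}, but it is the point on which invertibility hinges and deserves an explicit mention.
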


\begin{proposition}\label{gpbr}
Assume that the edge $\lambda_+$ of the gap
$(\lambda_-,\,\lambda_+)$ of $\sigma(H_0)$ is finite.\vskip2mm

(i) If $l(\lambda_+)=M(\lambda_+, H_0,W)<\infty$, then for
$\gamma<0$ the operator $H_\gamma$ has exactly $l(\lambda_+)$
branches $\lambda_-<\rho_1^+(\gamma)\le\rho_2^+(\gamma)\le
\dots\le\rho_{l(\lambda_+)}^+(\gamma)$ of virtual eigenvalues near
the edge $\lambda_+$ of $(\lambda_-,\,\lambda_+)$. Furthermore, for
any $1\le k\le l(\lambda_+)$ and $\gamma\in\Dm(\lambda_k^+)$,
\begin{equation}\label{frm}
\rho_k^+(\gamma)=(\mu_k^+)^{-1}\left(-\frac{1}{\gamma}\right),
\end{equation}
where $(\mu_k^+)^{-1}$ is the inverse of the function
$\mu_k^+(\lambda)$ (the main characteristic branch of $H_0$ with
respect to $W$ near the edge $\lambda_+$). If
$\;l(\lambda_+)<\infty$ and $\lambda_-=-\infty$, then
$\Dm(\rho_k^+)=(-\infty,0)\;\;\forall k\in\{1,2,\dots,l(b)\}$. Hence
the operator $H_\gamma$ has at least $l(\lambda_+)$ eigenvalues in
the gap $(-\infty,\,\lambda_+)$ for any $\gamma<0$.\vskip2mm

(ii) If $M(\lambda_+,H_0,W)=\infty$, then for $\gamma<0$ the number
of the branches of eigenvalues $\{\lambda_k^+(\gamma)$ of the
operator $H_\gamma$, which enter the gap $(\lambda_-,\,\lambda_+)$
across the edge $\lambda_+$, is infinite, each of them is a virtual
eigenvalue and the property is valid for them $(-\theta_k,
0)\subseteq\Dm(\rho_k^+),\; {\rm where}\;\;\theta_k\uparrow
+\infty\;\;\fr\;\;k\rightarrow\infty$. The latter fact means that
the operator $H_\gamma$ has an infinite number of eigenvalues in the
gap $(\lambda_-,\,\lambda_+)$ for any $\gamma<0$. These eigenvalues
cluster to the edge $\lambda_+$ only and formula (\ref{frm}) with
$l(\lambda_+)=\infty$ is valid for them.\vskip2mm

(iii) If $\gamma<0$, for the branches of virtual eigenvalues of the
operator $H_\gamma$ at the edge $\lambda_-$ of
$(\lambda_-,\,\lambda_+)$ the analogous claims are valid like in the
case of the edge $\lambda_+$.
\end{proposition}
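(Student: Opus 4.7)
The plan is to reduce Proposition \ref{gpbr} to an elementary inversion of monotone scalar functions by invoking the Birman--Schwinger correspondence of Proposition \ref{BrScwspct} together with the monotonicity and continuity of the characteristic branches recorded in Remark \ref{rembranch}. First I would make the dictionary precise: for $\gamma<0$ and $\rho\in(\lambda_-,\lambda_+)$, Proposition \ref{BrScwspct} asserts that $\rho$ is an eigenvalue of $H_\gamma$ of multiplicity $m$ iff $\rho$ lies in the spectrum of the pencil $I+\gamma X_W(\lambda)$ with the same multiplicity, which is equivalent to $-1/\gamma>0$ being an eigenvalue of the compact self-adjoint operator $X_W(\rho)$ of multiplicity $m$. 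By Definition \ref{gde1} this amounts to the existence of one or more indices $k$ with $\mu_k^+(\rho)=-1/\gamma$, and $m$ counts such indices.

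Next I would invert each positive branch. By Remark \ref{rembranch}, $\mu_k^+$ is continuous and strictly increasing on its domain $(\eta_k^+,\lambda_+)$, tends to $0$ at $\eta_k^+$ (when $\eta_k^+>\lambda_-$), and tends to $\tilde\mu_k^+$ at $\lambda_+$. Hence the equation $\mu_k^+(\rho)=-1/\gamma$ has a unique solution in $(\lambda_-,\lambda_+)$ iff $-1/\gamma\in(0,\tilde\mu_k^+)$, and that solution is precisely $\rho_k^+(\gamma)=(\mu_k^+)^{-1}(-1/\gamma)$. This establishes formula (\ref{frm}) and gives $\Dm(\rho_k^+)=(-1/\tilde\mu_k^+,\,0)$ with the convention $1/(+\infty)=0$. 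The branch is then virtual at $\lambda_+$ iff $\rho_k^+(\gamma)\to\lambda_+$ as $\gamma\uparrow 0$, which by continuity and monotonicity holds iff $\tilde\mu_k^+=+\infty$, and by Definition \ref{gde2} this is equivalent to $k\le l(\lambda_+)$.

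From this the three claims follow by inspection. For (i), when $l(\lambda_+)<\infty$ there are exactly $l(\lambda_+)$ virtual branches, each with full domain $(-\infty,0)$, which under $\lambda_-=-\infty$ yields at least $l(\lambda_+)$ eigenvalues of $H_\gamma$ in the gap for every $\gamma<0$. For (ii), when $l(\lambda_+)=\infty$ every positive branch is virtual and $\Dm(\rho_k^+)=(-\infty,0)$ for each $k$, so any choice $\theta_k\uparrow+\infty$ works; clustering is restricted to $\lambda_+$ because any interior accumulation point $\rho_\ast$ would force infinitely many $\mu_k^+(\rho_\ast)$ to equal the common value $-1/\gamma$, contradicting the discreteness of the spectrum of the compact operator $X_W(\rho_\ast)$. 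Claim (iii) is the mirror statement for the edge $\lambda_-$: one uses the negative characteristic branches $\mu_k^-(\lambda)$, which by Remark \ref{rembranch} are likewise continuous and strictly increasing on $(\lambda_-,\eta_k^-)$, and solves $\mu_k^-(\rho)=-1/\gamma<0$ for $\gamma>0$ in the identical manner.

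The main obstacle is essentially conceptual rather than computational: all the spectral-theoretic substance has been packaged into Proposition \ref{BrScwspct} and Remark \ref{rembranch}. Once these are available, the argument is pure bookkeeping about inverses of monotone functions. The point requiring most care is that the chosen labeling $\mu_1^+\ge\mu_2^+\ge\dots$ be compatible with the nesting of the domains $(\eta_k^+,\lambda_+)$, which is why the monotonicity of the sequence $\{\eta_k^+\}$ stated in Remark \ref{rembranch} is indispensable.
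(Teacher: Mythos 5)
Your overall strategy is the right one: the Birman--Schwinger dictionary of Proposition~\ref{BrScwspct} together with the monotonicity and continuity of the characteristic branches from Remark~\ref{rembranch} do reduce the proposition to inverting monotone scalar functions (indeed the paper does not prove Proposition~\ref{gpbr} at all; it is quoted from~\cite{Ar-Zl1}). But the execution has concrete gaps. First, the stated formula $\Dm(\rho_k^+)=(-1/\tilde\mu_k^+,\,0)$ is wrong. For $\gamma<0$ the map $\gamma\mapsto -1/\gamma$ carries $(-\infty,0)$ increasingly onto $(0,\infty)$, so requiring $-1/\gamma\in(0,\tilde\mu_k^+)$ yields $\gamma\in(-\infty,\,-1/\tilde\mu_k^+)$, not $(-1/\tilde\mu_k^+,\,0)$; with your convention $1/(+\infty)=0$ your formula gives the empty interval $(0,0)$ for every virtual branch. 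More to the point, you have confused the roles of the two endpoints: $\tilde\mu_k^+$ only controls the right endpoint (the branch is virtual iff $\tilde\mu_k^+=+\infty$), while the left endpoint of $\Dm(\rho_k^+)$ is governed by the lower limit $\ell_k:=\lim_{\lambda\downarrow\eta_k^+}\mu_k^+(\lambda)$ and equals $-1/\ell_k$ (which is $-\infty$ exactly when $\ell_k=0$). As you yourself note when quoting Remark~\ref{rembranch}, $\ell_k=0$ is only guaranteed when $\eta_k^+>\lambda_-$.

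This error propagates into part~(ii), where you assert $\Dm(\rho_k^+)=(-\infty,0)$ for \emph{every} $k$. That is strictly stronger than the proposition, which only claims $(-\theta_k,0)\subseteq\Dm(\rho_k^+)$ with $\theta_k\uparrow+\infty$, and it can fail: when $\lambda_->-\infty$ and $\eta_k^+=\lambda_-$, the domain may be the bounded interval $(-1/\ell_k,0)$. Showing $\theta_k\uparrow+\infty$ requires an additional argument that $\ell_k\to 0$, using the compactness of $X_W(\lambda)$ (so that $\mu_k^+(\lambda)\to 0$ as $k\to\infty$ for each fixed $\lambda$) together with monotonicity in $k$; you do not supply this. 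Finally, your clustering argument is flawed as stated: from $\rho_k^+(\gamma)\to\rho_\ast$ it does not follow that ``infinitely many $\mu_k^+(\rho_\ast)$ equal $-1/\gamma$,'' since $\mu_k^+(\rho_k^+(\gamma))=-1/\gamma$ is an evaluation at the points $\rho_k^+(\gamma)\ne\rho_\ast$. The correct contradiction is that for any $\rho\in(\rho_\ast,\lambda_+)$, monotonicity gives $\mu_k^+(\rho)\ge -1/\gamma>0$ for all large $k$, so the compact operator $X_W(\rho)$ would have infinitely many eigenvalues bounded away from $0$.
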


In order to get in what follows an asymptotic representation for
virtual eigenvalues, we need the following claim:

\begin{lemma}\label{lmasymp}
If $\mu:\,(a,b)\rightarrow\R$ be an increasing continuous function
having the asymptotic representation for $\lambda\uparrow b$:
$\mu(\lambda)=\frac{A}{f(\lambda)}+O(1)$, where $A>0$ and
$f:\,(a,b)\rightarrow\R$ is a decreasing continuous function such
that $\lim_{\lambda\uparrow b}f(\lambda)=0$, then\vskip2mm

(a) there is $\bar\gamma>0$ such that for any
$\gamma\in(0,\bar\gamma]$ the equation
$\mu(\lambda)=\frac{1}{\gamma}$ has in $(a,b)$ a unique solution
$\lambda=\rho(\gamma)$;\vskip2mm

(b) the asymptotic representation
$f(\rho(\gamma))=\gamma(A+O(\gamma))$ is valid for $\gamma\downarrow
0$.
\end{lemma}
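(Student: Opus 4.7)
The plan is to deduce part (a) from the intermediate value theorem applied to $\mu$, and then read off part (b) by substituting $\lambda=\rho(\gamma)$ into the given asymptotic expansion of $\mu$ and solving for $f(\rho(\gamma))$.

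For part (a), the first step is a small sign observation: since $f$ is decreasing on $(a,b)$ and $f(\lambda)\to 0$ as $\lambda\uparrow b$, values of $f$ near $b$ lie below all values of $f$ further from $b$, forcing $f(\lambda)>0$ for $\lambda$ close enough to $b$. Combined with $A>0$ and the asymptotic $\mu(\lambda)=A/f(\lambda)+O(1)$, this gives $\lim_{\lambda\uparrow b}\mu(\lambda)=+\infty$. Fix any $\lambda_0\in(a,b)$ and set $\bar\gamma=1/\max(1,\mu(\lambda_0))$, say; then for $\gamma\in(0,\bar\gamma]$ the value $1/\gamma$ exceeds $\mu(\lambda_0)$, and by continuity together with $\mu(\lambda)\to+\infty$ there exists $\rho(\gamma)\in(\lambda_0,b)$ with $\mu(\rho(\gamma))=1/\gamma$. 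Uniqueness in $(a,b)$ follows immediately from strict monotonicity of $\mu$.

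For part (b), I would first verify that $\rho(\gamma)\uparrow b$ as $\gamma\downarrow 0$. Indeed, if along some sequence $\gamma_n\downarrow 0$ the points $\rho(\gamma_n)$ stayed inside a compact subset $K\subset(a,b)$, then $\mu(\rho(\gamma_n))$ would be bounded by continuity of $\mu$ on $K$, contradicting $\mu(\rho(\gamma_n))=1/\gamma_n\to+\infty$. Substituting $\lambda=\rho(\gamma)$ into $\mu(\lambda)=A/f(\lambda)+O(1)$ then yields
\[
\frac{1}{\gamma}=\frac{A}{f(\rho(\gamma))}+O(1)\quad\text{as }\gamma\downarrow 0,
\]
where the $O(1)$ term is uniformly bounded for small $\gamma$. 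Isolating $f(\rho(\gamma))$ gives
\[
f(\rho(\gamma))=\frac{A}{1/\gamma-O(1)}=\frac{A\gamma}{1-\gamma\cdot O(1)}=\gamma\bigl(A+O(\gamma)\bigr),
\]
which is precisely the asymptotic claimed.

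No step looks like a genuine obstacle: the argument is essentially an inverse-function calculation. The only points requiring mild care are the sign check $f>0$ near $b$ (to make sense of $A/f$) and the argument that $\rho(\gamma)$ cannot cluster in the interior, both of which rest only on monotonicity, continuity and the blow-up of $\mu$ at the endpoint.
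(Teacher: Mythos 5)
Your proof is correct and follows essentially the same route as the paper: monotonicity and the blow-up $\mu(\lambda)\to\infty$ as $\lambda\uparrow b$ give (a), and substituting $\lambda=\rho(\gamma)$ into the asymptotic and solving for $f(\rho(\gamma))$ gives (b). The paper's algebraic manipulation is slightly different in presentation (it multiplies through by $\gamma f(\rho(\gamma))$, first extracts $f(\rho(\gamma))=O(\gamma)$, then substitutes back), whereas you isolate $f(\rho(\gamma))$ directly and expand $(1-\gamma\,O(1))^{-1}$; these are equivalent, and your added checks that $f>0$ near $b$ and that $\rho(\gamma)\uparrow b$ are worthwhile details that the paper leaves implicit.
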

\begin{proof}
Claim (a) follows from the monotony and continuity of $\mu$ and the
fact that $\lim_{\lambda\uparrow b}\mu(\lambda)=\infty$. Then in
view of  the equality
$\frac{A}{f(\rho(\gamma))}+O(1)=\frac{1}{\gamma}$, we have:
$f(\rho(\gamma)=\gamma A+\gamma f(\lambda(\gamma)O(1)$, hence
$f(\rho(\gamma)=O(\gamma)$ for $\gamma\downarrow 0$. The last two
equalities imply the desired asymptotic representation.
\end{proof}

\subsection{Perturbation of a compact operator}
\label{subsec:pertcompoper}

In what follows we shall use the following result about a
perturbation of a compact operator in a Hilbert space.
\begin{proposition}\label{prpertcompoper}
Let $\Phi(t)\;(t\in[0,\delta]\,\delta>0)$ be a family of linear
compact operators acting in a Hilbert space $\B$ such that
\begin{equation}\label{rprPhit}
\Phi(t)=\Phi(0)+t\theta(t),
\end{equation}
where
\begin{equation}\label{cndtht}
\bar\theta=\sup_{t\in[0,\delta]}\Vert\theta(t)\Vert<\infty
\end{equation}
and the operator $\Phi(0)$ has positive eigenvalues. Let
$\mu_1^0\ge\mu_2^0\ge\dots\ge\mu_N^0\;(N\le\infty)$ be such
eigenvalues of $\Phi(0)$ arranged counting their multiplicities and
$e_1^0,e_2^0,\\\dots,e_N^0$ be an orthonormal system of eigenvectors
corresponding to them. Then\vskip2mm

(i) if $N<\infty$, there exists $\tilde\delta\in[0,\delta]$ such
that \vskip1mm

\indent\indent (a) for any $t\in[0,\tilde\delta]$ the operator
$\Phi(t)$ has $N$ positive eigenvalues
$\mu_1(t)\ge\mu_2(t)\ge\dots\ge\mu_N(t)$ (counting their
multiplicities) having the asymptotic representation
\begin{equation}\label{asymptlmu}
\mu_k(t)=\mu_k^0+O(t)\quad\rm{as}\quad t\rightarrow
0\quad(k=1,2,\dots, N))
\end{equation}
and all the rest of positive eigenvalues of $\Phi(t)$ (if they
exist) have the asymptotic representation
\begin{equation}\label{asymptlmurest}
\mu_k(t)=O(t)\quad\rm{as}\quad t\rightarrow 0\quad(k> N);
\end{equation}
\vskip1mm

\indent\indent (b) if $m(j)$ is the multiplicity of the positive
eigenvalue $\mu_j^0$ of the operator
$\Phi(0)\;(j\in\{1,2,\dots,N\})$ and
\begin{equation}\label{groupeigval}
\mu_{l(j)}(t)\ge\mu_{l(j)+1}(t)\ge\dots\ge\mu_{l(j)+m(j)-1}(t)\quad(l(j)\in\{1,2,\dots,N\})
\end{equation}
is the group of the eigenvalues of the operator $\Phi(t)$  which
tend to $\mu_j^0$ as $t\rightarrow 0$, then for any
$t\in[0,\tilde\delta]$ it is possible to choose an orthonormal basis
$e_k(t)\;(k\in\{l(j),l(j)+1,\dots,l(j)+m(j)-1\})$ in the subspace
spanned by eigenvectors of $\Phi(t)$ corresponding to these
eigenvalues, such that for any
$k\in\{l(j),l(j)+1,\dots,l(j)+m(j)-1\}$ $\Vert
e_k(t)-e_k^0\Vert=O(t)$ for $t\rightarrow 0$.\vskip2mm

(ii) If $N=\infty$, for any natural $n$ there exists
$\tilde\delta\in[0,\delta]$ such that for any $t\in[0,\tilde\delta]$
the operator $\Phi(t)$ has $n$ positive eigenvalues
$\mu_1(t)\ge\mu_2(t)\ge\dots\ge\mu_n(t)$ (counting their
multiplicities). Furthermore, these eigenvalues and eigenvectors,
corresponding to them, have the same properties as in claim (i)
except (\ref{asymptlmurest}), with the number $n$ instead of $N$.
\end{proposition}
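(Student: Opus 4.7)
The plan is to combine the Courant--Fischer min-max principle (for the eigenvalue asymptotics) with the Riesz--Dunford holomorphic functional calculus (for the eigenvectors), treating $\Phi(t)$ as a compact self-adjoint perturbation of $\Phi(0)$ of norm $O(t)$. Note that by (\ref{rprPhit}) and (\ref{cndtht}), $\Vert \Phi(t)-\Phi(0)\Vert \le \bar\theta\, t$.

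For claim (i-a), I would use the fact that for a compact self-adjoint operator $T$ the $k$-th positive eigenvalue admits the variational characterization
\begin{equation*}
\mu_k^+(T)=\max_{V\subseteq\B,\,\dim V=k}\min_{x\in V,\,\Vert x\Vert=1}(Tx,x),
\end{equation*}
with the convention that $\mu_k^+(T)=0$ if no such subspace has a positive minimum. A standard subadditivity argument then yields the Weyl-type bound $|\mu_k^+(\Phi(t))-\mu_k^+(\Phi(0))|\le \Vert\Phi(t)-\Phi(0)\Vert\le\bar\theta t$, which at once gives both (\ref{asymptlmu}) and (\ref{asymptlmurest}), and shows that the number of positive eigenvalues of $\Phi(t)$ staying bounded away from $0$ is at most $N$.

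For claim (i-b), I would invoke a Riesz spectral projection. Fix $j\in\{1,\dots,N\}$ and choose $\delta_j>0$ so small that the closed disc $\overline{D(\mu_j^0,\delta_j)}\subset\C$ contains $\mu_j^0$ but no other point of $\sigma(\Phi(0))$ and lies strictly to the right of $0$. Let $\Gamma_j=\partial D(\mu_j^0,\delta_j)$. By the resolvent identity
\begin{equation*}
(zI-\Phi(t))^{-1}-(zI-\Phi(0))^{-1}=(zI-\Phi(t))^{-1}(\Phi(t)-\Phi(0))(zI-\Phi(0))^{-1},
\end{equation*}
there exists $\tilde\delta\in(0,\delta]$ such that $\Gamma_j\subseteq\Rs(\Phi(t))$ for all $t\in[0,\tilde\delta]$ and the above difference is $O(t)$ in norm uniformly on $\Gamma_j$. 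Integrating, the Riesz projection
\begin{equation*}
P_j(t):=\frac{1}{2\pi i}\oint_{\Gamma_j}(zI-\Phi(t))^{-1}\,dz
\end{equation*}
satisfies $\Vert P_j(t)-P_j(0)\Vert=O(t)$ as $t\downarrow 0$. By the eigenvalue asymptotics already established, $P_j(t)$ has rank $m(j)$ and its range equals the span of eigenvectors of $\Phi(t)$ corresponding to the group (\ref{groupeigval}). I would then define $\tilde e_k(t):=P_j(t)e_k^0$ for $k\in\{l(j),\dots,l(j)+m(j)-1\}$; these satisfy $\Vert\tilde e_k(t)-e_k^0\Vert=O(t)$, and since $P_j(t)$ restricted to $\mathrm{Im}(P_j(0))$ is $O(t)$-close to the identity it is invertible onto $\mathrm{Im}(P_j(t))$ for small $t$, so the $\tilde e_k(t)$ form a basis of $\mathrm{Im}(P_j(t))$. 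A Gram--Schmidt orthonormalization produces the required basis $e_k(t)$; the Gram matrix $((\tilde e_k(t),\tilde e_l(t)))$ is $I+O(t)$, hence the change-of-basis matrix from the $\tilde e_k(t)$ to the $e_k(t)$ is also $I+O(t)$, yielding $\Vert e_k(t)-e_k^0\Vert=O(t)$.

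Claim (ii) reduces to (i) by truncation: for a given $n$, apply the previous arguments to the finitely many eigenvalues $\mu_1^0,\dots,\mu_n^0$, choosing $\tilde\delta$ small enough that the separation of these from the remaining spectrum of $\Phi(0)$ survives the perturbation. The main technical obstacle I anticipate is ensuring the Riesz contours $\Gamma_j$ can be taken independent of $t$ and that the distinct projections $P_j(t)$ remain mutually orthogonal up to $O(t)$; this is handled by uniform compactness of $\Gamma_j$ and the norm estimate on $\Phi(t)-\Phi(0)$, so no genuinely new difficulty arises beyond bookkeeping.
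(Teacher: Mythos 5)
Your argument follows essentially the same route as the paper's own proof: a Weyl/min-max estimate derived from $\Vert\Phi(t)-\Phi(0)\Vert\le\bar\theta t$ for the eigenvalue asymptotics (the paper invokes a comparison lemma from \cite{Ar-Zl1}, which is the same min-max fact), and Riesz spectral projections with the uniform resolvent bound to get $\Vert P_j(t)-P_j(0)\Vert=O(t)$ and transfer the eigenbasis. Your explicit Gram--Schmidt step at the end is in fact a small improvement: the paper defines $e_k(t)=P_j(t)e_k^0$ and stops there, which yields a basis that is $O(t)$-close to the $e_k^0$ but not literally orthonormal, so your final normalization step closes a gap that the paper leaves implicit.
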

\begin{proof}
(i) In view of (\ref{rprPhit}) and (\ref{cndtht}),
$\Phi(0)-\bar\theta tI\le\Phi(t)\le\Phi(0)+\bar\theta tI$ for any
$t\in[0,\delta]$. Hence by Lemma 3.4 of \cite{Ar-Zl1} ( a
modification of the comparison theorem on the base of the minimax
principle), if $\tilde
\delta<\min\{\delta,\frac{\tilde\mu_N^0}{\bar\theta}\}$, then for
any $t\in[0,\tilde\delta]$ the operator $\Phi(t)$ has $N$ positive
eigenvalues $\mu_1(t)\le\mu_2(t)\le\dots\le\mu_N(t)$ (counting their
multiplicities), for which the estimates
\begin{equation}\label{esttlmukt}
\forall\;t\in[0,\tilde\delta]:\quad\mu_k^0-\bar\theta
t\le\mu_k(t)\le\mu_k^0+\bar\theta t\quad (k=1,2,\dots, N)
\end{equation}
are valid and all the rest of positive eigenvalues of $\Phi(t)$ (if
they exist) satisfy the estimates $\mu_k(t)\le\bar\theta t\quad
(k>N)$ for any $t\in[0,\tilde\delta]$. These estimates imply the
desired asymptotic formulas (\ref{asymptlmu}) and
(\ref{asymptlmurest}). Claim (i-a) is proven.

Let $\tilde\mu_1^0>\tilde\mu_2^0>\dots>\tilde\mu_{\tilde
N}^0\;(\tilde N\le N)$ be all the mutually different positive
eigenvalues of the operator $\Phi(0)$.  The sum of their
multiplicities is equal to $N$. Let $E_1^0,E_2^0,\dots,E_{\tilde
N}^0$ be the eigenspaces of $\Phi(0)$ corresponding to these
eigenvalues. In order to prove the second part of claim (i), we
shall show that there exists an operator function
$U(\cdot):\,[0,\tilde\delta]\rightarrow\Bc(\B)$ taking unitary
values such that for any $j\in\{1,2,\dots \tilde N\}$ the subspace
$E_j(t)=U(E_j^0)$ is spanned by the eigenspaces of those eigenvalues
of the operator $\Phi(t)$ which tend to the eigenvalue $\tilde
\mu_j^0$ of the operator $\Phi(0)$ as $t\rightarrow 0$ and moreover,
\begin{equation}\label{estdiffIandU}
\Vert I-U(t)\Vert=O(t)\quad \rm{for}\quad t\rightarrow 0;
\end{equation}
Using (\ref{esttlmukt}), we can choose $r>0$ such that for any
$j\in\{1,2,\dots N\}$ and $t\in[0,\tilde\delta]$ the circle $C_r^j$
in the complex plane with the radius $r$ and the center at the
eigenvalue $\tilde \mu_j^0$ lies in the resolvent set of the
operator $\Phi(t)$ and surrounds the group of eigenvalues
(\ref{groupeigval}) of the operator $\Phi(t)$. Consider the total
orthogonal projection $P_j(t)$ on the subspace $E_j(t)$ spanned by
the eigenspaces of the above group of eigenvalues of $\Phi(t)$:
$P_j(t)=-\frac{1}{2\pi i}\oint_{C_r^j}R_\lambda(\Phi(t))\,d\lambda$.
In particular, $P_j^0=P_j(0)$ is the orthogonal projection on the
eigenspace $E_j^0$ of the operator $\Phi(0)$ corresponding to its
eigenvalue $\tilde\mu_j^0$. Then
\begin{equation}\label{diffproj}
P_j(t)-P_j^0=-\frac{1}{2\pi
i}\oint_{C_r^j}\big(R_\lambda(\Phi(t))-R_\lambda(\Phi(0))\big)\,d\lambda.
\end{equation}
From (\ref{rprPhit}) we get easily the representation
$R_\lambda(\Phi(t))-R_\lambda(\Phi(0))=-tR_\lambda(\Phi(0))\\\theta(t)R_\lambda(\Phi(t))$.
Then choosing $\tilde\delta>0$ such that
\begin{equation*}
\forall\;t\in[0,\tilde\delta],\quad\forall\;\lambda\in\bigcup_{j=1}^NC_r^j:\quad
\Vert
tR_\lambda(\Phi(0))\theta(t)\Vert\le\tilde\delta\bar\theta\Vert
R_\lambda(\Phi(0))\Vert<1
\end{equation*}
and using standard arguments, we obtain from representation
(\ref{diffproj}) that
\begin{equation*}
\exists\;C>0,\quad\forall\;t\in[0,\tilde\delta],\quad\forall\;j\in\{1,2,\dots
\tilde N\}:\quad\Vert P_j(t)-P_j^0\Vert\le Ct.
\end{equation*}
Then after a suitable restriction of the interval $[0,\tilde\delta]$
the inequality $\Vert P_j(t)-P_j^0\Vert\le 1$ is valid for any
$t\in[0,\tilde\delta]$, which implies easily that the operator
$Q(t)=P_j(t)\vert_{E_j^0}$ realizes a linear isomorphism between the
subspaces $E_j^0$ and $E_j(t)$. Since for any $x\in E_j^0\;$ $\Vert
Q(t)x-x\Vert\le\Vert P_j(t)-P_j^0\Vert\Vert x\Vert\le Ct\Vert
x\Vert$, then for the sequence of vectors
$e_k(t)=Q(t)e_k^0\;(k\in\{k(j),k(j)+1,\dots,k(j)+m(j)-1\})$ claim
(i-b) is valid.

(ii) Claim (ii) follows straightforwardly from all the arguments
used in the proof of claim (i).
\end{proof}

\subsection{The case of non-degenerate edges of a gap of the spectrum of the unperturbed operator}
\label{subsec:nondegedg}

In what follows we need some results about the Birman-Schwinger
operator corresponding to the Schr\"odinger operator
$H_0=-\Delta+V(\e)\cdot$ with a periodic potential $V(\e)$ and the
perturbation $W(\e)$ of this potential (see Section
\ref{sec:preliminaries}).

\subsubsection{Compactness of Birman-Schwinger operator}
\label{subsubsec:compBirmSchwing}

\begin{proposition}\label{compBirSchw}
If the periodic potential $V(\e)$ satisfies the condition
(\ref{condperpotent}) and the perturbation $W(\e)$ is measurable and
bounded in $\R^d$ and $\lim_{|\e|\rightarrow\infty} W(\e)=0$, then
for some $\lambda_0\in\R\setminus\sigma(H_0)$ the operator
$R_{\lambda_0}|W|^{\frac{1}{2}}$ is compact, hence for the operator
$H_\gamma$ and the Birman-Schwinger operator
$X_W(\lambda)=W^{\frac{1}{2}}R_\lambda(H_0)|W|^{\frac{1}{2}}$ all
the claims of Proposition \ref{BrScwspct} are valid.
\end{proposition}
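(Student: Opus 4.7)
The plan is to write the operator $R_{\lambda_0}(H_0)|W|^{1/2}$ as a norm-limit of compact operators obtained by spatially cutting off the multiplier $|W|^{1/2}$, and then invoke Proposition \ref{BrScwspct}. First I would use Proposition \ref{propselfadj}(ii) to fix any real $\lambda_0$ below $\inf\sigma(H_0)$; since $\Dm(H_0)=W_2^2(\R^d)$ with graph norm equivalent to the $W_2^2$-norm (closed graph theorem), the resolvent $R_{\lambda_0}(H_0)$ is bounded both as an operator $L_2(\R^d)\to L_2(\R^d)$ and as an operator $L_2(\R^d)\to W_2^2(\R^d)$.

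Next, for $R>0$ let $\chi_R$ be the characteristic function of the ball $\{|\e|\le R\}$, and split
\begin{equation*}
R_{\lambda_0}(H_0)|W|^{1/2}=R_{\lambda_0}(H_0)|W|^{1/2}\chi_R+R_{\lambda_0}(H_0)|W|^{1/2}(1-\chi_R).
\end{equation*}
The tail term has operator norm bounded by $\|R_{\lambda_0}(H_0)\|_{L_2\to L_2}\cdot\sup_{|\e|\ge R}|W(\e)|^{1/2}$, which tends to zero as $R\to\infty$ by the hypothesis $\lim_{|\e|\to\infty}W(\e)=0$. Therefore, once the first term is shown to be compact for each fixed $R$, the closedness of the compact operators in $\Bc(L_2(\R^d))$ yields the compactness of $R_{\lambda_0}(H_0)|W|^{1/2}$.

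For the compactness of $A_R:=R_{\lambda_0}(H_0)|W|^{1/2}\chi_R$, I would argue via the adjoint $A_R^*=\chi_R|W|^{1/2}R_{\lambda_0}(H_0)$ (taking $\lambda_0\in\R$, so $R_{\lambda_0}(H_0)$ is self-adjoint). Given a bounded sequence $\{f_n\}\subset L_2(\R^d)$, the sequence $u_n:=R_{\lambda_0}(H_0)f_n$ is bounded in $W_2^2(\R^d)$ by the previous step; its restriction $u_n|_{B_R}$ is bounded in $W_2^2(B_R)$, so by the Rellich--Kondrachov theorem a subsequence converges in $L_2(B_R)$. Multiplying by the bounded compactly supported function $\chi_R|W|^{1/2}$ (bounded because $W$ is bounded) yields a subsequence of $A_R^*f_n$ converging in $L_2(\R^d)$, showing $A_R^*$ (hence $A_R$) is compact.

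The main, and essentially only, non-routine point is the verification that $R_{\lambda_0}(H_0)$ maps $L_2(\R^d)$ continuously into $W_2^2(\R^d)$ under the hypothesis (\ref{condperpotent}); this is precisely what Proposition \ref{propselfadj}(ii) supplies, and once this is granted the rest of the argument is a standard Rellich-plus-cutoff scheme. Having established compactness of $R_{\lambda_0}(H_0)|W|^{1/2}$, the remaining claim of the proposition is immediate: the hypotheses of Proposition \ref{BrScwspct} are now met, so all of its conclusions for $H_\gamma$ and $X_W(\lambda)=W^{1/2}R_\lambda(H_0)|W|^{1/2}$ follow.
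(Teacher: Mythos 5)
Your proof is correct, but it takes a genuinely different route from the paper's. The paper passes to the square root $R_{\lambda_0}(H_0)^{1/2}$ and works at the quadratic-form level: using the coercivity estimates of Lemmas \ref{lmestVuOm} and \ref{lmestVuRd} on the individual lattice cells $\Omega_\lb$, it shows that the form unit ball $K=\{u\in W_2^2(\R^d):\;((H_0-\lambda_0 I)u,u)\le 1\}$ is bounded in $W_2^1$ uniformly over cells and that the weight $|W|$ contributes negligibly outside a finite union of cells; compactness then follows from the embedding $W_2^1(\R^d\setminus G_N)\hookrightarrow L_2(\R^d\setminus G_N)$. Your argument instead works at the operator level: you use the fact (Proposition \ref{propselfadj}(ii) plus the closed-graph/open-mapping theorem) that $R_{\lambda_0}(H_0)$ maps $L_2(\R^d)$ boundedly into $W_2^2(\R^d)$, cut off $|W|^{1/2}$ spatially, control the tail by $\operatorname{ess\,sup}_{|\e|>R}|W(\e)|^{1/2}$, and get compactness of the truncated piece from Rellich--Kondrachov on the ball $B_R$ applied to the adjoint. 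Each approach has a small advantage: the paper's form-level argument only needs $H^1$-coercivity (and so would adapt to rougher potentials for which $\Dm(H_0)\neq W_2^2$), while yours avoids the cell-by-cell coercivity lemmas entirely by invoking the $W_2^2$ domain characterization together with abstract norm-equivalence, which is conceptually simpler here since Proposition \ref{propselfadj}(ii) already provides exactly what you need. One small point worth making explicit in your write-up: the statement $\lim_{|\e|\to\infty}W(\e)=0$ must be read as decay of the essential supremum $\operatorname{ess\,sup}_{|\e|>R}|W(\e)|\to 0$ (as the paper itself implicitly assumes in its estimate $\sup_{\e\in G_N}|W(\e)|$), since $W$ is only measurable and your tail estimate uses the $L_\infty$-norm of $|W|^{1/2}(1-\chi_R)$ as a multiplier.
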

\begin{proof}
By claim (ii) of Proposition \ref{propselfadj}, the operator $H_0$
is self-adjoint, bounded below and its domain is $W_2^2(\R^d)$. It
is enough to prove that the operator
$|W|^{\frac{1}{2}}R_{\lambda_0}(H_0)^{\frac{1}{2}}=(R_{\lambda_0}(H_0)^{\frac{1}{2}}|W|^{\frac{1}{2}})^\star$
is compact for some $\lambda_0<\inf(\sigma(H_0))$, that is the set
\begin{equation}\label{defK}
K=\{u\in W_2^2(\R^d)\;\vert\;\big((H_0-\lambda_0I) u,u\big)\le 1\}
\end{equation}
is compact in the semi-norm $\sqrt{(|W|u,u)}$. By claims (ii) and
(iii) of Lemma \ref{lmestVuOm} and claims (ii) and (iii) of Lemma
\ref{lmestVuRd}, it is possible to choose
$\lambda_0<\inf(\sigma(H_0))$ such that for some $m,\,m_1>0$ and any
$u\in W_2^2(\R^d)$, $\lb\in\Gamma$
$\int_{\Omega_\lb}(H_0u-\lambda_0u)\bar u\,d\e\ge
m\int_{\Omega_\lb}(|\nabla u|^2+|u|^2)\,d\e$ and
$\big((H_0-\lambda_0I)u,u\big)\ge m_1(\|\nabla u\|^2+\|u\|^2)$,
where $\Omega_\lb=\Omega+\{\lb\}$ (recall that $\Gamma$ is the
lattice of periodicity of $V(\e)$). Then
$\int_{G_N}|W(\e)||u(\e)|^2\,d\e\le m\sup_{\e\in
G_N}|W(\e)|\big((H_0-\lambda_0I)u,u\big)$, where
$G_N=\bigcup_{|\lb|_\infty\ge N}\Omega_\lb$ and
$|\lb|_\infty=\max_{1\le j\le d}|(\lb)_j|$. These circumstances and
the compactness of the embedding of $W_2^1(\R^d\setminus G_N)$ into
$L_2(\R^d\setminus G_N)$ imply that the set $K$, defined by
(\ref{defK}), is compact in the semi-norm $\sqrt{(|W|u,u)}$.
\end{proof}

\subsubsection{Representation of Birman-Schwinger operator in the case of a non-degenerate edge}
\label{subsubsec:reprBirmSchwingnondeg}

\begin{proposition}\label{rprBirSchw}
Let $(\lambda_-,\,\lambda_+)$ be a gap of the spectrum of the
unperturbed operator $H_0$. Assume that $W(\e)$ is measurable and
bounded in $\R^d$, $W\in L_1(\R^d)$ and $W(\e)\ge 0$ a.e. on $\R^d$.
Let us take $\delta\in(0,\,(\lambda_+-\lambda_-))$.\vskip2mm

(i) If $d=1$, the edge $\lambda_+$ is non-degenerate and condition
(\ref{cndWd1}) is satisfied, then for
$\lambda\in((\lambda_+-\delta,\;\lambda_+)$ the representation is
valid:
\begin{equation}\label{rprXwd1}
X_W(\lambda)=\Phi_+(\lambda)+\Theta_+(\lambda),
\end{equation}
where $\Phi_+(\lambda))$ is a rank one operator of the form
\begin{equation}\label{dfPhipld1}
\Phi_+(\lambda)=\frac{\sqrt{m^+_1}}{\sqrt{2(\lambda_+-\lambda)}}(\cdot,v^+_1)v^+_1,
\end{equation}
$m_1^+$ and $v^+_1$ are defined by (\ref{dfmupl}) and
(\ref{dfvlpld1}) and $\Theta_+(\lambda)$ is a bounded operator in
$L_2(\R)$ such that
\begin{eqnarray}\label{bndthtpl}
\sup_{\lambda\in(\lambda_+-\delta,\;\lambda_+)}\|\Theta_+(\lambda)\|<\infty
\end{eqnarray}
\vskip2mm

(ii) If $d=2$, the edge $\lambda_+$ is non-degenerate and condition
(\ref{cndWd2}) is satisfied, then for
$\lambda\in(\lambda_+-\delta,\;\lambda_+)$ the representation
(\ref{rprXwd1}) is valid, where $\Theta_+(\lambda)$ is a bounded
operator in $L_2(\R^2)$ such that condition (\ref{bndthtpl}) is
satisfied, $\Phi_+(\lambda)$ is a finite rank operator of the form
\begin{equation}\label{dfPhipld2}
\Phi_+(\lambda)=\frac{1}{2\pi}\ln\left(\frac{1}{\lambda_+-\lambda}\right)G_W^+,
\end{equation}
and $G_W^+$ is defined by (\ref{dfGW}), in which $v_k^+$ is defined
by (\ref{dfvlpl}), (\ref{dfvlxp}) and (\ref{Bloch1}); \vskip2mm

(iii) If $d\ge 3$ and the edge $\lambda_+$ is non-degenerate, then
\begin{equation}\label{bndXw}
\sup_{\lambda\in(\lambda_+-\delta,\;\lambda_+)}\|X_W(\lambda)\|<\infty.
\end{equation}
\vskip2mm

(iv) The claims analogous to (i)-(iii) are valid with $\lambda_-$,
$(\lambda_-,\;\lambda_-+\delta)$, $m_1^-$, $G_W^-$ and $v_k^-$
instead of, respectively, $\lambda_+$,
$(\lambda_+-\delta,\;\lambda_+)$, $m_1^+$, $G_W^+$ and $v_k^+$.
\end{proposition}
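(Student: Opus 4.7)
The plan is to exploit the Gelfand-Fourier-Floquet representation sketched in Section~\ref{subsec:spectchar}. Under this transform $R_\lambda(H_0)$ becomes a direct integral of the fiber resolvents $R_\lambda(\tilde H(\J))$, and the spectral decomposition of each $\tilde H(\J)$ produces the Bloch-type kernel
\begin{equation*}
R_\lambda(H_0)(\e,\bs)=\frac{1}{(2\pi)^d}\int_{\T^d}\sum_{n=1}^\infty\frac{b_n(\e,\J)\overline{b_n(\bs,\J)}}{\lambda_n(\J)-\lambda}\,d\J.
\end{equation*}
First I would split this expansion into (a) the contribution of the branch $\lambda^+(\J)=\lambda_{j+1}(\J)$ integrated over a small neighborhood $\Oc^+=\bigcup_{k=1}^{n_+}\Oc^+(\J_k^+)$ of $F^+$ on which $\lambda^+(\J)$ is close to $\lambda_+$, and (b) everything else, i.e.\ the other branches $n\neq j+1$ together with the restriction of the $n=j+1$ integral to $\T^d\setminus\Oc^+$. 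Part (b) is holomorphic in $\lambda$ on a neighborhood of $[\lambda_+-\delta,\lambda_+]$ because each denominator is bounded away from zero, and sandwiching its kernel by $\sqrt{W(\e)}\,\sqrt{W(\bs)}$ yields an operator of uniformly bounded norm via a Schur or Hilbert--Schmidt test, using that $W$ is bounded, integrable, and that the Bloch functions admit uniform-in-$\J$ $C(\Omega)$-bounds by Corollary~\ref{cormainApp}. All of (b) therefore goes into $\Theta_+(\lambda)$.

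Next, on each $\Oc^+(\J_k^+)$ the function $\lambda^+$ is real analytic with $\lambda^+(\J_k^+)=\lambda_+$ and positive-definite Hessian $A_k=\mathrm{Hes}_{\J_k^+}(\lambda^+)$, so Taylor expansion (or Morse's lemma) with the change of variable $\J=\J_k^++\sqrt{2}\,A_k^{-1/2}\bq$ reduces the denominator to $|\bq|^2+(\lambda_+-\lambda)+O(|\bq|^3)$, while $\Qc^+(\e,\bs,\J)$ is replaced by $b_k^+(\e)\overline{b_k^+(\bs)}$ modulo an $O(|\bq|)$ error in $C(\Omega\times\Omega)$. The Jacobian carries a factor $\sqrt{2}^d/\sqrt{\det A_k}=2^{d/2}\sqrt{m_k^+}$, and the explicit computation of
\begin{equation*}
\frac{1}{(2\pi)^d}\cdot 2^{d/2}\sqrt{m_k^+}\int_{|\bq|<r}\frac{d\bq}{|\bq|^2+(\lambda_+-\lambda)}
\end{equation*}
yields $\sqrt{m_k^+}/\sqrt{2(\lambda_+-\lambda)}$ for $d=1$, and $(\sqrt{m_k^+}/(2\pi))\ln(1/(\lambda_+-\lambda))+O(1)$ for $d=2$, while for $d\ge3$ the integral is bounded uniformly in $\lambda$. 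Multiplying by $\sqrt{W(\e)W(\bs)}$ converts the leading rank-one summands into the operators $\Phi_+(\lambda)$ displayed in (\ref{dfPhipld1}) and (\ref{dfPhipld2}) (for $d=1$ recall that $n_+=1$), and establishes (\ref{bndXw}) for $d\ge3$.

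The principal technical obstacle is to show that all the error terms produced by the above extraction, once conjugated by $\sqrt{W}$, yield an operator whose norm is bounded uniformly in $\lambda\in(\lambda_+-\delta,\lambda_+)$. The dangerous piece is the Taylor error in the denominator: replacing $[|\bq|^2/2+(\lambda_+-\lambda)]^{-1}$ by a truly singular model introduces a correction whose Fourier-conjugate kernel, after restoration to the $(\e,\bs)$-variables, grows like $|\e-\bs|$ when $d=1$ and like $\ln(1+|\e-\bs|)$ when $d=2$. Its Hilbert--Schmidt norm as an operator on $L_2(\R^d)$ after sandwiching with $\sqrt{W(\e)}\sqrt{W(\bs)}$ is controlled precisely by $\iint W(\e)(\e-\bs)^2W(\bs)\,d\e\,d\bs$ for $d=1$ and by $\iint W(\e)(\ln(1+|\e-\bs|))^2W(\bs)\,d\e\,d\bs$ for $d=2$, which is why conditions (\ref{cndWd1}) and (\ref{cndWd2}) appear in the hypotheses. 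The errors coming from the $O(|\bq|)$ variation of the Bloch functions and from truncating the $\bq$-integral at $|\bq|<r$ are treated similarly but are strictly easier, since they produce kernels either with an extra power of $|\e-\bs|^{-1}$-decay after integration or with uniformly bounded $\lambda$-dependence. Claim (iv) is proved verbatim by the same argument applied to the branch $\lambda^-(\J)$ on a neighborhood of $F^-$, where the negative-definite Hessian and the sign convention of (\ref{dfmkpl}) turn the denominator into $|\bq|^2+(\lambda-\lambda_-)$.
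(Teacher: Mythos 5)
Your overall strategy matches the paper's: you pass to the Floquet representation, split the resolvent into a near-$F^+$ contribution from the branch $\lambda^+=\lambda_{j+1}$ and a far remainder, perform a Morse/Taylor reduction near each extremal point, compute the leading singular integral explicitly, and control the correction term as a Hilbert--Schmidt operator using precisely the growth hypotheses (\ref{cndWd1}), (\ref{cndWd2}). This is exactly the content of Proposition~\ref{prrprres}, which the paper's actual proof of Proposition~\ref{rprBirSchw} simply invokes; your Jacobian bookkeeping $2^{d/2}\sqrt{m_k^+}$ and the $d=1,2$ singular integrals are correct, and the identification of the kernel growth rates $|\e-\bs|$ (for $d=1$) and $\ln(1+|\e-\bs|)$ (for $d=2$) as the source of conditions (\ref{cndWd1}), (\ref{cndWd2}) is the heart of the argument.

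The genuine gap is in your treatment of ``part (b)'' (the other branches, plus the near branch outside the neighbourhood of $F^+$). You propose to control $\sqrt{W(\e)}\,\mathrm{(far\ kernel)}\,\sqrt{W(\bs)}$ via a Schur or Hilbert--Schmidt test, invoking boundedness, integrability of $W$, and uniform $C(\Omega)$-bounds for the Bloch functions from Corollary~\ref{cormainApp}. This does not go through. First, Corollary~\ref{cormainApp} gives a $C(\Omega)$-bound only for the single branch $b^{\pm}(\cdot,\J)$ associated with a simple edge; there is no claim of a bound that is uniform in the branch index $n$, and indeed $\|b_n(\cdot,\J)\|_{C(\Omega)}$ grows with $n$. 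Second, and more fundamentally, the formal sum $\sum_{n\neq j+1}\frac{b_n(\e,\J)\overline{b_n(\bs,\J)}}{\lambda_n(\J)-\lambda}$ does not converge pointwise; after integration over the Brillouin zone it reproduces the on-diagonal singularity of the resolvent kernel of $H_0$, which for $d\ge 2$ is not bounded and in general not square-integrable against $W(\e)W(\bs)$. So there is no pointwise kernel estimate to feed into Schur or Hilbert--Schmidt for this piece. The paper avoids this entirely by an operator-theoretic argument: since the Floquet transform is unitary and $\lambda$ stays a uniform distance $\delta$ from the relevant part of the fibered spectrum, the far operator $\Theta(\lambda)$ satisfies
\[
\|\Theta(\lambda)f\|^2 = \int \|R_\lambda(\tilde H(\J))\hat f(\cdot,\J)\|_2^2\,d\J \le \delta^{-2}\|f\|^2,
\]
and one then only needs boundedness of $W$ (not integrability, and no kernel control) to conclude $\sup_\lambda\|\sqrt{W}\Theta(\lambda)\sqrt{W}\|\le\|W\|_\infty\sup_\lambda\|\Theta(\lambda)\|<\infty$. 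You should replace the kernel-based bound on the far piece with this direct $L_2$-isometry argument; the rest of your proof is sound.
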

\begin{proof}
Using claim (i) of Proposition \ref{prrprres}, we get:
$X_W(\lambda)=\sqrt{W}K(\lambda)\sqrt{W}+\sqrt{W}\Theta(\lambda)\sqrt{W}$,
where, in view of (\ref{bndtht2}) and the boundedness of $W(\e)$,
\begin{eqnarray}\label{bndtht1}
&&\sup_{\lambda\in(\lambda_+-\delta,\;\lambda_+)}\|\sqrt{W}\Theta(\lambda)\sqrt{W}\|\le\sup_{\e\in\R^d}|W(\e)|
\nonumber\times\\
&&\sup_{\lambda\in(\lambda_+-\delta,\;\lambda_+)}\|\Theta(\lambda)\|<\infty.
\end{eqnarray}
Assume that $d=1$ and $\lambda\in(\lambda_+-\delta,\;\lambda_+)$.
Observe that the multiplication operator $\sqrt{W}\cdot$ maps the
set $L_{2,0}(\R)$ into itself. In view of claim (ii,a) of
Proposition \ref{prrprres},
$\sqrt{W}K(\lambda)\sqrt{W}\vert_{L_{2,0}(\R^d)}$ is an operator
with the integral kernel
\begin{equation*}
\sqrt{W(x)}F^+(x,s,\lambda)\sqrt{W(s)}+\sqrt{W(x)}\tilde
K^+(x,s,\lambda)\sqrt{W(s)},
\end{equation*}
and
\begin{equation*}
(\Phi_+(\lambda)f)(x)=\frac{1}{2\mu^+\sqrt{\lambda_+-\lambda}}
\int_{-\infty}^\infty\sqrt{W(x)}b^+(x,p_1)\overline{b^+(s,p_1)}\sqrt{W(s)}f(s)\,ds.
\end{equation*}
is the integral kernel of the operator $\sqrt{W(x)}
F^+(x,s,\lambda)\sqrt{W(s)}$. Since $W\in L_1(\R)$ and, in view of
claim (i) of Theorem \ref{thmainApp}, the function $b^+(x,p_1)$ is
bounded on $\R$, then the function $v^+_1(x)$, defined by
(\ref{dfvlpld1}), belongs to $L_2(\R)$. Hence the operator
$\Phi_+(\lambda)$ can be written in the form (\ref{dfPhipld1}).
Furthermore, in view of (\ref{cndK1pl}), there is $M>0$ such that
for any $\lambda\in(\lambda_+-\delta,\;\lambda_+)$ and $x,s\in\R$
$\sqrt{W(x}|\tilde K^+(x,s,\lambda)|\sqrt{W(s)}\le
M(1+|x-s|)\sqrt{W(x)}\sqrt{W(s)}$. Then, in view of condition
(\ref{cndWd1}),
\begin{eqnarray*}
&&\sup_{\lambda\in((\lambda_++\lambda_-)/2,\;\lambda_+)}
\int_{-\infty}^\infty\int_{-\infty}^\infty W(x)|\tilde K^+(x,s,\lambda)|^2W(s)\,dx\,ds\le\\
&&M^2\int_{-\infty}^\infty\int_{-\infty}^\infty W(x)(1+|x-s|)^2W(s)\,dx\,ds<\infty.
\end{eqnarray*}
This means that for any $\lambda\in(\lambda_+-\delta,\;\lambda_+)$
the operator $\tilde K^+(\lambda)$ with the integral kernel
$\sqrt{W(x)}\tilde K^+(x,s,\lambda)\sqrt{W(s)}$ belongs to the
Hilbert-Schmidt class (hence it is bounded in $L_2(\R)$), and
furthermore $\sup_{\lambda\in(\lambda_+-\delta,\;\lambda_+)}\|\tilde
K^+(\lambda)\|<\infty$. Then, in view of (\ref{bndtht1}) and the
density of $L_{2,0}(\R)$ in $L_2(\R)$, we obtain representation
(\ref{rprXwd1}), where the operator $\Theta_+(\lambda)=\tilde
K^+(\lambda)+\sqrt{W}\Theta(\lambda)\sqrt{W}$ has the property
(\ref{bndthtpl}). So, we have proved claim (i). Claims (ii) and
(iii) are proved analogously using claims (ii,b) and (i) of
Proposition \ref{prrprres}. Claim (iv) is proved in the same manner
as the previous ones.
\end{proof}

\subsubsection{Linear independence of weighted Bloch functions}
\label{subsubsec:linindepBloch}

For the proof of Theorem \ref{thnondegedg} we need three lemmas.

\begin{lemma}\label{lmBloch}(\cite{Kuch})
A finite sequence of Bloch functions
\begin{equation}\label{seqBloch}
\{b_k(\e)=\exp(i\J_k\cdot\e)e_k(\e)\}_{k=1}^N,
\end{equation}
corresponding to mutually different quasi-momenta
$\J_k\in\T^d\,(k=1,2,\dots,N)$, is linearly independent. Recall that
the functions $e_k(\e)$ are $\Gamma$-periodic.
\end{lemma}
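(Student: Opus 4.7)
The plan is to exploit the $\Gamma$-quasi-periodicity of each Bloch function. Since $e_k$ is $\Gamma$-periodic, one has $b_k(\e+\lb)=e^{i\J_k\cdot\lb}b_k(\e)$ for every $\lb\in\Gamma$. Hence, if a linear relation $\sum_{k=1}^N c_k b_k(\e)=0$ holds for a.e.\ $\e\in\R^d$, replacing $\e$ by $\e+\lb$ yields the whole family of relations
\begin{equation*}
\sum_{k=1}^N c_k\,e^{i\J_k\cdot\lb}\,b_k(\e)=0,\qquad \lb\in\Gamma,\ \text{a.e. } \e.
\end{equation*}

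The argument then reduces to choosing $N$ translations $\lb_1,\dots,\lb_N\in\Gamma$ for which the $N\times N$ matrix $M=(e^{i\J_k\cdot\lb_j})_{j,k=1}^N$ is invertible. Once such $\lb_j$ are fixed, the system of $N$ relations obtained by taking $\lb=\lb_1,\dots,\lb_N$ reads $M\,\vec d(\e)=0$ with $d_k(\e)=c_k b_k(\e)$, and invertibility of $M$ forces $c_k b_k(\e)=0$ a.e.\ for every $k$. Since each Bloch function $b_k$ is nontrivial (being, up to a unimodular factor, a normalized eigenfunction of $H(\J_k)$), this gives $c_k=0$ and the lemma follows.

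For the invertibility step I will invoke the classical Artin--Dedekind theorem on linear independence of characters: the homomorphisms $\chi_k\colon\Gamma\to\C$ defined by $\chi_k(\lb)=e^{i\J_k\cdot\lb}$ are pairwise distinct---this is precisely the assumption that the $\J_k$ are distinct points of the quasi-momentum torus $\T^d$, which is naturally identified with the character group of $\Gamma=\Z^d$---and hence linearly independent as $\C$-valued functions on $\Gamma$. Consequently the $\C$-linear span of the image of the evaluation map $\lb\mapsto(\chi_1(\lb),\dots,\chi_N(\lb))\in\C^N$ is all of $\C^N$, so one can extract $N$ values of $\lb$ whose image vectors form a basis; these are the desired $\lb_1,\dots,\lb_N$ making $M$ invertible. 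The only mildly subtle point is this identification of $\T^d$ with $\widehat{\Gamma}$, which is built into the Floquet framework of Section \ref{subsec:spectchar}; beyond it, no serious obstacle is anticipated.
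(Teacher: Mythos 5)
Your proof is correct, and it takes a genuinely different route from the paper. The paper applies the Gelfand--Fourier--Floquet transform $U$ to the putative linear dependence, computes $\hat b_k(\e,\J)=\frac{\exp(i\J\cdot\e)}{(2\pi)^{d/2}}\delta(\J-\J_k)e_k(\e)$, and then gets a contradiction by comparing distributional supports: each $\hat b_k$ is supported at the single fiber $\J=\J_k$, so no nontrivial relation can hold. Your argument instead stays entirely on the physical-space side and uses the defining quasi-periodicity $b_k(\e+\lb)=e^{i\J_k\cdot\lb}b_k(\e)$ together with Artin--Dedekind linear independence of the distinct characters $\chi_k(\lb)=e^{i\J_k\cdot\lb}$ of $\Gamma=\Z^d$ (the identification $\T^d\cong\widehat{\Gamma}$ is exactly right, since $\J_k\neq\J_l$ in $\T^d=\R^d/2\pi\Z^d$ implies $\chi_k\neq\chi_l$). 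Extracting $N$ lattice points making the $N\times N$ matrix invertible is justified by the span argument you give, and the final step $c_kb_k\equiv 0\Rightarrow c_k=0$ uses only that Bloch eigenfunctions are nontrivial. The two proofs are at bottom the same duality phenomenon expressed in complementary languages: your version trades the machinery of the direct-integral transform and distributional supports for the elementary character-independence theorem, which makes it more self-contained and arguably more elementary, while the paper's version sits naturally alongside the rest of its Floquet framework.
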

\begin{proof}
Consider the functions $b_k(\e)$ as distributions from the space
$\mathcal{S}^\prime$, which is dual to the Schwartz space
$\mathcal{S}$. As it is known, the Gelfand-Fourier-Floquet transform
\begin{equation}\label{unit}
\hat
f(\e,\J)=(Uf)(\e,\J)=\frac{1}{(2\pi)^{\frac{d}{2}}}\sum_{\lb\in\Gamma}\exp(i\J\cdot\lb)f(\e-\lb)
\end{equation}
maps the space $\mathcal{S}^\prime$ onto the space of distributions
$\mathcal{L}^\prime$, which is dual to the set $\mathcal{L}$ of all
$C^\infty$-sections of the direct integral
$\int_{\T^d}^\oplus\B_\J\,d\J$ (\cite{Kuch}). Recall that $\B_{\J}$
is the Hilbert space  of functions $u\in L_{2,loc}(\R^d)$ satisfying
the condition (\ref{Htau}) with the inner product defined by
(\ref{dfinnprd}-a). Using the $\Gamma$-periodicity of $e_k(\e)$, we
have:
\begin{eqnarray}\label{transpsi}
&&\hskip-12mm\hat
b_k(\e,\J)=(Ub_k)(\e,\J)=\frac{1}{(2\pi)^{\frac{d}{2}}}\sum_{\lb\in\Gamma}
\exp(i\J\cdot\lb+\J_k\cdot(\e-\lb))e_k(\e)=\nonumber\\
&&\hskip-12mm\frac{\exp(i\J\cdot\e)}{(2\pi)^{\frac{d}{2}}}\sum_{\lb\in\Z^d}
\exp(i(\J-\J_k)\cdot\lb)e_k(\e)=\frac{\exp(i\J\cdot\e)}{(2\pi)^{\frac{d}{2}}}\,\delta(\J-\J_k)
e_k(\e)
\end{eqnarray}
$(k=1,2,\dots,N)$. Here $\delta(\J)$ is the Dirac delta function.
Assume, on the contrary, that $\{b_k(\e)\}_{k=1}^N$ are linearly
dependent. Then
\begin{equation*}
b_{k_0}(\e)=\sum_{k\in\{1,2,\dots,N\}\setminus\{k_0\}}c_kb_k(\e),
\end{equation*}
hence
\begin{equation*}
\hat
b_{k_0}(\e,\J)=\sum_{k\in\{1,2,\dots,N\}\setminus\{k_0\}}c_k\hat
b_k(\e,\J).
\end{equation*}
In view of (\ref{transpsi}), the support of the distribution in
l.h.s. of the last equality is $\{\J_{k_0}\}$, but the distribution
in its r.h.s. vanishes in a neighborhood of $\J_{k_0}$. We have a
contradiction. Thus, the sequence $\{b_k(\e)\}_{k=1}^N$ is linearly
independent.
\end{proof}
\begin{lemma}\label{lmlinidep}
Assume that the unperturbed potential $V(\e)$ satisfies condition
(\ref{Hold}). Let $\{b_k(\e)\}_{k=1}^N$ be a finite sequence of
Bloch functions of the form (\ref{seqBloch}) corresponding to the
same energy level $\lambda$ of the unperturbed Hamiltonian
$H_0=-\Delta+V(\e)\cdot$ and having mutually different quasi-momenta
$\J_k\in\T^d\,(k=1,2,\dots,N)$. If $W\in L_1(\R^d)$, $W(\e)\ge 0$
a.e. on $\R^d$ and  $W(\e)>0$ on a set of positive measure, then the
sequence of functions $v_k(\e)=\sqrt{W(\e)}b_k(\e)\,(k=1,2,\dots,N)$
belongs to $L_2(\R^d)$ and it is linearly independent.
\end{lemma}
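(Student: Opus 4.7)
The plan is to split the lemma into two essentially independent parts: verifying $v_k\in L_2(\R^d)$, and then deducing linear independence in $L_2$ from the distributional linear independence already established in Lemma \ref{lmBloch}. The second part reduces, after a one-line calculation, to a unique continuation question for the Schr\"odinger operator $H_0$, which is where the real content lies.

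For the $L_2$-membership I would invoke Corollary \ref{cormainApp}: along a continuous branch of Bloch eigenfunctions through each $\J_k$ one has $b_k\in C(\Omega)$, so $|b_k|$ is bounded on $\Omega$ by some $M_k$. The quasi-periodicity $|b_k(\e+\lb)|=|b_k(\e)|$ for $\lb\in\Gamma$ then extends this bound to all of $\R^d$, and since $W\in L_1(\R^d)$,
\[
\|v_k\|^2=\int_{\R^d}W(\e)\,|b_k(\e)|^2\,d\e \;\le\; M_k^{\,2}\,\|W\|_{L_1(\R^d)}\;<\;\infty .
\]

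For the linear independence part, I would assume $\sum_{k=1}^N c_kv_k=0$ in $L_2(\R^d)$. This forces $\sqrt{W}\phi=0$ almost everywhere, where $\phi:=\sum_k c_kb_k$, so $\phi$ vanishes a.e.\ on the positive-measure set $E:=\{\e:W(\e)>0\}$. Since every $b_k$ corresponds to the common energy level $\lambda$, $\phi$ is a weak distributional solution of $(-\Delta+V)\phi=\lambda\phi$ on $\R^d$. Under the regularity (\ref{Hold}) on $V$, a strong unique continuation / nodal-set theorem for second-order Schr\"odinger operators (in the spirit of Jerison--Kenig and Hardt--Simon) implies that the zero set of any nontrivial solution has Hausdorff dimension at most $d-1$, and in particular Lebesgue measure zero. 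Since $\{\phi=0\}$ contains a set of positive measure, we must have $\phi\equiv 0$ as a distribution, and Lemma \ref{lmBloch} then forces $c_1=\cdots=c_N=0$.

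The main obstacle is the unique continuation step: one must cite (or verify) a nodal-set theorem for weak solutions of $(-\Delta+V)u=\lambda u$ with $V$ only in the regularity class (\ref{Hold}). For $d\ge 2$ the exponent $q>d$ in (\ref{Hold}) sits well above the critical threshold $d/2$ needed for Jerison--Kenig-type strong unique continuation, so the classical nodal-set arguments apply; for $d=1$ the conclusion is immediate since zeros of solutions of the ODE $-u''+Vu=\lambda u$ are isolated.
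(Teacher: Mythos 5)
Your proposal is essentially the same as the paper's proof: $L_2$-membership from boundedness of Bloch functions plus $W\in L_1$, then reduction of linear independence to the impossibility of a nontrivial solution $\phi=\sum c_kb_k$ of $(-\Delta+V)\phi=\lambda\phi$ vanishing on a set of positive measure, via the nodal-set theorem of Hardt--Simon and Lemma~\ref{lmBloch}. The one technical point you flag as ``the main obstacle'' — applying a nodal-set result when $V$ is merely $L^q$ — is handled in the paper not by a Jerison--Kenig-type $L^{d/2}$-unique-continuation argument but by a regularity bootstrap: continuity of $\phi$ comes from Theorem~\ref{thmainApp}(i), then the condition $q>d$ in (\ref{Hold}) feeds into the elliptic regularity theorem (Gilbarg--Trudinger, Theorem~8.22) to upgrade $\phi$ to local $C^{1,\alpha}$, at which point Hardt--Simon Theorem~1.7 applies directly and gives a nodal set of locally finite $(d-1)$-dimensional Hausdorff measure, hence Lebesgue measure zero. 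So the exponent $q>d$ (rather than the looser $q>d/2$ you mention) is precisely what powers the $C^{1,\alpha}$ bootstrap that makes the nodal-set theorem available.
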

\begin{proof}
Since all the Bloch functions are bounded on $\R^d$, the inclusion
$v_k\in L_2(\R^d)$ is obvious. Since $b_k(\e)$ correspond to the
same energy level $\lambda$, they are solutions of the Schr\"odinger
equation $H_0b=\lambda b$ belonging to $W_{2,\,loc}^2(\R^d)$.
Assume, on the contrary, that $\{v_k(\e)\}_{k=1}^N$ are linearly
dependent, that is there exist constants $c_1,c_2,\dots,c_N$ such
that $\sum_{k=1}^N|c_k|>0$ and $\sum_{k=1}^N
c_kv_k(\e)=\sqrt{W(\e)}\sum_{k=1}^Nc_kb_k(\e)=0$ for almost all
$\e\in\R^d$. The latter equality implies that the function
$b(\e)=\sum_{k=1}^Nc_kb_k(\e)$ vanishes on a set of positive
measure. On the other hand, since by Lemma \ref{lmBloch} the
functions $b_k(\e)$ are linearly independent, then the function
$b(\e)$ is a non-trivial solution of the equation $H_0b=\lambda b$
belonging to $W_{2,\,loc}^2(\R^d)$. Observe that its first
generalized derivatives $w_j=D_jb\,(j=1,2,\dots d)$ belong to
$W_{2,\,loc}^1(\R^d)$ and satisfy the equations $\Delta w_j+\lambda
w_j=D_j(V(\e)b)$. Observe that by claim (i) of Theorem
\ref{thmainApp}, the function $b(\e)$ is continuous on $\R^d$.
Then, in view of condition (\ref{Hold}) and by the elliptic
regularity theorem (\cite{Gil-Tr}, Theorem 8.22), the function
$b(\e)$ is locally $C^{1,\alpha}\;(\alpha\in(0,1))$. Then it follows
from Theorem 1.7. of \cite{Har-Sim} that the set of zeros of $b(\e)$
(the nodal set) has a locally finite $d-1$-dimensional Hausdorff
measure, hence it has a zero Lebesgue measure. We have a
contradiction. Thus, the sequence $\{v_k(\e)\}_{k=1}^N$ is linearly
independent.
\end{proof}

It is not difficult to prove the following consequence of Lemma
\ref{lmlinidep}:

\begin{lemma}\label{lmGWnondeg}
Under all the conditions of Lemma \ref{lmlinidep} the finite rank
operator $G=\sum_{k=1}^N\theta_k\,(\,\cdot,\,v_k)v_k$ ($\theta_k>0$
for any $k\in\{1,2,\dots,N\}$) is non-negative, it has exactly $N$
positive eigenvalues (counting their multiplicities) and they
coincide with the eigenvalues of the matrix
$\left((\theta_k\theta_l)^{\frac{1}{2}}(v_l,v_k)\right)_{k,l=1}^N$.
\end{lemma}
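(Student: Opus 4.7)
The plan is to reduce the spectral analysis of $G$ to that of an $N\times N$ matrix by exploiting that $G$ has finite rank supported on $\spn\{v_1,\dots,v_N\}$, and then to use the linear independence from Lemma \ref{lmlinidep} to promote this matrix to the Gram-type object appearing in the statement.

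First I would verify non-negativity directly from the formula: for any $f\in L_2(\R^d)$,
\begin{equation*}
(Gf,f)=\sum_{k=1}^N\theta_k\,(f,v_k)\overline{(f,v_k)}=\sum_{k=1}^N\theta_k|(f,v_k)|^2\ge 0,
\end{equation*}
since each $\theta_k>0$. Next observe that $\mathrm{Im}(G)\subseteq V:=\spn\{v_1,\dots,v_N\}$ and that $G$ annihilates $V^\perp$, since $f\perp v_k$ for every $k$ gives $Gf=0$. Hence the nonzero spectrum of $G$ coincides with the spectrum of its restriction $G|_V$ to the $N$-dimensional invariant subspace $V$ (the dimension is exactly $N$ by Lemma \ref{lmlinidep}).

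Next I would diagonalize $G|_V$ in the tilted basis $u_k:=\sqrt{\theta_k}\,v_k$, which is also a basis of $V$. For $g=\sum_{j=1}^N d_j u_j\in V$ one computes
\begin{equation*}
Gg=\sum_{k=1}^N\theta_k\,(g,v_k)\,v_k=\sum_{k=1}^N\Big(\sum_{j=1}^N(u_j,u_k)\,d_j\Big)u_k,
\end{equation*}
using $\theta_k(v_j,v_k)=(\sqrt{\theta_j}\,v_j,\sqrt{\theta_k}\,v_k)\cdot\sqrt{\theta_k/\theta_j}$ and rearranging so the coefficient of $u_k$ becomes $\sum_j(u_j,u_k)d_j$. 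The eigenvalue equation $Gg=\mu g$ and the linear independence of $\{u_k\}$ then force
\begin{equation*}
\sum_{j=1}^N(u_j,u_k)\,d_j=\mu\,d_k\quad(k=1,2,\dots,N),
\end{equation*}
which is $S\mathbf{d}=\mu\mathbf{d}$ with $S_{kj}=(u_j,u_k)=\sqrt{\theta_k\theta_j}\,(v_j,v_k)$, precisely the matrix in the statement (up to the Hermitian symmetry $S_{kj}=\overline{S_{jk}}$).

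Finally I would argue that $G$ has exactly $N$ positive eigenvalues. The matrix $S$ is the Gram matrix (in transposed form) of the system $\{u_1,\dots,u_N\}$, and since $\{v_k\}$ is linearly independent by Lemma \ref{lmlinidep} and $\theta_k>0$, so is $\{u_k\}$. Hence $S$ is Hermitian positive definite, with $N$ strictly positive eigenvalues. Combined with the preceding diagonalization, this shows that $G|_V$ has $N$ positive eigenvalues coinciding with those of $S$, and since $G$ vanishes on $V^\perp$ these are all the nonzero eigenvalues of $G$. No step should present a serious obstacle; the only thing to be careful about is the sesquilinear bookkeeping in identifying $S_{kl}=(\theta_k\theta_l)^{1/2}(v_l,v_k)$ with $(u_l,u_k)$ rather than with $(u_k,u_l)$, but this merely transposes $S$ and does not affect its spectrum.
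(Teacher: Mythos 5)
The paper offers no proof of Lemma \ref{lmGWnondeg}: it states only that the lemma is "not difficult to prove" as a consequence of Lemma \ref{lmlinidep}, so there is no authorial argument to compare against. Your proof is correct and supplies exactly the missing argument: non-negativity follows from $(Gf,f)=\sum_k\theta_k|(f,v_k)|^2$, the restriction to $V=\spn\{v_1,\dots,v_N\}$ (which has dimension $N$ by Lemma \ref{lmlinidep}) captures the nonzero spectrum, the matrix of $G|_V$ in the basis $u_k=\sqrt{\theta_k}v_k$ is the transposed Gram matrix $\big((u_l,u_k)\big)_{k,l}=\big((\theta_k\theta_l)^{1/2}(v_l,v_k)\big)_{k,l}$, and this matrix is Hermitian positive definite (hence has $N$ strictly positive eigenvalues) precisely because $\{u_k\}$ is linearly independent. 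Your care about the $(u_l,u_k)$ versus $(u_k,u_l)$ bookkeeping is appropriate; since the Gram matrix is Hermitian, transposition is complex conjugation and leaves the real spectrum unchanged, so the identification with the matrix in the statement is exact. This is surely the argument the author had in mind.
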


\subsubsection{Proof of Theorem \ref{thnondegedg}}
\label{subsubsec:proofthnondegedg}

\begin{proof}
Since each positive characteristic branch $\mu_k^+(\lambda)$ of the
operator $H_0$ w.r. to $W$ on the gap $(\lambda_-,\,\lambda_+)$ is
an increasing function (see Remark \ref{rembranch}), then by
Proposition \ref{ordl} each eigenvalue $\rho_k^+(\gamma)$ of the
operator $H_\gamma$ appearing in the gap $(\lambda_-,\,\lambda_+)$
for $\gamma<0$ increases, hence it cannot approach the edge
$\lambda_-$ as $\gamma\uparrow 0$. Hence it cannot be a virtual
eigenvalue of $H_\gamma$ at the edge $\lambda_-$, that is claim (i)
is valid (see Definitions \ref{gone1}, \ref{gde1} and \ref{gde2}).

Observe that, as it is easy to show, for $d\le 2$ each of the
conditions (\ref{cndWd1}) and (\ref{cndWd2}) imply that $W\in
L_1(\R^d)$. On the other hand, by claim (i) of Theorem
\ref{thmainApp}, the Bloch functions $b_k^+(\e)$ are bounded on
$\R^d$. Hence the corresponding weighted Bloch functions
$v_k^+(\e)=\sqrt{W(\e)}b_k^+(\e)$ belong to $L_2(\R^d)$.

We now turn to the proof of the claim (ii-a)  (the case where
$d=1$). Recall that for any fixed $\lambda\in(\lambda_-,\lambda_+)$
$\{\mu_k^+(\lambda)\}_{k=1}^{m_+}(\lambda)$ is the set of all
positive eigenvalues of the Birman-Schwinger operator $X_W(\lambda)$
arranged in the non-decreasing ordering
$\mu_1^+(\lambda)\ge\mu_2^+(\lambda)\ge\dots\ge\mu_{N_+}^+(\lambda)>0$,
were each $\mu_k^+(\lambda)$ is repeated according to its
multiplicity (the value $N_+=\infty$ is allowed). Denote
$t=\sqrt{\lambda_+-\lambda}$ and $\Phi(t)=tX_W(\lambda_+-t^2)$. In
view of the representation (\ref{rprXwd1}), definition
(\ref{dfPhipld1}) and the relation (\ref{bndthtpl}) (Proposition
\ref{rprBirSchw}), for some $\delta>0$ and any $t\in[0,\delta]$
$\Phi(t)=\Phi(0)+t\theta(t)$, where
$\Phi(0)=\frac{\sqrt{m^+}(\,\cdot\,,v_1^+)v_1^+}{\sqrt{2}}$ and
$\sup_{t\in[0,\delta]}\Vert\theta(t)\Vert<\infty$. As it is clear,
the one-rank operator $\Phi(0)$ has the unique non-zero eigenvalue
$\mu_1^0=\frac{\sqrt{m^+}\|v_1^+\|^2}{\sqrt{2}}$ and it is positive.
Then by claim (i-a) of Proposition \ref{prpertcompoper}, there
exists $\tilde\delta\in[0,\delta]$ such that for any
$t\in[0,\tilde\delta]$ the operator $\Phi(t)$ has a unique simple
positive eigenvalue $\mu_1(t)$ having the asymptotic representation
$\mu_1(t)=\mu_1^0+O(t)$ as $t\rightarrow 0$ and all the rest of
positive eigenvalues of $\Phi(t)$ (if they exist) have the
asymptotic representation $\mu_k(t)=O(t)$ as $t\rightarrow
0\;(k>1)$. Returning from the variable $t$ to the variable $\lambda$
and taking into account that all the eigenvalues of $X_W(\lambda)$
are obtained from the eigenvalues of $\Phi(t)$ via multiplication by
$\frac{1}{t}=\frac{1}{\sqrt{\lambda_+-\lambda}}$, we obtain that for
some $\hat\delta\in(0,\lambda_+-\lambda_-)$ and for any
$\lambda\in[\lambda_+-\hat\delta,\lambda_+)$ the operator
$X_W(\lambda)$ has a unique positive eigenvalue $\mu_1^+(\lambda)$
having the asymptotic representation
\begin{equation}\label{asymptmu}
\mu_1^+(\lambda)=\frac{\sqrt{m^+}\|v_1^+\|^2}{\sqrt{2(\lambda_+-\lambda)}}+O(1)\quad
\rm{for}\quad\lambda\uparrow\lambda_+
\end{equation}
(hence $\lim_{\lambda\uparrow\lambda_+}\mu_1^+(\lambda)=\infty$) and
all the rest of its positive eigenvalues $\mu_k^+(\lambda)\;(k>1)$
(if they exist) are bounded in $(\lambda_+-\hat\delta,\lambda_+)$.
This means that $\mu_1^+(\lambda)$ is a unique main characteristic
branch of $H_0$ w.r. to $W$ near the edge $\lambda_+$ of the gap
$(\lambda_-,\,\lambda_+)$, hence the corresponding asymptotic
multiplicity $M(\lambda_+,H_0,W)$ of $\lambda_+$ is one (see
Definition \ref{gde2}). By Proposition \ref{gpbr} the latter
circumstances mean that for $\gamma<0$ there exists a unique branch
 $\rho_1^+(\gamma)$ of virtual eigenvalues of the operator $H_\gamma$ at the edge $\lambda_+$
 (see Definition \ref{gone1}) and it is the solution of the equation $\mu_1^+(\lambda)=\frac{1}{|\gamma|}$
 for a sufficiently small $|\gamma|$. From (\ref{asymptmu}) and Lemma \ref{lmasymp}, we obtain the desired
 asymptotic formula (\ref{asympteigvd1}). Claim (ii-a) is proven.

 Let us prove claim (ii-b) (the case where $d=2$). By Proposition \ref{rprBirSchw},
 in this case the finite rank operator $\Phi_+(\lambda)$, taking part in the representation
(\ref{rprXwd1}) for $X_W(\lambda)$ has the form (\ref{dfPhipld2}),
where $G_W^+$ is defined by (\ref{dfGW}). Like above, let
$\mu_k^+(\lambda)\;(k=1,2,\dots,N_+)$ be positive characteristic
branches of $H_0$ w.r.t. $W$ in the gap $(\lambda_-,\,\lambda_+)$,
that is they are all the positive eigenvalues of the operator
$X_W(\lambda)$. Denote
\begin{equation}\label{dfPhit}
t=t(\lambda)=\Big(\ln\left(\frac{1}{\lambda_+-\lambda}\right)\Big)^{-1}\quad
\rm{and}\quad \Phi(t)=tX_W\big(\lambda_+-\exp(-t^{-1})\big).
\end{equation}
In view of representation (\ref{rprXwd1}), definition
(\ref{dfPhipld2}) and condition (\ref{bndthtpl}), for some
$\delta>0$ and any $t\in[0,\delta]$ representation (\ref{rprPhit})
is valid with $\Phi(0)=\frac{1}{2\pi}G_W^+$ and $\theta(t)$
satisfying the condition (\ref{cndtht}). Then by claim (i-a) of
Proposition \ref{prpertcompoper} and Lemma \ref{lmGWnondeg}, there
exists $\tilde\delta\in[0,\delta]$ such that for any
$t\in[0,\tilde\delta]$ the operator $\Phi(t)$ has $n_+$ positive
eigenvalues $\mu_1(t)\ge\mu_2(t)\ge\dots\ge\mu_{n_+}(t)$ (counting
their multiplicities), having the asymptotic representation
$\mu_k(t)=\frac{1}{2\pi}\nu_k^++O(t)\;(k=1,2,\dots,n_+)$ for
$t\rightarrow 0$ and all the rest of positive eigenvalues of
$\Phi(t)$ (if they exist) have the asymptotic representation
$\mu_k(t)=O(t)$ for $t\rightarrow 0\;(k>n_+)$. Recall that $\nu_k^+$
are the positive eigenvalues of the operator $G_W^+$. Returning from
the variable $t$ to the variable $\lambda$, we obtain that there
exists $\hat\delta\in (0,\lambda_+-\lambda_-)$ such that
$\mu_1^+(\lambda),\mu_2^+(\lambda),\dots,\mu_{n_+}^+(\lambda)$ exist
for any $\lambda\in[\lambda_+-\hat\delta,\lambda_+)$, they have the
asymptotic representation
\begin{equation}\label{asympmud2}
\mu_k^+(\lambda)=\frac{1}{2\pi}\ln\left(\frac{1}{\lambda_+-\lambda}\right)\nu_k^++O(1)\quad
\rm{for}\quad\lambda\uparrow\lambda_+\quad (k=1,2,\dots,n_+),
\end{equation}
(hence $\lim_{\lambda\uparrow\lambda_+}\mu_k^+(\lambda)=\infty$ for
$k=1,2,\dots,n_+$), and for $k>n_+$ $\mu_k^+(\lambda)$ are bounded
in $(\lambda_+-\hat\delta,\lambda_+)$ (if they exist). This means
that $\mu_k^+(\lambda)\;(k=1,2,\dots,n_+)$ is the set of all main
characteristic branches of $H_0$ w.r. to $W$ near the edge
$\lambda_+$ of $(\lambda_-,\lambda_+)$, hence
$M(\lambda_+,H_0,W)=n_+$. By Proposition \ref{gpbr} the latter
circumstances mean that for $\gamma<0$ there exist exactly $n_+$
branches
$\rho_1^+(\gamma)\le\rho_2^+(\gamma)\le\dots\le\rho_{n_+}^+(\gamma)$
of virtual eigenvalues of the operator $H_\gamma$ near the edge
$\lambda_+$ and each of them is the solution of the equation
$\mu_k^+(\lambda)=\frac{1}{|\gamma|}$ for any
$\gamma\in[-\bar\gamma,0)$  and for some $\bar\gamma>0$. Using Lemma
\ref{lmasymp}, we obtain from (\ref{asympmud2}) the desired
asymptotic formula (\ref{asympteigvd2}). Formula
(\ref{asymptLbThrd2}) follows from (\ref{asympteigvd2}) and the fact
that $\mathrm{tr}(G_W^+)=\sum_{k=1}^{n_+}\nu_k^+$. Claim (ii-b) is
proven.

We shall prove only claim (iii-b) (the case where $d=2$), because
claim (iii-a) (the case where $d=1$) is proved analogously.  For any
fixed $j\in\{1,2,\dots,n_+\}$ consider the group of virtual
eigenvalues
\begin{eqnarray*}
&&\rho_{l(j)}^{\,+}(\gamma)\le\rho_{l(j)+1}^{\,+}(\gamma)\le\dots\le\rho_{l(j)+m(j)-1}^{\,+}(\gamma)\\
&&(l(j)\in\{1,2,\dots,n_+\},\;\gamma\in[-\bar\gamma,0))\nonumber
\end{eqnarray*}
of the operator $H_\gamma$ in $(\lambda_-,\,\lambda_+)$, for which
the quantities $\frac{t(\rho_k^+(\gamma))}{|\gamma|}$
$(k\in\{l(j),l(j)+1,\dots,l(j))+m(j)-1\})$ tend as $\gamma\uparrow
0$ to the eigenvalue $\frac{\nu_j^+}{2\pi}$ of the operator
$\Phi(0)$, whose multiplicity is $m(j)$. Recall that the function
$t(\lambda)$ and the operator function $\Phi(t)$ are defined by
(\ref{dfPhit}) and $g_1^+(\e),g_2^+(\e),\dots,g_{n_+}^+(\e)$ is an
orthonormal sequence of eigenfunctions of the operator
$G_W^+=2\pi\Phi(0)$ corresponding to its positive eigenvalues
$\nu_1^+,\nu_2^+,\dots, \nu_{n_+}^+$. Denote
$\tilde\rho(\gamma)=\rho_{l(j)}^{\,+}(\gamma)$. Let
$\tilde\mu_0^+(\gamma)=\mu_{l(j)}^+\big(\tilde\rho(\gamma)\big)>
\tilde\mu_1^+(\gamma)>\dots>\tilde\mu_{n(\gamma)}^+(\gamma)$
$(n(\gamma)\le m(j)-1)$ be all the mutually different numbers from
the sequence
$\{\mu_{l(j)+k}^+\big(\tilde\rho(\gamma)\big)\}_{k=0}^{m(j)-1}$.
These numbers are positive eigenvalues of the self-adjoint operator
$X_W\big(\tilde\rho(\gamma)\big)$. Let $L_0(\gamma),
L_1(\gamma),\dots, L_{n(\gamma}(\gamma)$ be the eigenspaces of
$X_W\big(\tilde\rho(\gamma)\big)$, corresponding to these
eigenvalues. Observe that each of $L_k(\gamma)$ is the eigenspace of
the operator $\tilde\Phi(\gamma)=\Phi(t(\tilde\rho(\gamma)))$,
corresponding to its eigenvalue
$t(\tilde\rho(\gamma)))\tilde\mu_k^+(\gamma)$. Denote
$L(\gamma)=\bigoplus_{k=0}^{n(\gamma)}L_k(\gamma)$. By claim (i-b)
of Proposition \ref{prpertcompoper}, for a suitable
$\tilde\gamma\in(0,\,\bar\gamma]$   and any
$\gamma\in[0,\tilde\gamma]$ it is possible to choose a basis
\begin{equation*}
e_{l(j)}(\gamma),e_{l(j)+1}(\gamma),\dots,e_{l(j)+m(j)-1}(\gamma)
\end{equation*}
in the subspace $L(\gamma)$ such that $\Vert
e_k(\gamma)-g_k^+\Vert=O(t(\tilde\rho(\gamma))$ for $\gamma\uparrow
0$. Then taking into account the asymptotic formula
(\ref{asympteigvd2}), we get that
\begin{equation}\label{asympteigvec}
\Vert e_k(\gamma)-g_k^+\Vert=O(\gamma)\quad\rm{for}\quad
\gamma\uparrow 0\quad (k=l(j),l(j)+1,\dots,l(j)+m(j)-1),
\end{equation}
For any $\gamma\in[0,\tilde\gamma]$ consider the sequence
$\gamma_0(\gamma)=\gamma$,
$\gamma_k(\gamma)=-\frac{1}{\mu_{l(j)+k}^+\big(\tilde\rho(\gamma)\big)}\;
(k=1,2,\dots,m(j)-1)$. Since the r.h.s. of the asymptotic formula
(\ref{asympmud2}) is the same for all the functions from the
sequence $\{\mu_{l(j)+k}^+(\lambda)\}_{k=0}^{m(j)-1}$, then
\begin{equation*}
\mu_{l(j)}^+\big(\tilde\rho(\gamma)\big)-\mu_{l(j)+k}^+\big(\tilde\rho(\gamma)\big)=
-\gamma^{-1}+\big(\gamma_k(\gamma)\big)^{-1}=O(1)\quad \rm{for}\quad
\gamma\uparrow 0
\end{equation*}
$(k=1,2,\dots,m(j)-1)$. This fact implies  easily that
$\gamma-\gamma_k(\gamma)=O(\gamma^2)$ as $\gamma\uparrow 0$
$(k=0,1,\dots, m(j)-1)$. Observe that
$\tilde\gamma_k(\gamma)=-\frac{1}{\tilde\mu_k^+(\gamma)}\;(k=0,1,\dots,n(\gamma))$
are all the different numbers from the sequence
$\{\gamma_k(\gamma)\}_{k=0}^{m(j)-1}$. It is clear that for any
$k\in\{0,1,\dots, n(\gamma)\}$ the number $\tilde\rho(\gamma)$ is an
eigenvalue of the operator $H_{\tilde\gamma_k(\gamma)}$. Let
$E_k(\gamma)$ be the eigenspace of $H_{\tilde\gamma_k(\gamma)}$,
corresponding to the eigenvalue $\tilde\rho(\gamma)$. By Proposition
\ref{BrScwspct}, the multiplication operator
$W^{1/2}\cdot\vert_{E_k(\gamma)}$ realizes a linear isomorphism
between $E_k(\gamma)$ and $L_k(\gamma)$. This circumstance and the
fact that the subspaces $L_k(\gamma)$ are mutually orthogonal imply
that the subspaces $E_k(\gamma)$ are linearly independent and the
operator $Q(\gamma)=W^{1/2}\cdot\vert_{E(\gamma)}$ realizes a linear
isomorphism between the subspace
$E(\gamma)=E_0(\gamma)+E_1(\gamma)+\dots+E_{n(\gamma)}(\gamma)$ and
the subspace $L(\gamma)$. Let us define the following basis in
$E(\gamma)$: $\psi^+_{\gamma,k}=(Q(\gamma))^{-1} e_k(\gamma)$
$(k\in\{l(j),l(j)+1,\dots,l(j)+m(j)-1\})$. Then we obtain from
(\ref{asympteigvec}) the desired property (\ref{asympteigvec1}).
Claim (iii-b) is proven.

Let us prove claim (iv). In view of (\ref{bndXw}) (claim (iii) of
Proposition \ref{rprBirSchw}), all the positive characteristic
branches of $H_0$ w.r.t. $W$ on the gap $(\lambda_-,\,\lambda_+)$
are bounded, hence $M(\lambda_+,H_0,W)=0$. This fact, Proposition
\ref{gpbr} and claim (i) imply the desired claim (iv).

Claim (v) is proved in the same manner as the previous ones. Claim
(vi) follows from claims (iv) and (v).

Theorem \ref{thnondegedg} is proven.
\end{proof}

\subsection{The case of degenerate edges of a gap of the spectrum of the unperturbed operator}
\label{subsec:degedg}

As above, $(\lambda_-,\,\lambda_+)$ is a gap of the spectrum of the
unperturbed operator $H_0$ and we assume that the perturbation
$W(\e)$ of the periodic potential $V(\e)$ is measurable and bounded
in $\R^d$, $W\in L_1(\R^d)$ and $W(\e)\ge 0$ a.e. on $\R^d$.

\subsubsection{Representation of Birman-Schwinger operator in the case of a degenerate edge}
\label{subsubsec:reprBirmSchwingdeg}

In the case of a degenerate edge of a gap of the spectrum of the unperturbed operator
we have the following modification of Proposition \ref{rprBirSchw}:

\begin{proposition}\label{rprBirSchwdeg}
Let us take $\delta\in(0,\,(\lambda_+-\lambda_-))$.\vskip2mm

(i) If $d-d_+=1$, the edge $\lambda_+$ satisfies condition (C) of
Section \ref{subsec:mainresbirthdeg} and condition (\ref{cndWd1deg})
is satisfied, then for $\lambda\in(\lambda_+-\delta,\;\lambda_+)$
the representation is valid:
\begin{equation}\label{rprXwd1deg}
X_W(\lambda)=\Phi_+(\lambda)+\Theta_+(\lambda),
\end{equation}
where
\begin{equation}\label{dfPhipld1deg}
\Phi_+(\lambda)=\frac{1}{\sqrt{\lambda_+-\lambda}}\frac{\sqrt{2}\pi}{(2\pi)^d}G_W^+,
\end{equation}
the operator $G_W^+$ is defined by (\ref{dfintopGW})-(\ref{dfkerGW})
and $\Theta_+(\lambda)$ is a bounded operator in $L_2(\R^2)$
satisfying the condition
\begin{eqnarray}\label{bndthtpldeg}
&&\bar\Theta_+=\sup_{\lambda\in(\lambda_+-\delta,\;\lambda_+)}\|\Theta_+(\lambda)\|<\infty;
\end{eqnarray}

(ii) if $d-d_+=2$, the edge $\lambda_+$ satisfies condition (C) and
condition (\ref{cndWcod2}) is satisfied, then for
$\lambda\in((\lambda_+-\delta,\;\lambda_+)$ the representation
(\ref{rprXwd1deg}) is valid with $\Theta_+(\lambda)$ satisfying
condition (\ref{bndthtpldeg}) and with $\Phi_+(\lambda)$ having the
form:
\begin{eqnarray}\label{dfPhipld2deg}
&&\Phi_+(\lambda)=\ln\left(\frac{1}{\lambda_+-\lambda}\right)\frac{\sqrt{2}\pi}{(2\pi)^d}G_W^+;
\end{eqnarray}

(iii) if $d-d_+\ge 3$ and the edge $\lambda_+$ satisfies condition
(C), then
\begin{equation*}
\sup_{\lambda\in((\lambda_+-\delta,\;\lambda_+)}\|X_W(\lambda)\|<\infty;
\end{equation*}

(iv) The claims analogous to (i)-(iii) are valid with $d_-$,
$\lambda_-$, $(\lambda_-,\;\lambda_-+\delta)$ and $G_W^-$ instead
of, respectively, $d_+$, $\lambda_+$,
$(\lambda_+-\delta,\;\lambda_+)$ and $G_W^+$.
\end{proposition}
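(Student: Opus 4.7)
\textbf{Proof plan for Proposition \ref{rprBirSchwdeg}.} The plan is to mimic the argument of Proposition \ref{rprBirSchw} word for word, but replace the non-degenerate resolvent representation (Proposition \ref{prrprres}) with its Morse--Bott counterpart (Proposition \ref{prrprres1}). Concretely, I would write $R_\lambda(H_0)=K^+(\lambda)+\Theta(\lambda)$ on a neighborhood $(\lambda_+-\delta,\lambda_+)$, where $K^+(\lambda)$ is the singular part supplied by Proposition \ref{prrprres1} and $\Theta(\lambda)$ is a remainder that is uniformly bounded on $L_2(\R^d)$ in the operator norm. Sandwiching with $\sqrt{W}$ and using the boundedness of $W(\e)$ gives
\begin{equation*}
X_W(\lambda)=\sqrt{W}\,K^+(\lambda)\,\sqrt{W}+\sqrt{W}\,\Theta(\lambda)\,\sqrt{W},
\qquad \sup_{\lambda\in(\lambda_+-\delta,\lambda_+)}\Vert\sqrt{W}\Theta(\lambda)\sqrt{W}\Vert<\infty,
\end{equation*}
so the last term may be absorbed into $\Theta_+(\lambda)$. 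All the work is then in analyzing the integral kernel of $\sqrt{W}K^+(\lambda)\sqrt{W}$ near $F^+$.

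Next I would use a tubular neighborhood of $F^+$ to factor the singular kernel into an integration over $F^+$ against a normal-fiber integral. Normal-analyticity of $\lambda^+(\J)$ and $b^+(\e,\J)$ from Corollary \ref{cormainApp}, together with condition (C), let me replace $\lambda^+(\J)-\lambda_+$ along the normal direction by the positive-definite quadratic form $\tfrac12\mathrm{Hes}_\J(\lambda^+)\vert_{N_\J}$ plus an $O(|\xi|^3)$ error, where $\xi$ is the normal coordinate. The principal contribution to the normal fiber integral is then the Gaussian-type integral
\begin{equation*}
\int_{\R^{d-d_+}}\frac{d\xi}{\tfrac12\bigl(\mathrm{Hes}_\J(\lambda^+)\vert_{N_\J}\bigr)\xi\cdot\xi+(\lambda_+-\lambda)},
\end{equation*}
which, with $m^+(\J)=\bigl(JN\,\mathrm{Hes}_\J(\lambda^+)\bigr)^{-1}$, evaluates to $\sqrt{m^+(\J)}\,\pi\sqrt{2}/\sqrt{\lambda_+-\lambda}$ when $d-d_+=1$, and gives the logarithmic factor $\sqrt{m^+(\J)}\,\pi\sqrt{2}\,\ln\bigl(1/(\lambda_+-\lambda)\bigr)$ (up to a bounded remainder) when $d-d_+=2$. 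Collecting all factors of $(2\pi)^{-d}$ from the Gelfand--Fourier--Floquet inversion and multiplying by the weighted eigenkernel $\Qc_W^+(\e,\bs,\J)$ furnishes exactly $\Phi_+(\lambda)$ as in (\ref{dfPhipld1deg}) and (\ref{dfPhipld2deg}).

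The remaining task is to bound the leftover kernel, coming from the cubic error in the Taylor expansion of $\lambda^+(\J)$ and from the variation of $b^+(\cdot,\J)$ across the tube. As in Proposition \ref{rprBirSchw}, pointwise kernel estimates analogous to (\ref{cndK1pl}) translate (after the tube computation) into a majoration of the form
\begin{equation*}
\sqrt{W(\e)}\,\bigl|\tilde K^+(\e,\bs,\lambda)\bigr|\,\sqrt{W(\bs)}\le M\bigl(1+|\e-\bs|\bigr)\sqrt{W(\e)}\sqrt{W(\bs)}
\end{equation*}
when $d-d_+=1$, and an analogous majoration with $1+\ln(1+|\e-\bs|)$ when $d-d_+=2$. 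Integrating the square of this majoration over $\R^d\times\R^d$ and invoking condition (\ref{cndWd1deg}) or (\ref{cndWcod2}) respectively gives a uniform Hilbert--Schmidt bound on the remainder, which may then be merged into $\Theta_+(\lambda)$. When $d-d_+\ge 3$ the normal fiber integral is itself uniformly bounded in $\lambda$, so the same scheme, without extracting any $\lambda$-dependent factor, gives directly $\sup_\lambda\Vert X_W(\lambda)\Vert<\infty$ and proves (iii). Claim (iv) is obtained by applying the same argument with $-R_\lambda(H_0)$ at the lower edge $\lambda_-$ and with $m^-(\J)$, $\Qc_W^-$, $G_W^-$ in place of their $+$-counterparts.

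The main obstacle is the remainder estimate in the tubular-neighborhood step: one must show that after subtracting the principal singular piece $\Phi_+(\lambda)$, the bound on the kernel of the residual operator degrades only by the polynomial/logarithmic weight $1+|\e-\bs|$ or $1+\ln(1+|\e-\bs|)$, since only under this control do the conditions (\ref{cndWd1deg}), (\ref{cndWcod2}) deliver a uniform Hilbert--Schmidt estimate. This ultimately relies on the exponential decay, off the diagonal of the tube, of the Fourier-type integral over the quasi-momentum $\J$ in directions transverse to $F^+$, which in turn follows from the $C(\Omega\times\Omega)$-analyticity of $\Qc^+(\e,\bs,\J)$ supplied by Corollary \ref{cormainApp}.
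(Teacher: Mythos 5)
Your first paragraph is exactly the paper's proof: Proposition \ref{rprBirSchwdeg} is obtained by repeating the proof of Proposition \ref{rprBirSchw} verbatim, with Proposition \ref{prrprres1} (applied with $n_+=n_-=1$, as forced by condition (C)) replacing Proposition \ref{prrprres}. The subsequent paragraphs about tubular neighborhoods, normal-fiber Gaussian integrals, and the cubic Taylor remainder re-derive material that is already packaged inside Proposition \ref{prrprres1} (in particular its claims (iii-a,b) and the associated bounds on $\tilde K_k^+$), so while not wrong, they duplicate work that should simply be cited.
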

\begin{proof}
The proof is the same as the proof of Proposition \ref{rprBirSchw},
merely instead of Proposition \ref{prrprres} it is supported by
Proposition \ref{prrprres1} with $n_+=n_-=1$.
\end{proof}

\subsubsection{Spectrum of the operator $G_W^+\;(G_W^-)$}
\label{subsubsec:specopGWpl}

In  what follows we need the following
\begin{lemma}\label{lmspecGWpl}
If the edge $\lambda_+\;(\lambda_-)$ satisfies condition (C) of
Section \ref{subsec:mainresbirthdeg}, the unperturbed potential
$V(\e)$ satisfies condition (\ref{Hold}) and $W(\e)> 0$ on a set of
the positive measure, then the integral operator $G_W^+$, defined by
(\ref{dfintopGW})-(\ref{dfkerGW}),\vskip2mm

(i) is self-adjoint, non-negative, belongs to the trace class and
\begin{eqnarray}\label{traceform}
&&\hskip-4mm\hskip-5mm\rm{tr}(G_w^+)=\int_{\R^d}\Gc_W^+(\bs,\bs)\,d\bs=
\int_{F^+}\int_{\R^d}\Qc_W^+(\bs,\bs,\J)\,d\bs\sqrt{m^+(\J)}\,dF(\J)
\end{eqnarray}
\vskip2mm

(ii) has an infinite number of positive eigenvalues having finite
multiplicities.\vskip2mm

The analogous properties has the operator $G_W^-$.
\end{lemma}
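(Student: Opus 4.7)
The plan is to realize $G_W^+$ as $T^*T$ for a concrete Hilbert--Schmidt operator $T$, whence claim (i) is immediate, and then to exhibit infinitely many linearly independent vectors in the range of $T^*$ for claim (ii). With $d\mu(\J):=\sqrt{m^+(\J)}\,dF(\J)$, a finite measure on the compact submanifold $F^+$, define
\begin{equation*}
T:L_2(\R^d)\to L_2(F^+,d\mu),\qquad (Tf)(\J)=\int_{\R^d}\overline{v^+(\bs,\J)}\,f(\bs)\,d\bs.
\end{equation*}
This $T$ is well-defined because $v^+(\cdot,\J)=\sqrt{W}\,b^+(\cdot,\J)\in L_2(\R^d)$: by Corollary~\ref{cormainApp} the Bloch function $b^+(\cdot,\J)$ lies in $C(\Omega)$ and hence (by $\Gamma$-periodicity) is bounded on $\R^d$, while $W\in L_1(\R^d)$. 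A direct Fubini computation on the kernel (\ref{dfkerGW}) identifies $T^*T$ with $G_W^+$.

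Next, the Hilbert--Schmidt norm of $T$ equals
\begin{equation*}
\|T\|_{HS}^2=\int_{F^+}\|v^+(\cdot,\J)\|^2\,\sqrt{m^+(\J)}\,dF(\J)=\int_{F^+}\int_{\R^d}\Qc_W^+(\bs,\bs,\J)\,d\bs\,\sqrt{m^+(\J)}\,dF(\J).
\end{equation*}
By Corollary~\ref{cormainApp} the map $\J\mapsto b^+(\cdot,\J)\in C(\Omega)$ is real-analytic, hence $\|b^+(\cdot,\J)\|_{C(\Omega)}$ is uniformly bounded by some $M$ on the compact set $F^+$; thus $\|v^+(\cdot,\J)\|^2\le M^2\|W\|_{L_1(\R^d)}$, while compactness of $F^+$ and continuity of $m^+$ make the outer integral finite. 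Therefore $G_W^+=T^*T$ is self-adjoint, non-negative, and of trace class, with $\mathrm{tr}(G_W^+)=\|T\|_{HS}^2$ yielding formula~(\ref{traceform}). This settles (i).

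For (ii), since $G_W^+$ is a non-negative compact operator, its spectrum consists of non-negative eigenvalues of finite multiplicity clustering only at $0$, so it suffices to show that $G_W^+$ (equivalently $T^*$) has infinite rank. Fix any $N\in\N$ and select $N$ distinct points $\J_1,\dots,\J_N\in F^+$, which is possible since $d_+\ge 1$. For small $\epsilon>0$ set
\begin{equation*}
g_k^{(\epsilon)}=\mu(B_\epsilon(\J_k))^{-1}\chi_{B_\epsilon(\J_k)}\in L_2(F^+,d\mu),
\end{equation*}
where $B_\epsilon(\J_k)$ is the geodesic ball of radius $\epsilon$ in $F^+$; these are well-defined because $d\mu$ has positive smooth density. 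By continuity of $\J\mapsto v^+(\cdot,\J)\in L_2(\R^d)$ (which follows from Corollary~\ref{cormainApp} together with $W\in L_1(\R^d)$), one has $T^*g_k^{(\epsilon)}\to v^+(\cdot,\J_k)$ in $L_2(\R^d)$ as $\epsilon\downarrow 0$. By Lemma~\ref{lmlinidep} the limits $\{v^+(\cdot,\J_k)\}_{k=1}^N$ are linearly independent in $L_2(\R^d)$, so for $\epsilon$ small enough the $N$ vectors $\{T^*g_k^{(\epsilon)}\}_{k=1}^N\subset\mathrm{Range}(T^*)$ are themselves linearly independent. Since $N$ was arbitrary, $\mathrm{Range}(T^*)$ is infinite-dimensional.

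The main obstacle lies in (ii): relating the spectral content of the abstract integral operator $G_W^+$ to concrete weighted Bloch functions. The above argument isolates this by combining the linear independence of $v^+(\cdot,\J_k)$ at distinct quasi-momenta (Lemma~\ref{lmlinidep}) with the $L_2(\R^d)$-continuity of $\J\mapsto v^+(\cdot,\J)$ coming from the elliptic regularity result of the Appendix (Corollary~\ref{cormainApp}); the same proof applies to $G_W^-$ with $\lambda_-,F^-,m^-,b^-$ replacing their plus-indexed counterparts.
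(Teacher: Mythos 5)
Your factorization $G_W^+=T^*T$ is cleaner and more direct than what the paper does. The paper decomposes $G_W^+=\sum_{k=1}^L G_k$ via a partition of unity subordinate to a finite cover of $F^+$, factors each $G_k=A_kA_k^*$ with a local branch $v_k^+(\cdot,\J)$, passes through a block-matrix operator to get the trace-class property, and then invokes Brislawn's theorem to identify $\mathrm{tr}(G_k)$ with the integral of the diagonal kernel. You bypass all of that: the single global factorization $T^*T$ gives self-adjointness, positivity, the trace-class property, and the trace formula in one stroke via $\mathrm{tr}(T^*T)=\Vert T\Vert_{HS}^2$; and your rank argument for claim (ii) is more direct than the paper's minimax comparison with a finite-rank lower bound $F(\epsilon)\tilde G_N+\Theta_{N,\epsilon}$, while still relying on the same key ingredients (Lemma~\ref{lmlinidep} and the local $L_2$-continuity of $\J\mapsto v^+(\cdot,\J)$ from Corollary~\ref{cormainApp}).

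There is, however, one genuine gap you should address before the argument is complete: $T$ is defined by integrating against $\overline{v^+(\bs,\J)}$, which requires a choice of the Bloch function $b^+(\cdot,\J)$ for every $\J\in F^+$. Corollary~\ref{cormainApp} only supplies a branch of $b^+$ in a neighborhood $\Oc(\J_0)$ of each $\J_0$; only the eigenkernel $\Qc^+(\e,\bs,\J)=b^+(\e,\J)\overline{b^+(\bs,\J)}$ is canonically and globally defined (claim (iii) of the corollary). Since $F^+$ is a submanifold of dimension $d_+\ge 1$, it may be topologically nontrivial, and a globally continuous choice of $b^+(\cdot,\J)$ need not exist (a Berry-phase obstruction). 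This is precisely why the paper's proof works with local branches patched by a partition of unity. Your argument can be repaired without much work — a measurable global section of the eigenline bundle always exists, and it can be chosen continuous near the finitely many points $\J_1,\dots,\J_N$ used in part (ii); with such a choice $T$ is a well-defined Hilbert--Schmidt operator, $T^*T=G_W^+$ (since $v^+(\e,\J)\overline{v^+(\bs,\J)}=\Qc_W^+(\e,\bs,\J)$ is phase-independent), and the limits $T^*g_k^{(\epsilon)}\to v^+(\cdot,\J_k)$ hold. But as written, the implicit assumption of a single global continuous branch is unjustified and should be replaced by a measurable-selection remark or by restricting to a local factorization near each $\J_k$ for part (ii).
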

\begin{proof}
 We shall restrict ourselves only on the operator $G_W^+$, since the operator $G_W^-$ has the same structure.
Let us prove claim (i). Using claim (i) of Corollary
\ref{cormainApp} and compactness of the submanifold $F^+$, it is
possible to construct a finite open covering $\{U_k\}_{k=1}^L$ of
$F^+$ such that for any $k\in\{1,2,\dots,L\}$
$\Qc_W^+(\e,\bs,\J)=v_k^+(\e,\J)v_k^+(\bs,\J)$ for any $\J\in U_k$,
where $v_k^+(\e,\J)=\sqrt{W(\e)}b_k^+(\e,\J)$ and
$b_k^+(\e,\J)\\(\J\in U_k)$ is a branch of Bloch functions,
corresponding to the energy level $\lambda_+$ of the family of the
operators $H(\J)\;(\J\in U_k)$ such that the mapping $\J\rightarrow
b_k^+(\cdot,\J)$ belongs to the class $C(U_k,\, C(\Omega))$. Hence
since $W\in L_1(\R^d)$, the inclusion
\begin{equation}\label{mmbshipvpl}
\big(\J\rightarrow v_k^+(\cdot,\J)\big)\in C(U_k,\,L_2(\R^d)).
\end{equation}
is valid. Let $\{\phi_k(\J)\}_{k=1}^L$ be a decomposition of unit,
corresponding to the covering $\{U_k\}_{k=1}^L$ such that each
$\phi_k$ is continuous on $F^+$ and positive on $U_k$. Then the
operator $G_W^+$ can be represented in the form $G_W^+=\sum_{k=1}^L
G_k$, where $G_kf=\int_{\R^d}\Gc_k(\e,\bs)f(\bs)\,d\bs\;(f\in
L_2(\R^d))$ and
\begin{equation*}
\Gc_k(\e,\bs)=\int_{U_k}v_k^+(\e,\J)v_k^+(\bs,\J)\phi_k(\J)\sqrt{m^+(\J)}\,dF(\J).
\end{equation*}
In order to prove that $G_W^+$ belongs to the trace class, it is
enough to show that each of $G_k$ belongs to the trace class.  We
see that
\begin{equation*}
\Gc_k(\e,\bs)=\int_{U_k}\mathcal{A}_k(\e,\J)\overline{\mathcal{A}_k(\bs,\J)}\,dF(\J),
\end{equation*}
where
$\mathcal{A}_k(\e,\J)=v_k^+(\e,\J)(\phi_k(\J))^{1/2}(m^+(\J))^{1/4}$.
Consider the integral operator
\begin{equation*}
(A_k\phi)(\e)=\int_{U_k}\mathcal{A}_k(\e,\J)\phi(\J)\,dF(\J),
\end{equation*}
which acts from $L_2(U_k)$ to  $L_2(\R^d)$, hence
$G_k=A_k(A_k)^\star$. Formally in order to prove that the operator
$G_k$ belongs to the trace class, it is enough to show that $A_k$
belongs to the Hilbert-Schmidt class. But since the domain and the
range of $A_k$ are different, we are forced to use a direct sum
argument. Consider the Hilbert space $\mathcal{\tilde
H}=L_2(\R^d)\oplus L_2(F^+)$ and the operator $\tilde A_W$ acting in
it and defined by the operator matrix:
\begin{eqnarray}\label{dftlAW}
\tilde A_k=\left(\begin{array}{ll}
0&\;A_k\\
A_k^\star&\;0
\end{array}\right)
\end{eqnarray}
Since the dispersion function $\lambda^+(\J)$ is real-analytic and
satisfies the Morse-Bott condition in a neighborhood of $F^+$, then
in view of (\ref{dfmplp})
\begin{equation}\label{mmbshipHess}
m^+\in C(F^+,,\R).
\end{equation}
This fact and inclusion (\ref{mmbshipvpl}) imply easily that
$\int_{\R^d}\int_{U_k}|\mathcal{A}_k^+(\e,\J)|^2\,d\e\,dF(\J)<\infty$,
hence in view of (\ref{dftlAW}) the matrix integral kernel of the
operator $\tilde A_k$ is square integrable on $\R^d\times U_k$.
Therefore $\tilde A_k$ belongs to the Hilbert-Schmidt class and it
is self-adjoint. On the other hand, since $\tilde
A_k^2=\rm{diag}(G_W^+,\;A_k^\star A_k)$, the operator $G_k$ is
self-adjoint, non-negative and belongs to the trace class. In view
of Theorem 3.1 of \cite{Bris},
$\rm{tr}(G_k)=\int_{\R^d}\tilde\Gc_k(\bs,\bs)\,d\bs$, where
$\tilde\Gc_k(\e,\bs)$ is the local average of $\Gc_k(\e,\bs)$
defined in \cite{Bris}. On the other hand, using the decomposition
$G_k^+=A_k(A_k)^\star$ and the arguments of the proof  and Theorem
3.5 from \cite{Bris}, it is not difficult to show that
$\tilde\Gc_k(\e,\e)=\Gc_k(\e,\e)$ a.e. on $\R^d$. Hence, taking into
account that $\mathrm{tr}(G_w^+)=\sum_{k=1}^L\mathrm{tr}(G_k)$, we
obtain the desired trace formula (\ref{traceform}). Claim (i) is
proven.

Let us prove claim (ii). Claim (i) implies that the spectrum of the
operator $G_W^+$ consists of at most countable number of
non-negative eigenvalues, which can cluster only to zero and each
positive eigenvalue has a finite multiplicity. Let
$\J_1,\J_2,\dots,\J_N$ be mutually different quasi-momenta belonging
to the submanifold $F^+$ and
$\Oc^\epsilon_1,2,\dots,\Oc^\epsilon_N\subset F^+$ are disjoint
neighborhoods of these points having the same diameter $\epsilon>0$
and the same volume $F(\epsilon)$. We can choose $\epsilon$ such
that for any $j\in\{1,2,\dots,N\}$ there is
$k=k(j)\in\{1,2,\dots,L\}$ such that $\Oc^\epsilon_j\subset U_k$.
Denote $\tilde v_j^+=v_{k(j)}^+$. We have for $f\in L_2(\R^d)$:
\begin{equation}\label{lowestGWpl}
(G_W^+f,\,f)\ge F(\epsilon)(\tilde
G_Nf,\,f)+(\Theta_{N,\epsilon}f,\,f),
\end{equation}
where
\begin{equation}\label{dftlGN}
\tilde G_N=\sum_{j=1}^N(\,\cdot,\,\tilde v_j^+(\cdot,\J_j))\tilde
v_j^+(\cdot,\J_j)\sqrt{m^+(\J_j)}
\end{equation}
and
\begin{eqnarray*}
&&\Theta_{N,\epsilon}=\sum_{k=1}^N\int_{\Oc^\epsilon_k}
\Big((\,\cdot,\,\tilde v_j^+(\cdot,\J))\tilde
v_j^+(\cdot,\J)\sqrt{m^+(\J)}-
\\
&&(\,\cdot,\,\tilde v_j^+(\cdot,\J_j))\tilde
v_j^+(\cdot,\J_j)\sqrt{m^+(\J_j)}\Big)\,dF(\J).
\end{eqnarray*}
The last equality and the inclusions (\ref{mmbshipvpl}) and (\ref{mmbshipHess}) imply that
\begin{equation}\label{limnrm}
\lim_{\epsilon\downarrow
0}\frac{\Vert\Theta_{N,\epsilon}\Vert}{F(\epsilon)}=0.
\end{equation}
In view of (\ref{dftlGN}) and  Lemma \ref{lmGWnondeg}, the operator
$\tilde G_N$ has exactly $N$ positive eigenvalues $\nu_1\ge
\nu_2\ge\dots\ge \nu_N>0$ (counting their multiplicities). Then in
the same manner as in the proof of Proposition \ref{prpertcompoper},
we obtain using (\ref{limnrm}) and a modification of the comparison
theorem on the base of the minimax principle (Lemma 3.4 of
\cite{Ar-Zl1}) that for a small enough $\epsilon>0$ the operator
$F(\epsilon)\tilde G_N+\Theta_{N,\epsilon}$ has at least $N$
positive eigenvalues $\nu_1(\epsilon)\ge \nu_2(\epsilon)\ge\dots\ge
\nu_N(\epsilon)>0$ (counting their multiplicities). Using again the
minimax principle, we obtain from (\ref{lowestGWpl}) that the
operator $G_W^+$ has at least $N$ positive eigenvalues (counting
their multiplicities). Since $N$ is arbitrary, claim (ii) is proven.
\end{proof}

\subsubsection{Proof of Theorem \ref{thdegedg}}
\label{subsubsec:proofthdegedg}

\begin{proof}
Claim (i) is proved in the same manner as claim (i) of Theorem
\ref{thnondegedg}.

We now turn to the proof of the claim (ii)  in the case where
$d-d_+=2$. By claim (ii) of Proposition \ref{rprBirSchwdeg}, for the
Birman-Schwinger operator $X_W(\lambda)$ representation
(\ref{rprXwd1deg}) is valid for
$\lambda\in((\lambda_++\lambda_-)/2,\;\lambda_+)$, in which
$\Phi_+(\lambda)$ has the form (\ref{dfPhipld2deg}) and
$\Theta_+(\lambda)$ satisfies the condition (\ref{bndthtpldeg}). Let
$\mu_1^+(\lambda)\ge
\mu_2^+(\lambda)\ge\dots\ge\mu_n^+(\lambda)\ge\dots\;(\lambda\in(\lambda_-,\lambda_+))$
be the positive characteristic branches of the operator $H_0$ w.r.
to $W$ on the gap $(\lambda_-,\,\lambda_+)$, that is they are
positive eigenvalues of the operator $X_W(\lambda)$ (Definition
\ref{gde1}). By Lemma \ref{lmspecGWpl}, the operator $G_W^+$ has an
infinite number of positive eigenvalues
$\nu_1^+\ge\nu_2^+\ge\dots\ge\nu_n^+\ge\dots$. Let $\Phi(t)$ be the
operator function defined by (\ref{dfPhit}). Then in view of the
representation (\ref{rprXwd1deg}), definition (\ref{dfPhipld2deg})
and the condition (\ref{bndthtpldeg}), for some $\delta>0$ and any
$t\in[0,\delta]$ representation (\ref{rprPhit}) is valid with
$\Phi(0)=\frac{1}{2\pi}G_W$ and $\theta(t)$ satisfying the condition
(\ref{cndtht}). Then claim (ii) of Proposition \ref{prpertcompoper}
implies claim (ii) of Theorem \ref{thdegedg} for $d-d_+=2$ in the
same manner as the corresponding claim was obtained in the
non-degenerate case (see proof of claim (ii-b) of Theorem
\ref{thnondegedg}). Formula (\ref{asymptLbThrd2deg}) follows from
(\ref{asympteigvd2deg}), the formula
$\sum_{n=1}^\infty\nu_n^+=\rm{tr}(G_w^+)$ and claim (i) of Lemma
\ref{lmspecGWpl}. In the analogous manner claim (ii) for $d-d_+=1$
is proved by the use of representation (\ref{rprXwd1deg}) and
definition (\ref{dfPhipld1deg}). Claim (iii) follows from claim (ii)
of Proposition \ref{prpertcompoper} in the same manner as in the
proof of Theorem \ref{thnondegedg}) claim (iii-b) follows from claim
(i-b) of Proposition \ref{prpertcompoper}.

Claim (iv) is proved in the same manner as the corresponding claim
of Theorem \ref{thnondegedg} using claim (iii) of Proposition
\ref{rprBirSchwdeg}.

Claim (v) is proved in the same manner as the previous ones. Claim
(vi) follows from claims (iv) and (v).

Theorem \ref{thdegedg} is proven.
\end{proof}

\subsection{Estimate for the multiplicity of virtual eigenvalues in the case of an indefinite perturbation}
\label{subsec:estmult}

\subsubsection{General results on the multiplicity of virtual eigenvalues}
\label{subsubsec:genresmult}

Before proving Theorem \ref{thestmult}, we shall prove some auxiliary claims. If $A$ is a linear
operator acting in a Hilbert space, we denote $\Re(A):=\frac{1}{2}(A+A^\star)$;
if $A$ is self-adjoint, we denote $A_+:=\frac{1}{2}(|A|+A)$, $A_-:=\frac{1}{2}(|A|-A)$.

\begin{lemma}\label{lmrprBrSchw}
Let $(\lambda_-,\lambda_+)$ be a gap of the spectrum of a
self-adjoint operator $H_0$ acting in a Hilbert space $\B$ be and
$W$ be a bounded self-adjoint operator in $\B$. Then for any
$\lambda\in(\lambda_-,\lambda_+)$ the real part of the
Birman-Schwinger operator
$X_W(\lambda)=W^{\frac{1}{2}}R_\lambda(H_0)|W|^{\frac{1}{2}}$ admits
the representation:
\begin{equation}\label{rprBrSchw}
\Re(X_W(\lambda))=X_{W_+}(\lambda)-X_{W_-}(\lambda).
\end{equation}
Recall that $W^{\frac{1}{2}}$ is defined by (\ref{dfsqrtW}).
\end{lemma}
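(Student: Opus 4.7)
The plan is to decompose both $W^{1/2}$ and $|W|^{1/2}$ in terms of $W_+^{1/2}$ and $W_-^{1/2}$, expand the product defining $X_W(\lambda)$, and observe that the off-diagonal cross terms are skew-adjoint and hence vanish upon taking real parts.

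First I will use the spectral decomposition of the self-adjoint operator $W$ to write the polar decomposition $W = U|W|$ with $U = P_+ - P_-$, where $P_\pm$ are the spectral projections of $W$ onto the positive and negative spectral subspaces (so $U$ is zero on $\ker W$). Since $P_\pm$ commute with $|W|$ and hence with $|W|^{1/2}$, the definition (\ref{dfsqrtW}) gives $W^{1/2} = U|W|^{1/2}$ directly: on $(\ker W)^\perp$ we have $W(|W||_{(\ker W)^\perp})^{-1/2} = U|W|\,|W|^{-1/2} = U|W|^{1/2}$. Because $W_+$ and $W_-$ have mutually orthogonal supports, $(W_+ + W_-)^{1/2} = W_+^{1/2} + W_-^{1/2}$, and applying $P_+$ and $P_-$ yields the two identities
\begin{equation*}
|W|^{1/2} = W_+^{1/2} + W_-^{1/2}, \qquad W^{1/2} = U|W|^{1/2} = W_+^{1/2} - W_-^{1/2}.
\end{equation*}

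Next I will substitute these decompositions into $X_W(\lambda) = W^{1/2} R_\lambda(H_0) |W|^{1/2}$ and expand the four resulting terms:
\begin{equation*}
X_W(\lambda) = (W_+^{1/2} - W_-^{1/2}) R_\lambda(H_0) (W_+^{1/2} + W_-^{1/2}) = X_{W_+}(\lambda) - X_{W_-}(\lambda) + B - B^\star,
\end{equation*}
where $B := W_+^{1/2} R_\lambda(H_0) W_-^{1/2}$ and the identification $B^\star = W_-^{1/2} R_\lambda(H_0) W_+^{1/2}$ uses that $R_\lambda(H_0)$ is self-adjoint (since $\lambda \in (\lambda_-,\lambda_+) \subset \mathbb{R} \setminus \sigma(H_0)$). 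Here I also use that for a nonnegative operator the definition (\ref{dfsqrtW}) coincides with the usual nonnegative square root, so that $X_{W_\pm}(\lambda) = W_\pm^{1/2} R_\lambda(H_0) W_\pm^{1/2}$ are self-adjoint.

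Finally I will take real parts: the diagonal terms $X_{W_\pm}(\lambda)$ are already self-adjoint, while $B - B^\star$ is skew-adjoint, so $\Re(B - B^\star) = 0$. This leaves exactly the identity (\ref{rprBrSchw}). The only non-trivial point is the algebraic passage from the somewhat unusual prescription (\ref{dfsqrtW}) to the clean decompositions $W^{1/2} = W_+^{1/2} - W_-^{1/2}$ and $|W|^{1/2} = W_+^{1/2} + W_-^{1/2}$; once this is in place the rest of the argument is a short and purely algebraic expansion.
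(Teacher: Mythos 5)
Your proof is correct. Both you and the paper rest on the same underlying spectral fact — that $(W^{1/2})_\pm = W_\pm^{1/2}$, or equivalently $W^{1/2} = W_+^{1/2} - W_-^{1/2}$ and $|W|^{1/2} = W_+^{1/2} + W_-^{1/2}$ — but you organize the algebra in the opposite direction. The paper starts from the combination $X_{W_+}(\lambda) - X_{W_-}(\lambda) = (W^{1/2})_+ R_\lambda(H_0)(W^{1/2})_+ - (W^{1/2})_- R_\lambda(H_0)(W^{1/2})_-$ and rewrites it via a polarization-type identity as $\frac{1}{2}\bigl(X_W(\lambda) + X_W(\lambda)^\star\bigr)$. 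You instead expand $X_W(\lambda)$ directly using the two decompositions of $W^{1/2}$ and $|W|^{1/2}$, obtaining $X_{W_+}(\lambda) - X_{W_-}(\lambda) + B - B^\star$ with $B = W_+^{1/2}R_\lambda(H_0)W_-^{1/2}$, and observe that the cross term $B - B^\star$ is skew-adjoint and therefore drops out of the real part. Your route is slightly more transparent in that it makes visible exactly which piece of $X_W(\lambda)$ is being discarded (the skew-adjoint cross term), and why. It also sidesteps a small sign slip in the paper's chain of equalities. One minor point: your identity $(W_+ + W_-)^{1/2} = W_+^{1/2} + W_-^{1/2}$ is correct but the reason deserves a word — it holds because $W_+$ and $W_-$ have orthogonal ranges, so they and their square roots commute and the cross products in $(W_+^{1/2}+W_-^{1/2})^2$ vanish.
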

\begin{proof}
We have:
\begin{eqnarray}\label{calcul}
&&\hskip-8mm(W^{\frac{1}{2}})_+R_\lambda(H_0)(W^{\frac{1}{2}})_+-(W^{\frac{1}{2}})_-R_\lambda(H_0)(W^{\frac{1}{2}})_-=
\nonumber\\
&&\hskip-8mm\frac{1}{4}\left((|W|^{\frac{1}{2}}+W^{\frac{1}{2}})R_\lambda(H_0)(|W|^{\frac{1}{2}}+W^{\frac{1}{2}})\right.-
\nonumber\\
&&\hskip-8mm\left.(|W|^{\frac{1}{2}}-W^{\frac{1}{2}})R_\lambda(H_0)(|W|^{\frac{1}{2}}-W^{\frac{1}{2}})\right)=
\frac{1}{2}\left(W^{\frac{1}{2}}R_\lambda(H_0)|W|^{\frac{1}{2}}-\right.
\nonumber\\
&&\hskip-8mm\left.|W|^{\frac{1}{2}}R_\lambda(H_0)W^{\frac{1}{2}}\right)=
\frac{1}{2}\left(W^{\frac{1}{2}}R_\lambda(H_0)|W|^{\frac{1}{2}}+(W^{\frac{1}{2}}R_\lambda(H_0)|W|^{\frac{1}{2}})^\star\right).
\end{eqnarray}
Let $E_\lambda\,(\lambda\in\R)$ be the spectral resolution of identity, corresponding to the operator $W$. Since
$|W|^{\frac{1}{2}}=\int_{\R}|\lambda|^{\frac{1}{2}}\,dE_\lambda$ and, in view of (\ref{dfsqrtW}),
\begin{equation*}
W^{\frac{1}{2}}=\int_{\R}\rm{sign}(\lambda)|\lambda|^{\frac{1}{2}}\,dE_\lambda,
\end{equation*}
the equalities are valid:
$(W^{\frac{1}{2}})_+=(W_+)^{\frac{1}{2}}$ and $(W^{\frac{1}{2}})_-=(W_-)^{\frac{1}{2}}$. Then (\ref{calcul}) can be written
in the form (\ref{rprBrSchw}).
\end{proof}

\begin{proposition}\label{prestmult}
Assume that all the conditions of Lemma \ref{lmrprBrSchw} are
satisfied and furthermore assume that for some
$\lambda_0\in\Rs(H_0)$ the operator $R_{\lambda_0}|W|^{\frac{1}{2}}$
is compact. Let us take $\delta\in(0,\,\lambda_+-\lambda_-)$.
Then\vskip2mm

(i) if $M(\lambda_+,H_0,W_+)<\infty$, there exists $\bar\gamma>0$
such that for any $\gamma\in(-\bar\gamma,0)$ the multiplicity of
each eigenvalue of the operator $H_\gamma$ lying in
$(\lambda_+-\delta,\;\lambda_+)$ is at most $M(\lambda_+,H_0,W_+)$;
\vskip2mm

(ii) the claim analogous to (i) is valid for $\gamma>0$ with $W_-$
and $\gamma\in(0,\bar\gamma)$ instead of, respectively, $W_+$ and
$\gamma\in(-\bar\gamma,0)$;\vskip2mm

(iii) the claims analogous to (i), (ii) are valid near the edge
$\lambda_-$ with $\lambda_-$, $(\lambda_-,\lambda_-+\delta)$, $W_-$,
$W_+$, $\gamma\in(0,\bar\gamma)$ and $\gamma\in(-\bar\gamma,0)$
instead of, respectively, $\lambda_+$,
$(\lambda_+-\delta,\;\lambda_+)$, $W_+$, $W_-$,
$\gamma\in(-\bar\gamma,0)$ and $\gamma\in(0,\bar\gamma)$.
\end{proposition}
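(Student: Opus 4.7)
The plan is to reduce everything to a minimax argument applied to the self‐adjoint operator $\mathrm{Re}(X_W(\rho)) = X_{W_+}(\rho) - X_{W_-}(\rho)$, using Lemma \ref{lmrprBrSchw} as the bridge between the non‐self‐adjoint Birman–Schwinger operator of the indefinite perturbation and its definite constituents. I shall prove (i) in detail; (ii) follows by switching signs, and (iii) by interchanging the roles of $\lambda_+$ and $\lambda_-$ (and, correspondingly, of upper vs.\ lower bounds on the resolvent's bounded part).

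\textbf{Reduction to a quadratic-form identity.} Fix $\gamma \in (-\bar\gamma, 0)$ (with $\bar\gamma$ to be chosen) and $\rho \in (\lambda_+-\delta, \lambda_+)$ an eigenvalue of $H_\gamma$. By Proposition \ref{BrScwspct}, its multiplicity equals $n := \dim L$, where $L = \ker(I + \gamma X_W(\rho))$. For every $v\in L$ one has $X_W(\rho)v = |\gamma|^{-1} v$, hence $(X_W(\rho)v,v) = |\gamma|^{-1}\|v\|^2$ is real. Splitting $X_W = \mathrm{Re}(X_W) + i\,\mathrm{Im}(X_W)$ into its self-adjoint and anti‐self-adjoint parts, the imaginary quadratic form $(\mathrm{Im}(X_W(\rho))v,v)$ must vanish, and so by Lemma \ref{lmrprBrSchw}
\[
\bigl(X_{W_+}(\rho)v,v\bigr) - \bigl(X_{W_-}(\rho)v,v\bigr) \;=\; |\gamma|^{-1}\|v\|^2 \qquad (v\in L).
\]

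\textbf{A uniform lower bound on $X_{W_-}(\rho)$.} Let $P_{\pm}$ denote the spectral projections of $H_0$ onto $[\lambda_+,\infty)$ and $(-\infty,\lambda_-]$ respectively. Then $R_\rho(H_0)P_+ \ge 0$, while $R_\rho(H_0)P_- \ge -(\rho-\lambda_-)^{-1}P_-$. Sandwiching with $W_-^{1/2}$ and using $\rho > \lambda_+-\delta$ yields
\[
X_{W_-}(\rho) \;\ge\; -C_-\,I, \qquad C_- \;:=\; \frac{\|W_-\|}{\lambda_+ - \delta - \lambda_-},
\]
uniformly in $\rho\in(\lambda_+-\delta,\lambda_+)$. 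Substituting into the identity above, every $v\in L$ satisfies
\[
\bigl(X_{W_+}(\rho)v,v\bigr) \;\ge\; \bigl(|\gamma|^{-1} - C_-\bigr)\|v\|^2.
\]

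\textbf{Conclusion via minimax.} Set $m := M(\lambda_+,H_0,W_+) < \infty$. By Definition \ref{gde2} and the definition of $\tilde\mu_k^+$, all but the first $m$ positive characteristic branches of $H_0$ with respect to $W_+$ remain bounded as $\lambda \uparrow \lambda_+$; hence there exist $C>0$ and $\delta'\in(0,\delta]$ such that for every $\rho\in(\lambda_+-\delta',\lambda_+)$ the self-adjoint compact operator $X_{W_+}(\rho)$ has at most $m$ eigenvalues exceeding $C$. Choose $\bar\gamma \in \bigl(0,(C+C_-)^{-1}\bigr)$. If one had $n > m$, then the subspace $L$ would intersect the orthogonal complement of the eigenspaces of $X_{W_+}(\rho)$ with eigenvalues $>C$ nontrivially; picking a nonzero $v$ in that intersection yields $(X_{W_+}(\rho)v,v) \le C\|v\|^2$, which combined with the previous display forces $|\gamma|^{-1} \le C + C_-$, contradicting the choice of $\bar\gamma$. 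Thus $n\le m$, proving (i).

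\textbf{Variants.} For (ii) the eigenvalue of $X_W(\rho)$ is $-\gamma^{-1}<0$, so the identity becomes $(X_{W_-}v,v) - (X_{W_+}v,v) = \gamma^{-1}\|v\|^2$; the uniform bound is now $X_{W_+}(\rho)\ge -C_+ I$ with $C_+ = \|W_+\|/(\lambda_+-\delta-\lambda_-)$, and the roles of $W_+$ and $W_-$ are interchanged. For (iii), replace $\rho \in (\lambda_+-\delta,\lambda_+)$ by $\rho\in(\lambda_-,\lambda_-+\delta)$: here the singular contribution to $R_\rho(H_0)$ comes from the negative spectral subspace, so the uniform estimate is instead an \emph{upper} bound $X_{W_\mp}(\rho)\le C\,I$ coming from $R_\rho(H_0)P_+ \le (\lambda_+-\lambda_--\delta)^{-1}P_+$, and the minimax must be applied to the \emph{lowest} eigenvalues of $X_{W_\pm}(\rho)$ together with Definition \ref{gde2} for negative characteristic branches. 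The main technical point, and potentially the subtlest step, is ensuring that the uniform bound in Step 2 really is independent of $\rho$ throughout the subinterval of interest; this is why $\delta$ must be kept strictly less than $\lambda_+-\lambda_-$ and why the constants $C_\pm$ are written as above.
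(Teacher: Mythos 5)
Your proof is correct and takes essentially the same route as the paper's: reduce via Proposition \ref{BrScwspct} to bounding $\dim\ker(I+\gamma X_W(\rho))$, exploit the decomposition $\Re(X_W)=X_{W_+}-X_{W_-}$ from Lemma \ref{lmrprBrSchw}, obtain a uniform one-sided bound on the ``harmless'' summand, and finish by a minimax/dimension-counting argument against the $\le M(\lambda_+,H_0,W_+)$-dimensional span of eigenspaces carrying the main characteristic branches. The differences are cosmetic: you bound $X_{W_-}(\rho)$ from below directly via the spectral projections of $H_0$ (giving an explicit constant $C_-$), whereas the paper invokes the monotonicity of $X_{W_-}(\lambda)$ in $\lambda$ and bounds by $X_{W_-}(\lambda_+-\delta)$; and you work pointwise on kernel vectors, where the identity $(X_W(\rho)v,v)=|\gamma|^{-1}\|v\|^2$ is exact, while the paper estimates $\Re((I+\gamma X_W(\lambda))u,u)\ge\frac12\|u\|^2$ on a subspace $\Fc(\lambda)$ of codimension $\le M(\lambda_+,H_0,W_+)$ and then passes to $\|(I+\gamma X_W(\lambda))u\|$. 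Both routes are equally rigorous and give the same constants up to bookkeeping. One small remark: your intermediate $\delta'\le\delta$ is not needed, since $\mu_k^+$ is increasing on its domain with $\mu_k^+(\lambda)\le\tilde\mu_{m+1}^+$ for every $k>m$ and every $\lambda$, so the eigenvalue-count bound already holds uniformly on the whole interval $(\lambda_+-\delta,\lambda_+)$.
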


\begin{proof}
We shall prove only claim (i), because (ii) and (iii) are proved
analogously. Denote $l_+=M(\lambda_+,H_0,W_+)$. By Proposition
\ref{BrScwspct}, we should prove that
\begin{equation}\label{shldprv}
\forall\,\lambda\in(\lambda_+-\delta,\;\lambda_+):\quad
\dim(\ker(I+\gamma X_W(\lambda)))\le l_+.
\end{equation}
In view of Lemma \ref{lmrprBrSchw}, we have for $u\in\B$:
\begin{equation}\label{rprrequadfrm}
\Re(((I+\gamma X_W(\lambda))u,u))=(u,u)+\gamma((X_{W_+}(\lambda)u,u)-(X_{W_-}(\lambda)u,u)).
\end{equation}
Let $\mu_k^+(\lambda)$ and $\mu_k^-(\lambda)$ be positive and
negative characteristic branches of the operator $H_0$ w.r.t. $W_+$,
that is they are continuous branches of eigenvalues of the operator
$X_{W_+}(\lambda)$ such that $\mu_k^+(\lambda)$ are numbered in the
non-increasing ordering
$\mu_1^+(\lambda)\ge\mu_2^+(\lambda)\ge\dots\mu_k^+(\lambda)\ge\dots>0$
and $\mu_k^-(\lambda)$ are numbered in the non-increasing ordering
$\mu_1^-(\lambda)\le\mu_2^-(\lambda)\le\dots\mu_k^-(\lambda)\le\dots<0$,
$\Dm(\mu_k^+)=(\eta_k^+,\lambda_+),\;\eta_k^+\in[\lambda_-,\lambda_+]$,
$\Dm(\mu_k^-)=(\lambda_-,\eta_k^-),\;\eta_k^-\in[\lambda_-,\lambda_+]$'
and they are increasing functions (Definition \ref{gde1}, Remark
\ref{rembranch}). Let $\{\mu_k^+(\lambda)\}_{k=1}^{l_+}$ be the main
characteristic branches of $H_0$ w.r.t. $W_+$ near the edge
$\lambda_+$, that is
\begin{eqnarray}\label{mainbrnchWpl}
\lim_{\lambda\uparrow\lambda_+}\mu_k^+(\lambda)\left\{
\begin{array}{ll}
=\infty &\rm{for}\;1\le k\le l_+\\
<\infty &\rm{for}\; k>l_+.
\end{array}\right.
\end{eqnarray}
As it is clear, $\eta_1^+\le\eta_2^+\le\dots\le \eta_{l_+}^+$. For
each $\lambda\in(\eta_+,\lambda_+)$ and $j\in\{1,2,\dots,l_+\}$
consider the eigenspace $E_j^+(\lambda)$ of the operator
$X_{W_+}(\lambda)$, corresponding to its eigenvalue
$\mu_1^+(\lambda)$. Consider the following family of subspaces:
\begin{equation}\label{dffamsbsp}
\Fc(\lambda)=\left\{\begin{array}{ll}
\B\ominus\left(\bigoplus_{j=1}^{l_+}E_j^+(\lambda)\right) &\rm{for}\;
\lambda\in(\lambda_+-\delta,\;\lambda_+)\cap(\eta_{l_+}^+,\lambda_+),\\
\B &\rm{for}\;
\lambda\in(\lambda_+-\delta,\;\lambda_+)\setminus(\eta_{l_+}^+,\lambda_+).
\end{array}\right.
\end{equation}
Then we obtain:
\begin{eqnarray*}
&&\hskip-7mm\forall\;u\in\Fc(\lambda):\\
&&\hskip-7mm(X_{W_+}(\lambda)u,\,u)\le\left\{\begin{array}{ll}
\max_{j>l+}\mu_j^+(\lambda)\|u\|^2, &\rm{if}\;
\lambda\in(\lambda_+-\delta,\;\lambda_+)\cap(\eta_{l_+}^+,\lambda_+),\\
\mu_1^+(\eta_{l_+}^+)\|u\|^2, &\rm{if}\;
\lambda\in(\lambda_+-\delta,\;\lambda_+)\setminus(\eta_{l_+}^+,\lambda_+).
\end{array}\right.
\end{eqnarray*}
Hence, in view of (\ref{mainbrnchWpl}), we have:
\begin{equation}\label{estXWpl}
\sup_{\lambda\in(\lambda_+-\delta,\;\lambda_+),\;u\in\Fc(\lambda)\setminus\{0\}}
\frac{(X_{W_+}(\lambda)u,\,u)}{\|u\|^2}<\infty.
\end{equation}
Since the operator function $X_{W_-}(\lambda)$ increases in
$(\lambda_-,\lambda_+)$ in the sense of comparison of quadratic
forms (\cite{Ar-Zl1}, Lemma 3.5), we have:
\begin{eqnarray}\label{estXWmn}
&&
\inf_{\lambda\in(\lambda_+-\delta,\;\lambda_+),\;u\in\B\setminus\{0\}}
\frac{(X_{W_-}(\lambda)u,\,u)}{\|u\|^2}\ge
\nonumber\\
&& \inf_{u\in\B\setminus\{0\}}
\frac{\left(X_{W_-}(\lambda_+-\delta)u,\,u\right)}{\|u\|^2}>-\infty.
\end{eqnarray}
Then we conclude from (\ref{rprrequadfrm}), (\ref{estXWpl}) and (\ref{estXWmn}) that there
exists $\bar\gamma>0$ such that
\begin{equation*}
\forall\;\gamma\in(-\bar\gamma,0):\quad\inf_{\lambda\in(\lambda_+-\delta,\;\lambda_+),\;u\in\Fc(\lambda)\setminus\{0\}}
\frac{\Re(((I+\gamma X_W(\lambda))u,u))}{\|u\|^2}\ge\frac{1}{2}.
\end{equation*}
Taking into account that $\Re(((I+\gamma X_W(\lambda))u,u))\le\|(I+\gamma X_W(\lambda))u\|\|u\|$, we obtain
from the last estimate that
\begin{equation}\label{estinfqdfrm}
\forall\;\gamma\in(-\bar\gamma,0):\quad\inf_{\lambda\in(\lambda_+-\delta,\;\lambda_+),\;u\in\Fc(\lambda)\setminus\{0\}}
\frac{\|(I+\gamma X_W(\lambda))u\|}{\|u\|}\ge\frac{1}{2}.
\end{equation}
Let us prove estimate (\ref{shldprv}). Assume, on the contrary, that
\begin{eqnarray}\label{asscntr}
&&\exists\,\gamma_0\in(-\bar\gamma,0),\;\exists\,\lambda_0\in(\lambda_+-\delta,\;\lambda_+):\nonumber\\
&&\dim(\ker(I+\gamma_0 X_W(\lambda_0)))>l_+.
\end{eqnarray}
On the other hand, in view of (\ref{dffamsbsp}),
\begin{equation*}
\forall\,\lambda\in(\lambda_+-\delta,\;\lambda_+):\quad
\rm{codim}(\Fc(\lambda))\le l_+.
\end{equation*}
This inequality together with (\ref{asscntr}) imply that
\begin{equation*}
\Fc(\lambda_0)\cap\ker(I+\gamma_0 X_W(\lambda_0))\neq\{0\}.
\end{equation*}
The last fact contradicts the inequality (\ref{estinfqdfrm}). So, estimate (\ref{shldprv}) is proven.
\end{proof}

\subsubsection{Proof of Theorem \ref{thestmult}}
\label{subsec:proofthmult}

\begin{proof}
Assume that the edge $\lambda_+$ is non-degenerate.  If in the in
the proof of Theorem \ref{thnondegedg} we take $W_+$ instead of $W$,
we obtain that for $d=1$ $M(\lambda_+,H_0,W_+)=1$, for $d=2$
$M(\lambda_+,H_0,W_+)=n_+$ and for $d\ge 3$
$M(\lambda_+,H_0,W_+)=0$. Then by Proposition \ref{prestmult} all
the claims (i)-(iii) are valid in the case where $\lambda_+$ is
non-degenerate. The case where $\lambda_-$ is non-degenerate is
treated analogously.

Claim (iv) is proved in the same manner as the previous ones. Claim
(v) follows from the claims (iii) and (iv).

Theorem \ref{thestmult} is proven.
\end{proof}

\subsubsection{Proof of Theorem \ref{ththresholddeg}}
\label{subsec:proofthmultdeg}
\begin{proof}
The proof is the same as the proof of Theorem \ref{thestmult} in the
case where $d\ge 3$ and it is supported by Proposition
\ref{prestmult} and claim (iv) of Theorem \ref{thdegedg}.
\end{proof}

\section{Representation of the resolvent of the unperturbed operator $H_0$ near the edges of a gap of its spectrum}
\label{sec:resunprop1}\setcounter{equation}{0}

\subsection{Main claims} \label{sec:frmmaincl}

\begin{proposition}\label{prrprres1}
Let $(\lambda_-,\,\lambda_+)$ be a gap of the spectrum of the
unperturbed operator $H_0$ such that its edge $\lambda_+$ satisfies
condition (B) of Section \ref{subsec:spectchar}, and $\lambda^+(\J)$
be the dispersion function, branching from the edge $\lambda_+$. Let
us take $\delta\in(0,\,(\lambda_+-\lambda_-))$.\vskip2mm

(i) For $\lambda\in(\lambda_-,\lambda_+)$ the resolvent
$R_\lambda(H_0)$ has the form:
\begin{equation}\label{represol1}
R_\lambda(H_0)=K(\lambda)+\Theta(\lambda),
\end{equation}
where $\Theta(\lambda)$ is a bounded operator in $L_2(\R^d)$ for any
$\lambda\in(\lambda_-,\lambda_+)$ such that the condition is
satisfied
\begin{eqnarray}\label{bndtht2}
&&\sup_{\lambda\in(\lambda_+-\delta,\;\lambda_+)}\|\Theta(\lambda)\|<\infty,
\end{eqnarray}
$K(\lambda)=\sum_{k=1}^{n_+}K_k^+(\lambda)$ and for any
$k\in\{1,2,\dots,n_+\}$ the restriction
$K_k^+(\lambda)|_{L_{2,0}(\R^d)}$ of $K_k^+(\lambda)$ on
$L_{2,0}(\R^d)$ is an operator with a continuous in
$\R^d\times\R^d\times(\lambda_-,\lambda_+)$ integral kernel
$K_k^+(\e,\bs,\lambda)$;\vskip2mm

(ii) if $d-d_k^+\ge 3$, then for any $k\in\{1,2,\dots,n_+\}$ the
function $K_k^+(\e,\bs,\lambda)$ satisfies the condition
\begin{eqnarray}\label{cndK1}
\sup_{\lambda\in((\lambda_+-\delta,\;\lambda_+),\;\e,\bs\in\R^d}|K_k^+(\e,\bs,\lambda)|<\infty;
\end{eqnarray}
\vskip2mm

(iii) If $d-d_k^+\le 2$, then for any $k\in\{1,2,\dots,n_+\}$,
$\lambda\in(\lambda_-,\,\lambda_+)$ the function
$K_k^+(\e,\bs,\lambda)$ admits the representation
$K_k^+(\e,\bs,\lambda)=G_k^+(\e,\bs,\lambda)+\tilde
K_k^+(\e,\bs,\lambda)$, where $\tilde K_k^+(\e,\bs,\lambda)$ is
continuous in $\R^d\times\R^d\times(\lambda_-,\,\lambda_+)$ and
\vskip1mm

\indent\indent(a) for $d-d_k^+=1$
\begin{eqnarray}\label{dfFpl1}
G_k^+(\e,\bs,\lambda)=\frac{\sqrt{2}\pi}{(2\pi)^d\sqrt{\lambda_+-\lambda}}
\int_{F_k^+}\Qc^+(\e,\bs,\J)\sqrt{m^+(\J)}\,dF(\J)
\end{eqnarray}
where $\Qc^+(\e,\bs,\J)$ is the eigenkernel of $H(\J)$ corresponding
to the eigenvalue $\lambda^+(\J)$, defined in Section
\ref{subsec:spectchar}, $m^+(\J)$ is defined by (\ref{dfmplp}),
(\ref{dfnrmHes}), (\ref{dfHes}), and
\begin{eqnarray*}
\sup_{\lambda\in(\lambda_+-\delta,\;\lambda_+),\;\e,\bs\in\R^d}
\frac{|\tilde K_k^+(\e,\bs,\lambda)|}{(1+|\e-\bs|)}<\infty,
\end{eqnarray*}
\vskip1mm

\indent\indent(b) for  $d-d_k^+=2$
\begin{eqnarray}\label{dfFpl2}
&&\hskip-10mm
G_k^+(\e,\bs,\lambda)=\nonumber\\
&&\hskip-10mm\frac{1}{(2\pi)^{d-1}}\ln\Big(\frac{1}{\lambda_+-\lambda}\Big)
\int_{F_k^+}\Qc^+(\e,\bs,\J)\sqrt{m^+(\J)}\,dF(\J),
\end{eqnarray}
and
\begin{eqnarray*}
&&\sup_{\lambda\in(\lambda_+-\delta,\;\lambda_+),\;\e,\bs\in\R^d}
\frac{|\tilde K_k^+(\e,\bs,\lambda)|}{1+\ln(1+|\e-\bs|)}<\infty.
\end{eqnarray*}
\vskip2mm

The claims analogous to above ones are valid with $\lambda_-$,
$\lambda^-(\J)$, $n_-$, $d_k^-$, $F_k^-$, $\Qc^-(\e,\bs,\J)$,
$m^-(\J)$ and $(\lambda_-,\;\lambda_-+\delta)$ instead of,
respectively, $\lambda_+$, $\lambda^+(\J)$, $n_+$, $d_k^+$, $F_k^+$,
$\Qc^+(\e,\bs,\J)$, $m^+(\J)$ and $(\lambda_+-\delta,\;\lambda_+)$.
\end{proposition}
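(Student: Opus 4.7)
The plan is to exploit the unitary equivalence $UH_0U^{-1}=\int_{\T^d}^\oplus\tilde H(\J)\,d\J$ supplied by the Gelfand-Fourier-Floquet transform (\ref{unit}), which reduces $R_\lambda(H_0)$ to a fiber integral of the fiber resolvents $R_\lambda(\tilde H(\J))$. In a neighborhood $\Oc^+$ of $F^+$, condition (B)-(a) guarantees that $\lambda^+(\J)$ is a simple isolated eigenvalue of $H(\J)$ whose eigenprojection has the rank-one kernel $\Qc^+(\e,\bs,\J)=b^+(\e,\J)\overline{b^+(\bs,\J)}$, real-analytic in $\J$ in the $C(\Omega\times\Omega)$-norm by Corollary \ref{cormainApp}. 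I would write
\[
R_\lambda(H_0)=K(\lambda)+\Theta(\lambda),\qquad K(\lambda)=\int_{\Oc^+}^\oplus\frac{Q^+(\J)}{\lambda^+(\J)-\lambda}\,d\J,
\]
where $\Theta(\lambda)$ collects (i) the contributions of all bands $\lambda_n(\J)$ with $n\neq j+1$, and (ii) the singular band integrated over $\T^d\setminus\Oc^+$. Each piece has fiber resolvents bounded uniformly for $\lambda\in(\lambda_+-\delta,\lambda_+)$, so $\Theta(\lambda)$ is a uniformly bounded operator on $L_2(\R^d)$ and satisfies (\ref{bndtht2}).

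Next I would decompose $\Oc^+=\bigsqcup_{k=1}^{n_+}\Oc_k^+$ into disjoint tubular neighborhoods of the components $F_k^+$ via a smooth partition of unity, giving $K(\lambda)=\sum_{k=1}^{n_+}K_k^+(\lambda)$. Inside each tube I switch to Morse-Bott normal coordinates $\J=(\xi,\eta)$ with $\xi\in F_k^+$ and $\eta\in N_\xi F_k^+\cong\R^{d-d_k^+}$; condition (B)-(c) with the Morse-Bott lemma yields
\[
\lambda^+(\xi,\eta)-\lambda_+=\tfrac{1}{2}\,\mathrm{Hes}_\xi(\lambda^+)\vert_{N_\xi}[\eta,\eta]+O(|\eta|^3).
\]
Smoothly diagonalising the normal Hessian in $\xi$ and rescaling $\eta=H(\xi)^{-1/2}\zeta$ produces the Jacobian $\sqrt{m^+(\xi)}$, so the leading part of $K_k^+$ has kernel
\[
\frac{1}{(2\pi)^d}\int_{F_k^+}\sqrt{m^+(\xi)}\,\tilde\Qc^+(\e,\bs,\xi)\Bigl(\int_{\R^{d-d_k^+}}\frac{d\zeta}{(\lambda_+-\lambda)+|\zeta|^2/2}\Bigr)dF(\xi).
\]

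The inner model integral is elementary: it equals $\sqrt{2}\pi/\sqrt{\lambda_+-\lambda}$ for $d-d_k^+=1$, grows like $2\pi\ln(1/(\lambda_+-\lambda))$ (after the chart cutoff) for $d-d_k^+=2$, and is uniformly bounded in $\lambda$ for $d-d_k^+\geq 3$. Substituting these three values, together with the prefactor $(2\pi)^{-d}$, produces exactly the constants displayed in (\ref{dfFpl1}) and (\ref{dfFpl2}), and establishes (\ref{cndK1}) in the codimension-$\geq 3$ case. The residual kernel $\tilde K_k^+$ gathers three error sources: the cubic correction $O(|\eta|^3)$ in the denominator, the first-order Taylor remainder of $\tilde\Qc^+(\e,\bs,(\xi,\eta))$ along the normal fibre, and the tail $|\eta|>r$ together with the transitions between charts. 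The cubic correction lowers the singular order by at least one (hence contributes uniformly boundedly), while the Taylor remainder supplies an extra factor of $|\eta|$ that regularises the normal integral.

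The main obstacle is the pointwise growth bound on $\tilde K_k^+(\e,\bs,\lambda)$ in terms of $|\e-\bs|$. Writing $\tilde\Qc^+(\e,\bs,\J)=e^{-i\J\cdot(\e-\bs)}\Qc^+(\e,\bs,\J)$ with $\Qc^+$ jointly $\Gamma$-periodic, differentiation in $\J$ introduces the factor $|\e-\bs|$ that drives the growth in the stated remainder estimates. To reach the sharp $(1+|\e-\bs|)$ bound for $d-d_k^+=1$ and $(1+\ln(1+|\e-\bs|))$ bound for $d-d_k^+=2$, one has to split the $\zeta$-integration according to whether $|\zeta|\cdot|\e-\bs|$ is small or large and use non-stationary phase in the large regime, so that the oscillation of $e^{-i\J\cdot(\e-\bs)}$ compensates for the missing powers. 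The treatment of $\lambda_-$ is identical modulo sign changes: near a maximum of $\lambda^-$ the normal Hessian is negative definite, as already encoded in the extra minus sign in definition (\ref{dfmplp}) of $m^-(\J)$.
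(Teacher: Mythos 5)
Your overall plan matches the paper's: pass to the Gelfand--Fourier--Floquet picture, peel off the contribution of the simple band in a tubular neighborhood of $F^+$, use the Morse--Bott normal form (the paper's reducing charts) and a coarea/rescaling argument to produce $\sqrt{m^+(\J)}$ and the one-dimensional model integral $\int_0^r t^{d-d_k^+-1}(t^2+\lambda_+-\lambda)^{-1}\,dt$, then expand and collect remainders. Your model-integral computations (including the factor $2^{(d-d_k^+)/2}$ coming from the Hessian normalization) do reproduce the constants in (\ref{dfFpl1}) and (\ref{dfFpl2}).

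There is, however, a genuine gap in your treatment of the remainder $\tilde K_k^+$. You write that ``the Taylor remainder supplies an extra factor of $|\eta|$ that regularises the normal integral'' and that non-stationary phase handles the rest. This accounts for the \emph{quadratic} remainders in the $t$-expansion (the terms $\br_{k,l}(t,\bq,\by)=O(t^2)$ and $\zeta_{k,l}=O(t^2)$), but not for the \emph{linear} corrections. Concretely: the phase difference $\J(t,\bq,\by)-\bq$ contains a term $\partial_t\J(0,\bq,\by)\,t$ which is only linear in $t$, and the Jacobian/eigenkernel coefficient $D_{k,l}(t,\by,\bq,\e,\bs)$ has a $t$-linear term $g_{k,l}(\by,\bq,\e,\bs)\,t$. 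In codimension one, these produce model integrals of the type $\int_0^r \frac{t\,dt}{t^2+\lambda_+-\lambda}\sim\ln\frac{1}{\lambda_+-\lambda}$, which are \emph{not} uniformly bounded in $\lambda$. A naive bound $|e^{i\beta t}-1|\le|\beta|t$ is not good enough; non-stationary phase in the radial variable alone does not save you either, since the problem occurs at the singularity $\lambda\to\lambda_+$, not in the tail $|\e-\bs|\to\infty$. The paper resolves this with a parity argument in the sphere variable $\by\in S^{d-d_k^+-1}$ (Lemma \ref{lmdifeq}(ii)): both $\partial_t\J(0,\bq,\by)$ and $g_{k,l}(\by,\ldots)$ are \emph{odd} in $\by$, so after averaging over the sphere the exponential is replaced by $\cos(\cdot)-1$ (quadratic near $t=0$) for the phase correction, and by $\sin(\cdot)$ for the Jacobian correction, exactly what Lemma \ref{lmestint} needs to yield the uniform bounds in (\ref{summary1})--(\ref{summary7}). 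You should make this sphere-average cancellation explicit; without it the proof of part (iii) does not close.
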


In particular, in the case of non-degenerate edges of the gap of
$\sigma(H_0)$ the previous proposition acquires the form:

\begin{proposition}\label{prrprres}
Let $(\lambda_-,\,\lambda_+)$ is a gap of the spectrum of the
unperturbed operator $H_0$ such that its edge $\lambda_+$ is
non-degenerate and $\lambda^+(\J)$ be the dispersion function,
branching from the edge $\lambda_+$.  Let us take
$\delta\in(0,\,(\lambda_+-\lambda_-))$.\vskip2mm

(i) The resolvent $R_\lambda(H_0)$ has the form for
$\lambda\in(\lambda_-,\lambda_+)$:
$R_\lambda(H_0)=K(\lambda)+\Theta(\lambda)$, where $\Theta(\lambda)$
is a bounded operator in $L_2(\R^d)$ for any
$\lambda\in(\lambda_-,\lambda_+)$, such that condition
(\ref{bndtht2}) is satisfied and the restriction
$K(\lambda)|_{L_{2,0}(\R^d)}$ of $K(\lambda)$ on $L_{2,0}(\R^d)$ is
an operator with a continuous in
$\R^d\times\R^d\times(\lambda_-,\lambda_+)$ integral kernel
$K(\e,\bs,\lambda)$;  \vskip2mm

(ii) if $d\ge 3$, then the function $K(\e,\bs,\lambda)$ satisfies
the condition
\begin{eqnarray*}
\label{cndK}
\sup_{\lambda\in(\lambda_+-\delta,\;\lambda_+),\;\e,\bs\in\R^d}|K(\e,\bs,\lambda)|<\infty;
\end{eqnarray*}
\vskip2mm

(iii) if $d\le2$ and $\lambda\in(\lambda_-,\,\lambda_+)$, then the
function $K(\e,\bs,\lambda)$ admits the representation
$K(\e,\bs,\lambda)=F^+(\e,\bs,\lambda)+\tilde K^+(\e,\bs,\lambda)$,
where $\tilde K^+(\e,\bs,\lambda)$ is continuous in
$\R^d\times\R^d\times(\lambda_-,\,\lambda_+)$ and \vskip2mm

\indent\indent(a) for $d=1$
\begin{eqnarray*}
F^+(x,s,\lambda)=\sqrt{m_1^+}\;\frac{b^+(x,p_1)\overline{b^+(s,p_1)}}{\sqrt{2(\lambda_+-\lambda)}},
\end{eqnarray*}
where $m_1^+$ is defined by (\ref{dfmupl}) and
\begin{eqnarray}\label{cndK1pl}
\sup_{\lambda\in(\lambda_+-\delta,\;\lambda_+),\;x,s\in\R}
\frac{|\tilde K^+(x,s,\lambda)|}{1+|x-s|}<\infty;
\end{eqnarray}
\vskip2mm

\indent\indent(b) for  $d=2$
\begin{eqnarray*}
F^+(\e,\bs,\lambda)=\frac{1}{2\pi}\ln\left(\frac{1}{\lambda_+-\lambda}\right)
\sum_{k=1}^{n_+}\sqrt{m_k^+}\,b_k^+(\e)\overline{b_k^+(\bs)},
\end{eqnarray*}
where $m_k^+$ is defined by (\ref{dfmkpl}), (\ref{Hess}),
$b_k^+(\e)$ is the Bloch function, corresponding to the dispersion
function $\lambda^+(\J)$ and the quasi-momentum $\J=\J_k^+$, i.e.
$b_k^+(\e)=b^+(\J_k^+)$; furthermore
\begin{eqnarray*}
\sup_{\lambda\in(\lambda_+-\delta,\;\lambda_+),\;\e,\bs\in\R^2}
\frac{|\tilde K^+(\e,\bs,\lambda)|}{1+\ln(1+|\e-\bs|)}<\infty.
\end{eqnarray*}
\vskip2mm

 The claims analogous to above ones are valid with
$\lambda_-$, $\lambda^-(\J)$, $n_-$, $\J_k^-$, $b_k^-(\e)$, $m_k^-$
and $(\lambda_-,\;\lambda_-+\delta)$ instead of, respectively,
$\lambda_+$, $\lambda^+(\J)$, $n_+$, $\J_k^+$, $b_k^+(\e)$, $m_k^+$
and $(\lambda_+-\delta,\;\lambda_+)$.
\end{proposition}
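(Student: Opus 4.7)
My plan is to obtain Proposition \ref{prrprres} as the specialization of the more general Proposition \ref{prrprres1} to the case where each connected component of the extremal set reduces to a single point. Condition (A) is precisely condition (B) with $d_k^\pm = 0$ and the same $n_\pm$: each $\J_k^\pm$ is an isolated Morse critical point, the normal subspace $N_{\J_k^\pm}$ coincides with the full tangent space $T_{\J_k^\pm}(\T^d)$, so the normal Hessian (\ref{dfnrmHes}) equals the full Hessian (\ref{Hess}) and $m^+(\J_k^+)=m_k^+$. The integrals $\int_{F_k^+}\Qc^+(\e,\bs,\J)\sqrt{m^+(\J)}\,dF(\J)$ in (\ref{dfFpl1})--(\ref{dfFpl2}) then collapse to $\sqrt{m_k^+}\,b^+(\e,\J_k^+)\overline{b^+(\bs,\J_k^+)} = \sqrt{m_k^+}\,b_k^+(\e)\overline{b_k^+(\bs)}$. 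The prefactors specialize to $\sqrt{2}\pi/(2\pi)^d = 1/\sqrt{2}$ for $d=1$ and $1/(2\pi)^{d-1} = 1/(2\pi)$ for $d=2$, reproducing exactly the forms of $F^+(\e,\bs,\lambda)$ in (iii-a) and (iii-b). Claim (ii) for $d\ge 3$ and the uniform norm bound (\ref{bndtht2}) on $\Theta(\lambda)$ transfer verbatim, and the statements at $\lambda_-$ are parallel.

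For transparency I would also sketch the mechanism driving Proposition \ref{prrprres1} itself. The starting point is the Gelfand-Fourier-Floquet transform (\ref{unit}), which identifies $H_0$ with the direct integral $\int_{\T^d}^\oplus H(\J)\,d\J$ and yields the resolvent kernel
\begin{equation*}
R_\lambda(H_0)(\e,\bs) = \frac{1}{(2\pi)^d}\int_{\T^d}\sum_{n=1}^\infty\frac{b_n(\e,\J)\overline{b_n(\bs,\J)}}{\lambda_n(\J)-\lambda}\,d\J
\end{equation*}
on $L_{2,0}(\R^d)$. Next I would peel off the critical band $\lambda^+(\J)=\lambda_{j+1}(\J)$ localized to a small neighborhood $\bigcup_k\Oc^+(\J_k^+)$ of $F^+$, where Corollary \ref{cormainApp} furnishes a smooth choice of Bloch functions $b^+(\e,\J)$; all remaining bands, together with $\lambda^+$ outside this neighborhood, stay at positive distance from $\lambda_+$ and so assemble into a $\Theta(\lambda)$ uniformly bounded on $(\lambda_+-\delta,\lambda_+)$. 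A partition of unity $\{\chi_k\}$ subordinate to $\{\Oc^+(\J_k^+)\}$ then splits the singular piece, and the Morse lemma (justified by (A)-(b)) produces near each $\J_k^+$ a local diffeomorphism $\J=\J_k^++\phi_k(\bq)$ normalizing $\lambda^+(\J)-\lambda_+=|\bq|^2$, with $|\det\phi_k'(0)|=2^{d/2}\sqrt{m_k^+}$. Extracting the values at $\bq=0$ of the smooth factor $b^+(\e,\J)\overline{b^+(\bs,\J)}\chi_k(\J)|\det\phi_k'|$ gives a principal contribution of the form
\begin{equation*}
\frac{b_k^+(\e)\overline{b_k^+(\bs)}\sqrt{2^d m_k^+}}{(2\pi)^d}\int\tilde\chi_k(\bq)\,\frac{e^{i\phi_k(\bq)\cdot(\e-\bs)}}{|\bq|^2+(\lambda_+-\lambda)}\,d\bq,
\end{equation*}
whose $\lambda\uparrow\lambda_+$ asymptotic is $\pi/\sqrt{\lambda_+-\lambda}$ in $d=1$, $\pi\ln(1/(\lambda_+-\lambda))$ in $d=2$, and a finite limit in $d\ge 3$; combined with the specialization above this reproduces $F^+$.

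The hard part will be the pointwise $|\e-\bs|$-growth bounds on the remainder $\tilde K^+$ in (iii). The model identities $\int_\R e^{iq(x-s)}/(q^2+\eta^2)\,dq=(\pi/\eta)e^{-\eta|x-s|}$ and $\int_{\R^2}e^{i\bq\cdot(\e-\bs)}/(|\bq|^2+\eta^2)\,d\bq=2\pi K_0(\eta|\e-\bs|)$ show that the discrepancy between the principal kernel and its $\eta\downarrow 0$ leading term grows exactly like $|x-s|$, respectively like $\ln(1+|\e-\bs|)$, uniformly in $\eta=\sqrt{\lambda_+-\lambda}$. The secondary corrections --- the $\J$-dependence of $e^+(\e,\J)$, the higher-order Morse deviation of $\lambda^+(\J)-\lambda_+$ from $|\bq|^2$, and the smooth cutoff $\tilde\chi_k$ --- each insert an extra factor of $|\bq|$ in the numerator, weakening the singularity by one power and, after standard integration-by-parts in $\bq$, producing at worst the same linear (for $d=1$) or logarithmic (for $d=2$) growth in $|\e-\bs|$ that is absorbed into the stated bounds on $\tilde K^+$.
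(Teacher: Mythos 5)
Your proposal is correct and mirrors the paper's own proof, which derives Proposition \ref{prrprres} by running the argument of Proposition \ref{prrprres1} with each $F_k^+$ reduced to the singleton $\{\J_k^+\}$, so that the integrals over $F_k^+$ in (\ref{dfFpl1})--(\ref{dfFpl2}) collapse to point evaluations. Your bookkeeping --- that the normal Hessian then coincides with the full Hessian, that $m^+(\J_k^+)=m_k^+$, and that the prefactors $\sqrt{2}\pi/(2\pi)^d$ and $1/(2\pi)^{d-1}$ specialize to $1/\sqrt{2}$ for $d=1$ and $1/(2\pi)$ for $d=2$ --- is exactly the reduction the paper carries out.
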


\subsection{Auxiliary claims}\label{sec:auxclaims}

For the proof of the propositions, formulated above, we need some
auxiliary claims.

\subsubsection{Estimates of some integrals} \label{sec:estintegr}

\begin{lemma}\label{lmestint}
If $\rho>0$, $\alpha>1$, $\mu>0$, $\beta\in\R$, $n\in\{1,\,2\}$ and
$\phi:\;[0,\infty)\rightarrow\R$ is a continuous function such that
$\bar\phi=\sup_{u\in[0,\rho]}\frac{\phi(u)}{u^2}<\infty$, then the
following estimates are valid:
\begin{eqnarray}\label{estint1}
&&I_1=\int_0^{\rho\alpha}\frac{u^{n-1}|\exp(i\phi(u))-1|}{u^2+\mu}\,du\le\\
&&\left\{\begin{array}{ll}
\bar\phi\rho+2\rho^{-1}(1-\alpha^{-1})&\rm{for}\quad n=1,\\
\bar\phi\rho^2/2+2\ln\alpha &\rm{for}\quad n=2;
\end{array}\right.\nonumber
\end{eqnarray}
\begin{eqnarray}\label{estint2}
&&I_2=\int_0^{\rho\alpha}\frac{u^{n-1}(1-\cos(\beta
u))}{u^2+\mu}\,du\le\\
&&\left\{\begin{array}{ll}
\beta^2\rho/2+2\rho^{-1}(1-\alpha^{-1})&\rm{for}\quad n=1,\\
\beta^2\rho^2/4+2\ln\alpha &\rm{for}\quad n=2;
\end{array}\right.\nonumber
\end{eqnarray}
\end{lemma}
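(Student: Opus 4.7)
The plan is to split each of $I_1$ and $I_2$ at the point $u=\rho$ and bound the inner and outer pieces separately, using a Taylor-type estimate near $u=0$ and the crude size bound $|\exp(i\phi(u))-1|\le 2$, $1-\cos(\beta u)\le 2$ for the outer piece. Throughout, the key device for keeping the bounds uniform in $\mu>0$ is to drop $\mu$ from the denominator via $u^{2}+\mu\ge u^{2}$ (and never via $u^{2}+\mu\ge\mu$, which would introduce a $\mu$-dependence).

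For the inner piece $\int_{0}^{\rho}$ of $I_{1}$, I would use the standard inequality $|\exp(i\phi(u))-1|\le|\phi(u)|$ together with $|\phi(u)|\le\bar\phi\,u^{2}$ on $[0,\rho]$ (which is how the hypothesis on $\bar\phi$ is meant to be read; if $\phi$ takes negative values one reinterprets $\bar\phi$ as $\sup|\phi(u)|/u^{2}$). Combined with $(u^{2}+\mu)^{-1}\le u^{-2}$, this majorizes the integrand by $\bar\phi\,u^{n-1}$, and integrating from $0$ to $\rho$ produces $\bar\phi\,\rho^{n}/n$, which is $\bar\phi\rho$ for $n=1$ and $\bar\phi\rho^{2}/2$ for $n=2$. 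For the outer piece $\int_{\rho}^{\rho\alpha}$, I would use $|\exp(i\phi(u))-1|\le 2$ and again $(u^{2}+\mu)^{-1}\le u^{-2}$, so the integrand is at most $2u^{n-3}$; an elementary integration yields $2\rho^{-1}(1-\alpha^{-1})$ for $n=1$ and $2\ln\alpha$ for $n=2$. Adding the two contributions gives exactly the claimed bounds on $I_{1}$.

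The estimate of $I_{2}$ is carried out in the same two-piece manner, with the familiar inequality $1-\cos(\beta u)\le\beta^{2}u^{2}/2$ playing the role previously played by $|\exp(i\phi(u))-1|\le\bar\phi\,u^{2}$. The inner piece is then bounded by $\int_{0}^{\rho}(\beta^{2}/2)u^{n-1}\,du=\beta^{2}\rho^{n}/(2n)$, yielding $\beta^{2}\rho/2$ for $n=1$ and $\beta^{2}\rho^{2}/4$ for $n=2$; the outer piece is handled by the trivial bound $1-\cos(\beta u)\le 2$ and produces the same $2\rho^{-1}(1-\alpha^{-1})$ (resp.\ $2\ln\alpha$) remainder as before.

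There is no substantive obstacle in the argument. The only point requiring attention is the uniformity in the parameter $\mu$, which is secured by systematically using $u^{2}+\mu\ge u^{2}$; the resulting estimates are then independent of $\mu$, as the lemma requires.
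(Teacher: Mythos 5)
Your proof is correct and follows essentially the same route as the paper: split at $u=\rho$, bound the integrand near zero by $|\exp(i\phi(u))-1|\le|\phi(u)|\le\bar\phi u^2$ (resp.\ $1-\cos(\beta u)\le\beta^2u^2/2$), bound the tail trivially by $2$, and drop $\mu$ uniformly via $u^2+\mu\ge u^2$. The only cosmetic difference is that the paper derives $|\exp(i\phi(u))-1|\le|\phi(u)|$ from the integral representation $\exp(i\phi)-1=\int_0^1 i\phi\,e^{i\tau\phi}\,d\tau$ rather than citing it as standard, and both you and the paper implicitly read $\bar\phi$ as bounding $|\phi(u)|/u^2$.
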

\begin{eqnarray}\label{estint3}
I_3=\Big|\int_0^{\rho\alpha}\frac{u^n\sin(\beta
u)}{u^2+\mu}\,du\Big|\le\left\{\begin{array}{ll}
|\beta|\rho+\ln\alpha&\rm{for}\quad n=1,\\
|\beta|\rho^2/2+\rho(\alpha-1)&\rm{for}\quad n=2.
\end{array}\right.
\end{eqnarray}
\begin{proof}
We have for $u\in[0,\rho]$:
\begin{eqnarray*}
|\exp(i\phi(u))-1|=\Big|\int_0^1\partial_\tau\exp(i\tau\phi(u))\,d\tau\Big|=
\Big|\int_0^1\exp(i\tau\phi(u))i\phi(u)\,d\tau\Big|\le\bar\phi u^2,
\end{eqnarray*}
hence
\begin{eqnarray*}
&&I_1=\int_0^\rho\frac{u^{n-1}|\exp(i\phi(u))-1|}{u^2+\mu}\,du+
\int_\rho^{\rho\alpha}\frac{u^{n-1}|\exp(i\phi(u))-1|}{u^2+\mu}\,du\le\\
&&\bar\phi\int_0^\rho u^{n-1}\,du+2\int_\rho^\rho\alpha u^{n-3}\,du.
\end{eqnarray*}
Therefore, (\ref{estint1}) is valid. Estimates (\ref{estint2}) and
(\ref{estint3}) are proved analogously.
\end{proof}

\subsubsection{Some geometric claims} \label{sec:geomclaims}

Assume that the edge $\lambda_+\,(\lambda_-)$ of the gap
$(\lambda_-,\,\lambda_+)$ of the spectrum of the unperturbed
operator $H_0$ satisfies condition (B) of Section
\ref{subsec:spectchar}. Taking into account that $F^+\,(F^-)$ is the
set of all minimum (maximum) points of the dispersion function
$\lambda^+\,(\lambda^-)$ and using the Morse-Bott lemma
(\cite{Ban-Hur}), we obtain that for any $k\in\{1,2,\dots
n_+\}\;(k\in\{1,2,\dots n_-\})$ and $\J_\star\in F^+_k\,(\J_\star\in
F^-_k)$ on the torus $\T^d$ there exists a smooth chart
$(U,\phi)\,(\J_\star\in U)$ such that
$\phi:\,U\rightarrow\R^{d_k^+}\times\R^{d-d_k^+}\;\big(\phi:\,U\rightarrow\R^{d_k^-}\times\R^{d-d_k^-}\big)$
and

(a) $\phi(\J_\star)=0$;

(b) $\phi(U\cap F^+_k)=\{(x,y)\in\R^{d_k^+}\times\R^{d-d_k^+}\;\vert\;y=0\}$
$(\phi(U\cap F^-_k)=\{(x,y)\in\R^{d_k^-}\times\R^{d-d_k^-}\;\vert\;y=0\})$;

(c)
$(\lambda^+\circ\phi^{-1})(x,y)=\sum_{l=1}^{d-d_k^+}y_l^2+\lambda_+$
$\big((\lambda^-\circ\phi^{-1})(x,y)=-\sum_{l=1}^{d-d_k^-}y_l^2+\lambda_-\big)$.

The chart $(U,\phi)$ is called the {\it reducing chart} at the point
$\J_\star\in F^+_k\,(\J_\star\in F^-_k)$.

\begin{lemma}\label{lmredchrt}
Let $(U,\phi)$ be a reducing chart at a point $\J_\star\in
F^+_k\,(\J_\star\in F^-_k)$ defined above and $\Phi=P\circ\phi$,
where
$P:\;\R^d\rightarrow\R^{d-d_k^+}\;\big(P:\;\R^d\rightarrow\R^{d-d_k^-}\big)$
is the orthogonal projection on the subspace
$\R^{d-d_k^+}\;(\R^{d-d_k^-})$ of $\R^d$. Then for any $\J\in
F^+_k\cap U\,(\J\in F^-_k\cap U)$ the normal Jacobian of $\Phi$
$\;NJ\,\Phi(\J)=\det\big(\Phi^\prime(\J)\vert_{N_\J}\big)$ is equal
to
$\sqrt{\frac{NJ\,\rm{Hes}_\J(\lambda^+)}{2^{d-d^+_k}}}\;\big(\sqrt{\frac{NJ\,\rm{Hes}_\J(\lambda^-)}{2^{d-d^-_k}}}\big)$.
Hence, in particular, for any $\J\in F^+_k\cap U\,(\J\in F^-_k\cap
U)$
$\;\mathrm{Im}\big(\Phi^\prime(\J)\big)=\R^{d-d_k^+}\;\big(\mathrm{Im}\big(d\Phi(\J)\big)=\R^{d-d_k^-}\big)$
and the mapping
$\Phi^\prime(\J)\vert_{N_\J}:\,N_\J\rightarrow\R^{d-d_k^+}\;
\big(\Phi^\prime(\J)\vert_{N_\J}:\,N_\J\rightarrow\R^{d-d_k^-}\big)$
is a linear isomorphism. Recall that
$NJ\,\rm{Hes}_\J(\lambda^+)\;\big(NJ\,\rm{Hes}_\J(\lambda^-)\big)$
is the normal Hessian of $\lambda^+\;(\lambda^-)$ at the point $\J$,
defined by (\ref{dfnrmHes}), and
$\rm{Hes}_\J(\lambda^+)\;(\rm{Hes}_\J(\lambda^-))$ is the Hessian
operator of $\lambda^+\;(\lambda^-)$ at the point $\J$, defined by
(\ref{dfHes}).
\end{lemma}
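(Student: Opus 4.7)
The plan is to exploit the defining identity $\lambda^+(\J) - \lambda_+ = |\Phi(\J)|^2$, which holds on $U$ by property (c) of the reducing chart together with $\Phi = P\circ\phi$, and to differentiate it twice at a point $\J \in F^+_k \cap U$, where $\Phi(\J) = 0$. The first derivative vanishes automatically, consistent with $F^+_k$ being the extremal set of $\lambda^+$, while the second derivative collapses (the term carrying $\Phi(\J)$ drops out) to
\begin{equation*}
d^2\lambda^+(\J)[u,v] = 2\,\Phi'(\J)[u]\cdot\Phi'(\J)[v],\qquad u,v\in T_\J(\T^d).
\end{equation*}
Translating via (\ref{dfHes}), this is equivalent to the operator identity $\mathrm{Hes}_\J(\lambda^+) = 2\,\Phi'(\J)^\star\Phi'(\J)$ on $T_\J(\T^d)$.

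Next I would identify $\ker\Phi'(\J)$ with $T_\J(F^+_k)$. In the local coordinates supplied by $\phi$, the submanifold $F^+_k$ is cut out by $y=0$ and $\Phi$ is precisely the projection $(x,y)\mapsto y$; hence $\Phi'(\J)$ annihilates exactly those tangent vectors whose $y$-components vanish, that is, exactly $T_\J(F^+_k)$. Since $\dim N_\J = d-d_k^+ = \dim\R^{d-d_k^+}$, the restriction $A := \Phi'(\J)\vert_{N_\J}\colon N_\J\to\R^{d-d_k^+}$ is therefore a linear isomorphism, which in particular implies $\mathrm{Im}(\Phi'(\J)) = \R^{d-d_k^+}$.

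Finally I would restrict the operator identity to $N_\J$. From $\mathrm{Hes}_\J(\lambda^+)[u]\cdot v = 2(Au)\cdot(Av)$ and the fact that $Av=0$ for $v\in T_\J(F^+_k)$, the output $\mathrm{Hes}_\J(\lambda^+)[u]$ is orthogonal to $T_\J(F^+_k)$ whenever $u\in N_\J$; thus $\mathrm{Hes}_\J(\lambda^+)$ preserves the orthogonal decomposition $T_\J(\T^d) = T_\J(F^+_k)\oplus N_\J$ and on $N_\J$ acts as $2A^\star A$. Taking determinants gives
\begin{equation*}
NJ\,\mathrm{Hes}_\J(\lambda^+) = \det(2A^\star A) = 2^{d-d_k^+}(\det A)^2,
\end{equation*}
whence $|\det A| = \sqrt{NJ\,\mathrm{Hes}_\J(\lambda^+)/2^{d-d_k^+}}$, which is by definition $NJ\,\Phi(\J)$. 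The $\lambda^-$ case is treated identically after replacing the reducing-chart identity by $\lambda_- - \lambda^-(\J) = |\Phi(\J)|^2$. The only genuine obstacle is the middle step: confirming that the restricted Hessian really does give an endomorphism of $N_\J$ whose determinant agrees with that of the bilinear form, and this obstacle dissolves exactly because the kernel identification forces $\mathrm{Hes}_\J(\lambda^+)$ to preserve the normal/tangent splitting.
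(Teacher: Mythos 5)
Your proof is correct and follows essentially the same route as the paper: both differentiate the reducing-chart identity to obtain $\mathrm{Hes}_\J(\lambda^+)=2\,\Phi'(\J)^\star\Phi'(\J)$, both observe that $T_\J(F^+_k)\subseteq\ker\Phi'(\J)$ so the Hessian preserves the splitting $T_\J(\T^d)=T_\J(F^+_k)\oplus N_\J$, and both take determinants on $N_\J$. The only cosmetic difference is that you compute $d^2\lambda^+$ directly from $\lambda^+(\J)-\lambda_+=|\Phi(\J)|^2$ and get the isomorphism from the explicit identification $\ker\Phi'(\J)=T_\J(F^+_k)$ by dimension count, while the paper differentiates $\lambda^+\circ\phi^{-1}$ via the chain rule and infers the isomorphism afterwards from the non-degeneracy of the normal Hessian; these are equivalent.
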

\begin{proof}
We shall prove the claim only for the function $\lambda^+(\J)$,
because for $\lambda^-(\J)$ it is proved analogously. In view of
condition (c) for the reducing chart,, for $\J\in F^+_k\cap U$ and
$y=\phi(\J)$ $\lambda^+\circ\phi^{-1}(y)=Py\cdot y$. Then for
$v,w\in\R^d$
\begin{equation*}
d(\lambda^+\circ\phi^{-1}(y))[v]=d\lambda^+(\phi^{-1}(y))[d(\phi^{-1}(y))[v]]=2v\cdot
Py,
\end{equation*}
\begin{eqnarray*}
&&d^2(\lambda^+\circ\phi^{-1}(y))[v,w]=d^2\lambda^+(\phi^{-1}(y))[d(\phi^{-1}(y))[v],\,d(\phi^{-1}(y))[w]]+\\
&&d\lambda^+(\phi^{-1}(y))[d^2(\phi^{-1}(y))[v,w]]=2Pv\cdot w
\end{eqnarray*}
(we identify the tangent and the second tangent bundles over $\R^d$
and the flat torus $T^d$ with $\R^d\times\R^d$ and
$\R^d\times\R^d\times\R^d$ respectively). Since $\J=\phi^{-1}(y)$ is
a minimum point of the function $\lambda^+$,
$d\lambda^+(\phi^{-1}(y))=0$. Hence we get, in view of definition
(\ref{dfHes}) of the Hessian operator:
$\rm{Hes}_\J(\lambda^+)d(\phi^{-1}(y))[v]\cdot
d(\phi^{-1}(y))[w]=2Pv\cdot Pw$, that is
\begin{equation}\label{Heseq}
\forall\,s,t\in\R^d:\quad\rm{Hes}_\J(\lambda^+)s\cdot
t=2d\Phi(\J)[s]\cdot d\Phi(\J)[t].
\end{equation}
In view of condition (b) for the reducing chart, $\Phi(F^+_k\cap
U)=\{0\}$, hence for any $\J\in F^+_k\cap U$
$T_\J(F^+_k)\subseteq\ker(\Phi^\prime(\J))$. The last fact and
(\ref{Heseq}) imply that
$T_\J(F^+_k)\subseteq\ker\big(\rm{Hes}_\J(\lambda^+)\big)$. Hence
since the operator $\rm{Hes}_\J(\lambda^+)$ is self-adjoint, it maps
the subspace $N_\J^+$ into itself. Then by (\ref{Heseq})
$\rm{Hes}_\J(\lambda^+)\vert_{N_\J^+}=2\big(\Phi^\prime(\J)
\vert_{N_\J}\big)^\star\big(\Phi^\prime(\J)\vert_{N_\J}\big)$. The
last equality implies the desired claim.
\end{proof}

\begin{lemma}\label{lmdifeq}
Let $(U,\phi)$ be a reducing chart at a point $\J_\star\in
F^+_k\,(\J_\star\in F^-_k)$ defined above and $\Phi$ is the same
mapping as in Lemma \ref{lmredchrt}. Consider the differential
equation in $U$:
\begin{equation}\label{diffeq}
\frac{d\J}{dt}=\Big(\Phi^\prime(\J)\vert_{N_\J}\Big)^{-1}\by,\quad\by\in\R^{d-d_k^+}\;(\by\in\R^{d-d_k^-}).
\end{equation}
Let $\J(t,\J_0,\by)$ be the flow of this equation, that is the
solution of it satisfying the initial condition
$\J(0,\J_0,\by)=\J_0$. Then it is possible to restrict the
neighborhood $U$ of $\J_\star$ such that\vskip2mm

(i) for some $r>0$, any $t\in[0,r]$ and any $\by$ belonging to the
unit sphere
$S^{d-d_k^+-1}\subset\R^{d-d_k^+}\;\big(S^{d-d_k^--1}\subset\R^{d-d_k^-}\big)$
the set $F_{t\by}=\Phi^{-1}(t\by)$ is a $d_k^+\;(d_k^-)$-dimensional
smooth submanifold of $\T^d$ (in particular, $F_0=F^+_k\cap
U\;\big(F_0=F^-_k\cap U\big)$), $\J(t,F^+_k\cap U,\by)=F_{t\by}$
$\big(\J(t,F^+_k\cap U,\by)=F_{t\by}\big)$ and the mapping
$\J(t,\cdot,\by):\;F^+_k\cap U\rightarrow
F_{ty}\;\big(\J(t,\cdot,\by):\;F^-_k\cap U\rightarrow F_{t\by}\big)$
is a diffeomorphism;\vskip2mm

(ii) for $\bq\in F^+_k\cap U\;(\bq\in F^-_k\cap U)$ the functions
$\partial_t\J(t,\bq,\by)\vert_{t=0}$ and
$\partial_tTJ\J(t,\bq,\by)\vert_{t=0}$ are odd w.r.t. $\by$. Here
$TJ\J(t,\bq,\by)$ is the tangential Jacobian of the mapping
$\J(t,\cdot,\by)$, that is
\begin{eqnarray*}
&&TJ\J(t,\bq,\by)=\det\big(\partial_{\bq}\J(t,\bq,\by)\vert_{T_{\bq}(F^+_k)}\big)\\
&&\Big(TJ\J(t,\bq,\by)=\det\big(\partial_{\bq}\J(t,\bq,\by)\vert_{T_{\bq}(F^-_k)}\big)\Big).
\end{eqnarray*}
\end{lemma}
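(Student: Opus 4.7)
The plan is to realize the right-hand side of (\ref{diffeq}) as a smooth vector field on $U$ whose time-$t$ flow translates the $\Phi$-image by $t\by$; this makes the invariance asserted in claim (i) transparent, and then the linear dependence of the vector field on $\by$ immediately yields the oddness statements in claim (ii).

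For claim (i), I would first note that in the reducing chart $\phi:U\to\R^{d_k^+}\times\R^{d-d_k^+}$ the map $\Phi=P\circ\phi$ becomes the standard projection $(x,y)\mapsto y$, so $\Phi'(\J)$ is surjective at every $\J\in U$. Hence, after shrinking $U$ so that $\Phi(U)$ contains a closed ball of radius $r$ around $0$, the regular value theorem gives that each $F_{t\by}=\Phi^{-1}(t\by)$ with $t\in[0,r]$ and $\by\in S^{d-d_k^+-1}$ is a $C^\infty$-submanifold of $\T^d$ of dimension $d_k^+$, and $F_0=F^+_k\cap U$ by condition (b) of the reducing chart. I would then extend the definition of $N_\J$ off $F^+_k$ by setting $N_\J:=(\ker\Phi'(\J))^\perp$; since $\Phi'(\J)$ is everywhere surjective on $U$, the restriction $\Phi'(\J)|_{N_\J}$ is an isomorphism onto $\R^{d-d_k^+}$, so (\ref{diffeq}) defines a smooth vector field $X(\J,\by)=(\Phi'(\J)|_{N_\J})^{-1}\by$ depending linearly on $\by$.

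The key identity is obtained by differentiating $\Phi$ along a trajectory:
\begin{equation*}
\frac{d}{dt}\Phi\bigl(\J(t,\J_0,\by)\bigr)=\Phi'(\J)\bigl[(\Phi'(\J)|_{N_\J})^{-1}\by\bigr]=\by,
\end{equation*}
hence $\Phi(\J(t,\J_0,\by))=\Phi(\J_0)+t\by$. For $\J_0\in F^+_k\cap U$ this forces $\J(t,\J_0,\by)\in F_{t\by}$, so $\J(t,F^+_k\cap U,\by)\subseteq F_{t\by}$. Standard ODE theory makes $\J(t,\cdot,\by)$ a diffeomorphism of $U$ onto its image, and its restriction to $F^+_k\cap U$ is therefore an injective smooth map into the $d_k^+$-dimensional submanifold $F_{t\by}$; running the flow backward from a point of $F_{t\by}$ near $\J_\star$ produces a preimage in $F^+_k\cap U$, and a further shrinking of $U$ and $r$ (uniform existence time follows from compactness of $S^{d-d_k^+-1}$ and of a neighborhood of $\J_\star$ in $F^+_k$) makes the restricted flow a diffeomorphism onto $F_{t\by}$.

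For claim (ii), I would exploit that $X(\J,\by)$ is linear in $\by$ for fixed $\J$. Then $\partial_t\J(t,\bq,\by)|_{t=0}=X(\bq,\by)$ is linear, hence odd in $\by$. For the tangential Jacobian I would use the variational equation $\partial_t(\partial_\bq\J)=X'(\J,\by)\,\partial_\bq\J$ along the trajectory, which at $t=0$ reads $\partial_t\partial_\bq\J|_{t=0}=X'(\bq,\by)$ since $\partial_\bq\J(0,\bq,\by)$ is the identity; combined with Jacobi's formula for the derivative of a determinant this yields
\begin{equation*}
\partial_t TJ\J(t,\bq,\by)|_{t=0}=\operatorname{tr}\bigl(X'(\bq,\by)|_{T_\bq(F^+_k)}\bigr),
\end{equation*}
which is again linear in $\by$, and therefore odd. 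The argument for $F^-_k$ is identical since only the linearity of $X$ in $\by$ is used; the sign flip in the Morse--Bott normal form for $\lambda^-$ is irrelevant. The main obstacle I anticipate is not conceptual but bookkeeping: one must define $N_\J$ coherently off $F^+_k$ and guarantee a uniform time of existence for the flow as $(\J_0,\by)$ ranges over a compact set, which is handled by continuity of $X$ together with the compactness arguments just mentioned; once the variational equation and Jacobi's formula are in place, the oddness assertions follow from linearity in $\by$ with no further work.
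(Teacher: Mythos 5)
Your proposal follows essentially the same route as the paper: claim (i) rests on the identity $\Phi\bigl(\J(t,\J_0,\by)\bigr)=\Phi(\J_0)+t\by$, obtained by differentiating $\Phi$ along a trajectory and integrating, together with the regular-value theorem and local well-posedness of the flow; claim (ii) rests on the linearity in $\by$ of equation (\ref{diffeq}) and of its linearization along a trajectory. The only point worth flagging is your use of Jacobi's formula in (ii). The linearization $Y(t)=\partial_\bq\J(t,\bq,\by)\vert_{T_\bq(F^+_k)}$ maps $T_\bq(F^+_k)$ into the moving subspace $T_{\J(t)}(F_{t\by})$, not back into $T_\bq(F^+_k)$, so the plain identity $\partial_t\det Y\vert_{t=0}=\mathrm{tr}\bigl(\partial_tY\vert_{t=0}\bigr)$ does not apply verbatim, and the expression $\mathrm{tr}\bigl(X'(\bq,\by)\vert_{T_\bq(F^+_k)}\bigr)$ is not literally a well-posed trace, since $X'(\bq,\by)$ need not preserve $T_\bq(F^+_k)$. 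The correct derivative is found either through the Gram determinant $\sqrt{\det\bigl(\langle Y(t)e_i,Y(t)e_j\rangle\bigr)}$, which gives $\mathrm{tr}\bigl(P\,X'(\bq,\by)\,P\bigr)$ with $P$ the orthogonal projection onto $T_\bq(F^+_k)$, or, as the paper does, by differentiating the pullback of the volume form on $F_{t\by}$ via a wedge-product expansion; in either form it is still linear in $\by$, so your conclusion that $\partial_t TJ\J(t,\bq,\by)\vert_{t=0}$ is odd in $\by$ is correct, and the approach is sound once that bookkeeping is done.
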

\begin{proof}
Consider only the case where $\J_\star\in F^+_k$, because the case
$\J_\star\in F^-_k$ is treated analogously. Using the fact that
$\Phi(F_k^+\cap U)=\{0\}$, we see that if $\bq\in F_k^+\cap U_{k,l}$
and for some $\by\in S^{d-d_k^+-1}$, $t>0$ the solution
$\J(\cdot,\bq,\by)$ of (\ref{diffeq}) exists in the interval
$[0,t]$, then
$\Phi^\prime(\J(\tau,\bq,\by))\partial_\tau\J(\tau,\bq,\by)=\by\;(\tau\in[0,t])$.
Integrating the both sides of the last equality by $\tau$ over
$[0,t]$, we get: $\Phi(\J(t,\bq,\by))=t\by$, that is
$\J(t,\bq,\by)\in\Phi^{-1}(t\by)$. Using the theorem on the local
existence and uniqueness of solution of the Cauchy problem for a
dynamical system in a Banach space, we can restrict the neighborhood
$U$ of $\J_\star$ such that claim (i) is valid.

Let us prove claim (ii). The fact that the function
$\partial_t\J(t,\bq,\by)\vert_{t=0}$ is odd w.r. to $\by$ for
$\bq\in F_k^+\cap U$ follows immediately from (\ref{diffeq}) and the
equality $\J(0,\bq,\by)=\bq$.  As it is known, the derivative
$Y(t)=\partial_{\bq}\J(t,\bq,\by)$ satisfies the following linear
equation, which is the linearization of equation (\ref{diffeq}) at
its solution $\J(t)=\J(t,\bq,\by)$:
\begin{equation}\label{lineq}
\frac{dY}{dt}=(A(t)Y)\by,
\end{equation}
where $A(t)=d_\J\big(\Phi^\prime(\J(t))\vert_{N_{\J(t)}}\big)^{-1}$.
Furthermore, $Y(0)=I$. In view of claim (i), for any $\bq\in F_k^+$
and $t\in[o,r]$ the operator $Y(t)\vert_{T_{\bq}(F_k^+)}$ realizes a
linear isomorphism between $T_{\bq}(F_k^+)$ and
$T_{\J(t)}(F_{t\by})$. Let $dv=dp_1\wedge dp_2\wedge\dots\wedge
dp_d$ be the volume form on the flat torus $\T^d$. As it is known,
the volume form $dF_{t\by}$ on the submanifold $F_{t\by}$ has the
form $dF_{t\by}(\J)=dv\vert_{T_{\J}(F_{t\by})}\;(\J\in F_{t\by})$.
Let $d^\star F_{t\by}$ be the pullback of the form $dF_{t\by}$
w.r.t. the mapping $\J(t,\cdot,\by)\vert_{F_k^+\cap U}$. Taking into
account equation (\ref{lineq}) and the equalities $F_0=F_k^+\cap U$,
$Y(0)=I$ and $\J(0)=\J(0,\bq,\by)=\bq\;(\bq\in F_k^+\cap U)$, we
have for $\bs=(s_1,s_2,\dots,s_d)\in T_{\bq}(F_k^+)$:
\begin{eqnarray*}
&&\frac{d}{dt}d^\star F_{t\by}[\bs]\vert_{t=0}=\sum_{i=1}^d(Y(t)\bs)_1\wedge(Y(t)\bs)_2\wedge\dots\\
&&\wedge(Y(t)\bs)_{i-1}\wedge((A(t)Y(t)\bs)\by)_i\wedge\wedge(Y(t)\bs)_{i+1}\wedge\dots\wedge(Y(t)\bs)_d\vert_{t=0}=\\
&&\sum_{i=1}^ds_1\wedge s_2\wedge\dots\wedge
s_{i-1}\wedge((A(0)\bs)\by)_i\wedge s_{i+1}\wedge\dots\wedge s_d.
\end{eqnarray*}
The last representation implies that $\frac{d}{dt}d^\star
F_{t\by}[\bs]\vert_{t=0}=-\frac{d}{dt}d^\star
F_{-t\by}[\bs]\vert_{t=0}$. On the other hand, it is known that
$d^\star F_{t\by}(\bq)=TJ\J(t,\bq,\by)d F_0(\bq)$, where $d
F_0=dF_{t\by}\vert_{t=0}$ is the volume form on $F_k^+\cap U$. These
circumstances imply that the function
$\frac{d}{dt}TJ\J(t,\bq,\by)\vert_{t=0}$ is odd w.r.t. $\by$. Claim
(ii) is proven.
\end{proof}

\subsection{Proof of claim (i) of Proposition \ref{prrprres1}}
\label{sec:proofclipr1}

\begin{proof}
Let us choose a suitable neighborhood of each of the submanifolds
$F_k^+\;(k\in\{1,2,\dots,n_+\})$. Let $(U_{\J_\star},\phi)$ be a
reducing chart at a point $\J_\star\in F^+_k$. Since $F_k^+$ are
closed disjoint subsets of the torus $\T^d$, we can choose it such
that $ U_{\J_\star}\cap F^+_i=\emptyset\;(U_{\J_\star}\cap
F^-_i=\emptyset)$ for any $i\neq k$. Let
$\Phi:\;U_{\J_\star}\rightarrow\R^{d-d_k^+}$ be the same as in
Lemmas \ref{lmredchrt} and \ref{lmdifeq}. Then by Lemma
\ref{lmdifeq} it is possible to restrict the neighborhood
$U_{\J_\star}$ of $\J_\star$ such that for the flow $\J(t,\bq,\by)$
of the differential equation (\ref{diffeq}) claim (i) of this lemma
is valid. Recall that the dispersion function $\lambda^+(\J)$
branching from the edge $\lambda_+$ of the gap
$(\lambda_-,\,\lambda_+)$ in $\sigma(H_0)$ has the form
$\lambda^+(\J)=\lambda_{j+1}(\J)$ for some $j\ge 0$. Like in Section
\ref{subsec:spectchar}, we can restrict the neighborhood
$U_{\J_\star}$, taking into account condition (A)-(a) and Corollary
\ref{cormainApp}, such that for any $\J\in U_{\J_\star}$
$\lambda^+(\J)$ is a simple eigenvalue of $\tilde H(\J)$, i.e.
\begin{equation}\label{neib1}
\forall\,\J\in U_{\J_\star}:\quad \lambda^+(\J)<\lambda_{j+2}(\J),
\end{equation}
the function $\lambda^+(\J)$ is real-analytic in $U_{\J_\star}$ and
furthermore, the mapping $\J\rightarrow \tilde
\Qc^+(\cdot,\cdot,\J)\in C(\Omega\times\Omega)$ is real-analytic in
$U_{\J_\star}$. Recall that $\tilde\Qc^+(\e,\bs,\J)$ is the
eigenkernel of $\tilde H(\J)$, corresponding to $\lambda^+(\J)$.
Since each $F^+_k\;(k\in\{1,2,\dots,n_+\})$ is compact, then it is
possible to select a finite subcovering $\{U_{k,l}\}_{l=1}^{L_k^+}$
from its open covering $\{U_{\J_\star}\}_{\J_\star\in F^+_k}$.
Consider the neighborhood $U_k^+=\bigcup_{l=1}^{L_k^+} U_{k,l}$ of
$F^+_k$ for each $k\in\{1,2,\dots,n_+\}$. As it is clear, these
neighborhoods are disjoint. Denote $S_+=\bigcup_{k=1}^{n_+}U_k$.

As it is known (\cite{Gel}, \cite{Kuch}, \cite{Zl}), the operator
$\tilde U=\exp(-i\J\cdot\e)\cdot U$, where $U$ is defined by
(\ref{unit}), maps the space $L_2(\R^d)$ on the direct integral
$\int_{\T^d}^\oplus L_2(\R^d/\Gamma)\,d\J\\=L_2(\R^d/\Gamma)\otimes
L_2(\T^d)$ and realizes a unitary equivalence
 between the operator $H_0$ and the direct integral $\tilde H=\int_{\T^d}^\oplus\tilde
 H(\J)\,d\J$ of the operators $\tilde H(\J)$. Recall that the operator $\tilde H(\J)$ is defined
 by (\ref{b16}), (\ref{b17}) and its domain is $W_2^2(\R^d/\Gamma)$. Formula (\ref{unit})
 implies that the operator inverse to $\tilde U$, has the
 form: for any $\phi\in L_2(\R^d/\Gamma)\otimes L_2(\T^d)$
 \begin{eqnarray}\label{invunit1}
(\tilde
U^{-1}\phi)(\e)=\frac{1}{(2\pi)^{\frac{d}{2}}}\int_{\T^d}\exp(i\J\cdot\e)\phi(\e,\J)\,d\J.
 \end{eqnarray}
Let us take $f\in L_2(\R^d)$ and denote $\tilde f=\tilde Uf$. Since
$R_\lambda(H_0)=\tilde U^{-1}R_\lambda(\tilde H)\tilde U$ for any
$\lambda\in\Rs(H_0)$, we get using (\ref{invunit1}):
\begin{eqnarray}\label{res1}
&&(R_\lambda(H_0)f)(\e)=\frac{1}{(2\pi)^{\frac{d}{2}}}\int_{\T^d}\exp(i\J\cdot\e)
(R_\lambda(\tilde H(\J))\tilde f(\cdot,\J))(\e)\,d\J=\nonumber\\
&&(R_\lambda^+f)(\e)+(\Theta_\lambda^+f)(\e),
\end{eqnarray}
where
\begin{eqnarray}\label{dfRpl1}
&&(R_\lambda^+f)(\e)=\frac{1}{(2\pi)^{\frac{d}{2}}}\int_{S_+}\exp(i\J\cdot\e)
(R_\lambda(\tilde H(\J))\tilde f(\cdot,\,\J))(\e)\,d\J=\nonumber\\
&&(\tilde U^{-1}R_\lambda(\tilde H(\J))\chi_{_{S_+}}(\J)\tilde
f(\cdot,\,\J))(\e),
\end{eqnarray}
\begin{eqnarray}\label{dfThtpl1}
&&(\Theta_\lambda^+f)(\e)=\frac{1}{(2\pi)^{\frac{d}{2}}}\int_{\T^d\setminus
S_+}\exp(i\J\cdot\e)
(R_\lambda(\tilde H(\J))\tilde f(\cdot,\,\J))(\e)\,d\J=\nonumber\\
&&(\tilde U^{-1}(R_\lambda(\tilde H(\J))\chi_{_{\T^d\setminus
S_+}}(\J)\tilde f(\cdot,\,\J)))(\e).
\end{eqnarray}
Here we denote by $\chi_{_A}$ the characteristic function for a set
$A\subset \T^d$. Since
$F^+=\bigcup_{k=1}^{n_+}F^+_k=(\lambda^+)^{-1}(\lambda_+)$ and
$\lambda_+=\min_{\J\in\T^d}\lambda^+(\J)$, then
$\lambda^+(\J)>\lambda_+$ for any $\J\notin S_+$. Hence there exists
$\delta>0$ such that for any
$\lambda\in((\lambda_++\lambda_-)/2,\;\lambda_+)$ and
$\J\in\T^d\setminus S_+ $ $dist(\lambda,\sigma(\tilde
H(\J)))\ge\delta$, and hence $\|R_\lambda(\tilde
H(\J))\|_0\le\frac{1}{\delta}$. Denote by $|\|\cdot\||$ the norm of
elements in $L_2(\R^d/\Gamma)\otimes L_2(\T^d)$. Then using the
isometry of the operator $\tilde U$, we get from (\ref{dfThtpl1}):
\begin{eqnarray*}
&&\|\Theta_\lambda^+f\|^2=|\|R_\lambda(\tilde
H(\J))\chi_{_{\T^d\setminus S_+}}(\J)\hat
f(\cdot,\,\J)\||^2=\\
&&\int_{\T^d\setminus S_+}\|R_\lambda(\tilde
H(\J))\hat
f(\cdot,\,\J)\|_2^2\,d\J\le\\
&&\frac{1}{\delta^2}\int_{\T^d}\|\hat
f(\cdot,\,\J)\|_2^2\,d\J=\frac{1}{\delta^2}|\|\tilde U
f\||^2=\frac{1}{\delta^2}\|f\|^2.
\end{eqnarray*}
Recall that we denote by $\Vert\cdot\Vert_2$ and $(\cdot,\cdot)_2$
the norm and the inner product in the space $\B_0=L_2(\R^d/\Gamma)$
(see (\ref{dfinnprd})). Thus, we get that
\begin{equation}\label{estThtpl1}
\forall\,\lambda\in((\lambda_++\lambda_-)/2,\;\lambda_+):\quad\|\Theta_\lambda^+\|\le\frac{1}{\delta}.
\end{equation}

Now consider the case where $\J\in S_+$. As it is known, the
resolvent $R_\lambda(\tilde H(\J)$ can be represented in the form:
\begin{equation}\label{rprRlmbd1}
R_\lambda(\tilde H(\J))g=
\frac{\int_\Omega\tilde\Qc^+(\e,\bs,\J)\tilde
f(\bs,\J)\,d\bs}{\lambda^+(\J)-\lambda}+ \tilde R(\lambda,\J)g\quad
(g\in L_2(\R^d/\Gamma)),
\end{equation}
where
\begin{equation}\label{dfRk1}
\tilde
R(\lambda,\J)=\sum_{l\in\N\setminus\{j+1\}}\frac{(\,\cdot\;,\,e_l(\cdot,\,\J))_2
e_l(\cdot,\,\J)}{\lambda_l(\J)-\lambda}.
\end{equation}
Recall that $\{e_l(\e,\,\J)\}$ is the orthonormal sequence of the
eigenfunctions of the operator $\tilde H(\J)$. Then by
(\ref{dfRpl1}), (\ref{rprRlmbd1}),
$R_\lambda^+=K(\lambda)+\tilde\Theta_\lambda^+$, where
\begin{eqnarray}\label{defKlmbd1}
&&(K(\lambda)f)(\e)=\\
&&\frac{1}{(2\pi)^{\frac{d}{2}}}\sum_{k=1}^{n_+}\int_{U_k}\exp(i\J\cdot\e)
\frac{\int_\Omega\tilde\Qc^+(\e,\bs,\J)\tilde
f(\bs,\J)\,d\bs}{\lambda^+(\J)-\lambda}\,d\J\nonumber,
\end{eqnarray}
\begin{eqnarray*}
&&(\tilde\Theta_\lambda^+f)(\e)=\frac{1}{(2\pi)^{\frac{d}{2}}}\int_{S_+}\exp(i\J\cdot\e)
\tilde R(\lambda,\J)\tilde f(\cdot,\,\J)\,d\J=\nonumber\\
&&(\tilde U^{-1}\tilde R(\lambda,\J)\chi_{_{S_+}}(\J)\hat
f(\cdot,\,\J))(\e)
\end{eqnarray*}
In view of (\ref{neib1}), there exists $\delta_1>0$ such that for
any $\J\in S_+$ and $\lambda\in
((\lambda_++\lambda_-)/2,\;\lambda_+)$:
$dist(\lambda,\;\sigma(\tilde
H(\J))\setminus\{\lambda_{j+1}(\J)\}>\delta_1$, hence in view of
(\ref{dfRk1}), $\|\tilde R(\lambda,\J)\|_0\le\frac{1}{\delta_1}$.
Then in the same manner as estimate (\ref{estThtpl1}), we obtain the
following estimate:
\begin{equation}\label{esttlThtpl1}
\forall\,\lambda\in((\lambda_++\lambda_-)/2,\;\lambda_+):\quad\|\tilde\Theta_\lambda^+\|\le\frac{1}{\delta_1}.
\end{equation}
Taking $f\in L_{2,0}(\R^d)$ and using the $\Gamma$-periodicity of
$\tilde\Qc^+(\e,\bs,\J)$ by $\bs$ and the fact that $\tilde
f(\e,\J)=\exp(-i\J\cdot\e)\hat f(\e,\J)$ (with $\hat f(\e,\J)$
defined by (\ref{unit})), we obtain from (\ref{defKlmbd1}):
\begin{eqnarray*}
&&\hskip-8mm
(K(\lambda)f)(\e)=\frac{1}{(2\pi)^d}\sum_{k=1}^{n_+}\int_{U_k}\exp(i\J\cdot\e)\times
\nonumber\\
&&\hskip-8mm\left(\frac{1}{\lambda^+(\J)-\lambda}
\sum_{\lb\in\Gamma}\int_{\Omega
-\{\lb\}}\exp(-i\J\cdot\bs)\tilde\Qc^+(\e,\bs,\J)f(\bs)\,d\bs\right)\,d\J,
\end{eqnarray*}
Since the inner sum is finite, we obtain after a permutation of sums
and integrals:
$(K(\lambda)f)(\e)=\int_{\R^d}K(\e,\bs,\lambda)f(\bs)\,d\bs$, where
$K(\e,\bs,\lambda)=\sum_{k=1}^{n_+}K_k^+(\e,\bs,\lambda)$,
\begin{equation}\label{Kkxs1}
K_k^+(\e,\bs,\lambda)=
\frac{1}{(2\pi)^d}\int_{U_k}\exp(i\J\cdot(\e-\bs))
\frac{\tilde\Qc^+(\e,\bs,\J)}{\lambda^+(\J)-\lambda} \,d\J.
\end{equation}
We see that each $K_k^+(\e,\bs,\lambda)$ is continuous in
$\R^d\times\R^d\times(\lambda_-,\lambda_+)$. From (\ref{res1}) and
(\ref{dfRpl1}) we obtain the representation (\ref{represol1}) with
$\Theta(\lambda)=\Theta_\lambda^++\tilde \Theta_\lambda^+$. In view
of (\ref{estThtpl1}) and (\ref{esttlThtpl1}), $\Theta(\lambda)$
satisfies condition (\ref{bndtht2}). Claim (i) of Proposition
\ref{prrprres1} is proven.
\end{proof}

\subsection{Proof of claim (ii) of Proposition \ref{prrprres1}}
\label{sec:proofcliipr1}

\begin{proof}
For each $k\in\{1,2,\dots,n_+\}$ consider a decomposition of the
unit
\begin{equation*}
\{\phi_{k,l}(\J)\}_{l=1}^{L_k},\quad (\phi_{k,l}\in C^\infty(U_k)),
\end{equation*}
corresponding to the covering $\{U_{k,l}\}_{l=1}^{L_k}$ of $U_k$,
constructed above. Then we have the representation for the function
$K_k^+(\e,\bs,\lambda)$, defined by (\ref{Kkxs1}):
\begin{equation}\label{rprKkxs1}
K_k^+(\e,\bs,\lambda)=\sum_{l=1}^{L_k}K_{k,l}(\e,\bs,\lambda),
\end{equation}
where
\begin{eqnarray*}
&&K_{k,l}(\e,\bs,\lambda)=\\
&&\frac{1}{(2\pi)^d}\int_{U_{k,l}}\phi_{k,l}(\J)\exp(i\J\cdot(\e-\bs))\frac{\tilde\Qc^+(\e,\bs,\J)}
{\lambda_{j+1}(\J)-\lambda} \,d\J.
\end{eqnarray*}
Let $\Phi_{k,l}$ be the mapping $\Phi$ defined in Lemma
\ref{lmredchrt} and corresponding to the neighborhood $U_{k,l}$.
Like in Lemma \ref{lmdifeq}, consider the differential equation in
$U_{k,l}$:
\begin{equation}\label{difeq}
\frac{d\J}{dt}=\Big(\Phi_{k,l}^\prime(\J)\vert_{N_\J}\Big)^{-1}y.
\end{equation}
Let $\J_{k,l}(t,\J_0,y)$ be the flow of this equation. Taking into
account definition of the reducing chart and using claim (i) of
Lemma \ref{lmdifeq} and the coarea formula, we have:
\begin{eqnarray}\label{coarea}
&&\hskip-16mmK_{k,l}(\e,\bs,\lambda)=
\frac{1}{(2\pi)^d}\int_{B_{k,l}}\frac{d\by}{|\by|^2+\lambda_+-\lambda}
\int_{F_{\by}}\phi_{k,l}(\J)\exp(i\J\cdot(\e-\bs))\times\nonumber\\
&&\hskip-16mm\frac{\tilde\Qc^+(\e,\bs,\J)}{NJ\Phi_{k,l}(\J)}\,dF_{\by}(\J),
\end{eqnarray}
where $B_{k,l}=\{\by\in\R^{d-d_k^+}\,\vert\,|\by|\le
r_{k,l}\}\;(r_{k,l}>0)$, $dF_{\by}(\J)(\cdot)$ is the volume form of
the submanifold $F_{\by}=\Phi_{k,l}^{-1}(\by)$ ($\by\in B_{k,l}$),
and $NJ\Phi_{k,l}(\J)$ is the normal Jacobian of $\Phi_{k,l}$. Since
by claim (i) of Lemma \ref{lmdifeq} for any $t\in [0,r_{k,l}]$ and
$\by\in S^{d-d_k^+-1}$ the mapping $\J_{k,l}(t,\cdot,\by)$ is a
diffeomorphism between $U_{k,l}^0=U_{k,l}\cap F_k^+$ and $F_{t\by}$,
then (\ref{coarea}) can be written in the form after the change of
the variable $\J=\J_{k,l}(t,\bq,\by)\;(\bq\in U_{k,l}^0)$:
\begin{equation}\label{frmKklxs}
K_{k,l}(\e,\bs,\lambda)=\frac{1}{(2\pi)^d}\int_{U_{k,l}^0}E_{k,l}(\bq,\e,\bs,\lambda)\,dF_0(\bq),
\end{equation}
where
\begin{eqnarray}\label{dfEkl}
&&\hskip-8mmE_{k,l}(\bq,\e,\bs,\lambda)=\\
&&\hskip-8mm\int_{S^{d-d_k^+-1}}\,dS(\by)\int_0^{r_{k,l}}\frac{t^{d-d_k^+-1}}
{t^2+\lambda_+-\lambda}
\phi_{k,l}(\J_{k,l}(t,\bq,\by))\exp(i\J_{k,l}(t,\bq,\by)\cdot(\e-\bs))\times\nonumber\\
&&\hskip-8mm\frac{\tilde\Qc^+(\e,\bs,\J_{k,l}(t,\bq,\by))
TJ\J_{k,l}(t,\bq,\by)}{NJ\Phi_{k,l}(\J_{k,l}(t,\bq,\by)))}\,dt,\nonumber
\end{eqnarray}
$dS(\cdot)$ is the volume form of the unit sphere $S^{d-d_k^+-1}$,
$dF_0(\cdot)$ is the volume form of the submanifold $U_{k,l}^0$ and
$TJ\J_{k,l}(t,\bq,\by)$ is the tangential Jacobian of the mapping
$\J_{k,l}(t,\cdot,\by):\;U_{k,l}^0\rightarrow F_{t\by}$. Since for
$d-d_k^+\ge 3$ $\;\int_0^{r_{k,l}}\frac{t^{d-d_k^+-1}\,dt}
{t^2+\lambda_+-\lambda}\le\frac{r_{k,l}^{d-d_k^+-2}}{d-d_k^+-2}<\infty$
for any $\lambda\in(\lambda_-,\lambda_+)$, then by (\ref{dfEkl},
(\ref{frmKklxs}) and (\ref{rprKkxs1}) the property (\ref{cndK1}) is
valid. Hence claim (ii) of Proposition \ref{prrprres1} is proven.
\end{proof}

\subsection{Proof of claim (iii) of Proposition \ref{prrprres1}}
\label{sec:proofcliiipr1}

\begin{proof}
Assume that $1\le d-d_k^+\le 2$. Taking into account the connection
(\ref{connecteigkern}) between the eigenkernels $\Qc^+(\e,\bs,\J)$
and $\tilde\Qc^+(\e,\bs,\J)$, let us represent the function
$E_{k,l}(\bq,\e,\bs,\lambda)$, defined by (\ref{dfEkl}), in the
form:
\begin{equation}\label{rprEkl}
E_{k,l}(\bq,\e,\bs,\lambda)=E_{k,l}^{(1)}(\bq,\e,\bs,\lambda)+E_{k,l}^{(2)}(\bq,\e,\bs,\lambda)+
E_{k,l}^{(3)}(\bq,\e,\bs,\lambda),
\end{equation}
where
\begin{eqnarray}
\label{dfEkl1}
&&E_{k,l}^{(1)}(\bq,\e,\bs,\lambda)=
\phi_{k,l}(\bq)2^{(d-d_k^+)/2}\,\Qc^+(\e,\bs,\bq)\times\\
&&\sqrt{m^+(\bq)}\int_{S^{d-d_k^+-1}}\,dS(\by)\int_0^{r_{k,l}}\frac{t^{d-d_k^+-1}\,dt}
{t^2+\lambda_+-\lambda},\nonumber
\end{eqnarray}
\begin{eqnarray}\label{dfEkl2}
E_{k,l}^{(2)}(\bq,\e,\bs,\lambda)=
\phi_{k,l}(\bq)2^{(d-d_k^+)/2}\,\Qc^+(\e,\bs,\J)
\sqrt{m^+(\bq)}\tilde E_{k,l}^{(2)}(\bq,\e,\bs,\lambda),
\end{eqnarray}
\begin{eqnarray}\label{dftlEkl2}
&&\tilde E_{k,l}^{(2)}(\bq,\e,\bs,\lambda)=
\int_{S^{d-d_k^+-1}}\,dS(\by)\int_0^{r_{k,l}}\frac{t^{d-d_k^+-1}}
{t^2+\lambda_+-\lambda}\times\\
&&\big(\exp(i(\J_{k,l}(t,\bq,\by)-\bq)\cdot(\tilde\e-\tilde\bs))-1\big)\,dt,\nonumber
\end{eqnarray}
\begin{eqnarray}\label{dfEkl3}
&&\hskip-8mmE_{k,l}^{(3)}(\bq,\e,\bs,\lambda)=\int_{S^{d-d_k^+-1}}\,dS(\by)\int_0^{r_{k,l}}\frac{t^{d-d_k^+-1}}
{t^2+\lambda_+-\lambda}\times\\
&&\hskip-8mm\exp(i\J_{k,l}(t,\bq,\by)\cdot(\e-\bs))
D_{k,l}(t,\by,\bq,\e,\bs)\,dt,\nonumber
\end{eqnarray}
\begin{eqnarray}\label{dfDkl}
&&\hskip-8mmD_{k,l}(t,\by,\bq,\e,\bs)=
\phi_{k,l}(\J_{k,l}(t,\bq,\by))
\frac{\tilde\Qc^+(\e,\bs,\J_{k,l}(t,\bq,\by))
TJ\J_{k,l}(t,\bq,\by)}{NJ\Phi_{k,l}(\J_{k,l}(t,\bq,\by))}-
\nonumber\\
&&\hskip-8mm \phi_{k,l}(\bq)
\frac{2^{(d-d_k^+)/2}\,\tilde\Qc^+(\e,\bs,\bq)}
{\sqrt{NJ\,Hes_{\bq}(\lambda_{j+1})}}.
\end{eqnarray}
After simple calculations we have:
\begin{eqnarray}\label{asestint}
\int_0^{r_{k,l}}\frac{t^{d-d_k^+-1}\,dt}
{t^2+\lambda_+-\lambda}=\left\{\begin{array}{ll}
\frac{\pi}{2\sqrt{\lambda_+-\lambda}}+\theta_1(\lambda)&\rm{for}\quad d-d_k^+=1,\\
\frac{1}{2}\ln\Big(\frac{1}{\lambda_+-\lambda}\Big)+\theta_2(\lambda)&\rm{for}\quad d-d_k^+=2,
\end{array}\right.
\end{eqnarray}
where
\begin{equation}\label{esttht}
\theta_\nu(\lambda)=O(1)\quad\rm{for}\quad\lambda\uparrow\lambda_+\quad
(\nu=1,2).
\end{equation}
Then taking into account that
$\{\phi_{k,l}\vert_{F_k^+}\}_{l=1}^{L_k}$ is a decomposition of the
unit for the submanifold $F_k^+$, we get from (\ref{dfEkl1}),
(\ref{asestint}) and (\ref{esttht}):
\begin{equation}\label{rprKk1xslm}
\sum_{l=1}^{L_k}\int_{U_{k,l}^0}E_{k,l}^{(1)}(\bq,\e,\bs,\lambda)\,dF_0(\bq)=
G_k^+(\e,\bs,\lambda)+\tilde K_k^{(1)}(\e,\bs,\lambda),
\end{equation}
where $G_k^+(\e,\bs,\lambda)$ is defined by (\ref{dfFpl1}) for
$d-d_k^+=1$ and by (\ref{dfFpl2}) for $d-d_k^+=2$, and the function
$\tilde K_k^{(1)}(\e,\bs,\lambda)$ is continuous in
$\R^d\times\R^d\times(\lambda_+-\delta,\lambda_+)$ and
\begin{equation}\label{bundK1kxs}
\sup_{(\e,\bs,\lambda)\in\R^d\times\R^d\times(\lambda_+-\delta,\;\lambda_+)}|\tilde
K_k^{(1)}(\e,\bs,\lambda)|<\infty.
\end{equation}
Recall that $\delta\in(0,\,\lambda_+-\lambda_-)$.

 Let us estimate the function
$E_{k,l}^{(2)}(\bq,\e,\bs,\lambda)$, defined by (\ref{dfEkl2}). To
this end consider the Taylor representation of the flow of the
equation (\ref{difeq}) in a neighborhood of the point $t=0$:
\begin{eqnarray}\label{Tayrprp}
&&\hskip-10mm\J_{k,l}(t,\bq,\by)-\bq=\J_{k,l}(t,\bq,\by)-\J_{k,l}(0,\bq,\by)=\partial_t\J_{k,l}(0,\bq,\by)t+\\
&&\hskip-10mm\br_{k,l}(t,\bq,\by),\nonumber
\end{eqnarray}
where
$\br_{k,l}(t,\bq,\by)=\int_0^1(t-s)\partial^2_t\J_{k,l}(s,\bq,\by)\,ds$.
We have from (\ref{difeq}):
\begin{equation*}
\partial^2_t\J_{k,l}(s,\bq,\by)=\partial_\J\Big(\Big(\Phi_{k,l}^\prime(\J)\vert_{N_\J}\Big)^{-1}y\Big)
\cdot(\Big(\Phi_{k,l}^\prime(\J)\vert_{N_\J}\Big)^{-1}y\vert_{\J=\J_{k,l}(t,\bq,\by)}.
\end{equation*}
These circumstances imply that
\begin{equation}\label{bundrtqy}
\bar\br_{k,l}=\sup_{(t,\bq,\by)\in[0,r_{k,l}]\times U_{k,l}\times
S^{d-d_k^+-1}(0)}\frac{|\br_{k,l}(t,\bq,\by)|}{t^2}<\infty.
\end{equation}
Using (\ref{Tayrprp}), let us represent the function $\tilde E_{k,l}^{(2)}(\bq,\e,\bs,\lambda)$, defined by
(\ref{dftlEkl2}), in the form:
\begin{equation}\label{rprtlEkl2}
\tilde E_{k,l}^{(2)}(\bq,\e,\bs,\lambda)=\tilde E_{k,l}^{(2,1)}(\bq,\e,\bs,\lambda)+
\tilde E_{k,l}^{(2,2)}(\bq,\e,\bs,\lambda),
\end{equation}
where
\begin{eqnarray}\label{dftlEkl21}
&&\tilde E_{k,l}^{(2,1)}(\bq,\e,\bs,\lambda)=\\
&&\int_{S^{d-d_k^+-1}}\,dS(\by)\int_0^{r_{k,l}}\frac{t^{d-d_k^+-1}}
{t^2+\lambda_+-\lambda}\big(\exp(i\partial_t\J_{k,l}(0,\bq,\by)t\cdot(\e-\bs))-1\big)\,dt,\nonumber
\end{eqnarray}
and
\begin{eqnarray}\label{dftlEkl22}
&&\tilde E_{k,l}^{(2,2)}(\bq,\e,\bs,\lambda)=\\
&&\int_{S^{d-d_k^+-1}}\,dS(\by)\int_0^{r_{k,l}}\frac{t^{d-d_k^+-1}}
{t^2+\lambda_+-\lambda}\big(\exp(i\br_{k,l}(t,\bq,\by)\cdot(\e-\bs))-1\big)\times\nonumber\\
&&\exp(i\partial_t\J_{k,l}(0,\bq,\by)t\cdot(\e-\bs))\,dt,\nonumber
\end{eqnarray}
Let us estimate $\tilde E_{k,l}^{(2,1)}(\bq,\e,\bs,\lambda)$. Observe that, in view of (\ref{difeq}),
$\partial_t\J_{k,l}(0,\bq,\by)=-\partial_t\J_{k,l}(0,\bq,-\by)$. Then, since the integral of the odd part
(w.r. to $\by$) of the integrand in (\ref{dftlEkl21}) vanishes, we have:
\begin{eqnarray*}
&&\tilde E_{k,l}^{(2,1)}(\bq,\e,\bs,\lambda)=\\
&&\int_{S^{d-d_k^+-1}}\,dS(\by)\int_0^{r_{k,l}}\frac{t^{d-d_k^+-1}}
{t^2+\lambda_+-\lambda}\big(\cos(\partial_t\J_{k,l}(0,\bq,\by)t\cdot(\tilde\e-\tilde\bs))-1\big)\,dt,\nonumber
\end{eqnarray*}
Deriving the change of the variable $u=(1+|\e-\bs|)t$ in the inner integral, we get:
\begin{eqnarray*}
&&\hskip-8mm\tilde E_{k,l}^{(2,1)}(\bq,\e,\bs,\lambda)=(1+|\e-\bs|)^{2-(d-d_k^+)}\times\\
&&\hskip-8mm\int_{S^{d-d_k^+-1}}\,dS(\by)\int_0^{r_{k,l}(1+|\e-\bs|)}\frac{u^{d-d_k^+-1}}
{u^2+(\lambda_+-\lambda)(1+|\e-\bs|)^2}\times\nonumber\\
&&\hskip-8mm\big(\cos\big(\partial_t\J_{k,l}(0,\bq,\by)u\cdot\frac{\tilde\e-\tilde\bs}{1+|\e-\bs|}\big)-1\big)\,du.
\end{eqnarray*}
Using estimate (\ref{estint2}) of Lemma \ref{lmestint} with
$\rho=r_{k,l}$, $\alpha=1+|\e-\bs|$ and
$\beta=\partial_t\J_{k,l}(0,\bq,\by)\cdot\frac{\tilde\e-\tilde\bs}{1+|\e-\bs|}$,
we get:
\begin{eqnarray}\label{summary1}
&&\rm{for}\quad d-d_k^+=1:\\
&&\sup_{(\bq,\e,\bs,\lambda)\in\,
U_{k,l}^0\times\R^d\times\R^d\times(\lambda_+-\delta,\;\lambda_+)}
\frac{\big|\tilde E_{k,l}^{(2,1)}(\bq,\e,\bs,\lambda)\big|}
{1+|\e-\bs|}<\infty\nonumber
\end{eqnarray}
and
\begin{eqnarray}\label{summary2}
&&\rm{for}\quad d-d_k^+=2:\\
&&\sup_{(\bq,\e,\bs,\lambda)\in\,
U_{k,l}^0\times\R^d\times\R^d\times(\lambda_+-\delta,\;\lambda_+)}
\frac{\big|\tilde E_{k,l}^{(2,1)}(\bq,\e,\bs,\lambda)\big|}
{1+\ln(1+|\e-\bs|)}<\infty.\nonumber
\end{eqnarray}
Now let us estimate the function $\tilde E_{k,l}^{(2,2)}(\bq,\e,\bs,\lambda)$, defined by (\ref{dftlEkl22}).
To this end let us derive the change of the variable $u=\sqrt{1+|\e-\bs|}t$ in the inner integral of (\ref{dftlEkl22}):
\begin{eqnarray*}
&&\hskip-16mm\Big|\int_0^{r_{k,l}}\frac{t^{d-d_k^+-1}}
{t^2+\lambda_+-\lambda}\big(\exp(i\br_{k,l}(t,\bq,\by)\cdot(\e-\bs))-1\big)
\times\\
&&\hskip-16mm\exp(i\partial_t\J_{k,l}(0,\bq,\by)t\cdot(\e-\bs))\,dt\Big|\le\\
&&\hskip-16mm(1+|\e-\bs|)^{\frac{2-(d-d_k^+)}{2}}\int_0^{r_{k,l}\sqrt{1+|\e-\bs|}}\frac{u^{d-d_k^+-1}}
{u^2+(\lambda_+-\lambda)(1+|\e-\bs|)}\times\nonumber\\
&&\hskip-16mm\Big|\exp\Big(i\br_{k,l}\Big(\frac{u}{\sqrt{1+|\e-\bs|}},\bq,\by\Big)\cdot(\e-\bs)\Big)-1\Big|\,du
\end{eqnarray*}
Then using (\ref{bundrtqy}) and estimate (\ref{estint1}) of Lemma
\ref{lmestint} with $\rho=r_{k,l}$, $\alpha=\sqrt{1+|\e-\bs|}$ and
$\phi(u)=\br_{k,l}\Big(\frac{u}{\sqrt{1+|\e-\bs|}},\bq,\by\Big)\cdot(\tilde\e-\tilde\bs)$,
we obtain:
\begin{eqnarray}\label{summary3}
&&\rm{for}\quad d-d_k^+=1:\\
&&\sup_{(\bq,\e,\bs,\lambda)\in\,U_{k,l}^0\times\R^d\times\R^d\times(\lambda_+-\delta,\;\lambda_+)}
\frac{\big|\tilde E_{k,l}^{(2,2)}(\bq,\e,\bs,\lambda)\big|}
{\sqrt{1+|\e-\bs|}}<\infty\nonumber
\end{eqnarray}
and
\begin{eqnarray}\label{summary3a}
&&\rm{for}\quad d-d_k^+=2:\\
&&\sup_{(\bq,\e,\bs,\lambda)\in\,U_{k,l}^0\times\R^d\times\R^d\times(\lambda_+-\delta,\;\lambda_+)}
\frac{\big|\tilde E_{k,l}^{(2,2)}(\bq,\e,\bs,\lambda)\big|}
{1+\ln(1+|\e-\bs|)}<\infty.\nonumber
\end{eqnarray}

Let us estimate the function $E_{k,l}^{(3)}(\bq,\e,\bs,\lambda)$,
defined by (\ref{dfEkl3}). To this end consider the Taylor
representation of the function $D_{k,l}(t,\by,\bq,\e,\bs)$, defined
by (\ref{dfDkl}) in a neighborhood of $t=0$, taking into account
that in view of Lemma \ref{lmredchrt} and the equalities
$\J_{k,l}(0,\bq,\by)=\bq$ and $TJ(\J_{k,l}(0,\bq,\by))=1$, the
equality $D_{k,l}(0,\by,\bq,\e,\bs)=0$ is valid. We have:
$D_{k,l}(t,\by,\bq,\e,\bs)=g_{k,l}(\by,\bq,\e,\bs)t+\zeta_{k,l}(t,\by,\bq,\e,\bs)$,
where $g(\by,\bq,\e,\bs)=\partial_tD_{k,l}(0,\by,\bq,\e,\bs)$. We
see from (\ref{dfDkl}) that the functions $g_{k,l}(\by,\bq,\e,\bs)$
and $\zeta_{k,l}(t,\by,\bq,\e,\bs)$ are smooth and bounded in
$[0,r_{k,l}]\times U_{k,l}\times S^{d-d_k^+-1}\times\R^d\times\R^d$.
In the same manner as (\ref{bundrtqy}), we obtain the following
property of $\zeta_{k,l}(t,\by,\bq,\e,\bs)$:
\begin{equation}\label{bundzettqy}
\bar\zeta_{k,l}=\sup_{(t,\bq,\by,\e,\bs)\in[0,r_{k,l}]\times
U_{k,l}\times S^{d-d_k^+-1}\times\R^d\times\R^d}
\frac{|\zeta_{k,l}(t,\bq,\by,\e,\bs)|}{t^2}<\infty.
\end{equation}
Using the above representation and (\ref{Tayrprp}), let us represent
the function $E_{k,l}^{(3)}(\bq,\e,\bs,\lambda)$ in the form:
\begin{eqnarray}\label{rprEkl3}
&&E_{k,l}^{(3)}(\bq,\e,\bs,\lambda)=E_{k,l}^{(3,1)}(\bq,\e,\bs,\lambda)+\exp\big(i\bq\cdot(\e-\bs)\big)
\times\nonumber\\
&&\big(E_{k,l}^{(3,2)}(\bq,\e,\bs,\lambda)+E_{k,l}^{(3,3)}(\bq,\e,\bs,\lambda)\big),
\end{eqnarray}
where
\begin{eqnarray}\label{dfEkl31}
&&\hskip-8mmE_{k,l}^{(3,1)}(\bq,\e,\bs,\lambda)=\int_{S^{d-d_k^+-1}}\,dS(\by)\int_0^{r_{k,l}}\frac{t^{d-d_k^+-1}}
{t^2+\lambda_+-\lambda}\times\\
&&\hskip-8mm\exp(i\J_{k,l}(t,\bq,\by)\cdot(\e-\bs))
\zeta_{k,l}(t,\by,\bq,\e,\bs)\,dt,\nonumber
\end{eqnarray}
\begin{eqnarray}\label{dfEkl32}
&&\hskip-8mmE_{k,l}^{(3,2)}(\bq,\e,\bs,\lambda)=\int_{S^{d-d_k^+-1}}\,dS(\by)\int_0^{r_{k,l}}\frac{t^{d-d_k^+}}
{t^2+\lambda_+-\lambda}\times\\
&&\hskip-8mm\exp(it\partial_t\J_{k,l}(0,\bq,\by)\cdot(\e-\bs))
g_{k,l}(\by,\bq,\e,\bs)\,dt,\nonumber
\end{eqnarray}
\begin{eqnarray}\label{dfEkl33}
&&\hskip-8mmE_{k,l}^{(3,3)}(\bq,\e,\bs,\lambda)=\int_{S^{d-d_k^+-1}}\,dS(\by)\int_0^{r_{k,l}}\frac{t^{d-d_k^+}}
{t^2+\lambda_+-\lambda}\times\\
&&\hskip-8mm\exp(it\partial_t\J_{k,l}(0,\bq,\by)\cdot(\e-\bs))
g_{k,l}(\by,\bq,\e,\bs)\nonumber\times\\
&&\hskip-8mm\big(\exp(i\br_{k,l}(t,\bq,\by)\cdot(\e-\bs))-1\big)\,dt.
\nonumber
\end{eqnarray}
Using (\ref{bundzettqy}), we obtain from (\ref{dfEkl31}):
\begin{eqnarray}\label{summary3}
&&\rm{for}\quad d-d_k^+\in\{1,2\}:\\
&&\sup_{(\bq,\e,\bs,\lambda)\in\,U_{k,l}^0\times\R^d\times\R^d\times(\lambda_+-\delta,\;\lambda_+)}
\big| E_{k,l}^{(3,1)}(\bq,\e,\bs,\lambda)\big| <\infty.\nonumber
\end{eqnarray}
Now we turn to the estimation of the function
$E_{k,l}^{(3,2)}(\bq,\e,\bs,\lambda)$. Deriving the change of the
variable $u=(1+|\e-\bs|)t$ in the inner integral of (\ref{dfEkl32})
and taking into account that by claim (ii) of Lemma \ref{lmdifeq}
the functions $\partial_t\J_{k,l}(0,\bq,\by)$ and
$g_{k,l}(\by,\bq,\e,\bs)=\partial_tD_{k,l}(0,\by,\bq,\e,\bs)$ are
odd w.r.t. $\by$, we get:
\begin{eqnarray*}
&&\hskip-8mmE_{k,l}^{(3,2)}(\bq,\e,\bs,\lambda)=(1+|\e-\bs|)^{1-(d-d_k^+)}\int_{S^{d-d_k^+-1}}
g_{k,l}(\by,\bq,\e,\bs)\,dS(\by)\times\\
&&\hskip-8mm\int_0^{r_{k,l}(1+|\e-\bs|)}
\frac{u^{d-d_k^+}\sin\Big(u\partial_t\J_{k,l}(0,\bq,\by)\cdot\frac{\tilde\e-\tilde\bs}
{1+|\e-\bs|}\Big)}{u^2+(\lambda_+-\lambda)(1+|\e-\bs|)^2}\,du
\end{eqnarray*}
Using estimate (\ref{estint3}) of Lemma \ref{lmestint} with
$\rho=r_{k,l}$, $\alpha=1+|\e-\bs|$ and
$\beta=\partial_t\J_{k,l}(0,\bq,\by)\cdot\frac{\tilde\e-\tilde\bs}{1+|\e-\bs|}$,
we get:
\begin{eqnarray}\label{summary4}
&&\rm{for}\quad d-d_k^+=1:\\
&&\sup_{(\bq,\e,\bs,\lambda)\in\,U_{k,l}^0\times\R^d\times\R^d\times(\lambda_+-\delta,\;\lambda_+)}
\frac{\big|E_{k,l}^{(3,2)}(\bq,\e,\bs,\lambda)
\big|}{1+\ln(1+|\e-\bs|)}<\infty\nonumber
\end{eqnarray}
and
\begin{eqnarray}\label{summary5}
&&\rm{for}\quad d-d_k^+=2:\\
&&\sup_{(\bq,\e,\bs,\lambda)\in\,U_{k,l}^0\times\R^d\times\R^d\times(\lambda_+-\delta,\;\lambda_+)}
\big|E_{k,l}^{(3,2)}(\bq,\e,\bs,\lambda)\big|<\infty\nonumber
\end{eqnarray}

Now we turn to the estimation of the function $E_{k,l}^{(3,3)}(\bq,\e,\bs,\lambda)$. Deriving the change of the variable
$u=\sqrt{1+|\e-\bs|}t$ in the inner integral of (\ref{dfEkl33}), we get:
\begin{eqnarray*}
&&\hskip-12mm|E_{k,l}^{(3,3)}(\bq,\e,\bs,\lambda)|\le(1+|\e-\bs|)^{\frac{1-(d-d_k^+)}{2}}\int_{S^{d-d_k^+-1}}
|g_{k,l}(\by,\bq,\e,\bs)|\,dS(\by)\times\\
&&\hskip-12mm\int_0^{r_{k,l}\sqrt{1+|\e-\bs|}}\frac{u^{d-d_k^+}
\Big|\exp\Big(i\br_{k,l}\big(\frac{u}{\sqrt{1+|\e-\bs|}},\bq,\by\big)\cdot(\e-\bs)\Big)-1\Big|}
{u^2+(\lambda_+-\lambda)(1+|\e-\bs|)}\,du
\end{eqnarray*}
Using, as above, estimate (\ref{estint1}) of Lemma \ref{lmestint},
we obtain:
\begin{eqnarray}\label{summary6}
&&\rm{for}\quad d-d_k^+=1:\\
&&\sup_{(\bq,\e,\bs,\lambda)\in\,U_{k,l}^0\times\R^d\times\R^d\times(\lambda_+-\delta,\;\lambda_+)}
\frac{\big|E_{k,l}^{(3,3)}(\bq,\e,\bs,\lambda)
\big|}{1+\ln(1+|\e-\bs|)}<\infty\nonumber
\end{eqnarray}
and
\begin{eqnarray}\label{summary7}
&&\rm{for}\quad d-d_k^+=2:\\
&&\sup_{(\bq,\e,\bs,\lambda)\in\,U_{k,l}^0\times\R^d\times\R^d\times(\lambda_+-\delta,\;\lambda_+)}
\big|E_{k,l}^{(3,3)}(\bq,\e,\bs,\lambda) \big|<\infty.\nonumber
\end{eqnarray}
The representations (\ref{rprKkxs1}), (\ref{frmKklxs}),
(\ref{rprEkl}), (\ref{rprKk1xslm}), (\ref{rprtlEkl2}),
(\ref{rprEkl3}) and the properties (\ref{bundK1kxs}),
(\ref{summary1}), (\ref{summary2}), (\ref{summary3a})
(\ref{summary3}), (\ref{summary4}), (\ref{summary5}),
(\ref{summary6}) and (\ref{summary7}) imply claim (iii) of
Proposition \ref{prrprres1}.
\end{proof}

\subsection{Proof of Proposition \ref{prrprres}}
\label{sec:proofpr}
\begin{proof}
The proof is the same as the proof of Proposition \ref{prrprres1},
only we should take into account that since in our case each of
$F_k^+$ is a singleton $\{\J_k^+\}$, then the set
$U_{k,l}^0=U_{k,l}\cap F_k^+$ is or this singleton, or it is empty.
Hence in the first case in the r.h.s. of (\ref{frmKklxs}) it will be
the value of the integrand at $\J=\J_k^+$ instead of the integral
along $U_{k,l}^0$. Therefore in the r.h.s. of (\ref{dfFpl1}) and
(\ref{dfFpl2}) it will be the value of the integrand at $\J=\J_k^+$
instead of the integral along $F_k^+$.
\end{proof}

\section{\bf Appendix : $C(\Omega)$-holomorphy of Bloch functions}
\label{sec:appendix}

\setcounter{equation}{0}
\setcounter{theorem}{0}

\renewcommand{\thetheorem}{A.\arabic{theorem}}
\renewcommand{\thelemma}{A.\arabic{lemma}}
\renewcommand{\theproposition}{A.\arabic{proposition}}
\renewcommand{\theremark}{A.\arabic{remark}}
\renewcommand{\thecorollary}{A.\arabic{corollary}}
\renewcommand{\thedefinition}{A.\arabic{definition}}
\renewcommand{\theequation}{A.\arabic{equation}}
\renewcommand{\thesubsection}{A.\arabic{subsection}}

\subsection{Main claims}
\label{subsec:mainclaim}

 In this Appendix we prove that
under some assumptions the Bloch functions can be chosen to be
holomorphic w.r.t. the quasi-momentum in the $C(\Omega)$-norm.

In this section we shall denote by $\Vert\cdot\Vert_q$ and
$\Vert\cdot\Vert_{p,\,l}$ the norms in the spaces $L_q(\Omega)$ and
$W_p^l(\Omega)$ respectively. Observe that for $q=2$ the notation
$\Vert\cdot\Vert_q$ is compatible with the notation given by
(\ref{dfinnprd}).

The main results of this section are following:

\begin{theorem}\label{thmainApp}
Assume that the periodic potential $V(\e)$ satisfies the condition
(\ref{condperpotent}) Then\vskip2mm

(i) if $\lambda(\J)$ is an eigenvalue of the operator $H(\J)$, then
any eigenfunction $b(\e,\J)$ of $H(\J)$ corresponding to
$\lambda(\J)$ belongs to $C(\Omega)$;\vskip2mm

(ii) if the family of operators $H(\J)$ has a holomorphic branch of
eigenvalues $\lambda(\J)$ in a connected neighborhood
$\Oc(\J_0)\subset\C^d$ of a point $\J_0\in\R^d$, and for each
$\J\in\Oc(\J_0)$ it is possible to choose an eigenfunction
$b(\e,\J)\neq 0$ of $H(\J)$ corresponding to $\lambda(\J)$ such that
the mapping $\J\rightarrow b(\e,\J)\in L_2(\Omega)$ is holomorphic
in $\Oc(\J_0)$, then the mapping $\J\rightarrow b(\e,\J)\in
C(\Omega)$ is holomorphic in $\Oc(\J_0)$.
\end{theorem}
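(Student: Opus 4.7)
\emph{Plan.} The core tool is an elliptic bootstrap applied to the Schr\"odinger equation $-\Delta b = (\lambda - V)b$, which holds in the distributional sense on $\Omega$ for any eigenfunction $b$ of $H(\J)$. Since $b \in W_{2,\text{loc}}^2(\R^d) \cap \B_\J$, we have $b \in W_2^2(\Omega)$. For $d \le 3$ the Sobolev embedding $W_2^2(\Omega) \hookrightarrow C(\Omega)$ is direct, recovering the Wilcox case. For $d \ge 4$ I iterate: if $b \in L_{p_n}(\Omega)$, then H\"older and $V \in L_q$ with $q > d/2$ give $(\lambda - V)b \in L_{r_n}(\Omega)$ with $1/r_n = 1/q + 1/p_n$, and Calder\'on--Zygmund $L^p$ elliptic regularity for the Laplacian on the flat torus $\R^d/\Gamma$ (obtained after the gauge change $\tilde b = e^{-i\J\cdot\e}b$, which converts $H(\J)$ into $\tilde H(\J) = -\Delta_\J + V$ on $L_2(\R^d/\Gamma)$) yields $b \in W_{r_n}^2$. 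If $r_n > d/2$ then $W_{r_n}^2 \hookrightarrow C(\Omega)$ and we stop; otherwise Sobolev gives $b \in L_{p_{n+1}}$ with $1/p_{n+1} = 1/r_n - 2/d = 1/q + 1/p_n - 2/d$. Since $q > d/2$, the increment $1/p_{n+1} - 1/p_n = 1/q - 2/d < 0$, so the exponent $p_n$ strictly improves and $r_n > d/2$ is reached after finitely many steps, proving claim (i).

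For claim (ii) I work with $\tilde H(\J)$ on the fixed Hilbert space $L_2(\R^d/\Gamma)$, which eliminates the $\J$-dependence of $\B_\J$; the hypothesis becomes $L_2(\R^d/\Gamma)$-holomorphy of $\tilde b(\cdot,\J) = e^{-i\J\cdot\e}b(\cdot,\J)$ on $\Oc(\J_0)$. Fix a relatively compact connected neighborhood $\overline{\mathcal{U}} \subset \Oc(\J_0)$ of $\J_0$ and choose $\mu_0 \in \R$ strictly below $\inf_{\J \in \overline{\mathcal{U}}}\inf\sigma(\tilde H(\J))$ (available because $\tilde H(\J)$ is bounded below uniformly in $\J$). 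The eigenvalue equation then reads
\begin{equation*}
\tilde b(\cdot,\J) = (\lambda(\J) - \mu_0)\,R_{\mu_0}(\tilde H(\J))\,\tilde b(\cdot,\J).
\end{equation*}
Since $\tilde H(\J) - \tilde H(\J_0)$ is a differential operator of order $\le 1$ with coefficients polynomial in $\J$, it is relatively $\tilde H(\J_0)$-bounded, and analytic perturbation theory shows $\J \mapsto R_{\mu_0}(\tilde H(\J))$ is holomorphic with values in $\Bc(L_p(\R^d/\Gamma), W_p^2(\R^d/\Gamma))$ on $\overline{\mathcal{U}}$ for those $p$ where an $L^p$ resolvent estimate holds uniformly in $\J$. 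Substituting into the displayed identity and iterating the bootstrap of part (i) in a $\J$-uniform way promotes $L_2$-holomorphy of $\tilde b(\cdot,\J)$ through $L_{p_n}$-holomorphy to $W_p^2$-holomorphy for some $p > d/2$; composing with the continuous embedding $W_p^2 \hookrightarrow C(\R^d/\Gamma) \hookrightarrow C(\Omega)$ and multiplying by the entire factor $e^{i\J\cdot\e}$ yields the desired $C(\Omega)$-holomorphy of $\J \mapsto b(\cdot,\J)$.

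The principal technical obstacle is the $L^p$ resolvent estimate for $\tilde H(\J)$ with constants uniform in $\J \in \overline{\mathcal{U}}$. Because $V$ is only assumed in $L_q$ with $q > d/2$ (not $q > d$), $V$ is not Kato-small relative to $-\Delta_\J$ as an operator $W_p^1 \to L_p$ in general, so the direct Neumann-series argument that suffices in the $d \le 3$ case of Wilcox breaks down. The remedy I have in mind is a splitting $V = V_1 + V_\infty$ with $\|V_1\|_{L_q(\Omega)}$ arbitrarily small and $V_\infty \in L_\infty(\Omega)$: Calder\'on--Zygmund estimates for the constant-coefficient operator $-\Delta_\J + \mu_0$ combined with H\"older absorb $V_1$ through a Neumann series, while $V_\infty$ is absorbed by shifting $\mu_0$ further down. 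This produces the uniform $L^p$ mapping properties of $R_{\mu_0}(\tilde H(\J))$ needed to close the iteration in part (ii) and is precisely the modification of Wilcox's approach referred to in the Introduction.
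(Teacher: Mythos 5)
Your proposal is correct in its essentials but takes a genuinely different route from the paper. The paper follows Wilcox's kernel strategy: after the Floquet transform it builds the Green's function $G(\e,\bs,\J,\gamma_0)$ of $R(\J)=(H(\J)+\gamma_0^2)^{-1}$ from the Helmholtz fundamental solution $\Ec_d$ (Proposition A.6, Lemma A.9), shows via the compositional-power estimate of Lemma A.12 and the $L_{c,q}$-holomorphy machinery of Lemma A.11 that the kernel of $K(\J)=R(\J)^l$ (with $l=[d/\theta]+2$, $\theta=\min\{1,d-q(d-2)\}$) is in $C(\Omega\times\Omega)$ and holomorphic there, and then reads off both claims from the singular-value identity $b=\nu(\J)K(\J)b$. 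You instead run an $L^p$ elliptic bootstrap on the gauge-transformed torus problem: H\"older, Calder\'on--Zygmund for $-\Delta_\J$, and Sobolev embedding improve the exponent by the fixed negative increment $1/q-2/d$ until $W_p^2\hookrightarrow C$, and for (ii) you push holomorphy through each step via the identity $\tilde b=(\lambda(\J)-\mu_0)R_{\mu_0}(\tilde H(\J))\tilde b$ together with a Neumann series built from the splitting $V=V_1+V_\infty$. Each route buys something different: yours uses the standard PDE toolbox and avoids the rather delicate explicit kernel estimates of Lemmas A.9 and A.12, while the paper's kernel representation is not merely a device for this theorem but is reused to establish the resolvent expansions of Section 6, so the explicit control is part of what the paper needs anyway. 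Two points you should still pin down: the bootstrap can land exactly at $r_n=d/2$, where $W_{d/2}^2$ embeds into all $L_p$, $p<\infty$, but not into $C$, so you need one more Sobolev--H\"older step to push past it; and in (ii) you should state the $\J$-uniform $L^p\to W_p^2$ resolvent bound as a lemma and prove it (Fourier multiplier estimate for $-\Delta_\J-\mu_0$ uniform on a compact complex neighborhood of $\J_0$, bounded perturbation by $V_\infty$, Neumann series in $V_1$), since "analytic perturbation theory shows $R_{\mu_0}$ is holomorphic into $\Bc(L_p,W_p^2)$ for those $p$ where the estimate holds" as written is circular until the estimate is actually in hand.
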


\begin{corollary}\label{cormainApp}
If $V(\e)$ is as in Theorem \ref{thmainApp} and for some
$\J_0\in\R^d$ an eigenvalue $\lambda_0$ of $H(\J_0)$ is simple,
then\vskip2mm

(i) for some neighborhood $\Oc(\J_0)\subset\C^d$ of $\J_0$ there
exists a branch $\lambda(\J)$ of eigenvalues of the family
$H(\J)\;(\J\in\Oc(\J_0))$ such that $\lambda(\J_0)=\lambda_0$,
$\lambda(\J)$ is simple for any $\J\in\Oc(\J_0)$, the function
$\lambda(\J)$ is holomorphic in $\Oc(\J_0)$ and for any
$\J\in\Oc(\J_0)$ it is possible to choose an eigenfunction
$b(\e,\J)$ of $H(\J)$ corresponding to $\lambda(\J)$ such that
$\Vert b(\e,\J)\Vert_2=1$, the function $b(\e,\J)$ is continuous in
$\Omega$, and the mapping $\J\rightarrow b(\cdot,\J)\in C(\Omega)$
is real-analytic in $\Oc(\J_0)\cap\R^d$; vskip2mm

(ii) for any $\J\in\Oc(\J_0)\cap\R^d$ the eigenkernel, corresponding
to $\lambda(\J)$, has the form
$\Qc(\e,\bs,\J)=b(\e,\J)\overline{b(\bs,\J)}$ and the mapping
$\J\rightarrow\Qc(\cdot,\cdot,\J)\in C(\Omega\times\Omega)$ is real
analytic in $\Oc(\J_0)\cap\R^d$; vskip2mm

(iii) for any $\J\in\Oc(\J_0)\cap\R^d$ the eigenkernel
$\Qc(\e,\bs,\J)$ does not depend on the choice of the branch
$b(\e,\J)$ of eigenfunctions of the family
$H(\J)\;(\J\in\Oc(\J_0))\cap\R^d$ satisfying the conditions imposed
in claim (i).
\end{corollary}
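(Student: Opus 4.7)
The plan is to combine classical analytic perturbation theory in $L_2$ with Theorem \ref{thmainApp} to lift the regularity from $L_2(\Omega)$ to $C(\Omega)$, and then dispose of the normalization and the kernel formula by routine arguments.

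First, I would pass to the unitarily equivalent family $\tilde H(\J)=-\Delta_\J+V(\e)\cdot$ acting on the fixed Hilbert space $\B_0=L_2(\R^d/\Gamma)$, where $\Delta_\J$ is given by (\ref{b17}). Because the coefficients of $-\Delta_\J$ are polynomial in the components of $\J$, and because the operator domain $W_2^2(\R^d/\Gamma)$ does not depend on $\J$, the family $\{\tilde H(\J)\}_{\J\in\C^d}$ is a type-(A) holomorphic family of closed operators in the sense of Kato. Since $\lambda_0$ is a simple isolated eigenvalue of $\tilde H(\J_0)$ (with discrete spectrum by Proposition \ref{prcompres}), Kato--Rellich theory produces a connected neighborhood $\Oc(\J_0)\subset\C^d$, a holomorphic branch $\lambda(\J)$ with $\lambda(\J_0)=\lambda_0$ that is a simple isolated eigenvalue of $\tilde H(\J)$ for every $\J\in\Oc(\J_0)$, and a holomorphic rank-one spectral projection
\[
\tilde P(\J)=-\frac{1}{2\pi i}\oint_{C_r}\big(\tilde H(\J)-zI\big)^{-1}\,dz,
\]
where $C_r$ is a small circle around $\lambda_0$ separating it from the rest of $\sigma(\tilde H(\J_0))$.

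Next, picking any eigenfunction $e_0$ of $\tilde H(\J_0)$ at $\lambda_0$, I set $\tilde e(\e,\J)=(\tilde P(\J)e_0)(\e)$. By continuity, after shrinking $\Oc(\J_0)$ we have $\tilde e(\cdot,\J)\neq 0$ in $L_2(\Omega)$, and the mapping $\J\mapsto \tilde e(\cdot,\J)\in L_2(\Omega)$ is holomorphic. Here is the crucial step: applying claim (ii) of Theorem \ref{thmainApp} to the pair $\lambda(\J),\tilde e(\e,\J)$ gives holomorphy of $\J\mapsto\tilde e(\cdot,\J)\in C(\Omega)$ in $\Oc(\J_0)$. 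Setting $\tilde b(\e,\J)=\exp(i\J\cdot\e)\tilde e(\e,\J)$ yields the corresponding Bloch branch, which is continuous on $\Omega$ and depends holomorphically on $\J$ in the $C(\Omega)$-norm. For $\J\in\Oc(\J_0)\cap\R^d$ the scalar $n(\J)=\Vert \tilde e(\cdot,\J)\Vert_2=\sqrt{(\tilde e(\cdot,\J),\tilde e(\cdot,\J))_2}$ is strictly positive and real-analytic (an inner product of real-analytic $L_2(\Omega)$-valued mappings), so $b(\e,\J):=\tilde b(\e,\J)/n(\J)$ is normalized, continuous, and depends real-analytically on $\J\in\Oc(\J_0)\cap\R^d$ in $C(\Omega)$. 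This proves claim (i).

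For claim (ii), $\tilde P(\J)$ is the orthogonal projection onto the one-dimensional eigenspace spanned by $b(\cdot,\J)$; hence its integral kernel in the $\B_\J$-picture is $\Qc(\e,\bs,\J)=b(\e,\J)\overline{b(\bs,\J)}$. The mapping $\J\mapsto \Qc(\cdot,\cdot,\J)\in C(\Omega\times\Omega)$ is a product of two real-analytic $C(\Omega)$-valued mappings (one complex-conjugated in the second variable), hence is real-analytic on $\Oc(\J_0)\cap\R^d$. For claim (iii), any other normalized continuous branch $b'(\cdot,\J)$ satisfying the hypotheses of (i) must span the same one-dimensional eigenspace, so $b'(\e,\J)=c(\J)b(\e,\J)$ with $|c(\J)|=1$; then $b'(\e,\J)\overline{b'(\bs,\J)}=|c(\J)|^2 b(\e,\J)\overline{b(\bs,\J)}=\Qc(\e,\bs,\J)$, proving independence of the choice.

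The only nontrivial step is the application of Theorem \ref{thmainApp}(ii) to pass from $L_2(\Omega)$-holomorphy of the eigenbranch to $C(\Omega)$-holomorphy; everything else reduces to the standard Kato--Rellich machinery and the elementary observation that a simple eigenvalue's spectral projection is rank one.
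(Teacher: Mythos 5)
Your proposal is correct but takes a genuinely different route from the paper for the perturbation-theoretic step. The paper does not invoke Kato's type-(A) machinery on the unbounded family $\tilde H(\J)$; instead it goes through the bounded compact family $K(\J)=(R(\J))^l$ already constructed in Proposition \ref{prpwkrn}, notes that $\nu_0=(\lambda_0+\gamma_0^2)^l$ is a simple eigenvalue of $K(\J_0)$, and cites Baumg\"artel's analytic perturbation theory for compact operators to produce a holomorphic eigenvalue branch $\nu(\J)$ together with a holomorphic rank-one eigenprojection $Q(\J)$; the branch $\tilde b(\cdot,\J)=Q(\J)b_0$ is then holomorphic in $L_2(\Omega)$, and Theorem \ref{thmainApp}(ii) lifts this to $C(\Omega)$. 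Your version performs the perturbation step on $\tilde H(\J)$ directly, which is more standard and avoids the detour through Baumg\"artel and through $K(\J)$ in the perturbation step, at the cost of checking the type-(A) property for the polynomial family $-\Delta_\J+V$ (which is routine given the relative bounds in Lemma \ref{lmestVuOm}). Both approaches ultimately feed the $L_2$-holomorphic branch into Theorem \ref{thmainApp}(ii), which is where all the hard analysis lives, so the gain is mostly expository.

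There is a second, smaller divergence worth noting. For the real-analyticity of the normalizing factor $n(\J)=\Vert\tilde b(\cdot,\J)\Vert_2$ on $\Oc(\J_0)\cap\R^d$, the paper invokes the time-reversal symmetry $JK(\J)J=K(-\J)$ to show that $\J\mapsto\overline{\tilde b(\cdot,\J)}$ is itself a real-analytic $C(\Omega)$-valued map, and then concludes that the pairing $\int_\Omega\tilde b\,\overline{\tilde b}$ is real-analytic. You simply observe that complex conjugation preserves real-analyticity on $\R^d$, which is an equally valid and more elementary observation. One tiny imprecision: you apply Theorem \ref{thmainApp}(ii) to $\tilde e(\cdot,\J)$, whereas the theorem is stated for the eigenfunctions $b(\cdot,\J)$ of $H(\J)$; you should formally pass to $\tilde b(\e,\J)=\exp(i\J\cdot\e)\tilde e(\e,\J)$ first. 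Since multiplication by $\exp(i\J\cdot\e)$ is a bounded operator on both $L_2(\Omega)$ and $C(\Omega)$ depending holomorphically on $\J$ (with $\Omega$ compact), the two versions are interchangeable and this is only a matter of phrasing.
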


Notice that for $d=3$  the claim of Theorem \ref{thmainApp} follows
from the results of the paper \cite{Wil}. It have been shown there
(\cite{Wil}, Lemmas 3.7, 3.8) that the square $R^2(\J)$ of the
resolvent $R(\J)$ of the operator $H(\J)$ is an integral operator
with an integral kernel $K(\e,\bs,\J)$, such that for some
$\gamma>0$ for any fixed $\J$ with $|\Im(\J)|\le\gamma$
$K(\e,\bs,\J)\in C(\Omega\times\Omega)$ and the mapping
$\J\rightarrow K(\e,\bs,\J)\in C(\Omega\times\Omega)$ is holomorphic
for $|\Im(\J)|\le\gamma$. This fact implies easily the claim of
Theorem \ref{thmainApp}. Observe that the arguments of \cite{Wil}
are true also in the case where $d<3$. But in the case $d\ge 4$ the
integral kernel of the resolvent $R(\J)$ has a stronger singularity
at its diagonal, hence we need to deal with a higher power $R^l(\J)$
of it in order to get an integral kernel having the property
mentioned above.

\subsection{Domains and self-adjointness of the operators $H(\J)$ and $H_0$}
\label{subsec:mainclaim}

In this section we generalize to the case of an arbitrary dimension
$d$ the results on domains and self-adjointness of the operators
$H(\J)$ and $H_0$, obtained in \cite{Wil} for $d=3$ (Lemmas 1.2 and
1.4). The arguments used in \cite{Wil} are true also for $d<3$.
These arguments are based on the fact that for $d\le 3$ the
continuous embedding $W_2^2(\Omega)\hookrightarrow C(\Omega)$ holds.
For $d\ge 4$ this embedding is not true, but we use in this case the
Sobolev's theorem on embedding of $W_p^2(\Omega)$ into
$L_q(\Omega)$. First of all, let us prove the lemma, whose first
claim is an analog of Lemma 1.3 from \cite{Wil}:

\begin{lemma}\label{lmestVuOm}
(i) If $d\ge 4$ and  $V\in L_s(\Omega)$ with $s>\frac{d}{2}$, then
there exists $C>0$ such that for any $u\in W_2^2(\Omega)$ and any
$\epsilon>0$
\begin{equation}\label{estnrmVu1}
\Vert Vu\Vert_2^2\le C|\Vert V\Vert_s^2(\epsilon\Vert
u\Vert_{2,\,2}^2+\epsilon^{-\mu}\Vert u\Vert_2^2),
\end{equation}
where
\begin{equation}\label{dfmu}
\mu=\mu(s)=\frac{(p(s))^{-1}-(q(s))^{-1}}{(q(s))^{-1}-(q(\tilde
s))^{-1}},
\end{equation}
\begin{equation}\label{dfqs}
q(s)=\frac{2s}{s-2},
\end{equation}
\begin{equation}\label{dfps}
p(s)=\frac{2ds}{(d+4)s-2d}
\end{equation}
and
\begin{equation}\label{dftls}
\tilde s=\frac{1}{2}\Big(\frac{d}{2}+s\Big);
\end{equation}

(ii) If $d\ge 2$ and  $V\in L_s(\Omega)$ with $s>\frac{d}{2}$, then
there exists $\tilde C>0$ such that for any $u\in W_2^1(\Omega)$ and
any $\epsilon>0$ $\Big|\int_\Omega V(\e)|u(\e)|^2\,d\e|\Big|\le
\tilde C|\Vert V\Vert_s(\epsilon\Vert
u\Vert_{2,\,1}^2+\epsilon^{-\tilde\mu}\Vert u\Vert_2^2)$, where
\begin{equation}\label{dftlmu}
\tilde\mu=\tilde\mu(s)=\frac{(\tilde p(s))^{-1}-(\tilde
q(s))^{-1}}{(\tilde q(s))^{-1}-(\tilde q(\tilde s))^{-1}},\quad
\tilde p(s)=\frac{2ds}{(d+2)s-d},
\end{equation}
\begin{equation}\label{dftlqs}
\tilde q(s)=\frac{2s}{s-1}
\end{equation}
and $\tilde s$ is defined by (\ref{dftls});\vskip2mm

(iii) If $d=1$ and  $V\in L_1(\Omega)$, then for any $u\in
W_2^1(\Omega)$ and any $\epsilon>0$ $\Big|\int_\Omega
V(\e)|u(\e)|^2\,d\e|\Big|\le|\Vert V\Vert_1(\epsilon\Vert
u\Vert_{2,\,1}^2+(\epsilon^{-1}+T^{-1})\Vert u\Vert_2^2)$, where
$T=lenth(\Omega)$.
\end{lemma}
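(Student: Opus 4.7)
The three estimates are of relative-boundedness type and admit a common strategy: isolate $V$ via H\"older's inequality, reduce the resulting $L^{q}(\Omega)$ norm of $u$ to a Sobolev norm through Sobolev embedding combined with $L^p$-interpolation, and convert the resulting product of norms into a sum with free parameter $\epsilon$ via Young's inequality.

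For claim (i), I would first apply H\"older with conjugate exponents $s/2$ and $s/(s-2)$ to get $\|Vu\|_2^2\le\|V\|_s^2\|u\|_{q(s)}^2$. The hypothesis $s>d/2$ gives the ordering $p(s)<q(s)<q(\tilde s)$ together with $p(s)<2$, via the identities $1/p(s)-1/q(s)=2/d$, $1/q(s)-1/q(\tilde s)=(s-\tilde s)/(s\tilde s)>0$, and $1/p(s)-1/2=(2s-d)/(ds)>0$. The log-convexity of $L^p$-norms on the bounded domain $\Omega$ then yields
\begin{equation*}
\|u\|_{q(s)}\le\|u\|_{p(s)}^{1-\theta}\|u\|_{q(\tilde s)}^{\theta},\qquad \theta=\frac{1/p(s)-1/q(s)}{1/p(s)-1/q(\tilde s)}.
\end{equation*}
The first factor is controlled by the finite-measure inclusion $L_2(\Omega)\hookrightarrow L_{p(s)}(\Omega)$, which gives $\|u\|_{p(s)}\le|\Omega|^{1/p(s)-1/2}\|u\|_2$. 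For the second, the Sobolev embedding $W_2^2(\Omega)\hookrightarrow L_{q(\tilde s)}(\Omega)$ is valid because $1/q(\tilde s)=1/2-1/\tilde s\ge 1/2-2/d$ (as $\tilde s>d/2$), so $\|u\|_{q(\tilde s)}\le C\|u\|_{2,2}$. Squaring yields $\|u\|_{q(s)}^2\le C\|u\|_2^{2(1-\theta)}\|u\|_{2,2}^{2\theta}$, and Young's inequality with parameter $\epsilon$ converts this product into $\epsilon\|u\|_{2,2}^2+C\epsilon^{-\mu}\|u\|_2^2$, where a direct computation confirms that $\mu=\theta/(1-\theta)$ matches the expression in (\ref{dfmu}).

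Claim (ii) follows the same template one derivative lower: H\"older with exponents $s$ and $s/(s-1)$ gives $\bigl|\int_\Omega V|u|^2\,d\e\bigr|\le\|V\|_s\|u\|_{\tilde q(s)}^2$; log-convexity bridges $L^{\tilde p(s)}$ and $L^{\tilde q(\tilde s)}$ to control $\|u\|_{\tilde q(s)}$; the finite-measure embedding controls $L^{\tilde p(s)}$ by $L^2$; the Sobolev embedding $W_2^1(\Omega)\hookrightarrow L_{\tilde q(\tilde s)}(\Omega)$ (valid since $\tilde s>d/2$) controls $L^{\tilde q(\tilde s)}$ by $W_2^1$; and Young concludes with $\tilde\mu=\tilde\theta/(1-\tilde\theta)$ matching (\ref{dftlmu}). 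Claim (iii) in dimension one is entirely elementary: for $\Omega=[a,a+T]$ the representation $u(x)=u(y)+\int_y^x u'(t)\,dt$ combined with Cauchy--Schwarz gives $|u(x)|^2\le(1+\delta)|u(y)|^2+(1+\delta^{-1})|x-y|\|u'\|_2^2$; averaging in $y$ over $\Omega$, choosing $\delta$ so that the coefficient of $\|u'\|_2^2$ becomes $\epsilon$, and bounding $\int|V||u|^2\le\|V\|_1\|u\|_\infty^2$ yields the inequality (for $\epsilon$ too small to calibrate in this way, the $\epsilon^{-1}\|u\|_2^2$ term on the right-hand side already absorbs the trivial one-dimensional Sobolev bound $\|u\|_\infty^2\le C(T^{-1}\|u\|_2^2+T\|u'\|_2^2)$). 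The only real technical work is verifying that the indices $p(s),q(s),\tilde p(s),\tilde q(s),q(\tilde s),\tilde q(\tilde s)$ all lie in their admissible ranges; the hypothesis $s>d/2$ guarantees precisely this, making the Sobolev embeddings and log-convexity interpolation licit throughout.
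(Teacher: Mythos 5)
Your proposal is correct and follows essentially the same route as the paper: H\"older to peel off $V$, then the Sobolev embedding chain combined with log-convexity interpolation between $L_{p(s)}$ and $L_{q(\tilde s)}$, with Young's inequality supplying the $\epsilon$-split and producing precisely $\mu=\theta/(1-\theta)$. The only cosmetic difference is that the paper interpolates first and then substitutes the embeddings, whereas you substitute the embeddings inside the interpolation; your slightly roundabout handling of (iii) (the $(1+\delta)$ split rather than $2\int u u'\le\epsilon\|u'\|_2^2+\epsilon^{-1}\|u\|_2^2$ followed by averaging in $y$) is equivalent after the small adjustment you already flag.
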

\begin{proof}
(i) Using H\"older's inequality, we have for any $u\in
W_2^2(\Omega)$:
\begin{eqnarray}\label{Holdest}
&&\Vert Vu\Vert_2^2=\int_\Omega (V(\e))^2|u(\e)|^2\,d\e\le\\
&&\Big(\int_\Omega(V(\e))^s\,d\e\Big)^{\frac{2}{s}}\Big(\int_\Omega|u(\e)|^{q(s)}\,d\e\Big)^{\frac{2}{q(s)}}=
\Vert V\Vert_s^2\,\Vert u\Vert_{q(s)}^2,\nonumber
\end{eqnarray}
where $q(s)$ is defined by (\ref{dfqs}). We see from the
representation $q(s)=2+\frac{4}{s-2}$ that $q(s)$ is decreasing and
if $s$ runs over $\Big(\frac{d}{2},\,\infty\Big)$, $q(s)$ runs over
$\Big(2,\,\frac{2d}{d-4}\Big)$ for $d>4$ and over $(2,\,\infty)$ for
$d=4$. By Sobolev's embedding theorem, if
\begin{equation}\label{frmp}
\frac{1}{p}=\frac{1}{q(s)}+\frac{2}{d},
\end{equation}
then $W_p^2(\Omega)\hookrightarrow L_{q(s)}(\Omega)$. Taking into
account (\ref{dfqs}), we get that the number $p=p(s)$, for which
(\ref{frmp}) holds, has the form (\ref{dfps}). Observe that the
representation $p(s)=\frac{2d}{d+4}\Big(1+\frac{2d}{(d+4)s-2d}\Big)$
implies that $p(s)$ is decreasing and when $s$ runs over
$\Big(\frac{d}{2},\,\infty\Big)$, $p(s)$ runs over
$\Big(\frac{2d}{d+4},\,2\Big)$. Observe that, in view of
(\ref{dftls}), $p(s)<p(\tilde s)<2<q(s)<q(\tilde s)$, hence the
interpolation inequality $\Vert u\Vert_{q(s)}\le\Vert
u\Vert_{p(s)}^\lambda\Vert u\Vert_{q(\tilde s)}^{1-\lambda}$ is
valid with $\lambda=\frac{(q(s))^{-1}-(q(\tilde
s))^{-1}}{(p(s))^{-1}-(q(\tilde s))^{-1}}$. Then the Young's
inequality implies that for any $\epsilon>0$
\begin{equation}\label{interpestuq}
\Vert u\Vert_{q(s)}^2\le\epsilon \Vert u\Vert_{q(\tilde
s)}^2+\epsilon^{-\mu}\Vert u\Vert_{p(s)}^2,
\end{equation}
where $\mu=\mu(s)$ is defined by (\ref{dfmu}) (see \cite{Gil-Tr},
Chapt. 7). By the Sobolev's embedding theorem, $W_{p(\tilde
s)}^2(\Omega)\hookrightarrow L_{q(\tilde s)}(\Omega)$, hence since
$W_2^2(\Omega)\hookrightarrow W_{p(\tilde s)}^2(\Omega)$, we get:
$W_2^2(\Omega)\hookrightarrow L_{q(\tilde s)}(\Omega)$. This fact,
the embedding $L_2(\Omega)\hookrightarrow L_{p(s)}(\Omega)$ and the
inequalities (\ref{Holdest}) and (\ref{interpestuq}) imply that for
some $C>0$ and for any $u\in W_2^2(\Omega)$, $\epsilon>0$ the
desired inequality (\ref{estnrmVu1}) is valid. Claim (i) is proven.

(ii) We have for $u\in W_2^1(\Omega)$: $\Big|\int_\Omega
V(\e)|u(\e)|^2\,d\e|\Big|\le|\Vert V\Vert_s\Vert u\Vert_{\tilde
q(s)}^2$, where $\tilde q(s)$ is defined by (\ref{dftlqs}). Further
we continue the proof like the proof of claim (i), using the fact
that $W_p^1(\Omega)\hookrightarrow L_{\tilde q(s)}(\Omega)$, if
$\frac{1}{p}=\frac{1}{\tilde q(s)}+\frac{1}{d}$.

(iii) Using the Newton-Leibnitz formula, we get easily that for any
$u\in W_2^1(\Omega)$, $\epsilon>0$ and $x\in\Omega$ the inequality
holds $|u(x)|^2\le \epsilon\Vert
u\Vert_{2,\,1}^2+(\epsilon^{-1}+T^{-1})\Vert u\Vert_2^2$, from which
the claim follows immediately.
\end{proof}

 Using Lemma \ref{lmestVuOm} and the arguments of the proof of
 Lemma 1.1 from \cite{Wil}, it is not difficult to prove the
 following claim:

\begin{lemma}\label{lmestVuRd}
(i) If $d\ge 4$ and $V\in L_s(\Omega)$ with $s>\frac{d}{2}$, then
there exists $C>0$ such that for any $u\in W_2^2(\R^d)$ and any
$\epsilon>0$
\begin{equation*}
\Vert Vu\Vert_{L_2(\R^d)}\le C\Vert V\Vert_s(\epsilon\Vert
u\Vert_{W_2^2(\R^d)}+\epsilon^{-\mu}\Vert u\Vert_{L_2(\R^d)}),
\end{equation*}
where $\mu=\mu(s)$ is defined by
(\ref{dfmu})-(\ref{dftls});\vskip2mm

(ii) If $d\ge 2$ and  $V\in L_s(\Omega)$ with $s>\frac{d}{2}$, then
there exists $\tilde C>0$ such that for any $u\in W_2^1(\R^d)$ and
any $\epsilon>0$
\begin{equation*}
\Big|\int_{\R^d} V(\e)|u(\e)|^2\,d\e|\Big|\le \tilde C|\Vert
V\Vert_s(\epsilon\Vert
u\Vert_{W_2^1(\R^d)}^2+\epsilon^{-\tilde\mu}\Vert
u\Vert_{L_2(\R^d)}^2),
\end{equation*}
where $\tilde\mu=\tilde\mu(s)$ is defined by
(\ref{dftlmu});\vskip2mm

(iii) If $d=1$ and  $V\in L_1(\Omega)$, then for any $u\in
W_2^1(\Omega)$ and any $\epsilon>0$
\begin{equation*}
\Big|\int_{\R^d} V(\e)|u(\e)|^2\,d\e|\Big|\le\Vert
V\Vert_1(\epsilon\Vert
u\Vert_{W_2^1(\R^d)}^2+(\epsilon^{-1}+T^{-1})\Vert
u\Vert_{L_2(\R^d)}^2),
\end{equation*}
where $T=lenth(\Omega)$.
\end{lemma}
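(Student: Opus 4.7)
The plan is to reduce Lemma \ref{lmestVuRd} to its already-established periodic-cell version, Lemma \ref{lmestVuOm}, by localizing to the translates of $\Omega$ under the periodicity lattice $\Gamma$. Writing $\Omega_\lb = \Omega + \{\lb\}$ for $\lb \in \Gamma$, the cubes $\{\Omega_\lb\}_{\lb \in \Gamma}$ form a disjoint (up to null set) covering of $\R^d$, and $\Gamma$-periodicity of $V$ gives $\Vert V\Vert_{L_s(\Omega_\lb)} = \Vert V\Vert_s$ uniformly in $\lb$, so the local estimates from Lemma \ref{lmestVuOm} are available with the same constant on every cube. The essential book-keeping fact I will use is that for $k \in \{1,2\}$ and any $u \in W_2^k(\R^d)$, one has the exact additivity
\begin{equation*}
\sum_{\lb \in \Gamma} \Vert u\Vert_{2,\,k,\,\Omega_\lb}^2 = \Vert u\Vert_{W_2^k(\R^d)}^2,
\end{equation*}
since weak derivatives on $\R^d$ restrict to weak derivatives on each open cube and the $L_2$-integrals of $|D^\alpha u|^2$ decompose additively over the partition.

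For claim (i), I will apply the squared estimate (\ref{estnrmVu1}) on each $\Omega_\lb$ to $u|_{\Omega_\lb}$ and sum over $\lb \in \Gamma$, obtaining
\begin{equation*}
\Vert Vu\Vert_{L_2(\R^d)}^2 \le C\Vert V\Vert_s^2\bigl(\epsilon \Vert u\Vert_{W_2^2(\R^d)}^2 + \epsilon^{-\mu}\Vert u\Vert_{L_2(\R^d)}^2\bigr).
\end{equation*}
Taking square roots and using $\sqrt{a+b} \le \sqrt{a} + \sqrt{b}$, then substituting $\epsilon \mapsto \epsilon^2$ to restore the exponents $1$ and $-\mu$, I obtain the advertised linear-in-norm inequality with a constant proportional to $\sqrt{C}$. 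For claim (ii), the same periodization is applied to the estimate from claim (ii) of Lemma \ref{lmestVuOm}; no square-root step is needed because that inequality is already quadratic in $u$, so summing over $\lb$ yields the target estimate directly. Claim (iii) is handled identically using the 1D pointwise bound of Lemma \ref{lmestVuOm}(iii), and the $T^{-1}$ term persists cube-by-cube with $T = 1$ (the period).

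There is no serious obstacle here; the step requiring minimal care is verifying the Sobolev-norm additivity over the lattice partition, which is standard because the $\Omega_\lb$ are disjoint cubes with no interface contribution to the weak derivatives, and confirming that the inner product $\int_{\R^d} V|u|^2\,d\e$ is indeed finite and admits the additive decomposition used in claims (ii) and (iii) (this is immediate from periodicity of $V$ and dominated convergence on finite unions of cubes). The argument is a routine periodization of the already-proved local estimate.
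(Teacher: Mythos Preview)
Your proposal is correct and follows exactly the route the paper indicates: the paper does not spell out a proof but simply refers to Lemma \ref{lmestVuOm} together with the arguments of Lemma 1.1 in \cite{Wil}, and those arguments are precisely the lattice-translate decomposition $\R^d=\bigcup_{\lb\in\Gamma}\Omega_\lb$, application of the local estimate on each cell (with identical constants by periodicity of $V$), and summation using additivity of the $L_2$-based Sobolev norms over the disjoint cells. Your handling of the square-root step and the $\epsilon\mapsto\epsilon^2$ rescaling in claim (i) is the standard way to pass from the squared inequality (\ref{estnrmVu1}) to the linear form stated in the lemma.
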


We now turn to the main claim of this section.

\begin{proposition}\label{propselfadj}
If $V(\e)$ satisfies the condition (\ref{condperpotent}),
then\vskip2mm

(i) for any $\J\in \T^d$ the operator $H(\J)$, generated in the
space $\B_{\J}$ by the operation $h=-\Delta+V(\e)\cdot$ and having
the domain $\Dc_\J=\B_{\J}\cap W_{2,loc}^2(\R^d)$, is self-adjoint
and bounded below uniformly w.r.t. $\J\in \T^d$;\vskip2mm

(ii) the operator $H_0$, generated in the space $L_2(\R^d)$ by the
operation $h=-\Delta+V(\e)\cdot$ and having the domain
$W_2^2(\R^d)$, is self-adjoint and bounded below.\vskip2mm

Recall that $\B_{\J}$ is the Hilbert space of functions $u\in
L_{2,loc}(\R^d)$ satisfying the condition (\ref{Htau}) with the
inner product, defined by (\ref{dfinnprd}).
\end{proposition}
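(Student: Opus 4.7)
The plan is to apply the Kato--Rellich theorem in both parts, using Lemmas \ref{lmestVuOm} and \ref{lmestVuRd} to show that $V$ is relatively bounded with respect to $-\Delta_\J$ (for (i)) or $-\Delta$ (for (ii)) with arbitrarily small relative bound.

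I would first prove part (ii). In the case $d\ge 4$, Lemma \ref{lmestVuRd}(i) directly yields
\begin{equation*}
\|Vu\|_{L_2(\R^d)}\le C\|V\|_s\big(\epsilon\|u\|_{W_2^2(\R^d)}+\epsilon^{-\mu}\|u\|_{L_2(\R^d)}\big)
\end{equation*}
for every $\epsilon>0$ and $u\in W_2^2(\R^d)$; combined with the standard elliptic estimate $\|u\|_{W_2^2(\R^d)}\le C_1(\|\Delta u\|_{L_2}+\|u\|_{L_2})$ this shows that $V$ is $-\Delta$-bounded with relative bound zero. In the case $d\le 3$ the Sobolev embedding $W_2^2(\Omega)\hookrightarrow C(\Omega)$ holds (since $2\cdot 2>d$), and the $\Gamma$-periodicity of $V$ together with a sum over the translated cubes $\Omega_{\lb}=\Omega+\{\lb\}$ gives
\begin{equation*}
\|Vu\|_{L_2(\R^d)}^2=\sum_{\lb\in\Gamma}\int_{\Omega_{\lb}}|V|^2|u|^2\,d\e\le\|V\|_{L_2(\Omega)}^2\sum_{\lb\in\Gamma}\|u\|_{L_\infty(\Omega_{\lb})}^2\le C\|V\|_{L_2(\Omega)}^2\|u\|_{W_2^2(\R^d)}^2,
\end{equation*}
and a rescaling of the Sobolev embedding on each cube (or elementary interpolation between $L_\infty$ and $L_2$) then produces the desired Kato-small estimate. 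In either case the Kato--Rellich theorem yields that $-\Delta+V$ is self-adjoint on $W_2^2(\R^d)$ and bounded below.

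For part (i), I would pass through the unitary $E_\J$ described before (\ref{b16}) to the operator $\tilde H(\J)=-\Delta_\J+V$ on $\B_0=L_2(\R^d/\Gamma)$ with domain $W_2^2(\R^d/\Gamma)$, where $-\Delta_\J=\sum_{j=1}^d(D_j+ip_j)^2$. The operator $-\Delta_\J$ is diagonalized by the Fourier series in the reciprocal lattice, hence is self-adjoint on $W_2^2(\R^d/\Gamma)$, and one has the bound
\begin{equation*}
\|u\|_{2,\,2}\le C_2\big(\|\Delta_\J u\|_2+\|u\|_2\big)
\end{equation*}
with $C_2$ depending only on $\sup_{\J\in\T^d}|\J|<\infty$. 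Lemma \ref{lmestVuOm} then supplies the Kato-small estimate for $V$ as a multiplication operator from $W_2^2(\R^d/\Gamma)$ to $\B_0$ when $d\ge 4$; for $d\le 3$ one uses $W_2^2(\Omega)\hookrightarrow C(\Omega)$ exactly as in part (ii). Applying Kato--Rellich to each $\tilde H(\J)$ gives the self-adjointness of $H(\J)$ on $\Dc_\J$.

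The only point requiring care is the \emph{uniformity} of the lower bound in $\J\in\T^d$. It is handled by observing that in the Kato--Rellich estimate $\|Vu\|\le a\|\tilde H(\J)u\|+b\|u\|$, the constants $a<1$ and $b$ depend only on $\|V\|_s$, on the Sobolev constants on $\Omega$, and on the above $C_2$---none of which depends on the specific $\J\in\T^d$. The resulting lower bound $-b/(1-a)$ on $\tilde H(\J)$ is therefore the same for every $\J\in\T^d$, which is exactly the uniform semiboundedness claimed in (i).
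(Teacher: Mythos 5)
Your proof is correct and follows essentially the same route as the paper: relative boundedness of $V$ with arbitrarily small relative bound (via Lemma \ref{lmestVuOm} for $H(\J)$ and Lemma \ref{lmestVuRd} for $H_0$ when $d\ge 4$, and via $W_2^2(\Omega)\hookrightarrow C(\Omega)$ when $d\le 3$), followed by Kato--Rellich. The only cosmetic difference is that you conjugate by the unitary $E_\J$ and apply Kato--Rellich to $\tilde H(\J)=-\Delta_\J+V$ on $L_2(\R^d/\Gamma)$, whereas the paper applies it directly to $-\Delta+V$ on $\Dc_\J\subset\B_\J$; the two are unitarily equivalent, so this is not a substantive departure, and your explicit remark on the $\J$-uniformity of the Kato constants is a welcome bit of care that the paper leaves implicit.
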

\begin{proof}
For $d=3$ the claims are proved in \cite{Wil} (Lemmas 1.2 and 1.4)
and the arguments used there are true also in the case $d<3$. Let us
prove claim (i) for the case $d\ge 4$. Assume that $V\in
L_s(\Omega)$ with $s>\frac{d}{2}$. Then using claim (i) of Lemma
\ref{lmestVuOm} and the arguments from the proof of Lemma 1.3 from
\cite{Wil}, we get that for some $\tilde C>0$ and for any
$u\in\Dc_\J$, $\epsilon>0$ $\;\Vert Vu\Vert_2\le \tilde C|\Vert
V\Vert_s(\epsilon\Vert\Delta u\Vert_2+\epsilon^{-\mu}\Vert
u\Vert_2)$ and the operator $-\Delta$ with the domain $\Dc_\J$ is
self-adjoint and non-negative. Hence claim (i) follows from the
Kato's theorem (\cite{Kat}, p. 287, Theorem 4.3). In the analogous
manner claim (ii) for $d\ge 4$ follows from claim (i) of Lemma
\ref{lmestVuRd}, arguments from the proof of Lemma 1.1 of \cite{Wil}
and the Kato's theorem mentioned above.
\end{proof}

 In the same manner as in \cite{Wil} (Lemma 1.5) the following claim
 is proved:
\begin{proposition}\label{prcompres}
If $V(\e)$ satisfies the condition (\ref{condperpotent}), then the
resolvent operator $R_\zeta(H(\J))=(H(\J)-\zeta I)^{-1}$ is compact
for every $\zeta$ in the resolvent set of $H(\J)$. Hence, in
particular, $H(\J)$ has a discrete spectrum $\sigma(H(\J))$ for
every $\J\in\T^d$.
\end{proposition}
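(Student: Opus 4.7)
The plan is to exhibit one point $\zeta_0$ in the resolvent set of $H(\J)$ at which $R_{\zeta_0}(H(\J))$ is compact; the compactness of $R_\zeta(H(\J))$ for every $\zeta\in\Rs(H(\J))$ then follows from the Hilbert resolvent identity $R_\zeta(H(\J))-R_{\zeta_0}(H(\J))=(\zeta-\zeta_0)R_\zeta(H(\J))R_{\zeta_0}(H(\J))$, and discreteness of $\sigma(H(\J))$ follows from self-adjointness together with compactness of the resolvent.

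To construct such a $\zeta_0$, first pass via the unitary equivalence $E_\J:\,\B_\J\to L_2(\R^d/\Gamma)$ introduced in (\ref{b16})-(\ref{b17}) to work with $\tilde H(\J)=-\Delta_\J+V(\e)\cdot$ on the torus $\R^d/\Gamma$, whose domain is $W_2^2(\R^d/\Gamma)$ by claim (i) of Proposition \ref{propselfadj}. By the uniform lower bound from the same proposition, one may choose $\zeta_0\in\R$ so negative that $\zeta_0<\inf\sigma(\tilde H(\J))-1$ for every $\J\in\T^d$.

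Next, I would establish an a priori elliptic estimate
\begin{equation*}
\Vert u\Vert_{2,2}\le C\big(\Vert (\tilde H(\J)-\zeta_0)u\Vert_2+\Vert u\Vert_2\big)\qquad (u\in W_2^2(\R^d/\Gamma)),
\end{equation*}
with $C$ independent of $u$. Here the Laplacian piece gives $\Vert \Delta u\Vert_2\le\Vert(\tilde H(\J)-\zeta_0)u\Vert_2+\Vert Vu\Vert_2+(|\zeta_0|+|\J|^2+1)\Vert u\Vert_2$ plus lower-order terms involving $\nabla u$ coming from $\Delta_\J$. The only delicate contribution is $\Vert Vu\Vert_2$, which I would absorb by applying Lemma \ref{lmestVuOm}(i) in the case $d\ge 4$, in the form $\Vert Vu\Vert_2\le C\Vert V\Vert_s(\epsilon\Vert u\Vert_{2,2}+\epsilon^{-\mu}\Vert u\Vert_2)$ for $V\in L_s(\Omega)$ with $s>d/2$, choosing $\epsilon$ small enough to absorb $\epsilon\Vert u\Vert_{2,2}$ into the left-hand side. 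For $d\le 3$, where $V\in L_2(\Omega)$ and $W_2^2(\Omega)\hookrightarrow C(\Omega)$, the estimate on $\Vert Vu\Vert_2$ is easier and follows by the same absorption argument. Combining this with $\Vert u\Vert_2\le\Vert R_{\zeta_0}(\tilde H(\J))\Vert\,\Vert(\tilde H(\J)-\zeta_0)u\Vert_2$ yields
\begin{equation*}
\Vert R_{\zeta_0}(\tilde H(\J))f\Vert_{2,2}\le C'\Vert f\Vert_2\qquad(f\in L_2(\R^d/\Gamma)).
\end{equation*}

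Finally, the Rellich-Kondrachov theorem supplies the compact embedding $W_2^2(\R^d/\Gamma)\hookrightarrow L_2(\R^d/\Gamma)$, so that $R_{\zeta_0}(\tilde H(\J))$ factors through this compact embedding and is therefore compact on $L_2(\R^d/\Gamma)$; pulling back by $E_\J^{-1}$ gives compactness of $R_{\zeta_0}(H(\J))$. The main technical step is the elliptic estimate above, and within it the key obstacle is the absorption of $\Vert Vu\Vert_2$; this is precisely where Lemma \ref{lmestVuOm} with its sharp interpolation exponent $\mu(s)$ is needed, because for $d\ge 4$ the Sobolev embedding $W_2^2\hookrightarrow C$ fails and a more refined $L_p$-interpolation argument is required.
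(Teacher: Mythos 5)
Your proposal is correct and follows essentially the same route the paper intends: the paper itself offers no independent proof but defers to Wilcox's Lemma~1.5, whose mechanism is precisely the one you give --- pass to $\tilde H(\J)$ on the compact quotient, prove an a priori $W_2^2$ estimate by absorbing $\Vert Vu\Vert_2$, and invoke Rellich--Kondrachov compactness of $W_2^2(\R^d/\Gamma)\hookrightarrow L_2(\R^d/\Gamma)$, with Lemma~\ref{lmestVuOm} supplying the needed interpolation bound for $d\ge 4$ where Wilcox's $W_2^2\hookrightarrow C$ argument is unavailable.
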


\subsection{Fundamental solution of the Helmholz's equation in $\R^d$}
\label{subsec:fundsolHelm}

First of all, let us study the structure of the fundamental solution
$\Ec(\e,\gamma_0)\;(\gamma_0>0)$ of the Helmholz equation in the
space $\R^d$, that is the generalized solution of the equation
\begin{equation}\label{fndsl}
-\Delta\Ec+\gamma_0^2\Ec=\delta(\e),
\end{equation}
belonging to the space $S^\prime(\R)$ of slowly growing
distributions.

\begin{proposition}\label{prfndsl}
(i) Equation (\ref{fndsl})has a unique solution $\Ec_d\in
S^\prime(\R^d)$;
\vskip2mm

(ii) it is spherically symmetric, that is
$\Ec_d(\e,\gamma_0)=\Ec_d(|\e|,\gamma_0)$, $\Ec(\e,\gamma_0)>0$ for
$\e\neq 0$, and it has the form:

(iii) for $d=1$
\begin{equation}\label{fndslone}
\Ec_d(x,\gamma_0)=\frac{1}{2\gamma_0}e^{-\gamma_0|x|};
\end{equation}
\vskip2mm

(iv) for
$d=3$$\;\Ec_d(\e,\gamma_0)=\frac{1}{4\pi|\e|}e^{-\gamma_0|\e|}$;
\vskip2mm

(v) for $d=2m+1\;(m=2,3,\dots)$
\begin{eqnarray*}
&&\Ec_d(\e,\gamma_0)=\frac{s_{d-2}}{2(2\pi)^{d-1}}\sum_{k=0}^{m-1}
\left(\begin{array}{l}
m-1\\
\quad k
\end{array}\right)
(-1)^{m-1-k}\sum_{j=0}^{2k} \left(\begin{array}{l}
2k\\
\;j
\end{array}\right)
\gamma_0^{2(m-1-k)+j}\times \nonumber\\
&&\frac{(2k-j)!\;e^{-\gamma_0|\e|}}{|\e|^{2k-j+1}};
\end{eqnarray*}
\vskip2mm

(vi) for $d=2$
$\;\Ec_d(\e,\gamma_0)=\frac{1}{2\pi}K_0(\gamma_0|\e|)$, where
\begin{equation}\label{mcdn}
K_\nu(x):=\int_1^\infty\frac{e^{-xt}}{t^\nu\sqrt{t^2-1}}\,dt
\end{equation}
$(x>0,\;\nu\ge 0)$ is the MacDonald's function; \vskip2mm

(vii) for $d=2m\;(m=2,3,\dots)$
\begin{eqnarray*}
&&\Ec_d(\e,\gamma_0)=\frac{s_{d-1}}{(2\pi)^{d}}\sum_{k=0}^{m-1}
\left(\begin{array}{l}
m-1\\
\quad k
\end{array}\right)
(-1)^{m-1-k}\sum_{j=0}^{2k} \left(\begin{array}{l}
2k\\
\;j
\end{array}\right)
\gamma_0^{2(m-1-k)+j}\times \nonumber\\
&&\frac{(2k-j)!}{|\e|^{2k-j}}K_{2k-j}(\gamma_0|\e|).
\end{eqnarray*}
\end{proposition}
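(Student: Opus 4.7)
The plan is to combine Fourier analysis for existence, uniqueness and qualitative properties with an explicit dimensional descent recursion that, when iterated, yields the closed-form double sums in (v) and (vii).

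First I would apply the Fourier transform (interpreted on $S^\prime(\R^d)$) to $-\Delta\Ec+\gamma_0^2\Ec=\delta(\e)$, obtaining $(|\xi|^2+\gamma_0^2)\hat\Ec=1$. Because $|\xi|^2+\gamma_0^2\ge\gamma_0^2>0$, the bounded continuous function $\xi\mapsto(|\xi|^2+\gamma_0^2)^{-1}$ is the unique distributional inverse in $S^\prime(\R^d)$, so its inverse Fourier transform gives existence and uniqueness (claim (i)); $O(d)$-invariance of $\hat\Ec$ yields (ii). Positivity of $\Ec_d$ for $\e\neq 0$ follows from the heat-kernel representation
\[
\Ec_d(\e,\gamma_0)=\int_0^\infty\frac{1}{(4\pi t)^{d/2}}\,e^{-|\e|^2/(4t)-\gamma_0^2 t}\,dt,
\]
obtained by inserting $(|\xi|^2+\gamma_0^2)^{-1}=\int_0^\infty e^{-t(|\xi|^2+\gamma_0^2)}\,dt$ and exchanging orders of integration, which is legal because the integrand is nonnegative.

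Next I would handle the base cases directly. For $d=1$, a residue calculation on $\frac{1}{2\pi}\int_{\R}\frac{e^{ix\xi}}{\xi^2+\gamma_0^2}\,d\xi$ yields (\ref{fndslone}). For $d=3$, spherical coordinates in Fourier space reduce the problem to a one-dimensional integral of $\rho/(\rho^2+\gamma_0^2)$, again evaluated by residues, giving $\Ec_3=\frac{e^{-\gamma_0 r}}{4\pi r}$. For $d=2$, polar coordinates in Fourier space produce
\[
\Ec_2(r,\gamma_0)=\frac{1}{(2\pi)^2}\int_0^\infty\frac{\rho\,J_0(r\rho)}{\rho^2+\gamma_0^2}\,d\rho,
\]
which after the substitution $\rho=\gamma_0\sqrt{t^2-1}$ together with a standard integral representation of $J_0$ reduces to $\frac{1}{2\pi}K_0(\gamma_0 r)$ with $K_0$ in the form (\ref{mcdn}).

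The central technical step is the descent recursion
\[
\Ec_{d+2}(r,\gamma_0)=-\frac{1}{2\pi r}\frac{d}{dr}\Ec_d(r,\gamma_0),\qquad d\ge 1,
\]
which I would prove at the level of the radial ODE. A direct calculation, differentiating the equation $-u''-\frac{d-1}{r}u'+\gamma_0^2 u=0$ satisfied by $\Ec_d$ once in $r$ and substituting, shows that $v(r):=-\frac{1}{2\pi r}u'(r)$ satisfies $-v''-\frac{d+1}{r}v'+\gamma_0^2 v=0$, the radial Helmholtz equation in dimension $d+2$. Correct normalization (so that $v$ yields exactly the Dirac mass in $\R^{d+2}$) is verified by matching leading singular behavior at $r=0$: the singular part of $\Ec_d$ is $(d-2)^{-1}s_{d-1}^{-1}r^{-(d-2)}$ for $d\ge 3$ (with the analogous logarithmic/linear behavior for $d\le 2$), and applying $-\frac{1}{2\pi r}\frac{d}{dr}$ produces $d^{-1}s_{d+1}^{-1}r^{-d}$, which is precisely the required singularity in $\R^{d+2}$, using $s_{d+1}=\frac{2\pi}{d-1}s_{d-1}$.

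Finally, I would iterate the descent: $\Ec_{2m+1}=\bigl(-\frac{1}{2\pi r}\frac{d}{dr}\bigr)^{m-1}\Ec_3$ and $\Ec_{2m}=\bigl(-\frac{1}{2\pi r}\frac{d}{dr}\bigr)^{m-1}\Ec_2$. The operator acts explicitly on the relevant building blocks by
\[
-\tfrac{1}{2\pi r}\tfrac{d}{dr}\bigl(e^{-\gamma_0 r}r^{-\alpha}\bigr)=\tfrac{1}{2\pi}\bigl(\gamma_0 r^{-\alpha-1}+\alpha r^{-\alpha-2}\bigr)e^{-\gamma_0 r},
\]
\[
-\tfrac{1}{2\pi r}\tfrac{d}{dr}\bigl(r^{-\alpha}K_n(\gamma_0 r)\bigr)=\tfrac{1}{2\pi}\bigl(\alpha r^{-\alpha-2}K_n(\gamma_0 r)+\gamma_0 r^{-\alpha-1}K_{n-1}(\gamma_0 r)\bigr),
\]
where the second line uses the identity $K_n'(x)=-K_{n-1}(x)$, immediate from (\ref{mcdn}). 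Iterating and collecting like monomials $\gamma_0^a r^{-b}e^{-\gamma_0 r}$ in the odd case (resp.\ $\gamma_0^a r^{-b}K_n(\gamma_0 r)$ in the even case) by induction on $m$ produces a finite double sum. I expect the principal obstacle to be matching this iteratively generated sum to the exact form (v), (vii): this reduces to a finite binomial identity verified at each inductive step, and in the even case additionally requires integration-by-parts identities such as $K_{-1}(x)=K_1(x)+K_2(x)/x$ (proved directly from (\ref{mcdn})) to rewrite the negative-index terms $K_{-n}(\gamma_0 r)$ arising from the iteration as the combinations $\sum_{j} c_j(\gamma_0 r)^{-j}K_{n-j}(\gamma_0 r)$ appearing in (vii). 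The outer alternating-sign binomial $\binom{m-1}{k}(-1)^{m-1-k}$ precisely encodes the cancellations that render the resulting expansion well-defined.
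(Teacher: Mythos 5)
Your plan is correct in outline, and it takes a genuinely different route from the paper's in several details. The paper also begins with the Fourier representation $\hat\Ec_d(\J,\gamma_0)=(2\pi)^{-d/2}(|\J|^2+\gamma_0^2)^{-1}$, but instead of the subordination (heat-kernel) identity it performs a partial Fourier inversion: after rotating so that $\e$ lies along the $p_d$-axis, the $p_d$-integral is evaluated by residues, giving for $d\ge 2$
\begin{equation*}
\Ec_d(\e,\gamma_0)=\frac{s_{d-2}}{2(2\pi)^{d-1}}\int_0^\infty\frac{r^{d-2}\exp\bigl(-\sqrt{r^2+\gamma_0^2}\,|\e|\bigr)}{\sqrt{r^2+\gamma_0^2}}\,dr,
\end{equation*}
from which spherical symmetry, positivity and the $d=3$ case are read off at once. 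For $d\ge 4$ the paper splits $r^{d-2}=r^{d-4}(r^2+\gamma_0^2)-\gamma_0^2 r^{d-4}$ inside this integral to obtain the second-order descent $\Ec_d=\frac{s_{d-2}}{s_{d-4}(2\pi)^2}\bigl(\partial_\rho^2-\gamma_0^2\bigr)\Ec_{d-2}$, iterates from $\Ec_3$ to get (v), and obtains (vi)--(vii) afterwards by Hadamard's descent $\Ec_{2m}(\e)=\int_{\R}\Ec_{2m+1}((\e,\xi))\,d\xi$. You instead use the first-order ascent $\Ec_{d+2}=-\frac{1}{2\pi r}\partial_r\Ec_d$, verified against the radial ODE and normalized by matching the singularity at $r=0$, treat $d=2$ directly via the planar Fourier integral, and iterate from both $\Ec_2$ and $\Ec_3$. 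Both recursions are correct; yours keeps the iteration at first order, which makes the bookkeeping in the double sum somewhat lighter, while the paper's version has the normalization constant built into the integral identity so that no separate singularity matching is needed, and it avoids the Bessel recurrences entirely.

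Two small corrections worth making before you rely on the argument. First, the correct surface-volume recurrence is $s_{d+1}=\frac{2\pi}{d}\,s_{d-1}$, not $\frac{2\pi}{d-1}\,s_{d-1}$; with $\Ec_d\sim((d-2)s_{d-1})^{-1}r^{-(d-2)}$ near the origin for $d\ge 3$, applying $-\frac{1}{2\pi r}\partial_r$ gives $(2\pi s_{d-1})^{-1}r^{-d}$, which matches $(d\,s_{d+1})^{-1}r^{-d}$ precisely because $d\,s_{d+1}=2\pi s_{d-1}$. Second, the identity $K_{-1}(x)=K_1(x)+x^{-1}K_2(x)$ is indeed a consequence of (\ref{mcdn}) (integrate $\partial_t\bigl(e^{-xt}\sqrt{t^2-1}\,t^{-\nu}\bigr)$ over $[1,\infty)$ and set $\nu=1$, which yields $x\bigl(K_{\nu-2}-K_\nu\bigr)=(1-\nu)K_{\nu-1}+\nu K_{\nu+1}$), but it does not follow in one line from $K_\nu'=-K_{\nu-1}$ alone; you should record this recurrence explicitly, since the inductive reduction of negative-index terms in (vii) rests on it.
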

\begin{proof}
We see from (\ref{fndsl}) that the Fourier transform $\hat\Ec_d$ of
$\Ec_d$ satisfies the equation
\begin{equation}\label{eqfrtrfnsl}
(|\J|^2+\gamma_0^2)\hat\Ec=\frac{1}{(2\pi)^{d/2}},
\end{equation}
hence
$\hat\Ec_d(\J,\gamma_0)=\frac{1}{(2\pi)^{d/2}(|\J|^2+\gamma_0^2)}$
and, as it is clear, this is a unique solution of (\ref{eqfrtrfnsl})
belonging to $S^\prime(\R^d)$. This proves claim (i) of the
proposition. Let us reconstruct $\Ec_d$ from $\hat\Ec_d$:
\begin{equation}\label{reconstr}
\Ec_d(\e,\gamma_0)=\frac{1}{(2\pi)^d}\int_{\R^d}\frac{e^{i\J\cdot\e}}{|\J|^2+\gamma_0^2}\,d\J.
\end{equation}
for d=1 we have using Jordan's lemma:
\begin{eqnarray}
\Ec_d(x,\gamma_0)=\frac{1}{2\pi}\int_{-\infty}^\infty\frac{e^{ipx}}{p^2+\gamma_0^2}\,dp=
\left\{\begin{array}{ll} i
Res_{p=i\gamma_0}\frac{e^{ipx}}{p^2+\gamma_0^2}& for x>0,\\
-i Res_{p=-i\gamma_0}\frac{e^{ipx}}{p^2+\gamma_0^2}& for x<0
\end{array}\right.
=\frac{1}{2\gamma_0}e^{-\gamma_0|\e|}. \nonumber
\end{eqnarray}
So, we have proved claim (iii). Rotating the space $\R^d$ such that
the direction of the vector $\e$ transforms to the direction of the
axis $p_d$, we have from (\ref{reconstr}) for $d>1$:
\begin{equation*}
\Ec_d(\e,\gamma_0)=\frac{1}{(2\pi)^d}\int_{\R^{d-1}}d\tilde\J\int_{-\infty}^\infty
\frac{e^{ip_d|\e|}}{|\tilde\J|^2+p_d^2+\gamma_0^2}\,dp_d,
\end{equation*}
where $\tilde\J=(p_1,p_2,\dots,p_{d-1})$. Applying Jordan's lemma to
the inner integral, we have:
\begin{eqnarray}\label{descent}
&&\Ec_d(\e,\gamma_0)=\frac{1}{2(2\pi)^{d-1}}\int_{\R^{d-1}}\frac{\exp(-\sqrt{|\tilde\J|^2+\gamma_0^2}|\e|)}
{\sqrt{|\tilde\J|^2+\gamma_0^2}}\,d\tilde\J=\nonumber\\
&&\frac{s_{d-2}}{2(2\pi)^{d-1}}\int_0^\infty
\frac{r^{d-2}\exp(-\sqrt{r^2+\gamma_0^2}|\e|)}
{\sqrt{r^2+\gamma_0^2}}\,dr
\end{eqnarray}
We see from the last equality and (\ref{fndslone}) that
$\Ec_d(\e,\gamma_0)$ is spherically symmetric, that is
$\Ec_d(\e,\gamma_0)=\Ec(|\e|,\gamma_0)$, it is finite for $\e\neq 0$
and $\Ec_d(\e,\gamma_0)>0$. We have proved claim (ii). Taking in
(\ref{descent}) $d=3$, we get easily claim (iv).

For $d\ge 4$ let us write (\ref{descent}) in the form:
\begin{eqnarray*}
&&\Ec_d(\e,\gamma_0)=\frac{s_{d-2}}{2(2\pi)^{d-1}}\left(\int_0^\infty
\frac{r^{d-4}(r^2+\gamma_0^2)\exp(-\sqrt{r^2+\gamma_0^2}|\e|)}{\sqrt{r^2+\gamma_0^2}}\,dr
-\right.\\
&&\left.\gamma_0^2 \int_0^\infty
\frac{r^{d-4}\exp(-\sqrt{r^2+\gamma_0^2}|\e|)}
{\sqrt{r^2+\gamma_0^2}}\,dr\right)=\nonumber\\
&&\frac{s_{d-2}}{s_{d-4}(2\pi)^2}\left(\frac{d^2}{d\rho^2}-\gamma_0^2\right)
\Ec_{d-2}(\rho,\gamma_0)|_{\rho=|\e|},
\end{eqnarray*}
From this recursive formula and claim (iv) we get easily claim (v).

Using the Hadamard's descent principle:
\begin{equation*}
\Ec_{2m}(\e,\gamma_0)=\int_{-\infty}^\infty\Ec_{2m+1}((\e,\xi),\gamma_0)\,d\xi
\end{equation*}
(see \cite{Wl}), it is not difficult to prove claims (vi) and (vii)
with the help of claims (iv) and (v).
\end{proof}

\begin{corollary}\label{corest}
For any natural $d$ there exists $M=M(d,\gamma_0)>0$ such
that\vskip2mm

(i) if $d=1$, then for any $x\in\R$ $\;\Ec_d(x,\gamma_0)\le
Me^{-\gamma_0|x|}$; vskip2mm

 (ii) if $d\ge 3$ is odd, then
\begin{equation*}
\Ec_d(\e,\gamma_0)\le \left\{\begin{array}{ll} \frac{M}{|\e|^{d-2}}&
for\quad
|\e|\le 1\\
Me^{-\gamma_0|\e|}& for\quad |\e|> 1;
\end{array}\right.
\end{equation*}

(iii) if $d$ is even, then
\begin{equation*}
\Ec_d(\e,\gamma_0)\le \left\{\begin{array}{ll}
\frac{M}{|\e|^{d-2}}\ln\left(\frac{1}{|\e|}\right)& for\quad
|\e|\le 1\\
Me^{-\gamma_0|\e|}& for\quad |\e|> 1;
\end{array}\right.
\end{equation*}

(iv) in particular, $\Ec_d(\e,\gamma_0)\in L_q(\R^d)$ for $d=1,2$,
$q\in[1,\infty)$ and for $d>2$, $q\in[1,\,\frac{d}{d-2})$.
\end{corollary}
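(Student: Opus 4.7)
My approach is to deduce the three pointwise bounds from the explicit representations of $\Ec_d$ already established in Proposition \ref{prfndsl}, and then obtain the $L_q$ integrability in claim (iv) from these bounds by separating behavior near the origin from behavior at infinity. Claim (i) is immediate from A.3(iii), which gives $\Ec_1(x,\gamma_0)=\frac{1}{2\gamma_0}e^{-\gamma_0|x|}$.

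For claim (ii) with $d$ odd and $d\ge 3$, the plan is to apply the triangle inequality to the closed-form sums in A.3(iv) and A.3(v). Every summand is proportional to $\frac{e^{-\gamma_0|\e|}}{|\e|^{2k-j+1}}$ with $0\le k\le m-1$, $0\le j\le 2k$ (here $d=2m+1$), and the maximum value of the exponent is $2(m-1)+1=d-2$. Hence for $|\e|\le 1$ each term is bounded by $C|\e|^{-(d-2)}$ (using $e^{-\gamma_0|\e|}\le 1$), while for $|\e|>1$ each term is bounded by $Ce^{-\gamma_0|\e|}$ (using $|\e|^{-(2k-j+1)}\le 1$). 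Summing the finitely many contributions yields the two-regime estimate.

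For claim (iii) with $d$ even, the same term-by-term strategy applied to A.3(vii) fails: individual summands $K_{2k-j}(\gamma_0|\e|)/|\e|^{2k-j}$ behave like $|\e|^{-2(2k-j)}$ near the origin, which can be as singular as $|\e|^{-2(d-2)}$, and the correct $|\e|^{-(d-2)}$ order emerges only after cancellations. The cleaner route is to return to the integral representation (\ref{descent}), valid for every $d\ge 2$,
\begin{equation*}
\Ec_d(\e,\gamma_0)=\frac{s_{d-2}}{2(2\pi)^{d-1}}\int_0^\infty\frac{r^{d-2}e^{-\sqrt{r^2+\gamma_0^2}|\e|}}{\sqrt{r^2+\gamma_0^2}}\,dr.
\end{equation*}
For $|\e|>1$, the change of variable $u=\sqrt{r^2+\gamma_0^2}$ converts the integral into $\int_{\gamma_0}^\infty(u^2-\gamma_0^2)^{(d-3)/2}e^{-u|\e|}\,du$, and the further shift $u=\gamma_0+s/|\e|$ extracts a factor $e^{-\gamma_0|\e|}$ multiplied by an integral that is uniformly bounded for $|\e|\ge 1$, giving $\Ec_d\le Me^{-\gamma_0|\e|}$. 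For $|\e|\le 1$ I split the $r$-integral at $r=1$: the contribution from $r\le 1$ is trivially bounded (since $\sqrt{r^2+\gamma_0^2}\ge\gamma_0$), while the contribution from $r\ge 1$ is dominated by $\int_1^\infty r^{d-3}e^{-r|\e|}\,dr=|\e|^{-(d-2)}\int_{|\e|}^\infty s^{d-3}e^{-s}\,ds$. For $d=2$ this last integral grows like $\ln(1/|\e|)$ as $|\e|\downarrow 0$, producing the logarithmic factor; for $d\ge 4$ it is uniformly bounded, giving the stronger bound $C|\e|^{-(d-2)}$, which is a fortiori dominated by the stated one. The case $d=2$ can alternatively be read off directly from A.3(vi), using the standard bounds $K_0(x)=O(\ln(1/x))$ as $x\downarrow 0$ and $K_0(x)=O(e^{-x})$ as $x\to\infty$, both of which follow from (\ref{mcdn}) by splitting at $t=2$.

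Claim (iv) then follows from (i)--(iii) together with the spherical symmetry in A.3(ii): $\int_{\R^d}\Ec_d^q\,d\e$ splits into an integral over $\{|\e|\le 1\}$, which reduces to $\int_0^1 r^{-q(d-2)+d-1}\,dr$ (with a harmless logarithmic factor for $d$ even), finite iff $q<d/(d-2)$ when $d\ge 3$ and for every $q\in[1,\infty)$ when $d\in\{1,2\}$; and an integral over $\{|\e|>1\}$, which is finite for all $q\ge 1$ by exponential decay. The principal obstacle is the step in claim (iii) that bypasses the explicit sum-of-MacDonald-functions formula via the integral representation (\ref{descent}); without this detour the naive triangle inequality overestimates the singularity at the origin by a factor $|\e|^{-(d-2)}$.
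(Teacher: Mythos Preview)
The paper states this corollary without proof, presenting it simply as a consequence of the explicit formulas in Proposition~\ref{prfndsl}. Your argument is correct and is the natural way to fill in the details: the termwise triangle inequality on the closed-form sums handles odd $d$, and your use of the integral representation~(\ref{descent}) for even $d$ is exactly the right move, since (as you note) the triangle inequality applied to the MacDonald-function sum in Proposition~\ref{prfndsl}(vii) would overestimate the singularity at the origin.

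One small slip: for even $d\ge 4$ you obtain the (correct and sharper) bound $C|\e|^{-(d-2)}$ and then assert that it is ``a fortiori dominated by'' the stated bound $M|\e|^{-(d-2)}\ln(1/|\e|)$. This is not literally true for $|\e|$ close to $1$, where $\ln(1/|\e|)\to 0$; in fact the corollary as stated is vacuous at $|\e|=1$ since $\Ec_d>0$ there. This is a defect of the statement rather than of your proof---your bound is the right one, and the logarithm is genuinely needed only for $d=2$. To match the statement literally one should either read the range as $|\e|\le c<1$ or replace $\ln(1/|\e|)$ by $1+\ln(1/|\e|)$; either reading makes your ``a fortiori'' step valid.
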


The following claims are proved easily:

\begin{lemma}\label{lmdil}
The following equality is valid:
$\Ec_d(\e,\gamma_0)=\gamma_0^{d-2}\Ec_d(\gamma_0\e,1)$.
\end{lemma}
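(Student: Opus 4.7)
The plan is to verify the identity by reducing it to the uniqueness of the tempered distributional solution of the Helmholtz equation established in Proposition \ref{prfndsl}(i). Define $F(\e):=\gamma_0^{d-2}\Ec_d(\gamma_0\e,1)$ and show that $F$ solves $(-\Delta+\gamma_0^2)F=\delta$ in $S^\prime(\R^d)$; then uniqueness forces $F=\Ec_d(\cdot,\gamma_0)$.

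I would carry out the verification in the quickest way, namely on the Fourier side, since in the proof of Proposition \ref{prfndsl} we have already identified
\begin{equation*}
\hat\Ec_d(\J,\gamma_0)=\frac{1}{(2\pi)^{d/2}(|\J|^2+\gamma_0^2)}.
\end{equation*}
By the standard dilation rule of the Fourier transform, if $g(\e)=f(\gamma_0\e)$ then $\hat g(\J)=\gamma_0^{-d}\hat f(\gamma_0^{-1}\J)$, hence
\begin{equation*}
\hat F(\J)=\gamma_0^{d-2}\cdot\gamma_0^{-d}\hat\Ec_d(\gamma_0^{-1}\J,1)=\frac{\gamma_0^{-2}}{(2\pi)^{d/2}(\gamma_0^{-2}|\J|^2+1)}=\hat\Ec_d(\J,\gamma_0),
\end{equation*}
from which the identity follows by injectivity of the Fourier transform on $S^\prime(\R^d)$. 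The alternative route, useful if one prefers to avoid Fourier, is to differentiate $F$ via the chain rule, giving $-\Delta_\e F(\e)=\gamma_0^{d}(-\Delta_\by\Ec_d)(\gamma_0\e,1)$, and combine this with $\gamma_0^2 F(\e)=\gamma_0^{d}\Ec_d(\gamma_0\e,1)$; applying the equation satisfied by $\Ec_d(\cdot,1)$ and the distributional scaling $\delta(\gamma_0\e)=\gamma_0^{-d}\delta(\e)$ on $\R^d$ yields $(-\Delta+\gamma_0^2)F=\delta$ directly.

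The only point that needs a brief check is membership of $F$ in $S^\prime(\R^d)$, which is immediate because $\Ec_d(\cdot,1)\in S^\prime(\R^d)$ (that is exactly Proposition \ref{prfndsl}(i) applied with $\gamma_0=1$) and tempered distributions are stable under linear dilation and multiplication by constants. There is no genuine obstacle here; the lemma is a pure scaling statement whose only subtlety is keeping track of the two factors of $\gamma_0$ that arise, one from the chain rule (or equivalently the $\gamma_0^{-d}$ in the Fourier dilation rule combined with the inverse-square denominator) and one from the coefficient in front of $\Ec_d(\gamma_0\e,1)$, which together produce the exponent $d-2$.
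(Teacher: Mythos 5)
Your proof is correct, and it is the natural scaling argument that the paper implicitly refers to when it says these claims are "proved easily." Both the Fourier-side verification and the chain-rule alternative are valid; the accounting of the exponent $d-2$ (one factor $\gamma_0^{-2}$ from the denominator of $\hat\Ec_d$ after rescaling the momentum, offset by the prefactor $\gamma_0^{d-2}$, together with the $\gamma_0^{-d}$ Jacobian) works out exactly as you describe, and the appeal to uniqueness in $S^\prime(\R^d)$ from Proposition~\ref{prfndsl}(i) correctly closes the argument.
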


\begin{lemma}\label{rsintop}
The restriction of the operator
$R_{-\gamma_0^2}(-\Delta)=(-\Delta+\gamma_0^2)^{-1}$ on the set
$L_1(\R^d)\cap L_\infty(\R^d)$ is represented as the integral
operator:
\begin{equation*}
R_{-\gamma_0^2}(-\Delta)f=\int_{\R^d}\Ec_d(\e-\bs,\gamma_0)f(\bs)\,d\bs\quad
(f\in L_1(\R^d)\cap L_\infty(\R^d)).
\end{equation*}
\end{lemma}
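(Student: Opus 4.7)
The plan is to define $u = \Ec_d * f$ as a convolution, to verify that $u \in \mathrm{Dom}(-\Delta+\gamma_0^2 I) = W_2^2(\R^d)$, and to check that $(-\Delta+\gamma_0^2)u = f$. Since $\gamma_0^2 > 0$ lies in $\Rs(-\Delta)$, the operator $-\Delta+\gamma_0^2 I$ is a bijection from $W_2^2(\R^d)$ onto $L_2(\R^d)$, and these two facts together force $u = R_{-\gamma_0^2}(-\Delta)f$.

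For well-definedness and integrability, I would proceed as follows. By Corollary \ref{corest}(iv), $\Ec_d(\cdot,\gamma_0)\in L_1(\R^d)$ for every $d$ (the local singularity is $|\e|^{2-d}$, possibly with a logarithmic factor, which is locally $L_1$, and at infinity the decay is exponential). Since $f\in L_1(\R^d)\cap L_\infty(\R^d)$, we have $f\in L_2(\R^d)$. Young's convolution inequality with exponents $1+\frac12 = 1+\frac12$ then gives $u=\Ec_d*f\in L_2(\R^d)$ with $\Vert u\Vert_2\le \Vert\Ec_d(\cdot,\gamma_0)\Vert_1\Vert f\Vert_2$. Moreover Fubini's theorem, applied to the absolutely convergent double integral $\int\!\!\int \Ec_d(\e-\bs,\gamma_0)|f(\bs)|\,d\bs\,d\e$, shows the pointwise almost-everywhere integral in the statement makes sense and equals the convolution.

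The core of the proof is then carried out on the Fourier side. Taking the Fourier transform (in the tempered distributions sense, consistent with $L_2$), the identity \eqref{eqfrtrfnsl} from the proof of Proposition \ref{prfndsl} gives $\hat\Ec_d(\J,\gamma_0) = (2\pi)^{-d/2}(|\J|^2+\gamma_0^2)^{-1}$, so by the convolution theorem
\begin{equation*}
\hat u(\J) \;=\; (2\pi)^{d/2}\hat\Ec_d(\J,\gamma_0)\hat f(\J) \;=\; \frac{\hat f(\J)}{|\J|^2+\gamma_0^2}.
\end{equation*}
Since $\hat f\in L_2(\R^d)$ and the multiplier $(|\J|^2+\gamma_0^2)^{-1}$ is bounded, $\hat u\in L_2$; moreover $(1+|\J|^2)^2\,|\hat u(\J)|^2$ is dominated by a constant multiple of $|\hat f(\J)|^2\in L_1$, so $u\in W_2^2(\R^d)$. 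Multiplying the displayed identity by $(|\J|^2+\gamma_0^2)$ yields $(|\J|^2+\gamma_0^2)\hat u(\J)=\hat f(\J)$, i.e.\ $(-\Delta+\gamma_0^2)u=f$ in $L_2(\R^d)$. Applying $R_{-\gamma_0^2}(-\Delta)$ to both sides gives $u=R_{-\gamma_0^2}(-\Delta)f$, which is the desired integral representation.

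The argument is essentially routine; no single step is a serious obstacle. The only point that requires minor care is the local integrability of $\Ec_d$ near the origin in high dimensions, but this is already packaged in Corollary \ref{corest} and is finite in $L_1$ for every $d\ge 1$, so Young's inequality applies uniformly in the dimension.
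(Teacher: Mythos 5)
Your argument is correct and is the natural one: compute on the Fourier side using (\ref{eqfrtrfnsl}) and the $L_1$-integrability of $\Ec_d$ from Corollary~\ref{corest}, obtain $\widehat{\Ec_d*f}=(|\J|^2+\gamma_0^2)^{-1}\hat f$, and conclude $\Ec_d*f\in W_2^2(\R^d)$ with $(-\Delta+\gamma_0^2)(\Ec_d*f)=f$. The paper states this lemma without proof (it is listed among claims ``proved easily''), so there is no alternative argument to compare against; the only slip in your write-up is the parenthetical ``$\gamma_0^2>0$ lies in $\Rs(-\Delta)$,'' which should read $-\gamma_0^2\in\Rs(-\Delta)$, though the conclusion you draw from it --- that $-\Delta+\gamma_0^2 I$ is a bijection from $W_2^2(\R^d)$ onto $L_2(\R^d)$ --- is exactly right.
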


\subsection{Green's function of the operator $H(\J)$}
\label{subsec:GreenHp}

Let $H_0(\J)$ be the operator generated  by the operation $-\Delta$
in the space $\B_\J$ (defined in Section \ref{sec:preliminaries})
with the domain $\Dc_\J(\Gamma)=W_{2,loc}^2(\R^d)\cap\B_{\J}$.
Consider the function
\begin{equation}\label{dfG0}
G_0(\e,\J,\gamma_0):=\sum_{\mb\in\Z^d}\Ec_d(\e-\mb,\gamma_0)\exp(i\J\cdot\mb).
\end{equation}
Furthermore, we shall assume in  what follows that $d\ge 4$, because
the case $d\le 3$ have been studied in \cite{Wil}. Using the
function $G_0(\e,\J,\gamma_0)$, we shall construct below the
integral kernel of the operator
$R_{-\gamma_0^2}(H(\J))=(H(\J)+\gamma_0^2)^{-1}$, where the operator
$H(\J)$ have been defined in Section \ref{sec:preliminaries}. But
first of all we shall study some properties of the function
$G_0(\e,\J,\gamma_0)$.

Denote by $L_{c,q}(\Omega\times\Omega)\,(q\ge 1)$ the set of all
measurable functions $f:\,\Omega\times\Omega\rightarrow\C$ such that
for any fixed $\e\in\Omega$ the function $f(\e,\cdot)$ belongs to
$L_q(\Omega)$ and the function $\e\rightarrow f(\e,\cdot)\in
L_q(\Omega)$ is continuous. This is a Banach space w. r. to the norm
\begin{equation*}
\|f\|_{c,q}:=\|\|f(\e,\cdot)\|_{L_q(\Omega)}\|_{C(\Omega)}=
\max_{\e\in\Omega}\Big(\int_\Omega|f(\e,\bs)|^q\,d\bs\Big)^{\frac{1}{q}}.
\end{equation*}
In the analogous manner the space $L_{q,c}(\Omega\times\Omega)$,
having the norm
\begin{equation*}
\|f\|_{q,c}:=\|\|f(\cdot,\bs)\|_{L_q(\Omega)}\|_{C(\Omega)}=
\max_{\bs\in\Omega}\Big(\int_\Omega|f(\e,\bs)|\,d\e\Big)^{\frac{1}{q}},
\end{equation*}
is defined.

If $q\ge 1$, we shall denote by $q^\prime$ the number conjugate to
$q$, that is $1/q+1/q^\prime=1$. As it is easy to check,
$q\in(1,\frac{d}{d-2})$ if and only if $q^\prime>\frac{d}{2}$.

Denote $\B_{\J,q^\prime}:=\B_\J\cap\L_{q^\prime,loc}(\R^d)$,
$\B_{\J,\infty}:=\B_\J\cap\L_{\infty,loc}(\R^d)$,
$\Z^d_+=\{\mb\in\Z^d\;|\;m_k\ge 0\;(k=1,2,\dots,d)\}$. For
$\mb\in\Z^d$ we denote $|\mb|_\infty:=\max_{1\le k\le d}|m_k|$,
$|\mb|_1:=\sum_{k=1}^d|m_k|$ ($\mb=(m_1,\dots,m_d)$).

Applying Corollary \ref{corest} of Proposition \ref{prfndsl} and
using the same arguments that have been used for $d=3$ in \cite{Wil}
(Lemmas 3.1, 3.2, 3.5), we can prove the following claims, using the
$L_{c,q}(\Omega\times\Omega)$-norm for integral kernels of operators
instead of the $L_2(\Omega\times\Omega)$-norm  used in \cite{Wil}:

\begin{lemma}\label{lmcnvser}
(i) The series in (\ref{dfG0}) converges for $(\e,\J)\in
(\R^d/\Z^d)\times\{\J\in\C^d\;|\;|\Im\J|<\gamma_0\}$ and the
convergence is uniform on compact subsets. Moreover, if
$\,G_0^\prime(\e,\J,\gamma_0)$ is defined by
\begin{equation}\label{dfG0pr}
G_0(\e,\J,\gamma_0)=\sum_{\mb\in\Nc}\Ec_d(\e-\mb,\gamma_0)\exp(i\J\cdot\mb)+G_0^\prime(\e,\J,\gamma_0),
\end{equation}
where $\Nc:=\{\mb\in\Z^d\;|\;|\mb|\le\sqrt{d}\}$, then
$G_0^\prime(\cdot,\J,\gamma_0)\in C(2\Omega)$ and the mapping
$\J\rightarrow G_0^\prime(\cdot,\J,\gamma_0)\in C(2\Omega)$ is
holomorphic for $|\Im\J|<\gamma_0$.

Furthermore, if $g\ge 1$ for $d\le 2$ and $q\in[1,\frac{d}{d-2})$
for $d>2$, then \vskip2mm

(ii) each term of the series
$\sum_{\mb\in\Z^d}\Ec_d(\e-\bs-\mb,\gamma_0)\exp(i\J\cdot\mb)$
belongs to the space $L_{c,q}(\Omega\times\Omega)$ for any fixed
$\J\in C^d$ and it is holomorphic in $\C^d$ w.r. to $\J$ in the
$L_{c,q}(\Omega\times\Omega)$-norm; \vskip2mm

(iii) this series converges in the
$L_{c,q}(\Omega\times\Omega)$-norm to a kernel
$G_0(\e,\bs,\J,\gamma_0)=G_0(\e-\bs,\J,\gamma_0)\in
L_{c,q}(\Omega\times\Omega)$ uniformly w.r. to
$\J\in\Pi_\gamma=\{\J\in\C^d\;|\;|\Im p|<\gamma\}\;(\gamma\in
(0,\gamma_0))$, hence the mapping $\J\rightarrow
G_0(\e,\bs,\J,\gamma_0)\in L_{c,q}(\Omega\times\Omega)$ is
holomorphic in the strip $\Pi_{\gamma_0}$; \vskip2mm

(iv) for any $\J\in\T^d$ the restriction of the operator
$R_0(\J)=(-\Delta+\gamma_0^2I)^{-1}$ on the set $\B_{\J,q^\prime}$
has the form: $R_0(\J) f=\int_\Omega
G_0(\e,\bs,p,\gamma_0)f(\bs)\,d\bs$;
\vskip2mm

(v) the operator $R_0(\J)$ maps the set $\B_{\J,q^\prime}$ into
$\B_{\J,\infty}$.
\end{lemma}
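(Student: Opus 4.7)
The plan is to follow the pattern laid out in \cite{Wil} for $d=3$, now using the decay estimates for $\Ec_d$ provided by Corollary \ref{corest} (exponential decay away from the origin, locally $L_q$ integrable near the origin for $q<\frac{d}{d-2}$), while replacing the $L_2$ framework of \cite{Wil} by the $L_{c,q}(\Omega\times\Omega)$-norm appropriate to quasi-momentum representation in high dimension.

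For claim (i), I would start from the trivial estimate $|\exp(i\J\cdot\mb)|\le e^{|\Im\J|_1|\mb|_\infty}$ and the bound $\Ec_d(\e-\mb,\gamma_0)\le M e^{-\gamma_0|\e-\mb|}$ from claims (i)-(iii) of Corollary \ref{corest}, valid once $|\e-\mb|>1$. Since $\e\in\Omega$ forces $|\e-\mb|\ge|\mb|_\infty-\frac12$, for any $\gamma<\gamma_0$ the terms of the series indexed by $\mb\notin\Nc$ are dominated by a geometric sum $M'\sum_\mb e^{-(\gamma_0-\gamma)|\mb|}$, giving uniform convergence on $\{|\Im\J|\le\gamma\}\times\Omega$. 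The uniform limit of these holomorphic-in-$\J$ summands (each continuous in $\e$ on $2\Omega$ since the singularity of $\Ec_d(\e-\mb,\gamma_0)$ at $\e=\mb$ is avoided when $\mb\notin\Nc$ and $\e\in 2\Omega$) yields the stated $C(2\Omega)$-continuity of $G_0^\prime(\cdot,\J,\gamma_0)$ and $C(2\Omega)$-valued holomorphy in $\J$ via Morera's theorem.

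For claims (ii) and (iii), the argument is parallel but in the norm $\Vert\cdot\Vert_{c,q}$. For a fixed $\mb$, the function $\bs\mapsto\Ec_d(\e-\bs-\mb,\gamma_0)$ lies in $L_q(\Omega)$ uniformly in $\e\in\Omega$: when $\mb\notin\Nc$ the same exponential bound as above applies; when $\mb\in\Nc$ the potential singularity is controlled by claims (ii)-(iv) of Corollary \ref{corest}, whose decay rate $|\e-\bs-\mb|^{-(d-2)}$ (possibly times a log) is $L_q$-integrable precisely because $q(d-2)<d$. So each term belongs to $L_{c,q}(\Omega\times\Omega)$, and its holomorphy in $\J\in\C^d$ is immediate from the factor $\exp(i\J\cdot\mb)$. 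For the tail sum over $|\mb|_\infty$ large, the same Weierstrass M-test as in (i) with an $L_{c,q}$-majorant proves uniform convergence on every strip $\Pi_\gamma$ with $\gamma<\gamma_0$, hence holomorphy of the limit in $\Pi_{\gamma_0}$.

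For claim (iv), I would test the candidate operator $\tilde R_0(\J)f(\e):=\int_\Omega G_0(\e-\bs,\J,\gamma_0)f(\bs)\,d\bs$ against the definition of $R_0(\J)=(H_0(\J)+\gamma_0^2)^{-1}$. The quasi-periodicity $G_0(\e+\lb,\J,\gamma_0)=\exp(i\J\cdot\lb)G_0(\e,\J,\gamma_0)$ follows by reindexing the defining series, so $\tilde R_0(\J)f\in\B_\J$. Applying $-\Delta_\e+\gamma_0^2$ termwise to the series (\ref{dfG0}) gives the distribution $\sum_\mb\delta(\e-\bs-\mb)\exp(i\J\cdot\mb)$; inside $\Omega$ only the $\mb=0$ delta contributes, so $(-\Delta+\gamma_0^2)\tilde R_0(\J)f=f$ in $\Dc^\prime(\Omega)$, whence everywhere by quasi-periodicity. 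Uniqueness of the resolvent identifies $\tilde R_0(\J)=R_0(\J)$ on $\B_{\J,q^\prime}$. Claim (v) then follows by H\"older's inequality: $|(R_0(\J)f)(\e)|\le\Vert G_0(\e-\cdot,\J,\gamma_0)\Vert_{L_q(\Omega)}\Vert f\Vert_{L_{q^\prime}(\Omega)}\le\Vert G_0\Vert_{c,q}\Vert f\Vert_{L_{q^\prime}(\Omega)}$, so $R_0(\J)f\in L_\infty(\Omega)$, hence in $\B_{\J,\infty}$ by quasi-periodicity.

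The main technical obstacle is the rigorous justification of the distributional computation in step (iv): exchanging the differentiation with the infinite sum and then with the integral in $\bs$. I would handle this by first restricting to $f\in\B_{\J,q^\prime}$ with additional smoothness (or by approximating by such $f$ using density arguments), testing the identity $(-\Delta+\gamma_0^2)\tilde R_0(\J)f=f$ against $C_0^\infty(\Omega)$ test functions, and using the uniform $L_{c,q}$-convergence from (iii) to move derivatives off the kernel and onto the test function. The other nuisance is the borderline case $d=2$ in (iii), where $\Ec_d$ has a logarithmic singularity (Corollary \ref{corest}, claim (iii)): this is absorbed because $\ln(1/|\e|)$ is in $L_q(\Omega)$ for every $q<\infty$, so no separate treatment is needed.
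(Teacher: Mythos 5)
Your proposal follows the same strategy the paper indicates for this lemma: apply the decay estimates of Corollary \ref{corest} and transplant Wilcox's $L_2(\Omega\times\Omega)$-arguments into the $L_{c,q}(\Omega\times\Omega)$-framework. Claims (i)--(iii) and (v) are handled as intended: the exponential decay of $\Ec_d$ away from the origin gives a Weierstrass majorant for the tail of the series uniformly in each sub-strip $\Pi_\gamma$, $\gamma<\gamma_0$; the finitely many singular terms with $\mb\in\Nc$ lie in $L_{c,q}$ precisely because $q(d-2)<d$ (the logarithmic factor for even $d$ being harmless); and (v) is just H\"older applied to the $L_{c,q}$-bound from (iii).

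For claim (iv), the step you flag as the main obstacle --- justifying the termwise distributional computation --- can be avoided entirely. Instead of applying $-\Delta+\gamma_0^2$ to the series, use the unfolding that Lemma \ref{rsintop} anticipates: for $f\in\B_{\J,q^\prime}$ the quasi-periodicity $f(\bs+\mb)=e^{i\J\cdot\mb}f(\bs)$ gives, after splitting $\R^d$ into translates of $\Omega$ and substituting,
\[
\int_{\R^d}\Ec_d(\e-\bs,\gamma_0)f(\bs)\,d\bs
=\sum_{\mb\in\Z^d}\int_\Omega\Ec_d(\e-\bs-\mb,\gamma_0)e^{i\J\cdot\mb}f(\bs)\,d\bs
=\int_\Omega G_0(\e-\bs,\J,\gamma_0)f(\bs)\,d\bs,
\]
where the sum/integral interchange is justified by the absolute convergence you already proved in (iii). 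The left-hand side is a quasi-periodic function $u\in\B_\J$ satisfying $(-\Delta+\gamma_0^2)u=f$ because $\Ec_d$ is the fundamental solution, and uniqueness then identifies $u=R_0(\J)f$. This reduces the distributional verification to a single well-behaved convolution rather than an infinite series of deltas under an integral, which is the route Wilcox's Lemma 3.5 takes. Your approximation-and-test-function route is not wrong, but it is more laborious than the argument the paper has in mind.
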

\begin{lemma}\label{lmholcomp}
Let $\Pi$ be a domain in $\C^d$. If $V\in
L_{q^\prime}(\Omega)\;(q^\prime\in(1,\infty]$ and
$G_k(\e,\bs,\J)\;(k=0,1)$ be functions defined in
$\Omega\times\Omega\times\Pi$ such that for any fixed $\J\in\Pi$
$G_k(\cdot,\cdot,\J)\in L_{c,q}(\Omega\times\Omega)\;(k=0,1)$ and
the mappings $\J\rightarrow G_k(\cdot,\cdot,\J)\in
L_{c,q}(\Omega\times\Omega)\;(k=0,1)$ are holomorphic in $\Pi$. Then
the function
$G_2(\e,\bs,\J)=\int_{\Omega}G_0(\e,\xi,\J)V(\xi)G_1(\xi,\bs,\J)$
has the same properties, i.e. for any fixed $\J\in\Pi$
$G_2(\cdot,\cdot,\J)\in L_{c,q}(\Omega\times\Omega)$ and the mapping
$\J\rightarrow G_2(\cdot,\cdot,\J)\in L_{c,q}(\Omega\times\Omega)$
is holomorphic in $\Pi$.
\end{lemma}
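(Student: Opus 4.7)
The plan is to view $G_2$ as the value of a continuous bilinear map $B:L_{c,q}(\Omega\times\Omega)\times L_{c,q}(\Omega\times\Omega)\to L_{c,q}(\Omega\times\Omega)$ applied to the holomorphic curves $\J\mapsto G_0(\cdot,\cdot,\J)$ and $\J\mapsto G_1(\cdot,\cdot,\J)$, and then invoke the standard fact that composition of a holomorphic Banach-space-valued map with a continuous bilinear map is holomorphic. The whole argument reduces to one clean estimate, derived by combining Minkowski's integral inequality (in the $\bs$-variable) with H\"older's inequality (in the $\xi$-variable).

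First I would establish the key bilinear estimate. For fixed $\J\in\Pi$, apply Minkowski's integral inequality (valid since $q\ge 1$) to bound, for each $\e\in\Omega$,
\begin{equation*}
\bigl\|G_2(\e,\cdot,\J)\bigr\|_{L_q(\Omega)}\;\le\;\int_{\Omega}|G_0(\e,\xi,\J)|\,|V(\xi)|\,\bigl\|G_1(\xi,\cdot,\J)\bigr\|_{L_q(\Omega)}\,d\xi.
\end{equation*}
Then crudely bound $\|G_1(\xi,\cdot,\J)\|_{L_q(\Omega)}\le\|G_1(\cdot,\cdot,\J)\|_{c,q}$ and apply H\"older with exponents $q,q^\prime$ to the $\xi$-integral to get
\begin{equation*}
\bigl\|G_2(\cdot,\cdot,\J)\bigr\|_{c,q}\;\le\;\|V\|_{L_{q^\prime}(\Omega)}\,\bigl\|G_0(\cdot,\cdot,\J)\bigr\|_{c,q}\,\bigl\|G_1(\cdot,\cdot,\J)\bigr\|_{c,q}.
\end{equation*}
This proves the bilinear map $B(F,G)(\e,\bs):=\int_\Omega F(\e,\xi)V(\xi)G(\xi,\bs)\,d\xi$ is continuous in $L_{c,q}(\Omega\times\Omega)$.

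Next I would verify that $G_2(\cdot,\cdot,\J)$ really sits in $L_{c,q}(\Omega\times\Omega)$, i.e. that $\e\mapsto G_2(\e,\cdot,\J)\in L_q(\Omega)$ is continuous. Writing the difference $G_2(\e,\cdot,\J)-G_2(\e^\prime,\cdot,\J)$ as the same convolution-type integral but with $G_0(\e,\xi,\J)-G_0(\e^\prime,\xi,\J)$ in place of $G_0(\e,\xi,\J)$, the same Minkowski/H\"older chain gives
\begin{equation*}
\bigl\|G_2(\e,\cdot,\J)-G_2(\e^\prime,\cdot,\J)\bigr\|_{L_q(\Omega)}\le\|V\|_{L_{q^\prime}}\bigl\|G_1(\cdot,\cdot,\J)\bigr\|_{c,q}\,\bigl\|G_0(\e,\cdot,\J)-G_0(\e^\prime,\cdot,\J)\bigr\|_{L_q(\Omega)},
\end{equation*}
and the right-hand side tends to zero as $\e^\prime\to\e$ exactly because $G_0(\cdot,\cdot,\J)\in L_{c,q}(\Omega\times\Omega)$. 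Fubini-Tonelli, combined with the pointwise bound above, takes care of measurability and almost-everywhere definedness of $G_2$.

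Finally, for the holomorphy in $\J$: the map $\J\mapsto\bigl(G_0(\cdot,\cdot,\J),G_1(\cdot,\cdot,\J)\bigr)$ is holomorphic from $\Pi$ into the Banach space $L_{c,q}\times L_{c,q}$ by assumption, and $B$ is continuous bilinear, so $G_2(\cdot,\cdot,\J)=B\bigl(G_0(\cdot,\cdot,\J),G_1(\cdot,\cdot,\J)\bigr)$ is a holomorphic $L_{c,q}$-valued function on $\Pi$ by the standard composition principle (equivalently, one can verify weak holomorphy: for any continuous linear functional $\ell$ on $L_{c,q}$ the scalar function $\J\mapsto\ell(G_2(\cdot,\cdot,\J))$ is locally bounded and, via the bilinear decomposition $B(G_0,G_1)-B(G_0^{(0)},G_1^{(0)})=B(G_0-G_0^{(0)},G_1)+B(G_0^{(0)},G_1-G_1^{(0)})$, admits a complex derivative at every point of $\Pi$). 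The only mild technical nuisance is keeping the order of integration straight in the first step so that Minkowski is applied in the $\bs$-variable and H\"older in the $\xi$-variable; once the bilinear estimate is in place, both the $L_{c,q}$-membership and the holomorphy follow by the same soft argument.
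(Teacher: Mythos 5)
Your proof is correct and follows essentially the same route as the paper's: the paper establishes precisely the bilinear bound $\|G_2(\cdot,\cdot,\J)\|_{c,q}\le\|V\|_{q'}\|G_0(\cdot,\cdot,\J)\|_{c,q}\|G_1(\cdot,\cdot,\J)\|_{c,q}$, the corresponding estimate for $\|G_2(\e,\cdot,\J)-G_2(\e_0,\cdot,\J)\|_q$, and a difference-quotient estimate encoding exactly the bilinear decomposition $B(G_0,G_1)-B(G_0^0,G_1^0)=B(G_0-G_0^0,G_1)+B(G_0^0,G_1-G_1^0)$. You merely phrase the three explicit inequalities of the paper in the language of a continuous bilinear map and the standard composition principle for Banach-valued holomorphy, which is the same argument in slightly more abstract form.
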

\begin{proof}
The claim follows from the estimates:
\begin{eqnarray*}
 &&\hskip-8mm\|G_2(\e,\cdot,\J)\|_q\le
\Big(\int_\Omega\Big(\int_\Omega|G_0(\e,\xi,\J)||V(\xi)|
 |G_1(\xi,\bs,\J)|\,d\xi\Big)^q\,d\bs\Big)^{\frac{1}{q}}\le\\
&&\hskip-8mm\|V\|_{q^\prime}\|G_0(\e,\cdot,\J)\|_q
\|G_1(\cdot,\cdot,\J)\|_{c,q},
\end{eqnarray*}
\begin{eqnarray*}
 &&\hskip-8mm\|G_2(\e,\cdot,\J)-G_2(\e_0,\cdot,\J)\|_q\le\\
&&\hskip-8mm\Big(\int_\Omega\Big(\int_\Omega|G_0(\e,\xi,\J)-G_0(\e_0,\xi,\J)||V(\xi)|
 |G_1(\xi,\bs,\J)|\,d\xi\Big)^q\,d\bs\Big)^{\frac{1}{q}}\le\\
&&\hskip-8mm\|V\|_{q^\prime}\|G_0(\e,\cdot,\J)-G_0(\e_0,\cdot,\J)\|_q
\|G_1(\cdot,\cdot,\J)\|_{c,q},
\end{eqnarray*}
\begin{eqnarray*}
&&\hskip-7mm \|(G_2(\cdot,\cdot,\J_0+\hb)-G_2(\cdot,\cdot,\J_0))-
\int_\Omega\left(\nabla_\J
G_0(\cdot,\xi,\J)|_{\J=\J_0}\cdot\hb\times\right.\\
&&\hskip-7mm
\left.V(\xi)G_1(\xi,\cdot,\J_0)+G_0(\cdot,\xi,\J_0)V(\xi)\nabla_\J
G_1(\xi,\cdot,\J)_{\J=\J_0}\right)\,d\xi\|_{c,q}\le\\
&&\hskip-7mm\|V\|_{q^\prime}\left(\|(G_0(\cdot,\cdot,\J_0+\hb)-G_0(\cdot,\cdot,\J_0))-
\nabla_\J
G_0(\cdot,\cdot,\J)|_{\J=\J_0}\cdot\hb\|_{c,q}\right.\\
&&\hskip-7mm \times\|G_1(\cdot,\cdot,\J_0)\|_{c,q}+
\|G_0(\cdot,\cdot,\J_0+\hb)-G_0(\cdot,\cdot,\J_0)\|_{c,q}\times\\
&&\hskip-7mm\|G_1(\cdot,\cdot,\J_0+\hb)-G_1(\cdot,\cdot,\J_0)\|_{c,q}+
\|G_0(\cdot,\cdot,\J_0+\hb,\gamma_0)\|_{c,q}\times\\
&&\hskip-7mm\left.\|(G_1(\cdot,\cdot,\J_0+\hb)-G_1(\cdot,\cdot,\J_0))-\nabla_\J
G_1(\cdot,\cdot,\J)|_{\J=\J_0}\cdot\hb\|_{c,q}\right),
\end{eqnarray*}
where $\e,\e_0\in\Omega$ and $\J,\J_0,\J_0+\hb\in\Pi$.
\end{proof}

The proof of the following proposition is based on the previous
claims and it is analogous to the proof of Lemma 3.5 from
\cite{Wil}, only again instead of the $L_2(\Omega\times\Omega)$-norm
one should use the $L_{c,q}(\Omega\times\Omega)$-norm:

\begin{proposition}\label{prNeuser}
If $d\ge 4$ and $V\in L_{q^\prime}(\Omega)$ with
$q^\prime\in(\frac{d}{2},\infty]$, then for a large enough
$\gamma_0$:
\vskip2mm

(i) $-\gamma_0^2\in\Rs(H(\J))$ for any $\J\in\R^d$ and the resolvent
$R(\J)=(H(\J)+\gamma_0^2I)^{-1}$ is represented by the Neumann's
series
\begin{equation*}
R(\J)=(I+L_0(\J))^{-1}R_0(\J)=\sum_{n=0}^\infty(-1)^n(L_0(\J))^n
R_0(\J),
\end{equation*}
where $R_0(\J)=(-\Delta+\gamma_0^2I)^{-1}$, $L_0(\J):=R_0(\J)V$ and
this series converges to $R(\J)$ in $\f(L_2(\Omega))$-norm;
\vskip2mm

(ii) for any $\J\in\R^d$ the restriction of $R(\J)$ on
$\B_{\J,\infty}$ is the integral operator of the form:
$R(\J)f=\int_\Omega G(\e,\bs,\J,\gamma_0)f(\bs)\,d\bs\quad
(f\in\B_{\J,\infty})$, where $G(\cdot,\cdot,\J,\gamma_0)\in
L_{c,q}(\Omega\times\Omega)$ $(q^{-1}+(q^\prime)^{-1}=1)$,
\begin{equation}\label{NeuserG}
G(\e,\bs,\J,\gamma_0)=G_0(\e,\bs,\J,\gamma_0)+\sum_{n=1}^\infty
G_n(\e,\bs,\J,\gamma_0),
\end{equation}
\begin{eqnarray*}
&&G_{n+1}(\e,\bs,\J,\gamma_0)=
\nonumber\\
&&-\int_\Omega
G_0(\e,\bxi,\J,\gamma_0)V(\bxi)G_n(\bxi,\bs,\J,\gamma_0)\,d\bxi\quad
(n=0,1.\dots),
\end{eqnarray*}
$G_n(\e,\bs,\J,\gamma_0)\in L_{c,q}(\Omega\times\Omega)$ and the
series in (\ref{NeuserG}) converges to $G(\e,\bs,\J,\gamma_0)$ in
the $L_{c,q}(\Omega\times\Omega)$-norm;
\vskip2mm

(iii) each mapping $\J\rightarrow G_n(\cdot,\cdot,\J,\gamma_0)\in
L_{c,q}(\Omega\times\Omega)$ admits a holomorphic continuation from
$\R^d$ to the strip
$\Pi_{\frac{\gamma_0}{2}}:=\{\J\in\C^d\;|\;|\Im\J|<\frac{\gamma_0}{2}\}$
and the series in (\ref{NeuserG}) converges in the
$L_{c,q}(\Omega\times\Omega)$-norm uniformly in
$\Pi_{\frac{\gamma_0}{2}}$. Hence the mapping $\J\rightarrow
G(\cdot,\cdot,\J,\gamma_0)\in L_{c,q}(\Omega\times\Omega)$ admits a
holomorphic continuation from $\R^d$ to $\Pi_{\frac{\gamma_0}{2}}$;
\vskip2mm

(iv) for any $\J\in\Pi_{\frac{\gamma_0}{2}}$ the operator $R(\J)$
with the integral kernel $G(\e,\bs,\J,\gamma_0)$ maps the set
$\B_{\J,\infty}$ into itself; \vskip2mm

(v) for any $\J\in\Pi_{\frac{\gamma_0}{2}}$ and $\e,\bs\in\Omega$
the equalities are valid:
$G_0(\e,\bs,\J,\gamma_0)=G_0(\bs,\e,-\J,\gamma_0)$,
$G(\e,\bs,\J,\gamma_0)=G(\bs,\e,-\J,\gamma_0)$.
\end{proposition}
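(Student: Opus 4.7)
The plan is to adapt the argument of Lemma 3.5 of \cite{Wil} (handling $d=3$) to the regime $d\ge 4$, systematically replacing the $L_2(\Omega\times\Omega)$-norm used there by the $L_{c,q}(\Omega\times\Omega)$-norm of Lemmas \ref{lmcnvser} and \ref{lmholcomp}, where $q=q^\prime/(q^\prime-1)\in(1,d/(d-2))$.

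For claim (i), I would first note that Lemma \ref{lmestVuRd}(i) says $V$ is $-\Delta$-bounded with relative bound zero; writing $\e=R_0(\J)f$ and using $-\Delta\e=\gamma_0^2\e-f$ in $\mathcal{H}_\J$, this yields a bound of the form $\|VR_0(\J)f\|_2\le \tilde C\|V\|_{q^\prime}(\epsilon+\gamma_0^{-2}\epsilon^{-\mu})\|f\|_2$, so that for $\gamma_0$ large and $\epsilon$ optimized we have $\sup_{\J\in\R^d}\|L_0(\J)\|_{\f(L_2(\Omega))}\le 1/2$. The Neumann series for $(I+L_0(\J))^{-1}$ then converges in operator norm, giving $-\gamma_0^2\in\Rs(H(\J))$ and the stated formula for $R(\J)$ in $\f(L_2(\Omega))$.

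Claims (ii) and (iv) follow by iterating the kernel description on the dense invariant subspace $\B_{\J,\infty}$. By Lemma \ref{lmcnvser}(v), $R_0(\J)$ maps $\B_{\J,q^\prime}$ into $\B_{\J,\infty}$, and since $V\in L_{q^\prime}(\Omega)$, any $f\in\B_{\J,\infty}$ satisfies $Vf\in L_{q^\prime}\cap\B_\J\subseteq\B_{\J,q^\prime}$; hence $L_0(\J)$ preserves $\B_{\J,\infty}$. By Fubini's theorem each composition $L_0(\J)^n R_0(\J)$ is represented on $\B_{\J,\infty}$ as integration against the kernel $G_n$ defined by the stated recursion, and Lemma \ref{lmholcomp} applied inductively shows $G_n(\cdot,\cdot,\J,\gamma_0)\in L_{c,q}(\Omega\times\Omega)$. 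The equality of the resulting integral operator with $R(\J)$ is obtained on the dense subset $\B_{\J,\infty}$ using the operator convergence from step (i).

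Claim (iii) is the main technical point. The holomorphy of each $\J\mapsto G_n(\cdot,\cdot,\J,\gamma_0)\in L_{c,q}(\Omega\times\Omega)$ on the whole strip $\Pi_{\gamma_0}$ is obtained inductively from Lemma \ref{lmcnvser}(iii) (for $n=0$) and the holomorphy-preservation part of Lemma \ref{lmholcomp}. The obstacle is the uniform convergence of $\sum_n G_n$ on $\Pi_{\gamma_0/2}$: the recursive estimate extracted from the proof of Lemma \ref{lmholcomp},
\begin{equation*}
\|G_{n+1}(\cdot,\cdot,\J,\gamma_0)\|_{c,q}\le \|V\|_{q^\prime}\,\|G_0(\cdot,\cdot,\J,\gamma_0)\|_{c,q}\,\|G_n(\cdot,\cdot,\J,\gamma_0)\|_{c,q},
\end{equation*}
reduces the problem to showing $\|V\|_{q^\prime}\cdot\sup_{\J\in\Pi_{\gamma_0/2}}\|G_0(\cdot,\cdot,\J,\gamma_0)\|_{c,q}<1$. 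For $\J\in\Pi_{\gamma_0/2}$ one has $|\exp(i\J\cdot\mb)|\le\exp((\gamma_0/2)|\mb|)$, while Corollary \ref{corest} gives $\Ec_d(\e-\bs-\mb,\gamma_0)\lesssim\exp(-\gamma_0|\mb|)$ for large $|\mb|$; the product still decays like $\exp(-(\gamma_0/2)|\mb|)$, so the defining lattice sum for $G_0$ converges in the $L_{c,q}$-norm uniformly on $\Pi_{\gamma_0/2}$. Moreover, the scaling identity of Lemma \ref{lmdil} shows that the contribution of the leading term $\Ec_d(\cdot,\gamma_0)$ to $\|G_0\|_{c,q}$ scales like $\gamma_0^{d-2-d/q}$, whose exponent is \emph{negative} thanks to the hypothesis $q<d/(d-2)$; hence the supremum can be made arbitrarily small by enlarging $\gamma_0$, yielding the required strict contraction and proving (iii). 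Finally, (v) is immediate from the spherical symmetry $\Ec_d(\by,\gamma_0)=\Ec_d(-\by,\gamma_0)$ in Proposition \ref{prfndsl}(ii) and the change of summation index $\mb\mapsto-\mb$, applied to $G_0$ and then propagated through the recursion to each $G_n$.
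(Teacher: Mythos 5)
Your overall strategy — adapting Wilcox's Lemma~3.5 by replacing the $L_2(\Omega\times\Omega)$-norm with the $L_{c,q}(\Omega\times\Omega)$-norm via Lemmas~\ref{lmcnvser} and~\ref{lmholcomp} — is exactly the route the paper indicates (the paper itself gives only this one-sentence pointer rather than a worked-out argument). Claims (i), (ii), (iv), (v) of your sketch are fine; the justification of (i) via Lemma~\ref{lmestVuRd} and of (ii)/(iv) via the invariance of $\B_{\J,\infty}$ and Lemma~\ref{lmcnvser}(v) are appropriate supporting details that the paper tacitly assumes.

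The gap is in your contraction argument for claim (iii). You reduce the uniform convergence of $\sum_n G_n$ on $\Pi_{\gamma_0/2}$ to showing $\|V\|_{q'}\sup_{\J\in\Pi_{\gamma_0/2}}\|G_0(\cdot,\cdot,\J,\gamma_0)\|_{c,q}<1$, and you argue this via the scaling of the $\mb=0$ term $\|\Ec_d(\cdot,\gamma_0)\|_q\sim\gamma_0^{(d-2)-d/q}$. But the lattice sum defining $G_0$ also contains the finitely many terms with $\mb\in\{-1,0,1\}^d\setminus\{0\}$. For such an $\mb$, the kernel $\Ec_d(\e-\bs-\mb,\gamma_0)$ is still singular when $(\e,\bs)$ lies on the face $\{\e-\bs=\mb\}$ of $\Omega\times\Omega$, so $\sup_{\e\in\Omega}\|\Ec_d(\e-\cdot-\mb,\gamma_0)\|_{L_q(\Omega)}$ again scales like $\gamma_0^{(d-2)-d/q}$; and the accompanying factor $|\exp(i\J\cdot\mb)|$ attains $e^{(\gamma_0/2)|\mb|}\ge e^{\gamma_0/2}$ as $\J$ ranges over $\Pi_{\gamma_0/2}$. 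The product $\gamma_0^{(d-2)-d/q}\,e^{\gamma_0/2}$ diverges as $\gamma_0\to\infty$, so the desired smallness of $\sup_{\J\in\Pi_{\gamma_0/2}}\|G_0\|_{c,q}$ is not obtained from your scaling estimate alone. As written, the argument does not establish the strict contraction on a strip whose width grows like $\gamma_0/2$.

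The repair that the applications actually need (and which your scaling argument does deliver) is to work on a strip $\Pi_\gamma$ of \emph{fixed} width $\gamma$, with $\gamma_0$ then taken large depending on $\gamma$: the exponential factor $e^{\gamma|\mb|}$ over the finitely many singular $\mb$ is then a constant, and $\gamma_0^{(d-2)-d/q}\to 0$ wins. In the proof of Theorem~\ref{thmainApp} and Corollary~\ref{cormainApp} the neighborhood $\Oc(\J_0)$ is bounded, so one may always shrink it to lie inside such a fixed strip; the literal strip $\Pi_{\gamma_0/2}$ in the Proposition's statement is more than is needed. You should either replace $\Pi_{\gamma_0/2}$ by $\Pi_\gamma$ with fixed $\gamma$ in your claim (iii), or supply a separate argument (e.g.\ a bootstrap through the $\f(L_2(\Omega))$-norm of $L_0(\J)$ sandwiched between finitely many $L_{c,q}$ kernel steps) if the full strip $\Pi_{\gamma_0/2}$ is insisted upon.
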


\begin{corollary}\label{corsymm}
If $d\ge 4$ and $\,V\in L_{q^\prime}(\Omega)$ with
$q^\prime>\frac{d}{2}$, then for a large enough $\gamma_0$ and any
$\J\in\Pi_{\frac{\gamma_0}{2}}\;$ $G_0(\cdot,\cdot,\J,\gamma_0),\,
G(\cdot,\cdot,\J,\gamma_0)\in L_{c,q}(\Omega\times\Omega)\cap
L_{q,c}(\Omega\times\Omega)$ and the mappings $\J\rightarrow
G_0(\cdot,\cdot,\J,\gamma_0)\in L_{c,q}(\Omega\times\Omega)$,
$\J\rightarrow G_0(\cdot,\cdot,\J,\gamma_0)\in
L_{q,c}(\Omega\times\Omega)$, $\J\rightarrow
G(\cdot,\cdot,\J,\gamma_0)\in L_{c,q}(\Omega\times\Omega)$ and
$\J\rightarrow G(\cdot,\cdot,\J,\gamma_0)\in
L_{q,c}(\Omega\times\Omega)$ are holomorphic in
$\Pi_{\frac{\gamma_0}{2}}$.
\end{corollary}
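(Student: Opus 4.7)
The plan is to derive Corollary \ref{corsymm} as an almost immediate consequence of Proposition \ref{prNeuser}, using nothing more than the symmetry relations in claim (v) together with the observation that interchanging the roles of $\e$ and $\bs$ swaps the spaces $L_{c,q}(\Omega\times\Omega)$ and $L_{q,c}(\Omega\times\Omega)$ isometrically. So the $L_{c,q}$-assertions together with their holomorphy are already given by Proposition \ref{prNeuser}(ii), (iii), and the only real work is to transfer these to $L_{q,c}$.

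First I would introduce the transposition operator $T:L_{c,q}(\Omega\times\Omega)\to L_{q,c}(\Omega\times\Omega)$ defined by $(Tf)(\e,\bs)=f(\bs,\e)$, and note that by Fubini $T$ is an isometric (hence continuous) linear isomorphism between these two Banach spaces. I would also note that the map $\J\mapsto -\J$ is entire on $\C^d$ and preserves the strip $\Pi_{\gamma_0/2}$.

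Next, fix $\J\in\Pi_{\gamma_0/2}$. By Proposition \ref{prNeuser}(v) we have
\begin{equation*}
G_0(\e,\bs,\J,\gamma_0)=G_0(\bs,\e,-\J,\gamma_0),\qquad G(\e,\bs,\J,\gamma_0)=G(\bs,\e,-\J,\gamma_0),
\end{equation*}
that is, $G_0(\cdot,\cdot,\J,\gamma_0)=T\bigl(G_0(\cdot,\cdot,-\J,\gamma_0)\bigr)$ and likewise for $G$. Since $-\J\in\Pi_{\gamma_0/2}$, Proposition \ref{prNeuser}(ii) gives $G_0(\cdot,\cdot,-\J,\gamma_0),G(\cdot,\cdot,-\J,\gamma_0)\in L_{c,q}(\Omega\times\Omega)$, and applying $T$ yields membership of $G_0(\cdot,\cdot,\J,\gamma_0)$ and $G(\cdot,\cdot,\J,\gamma_0)$ in $L_{q,c}(\Omega\times\Omega)$. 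Combined with Proposition \ref{prNeuser}(ii), this gives the asserted double membership in $L_{c,q}\cap L_{q,c}$.

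Finally, for holomorphy in the $L_{q,c}$-norm I would compose three holomorphic/linear-continuous pieces: the entire map $\J\mapsto-\J$ on $\Pi_{\gamma_0/2}$, the $L_{c,q}$-valued holomorphic map $\J\mapsto G_0(\cdot,\cdot,\J,\gamma_0)$ (resp.\ $G(\cdot,\cdot,\J,\gamma_0)$) furnished by Proposition \ref{prNeuser}(iii), and the bounded linear operator $T$. The composition $T\circ G_0(\cdot,\cdot,-\,\cdot\,,\gamma_0)$ is then a holomorphic $L_{q,c}$-valued map on $\Pi_{\gamma_0/2}$, and by the identity above it coincides with $\J\mapsto G_0(\cdot,\cdot,\J,\gamma_0)$; analogously for $G$. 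There is no real obstacle here—the only thing to check carefully is that $T$ is well-defined as a bounded operator from $L_{c,q}$ to $L_{q,c}$, which is immediate from the definitions of these norms, so the corollary follows at once.
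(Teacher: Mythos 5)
Your proposal is correct, and it is exactly the argument the paper leaves implicit: the corollary is stated without a separate proof, and the symmetry relation in Proposition~\ref{prNeuser}(v) is placed there precisely so that the transposition $T:(\e,\bs)\mapsto(\bs,\e)$, which is an isometric isomorphism $L_{c,q}(\Omega\times\Omega)\to L_{q,c}(\Omega\times\Omega)$, transfers the $L_{c,q}$-membership and holomorphy from claims~(ii) and~(iii) to the $L_{q,c}$-norm via $G_0(\cdot,\cdot,\J,\gamma_0)=T\bigl(G_0(\cdot,\cdot,-\J,\gamma_0)\bigr)$ and the analogous identity for $G$. Your observation that $\J\mapsto-\J$ is entire and preserves $\Pi_{\gamma_0/2}$, together with the fact that post-composition with the bounded linear map $T$ preserves holomorphy, closes the argument cleanly.
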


\subsection{$C(\Omega\times\Omega)$-holomorphy of a compositional power of the Green's function of $H(\J)$}
\label{subsec:contcomppow}

We shall use the following notation. By $K_1(\cdot,\cdot)\circ
K_2(\cdot,\cdot)$ we denote the composition of integral kernels
$K_1(\e,\bf)$ and $K_2(\e,\bf)$: $(K_1(\cdot,\cdot)\circ
K_2(\cdot,\cdot))(\e,\bs):=\int_\Omega
K_1(\e,\xi)K_2(\xi,\bs)\,d\xi$. The composition of several integral
kernels $K_1(\cdot,\cdot)\circ K_2(\cdot,\cdot)\circ\cdots\circ
K_N(\cdot,\cdot)$ will be denoted briefly by $\circ_{j=1}^N
K_j(\cdot,\cdot)$; if
$K_1(\cdot,\cdot)=K_2(\cdot,\cdot)=\cdots=K_N(\cdot,\cdot)=K(\cdot,\cdot)$,
we shall denote it as a compositional power $K(\cdot,\cdot)^{\circ
N}$. We shall denote by $Hol(D,B)$ the set of abstract holomorphic
functions $\phi:D\rightarrow B$, where $D$ is an open domain in
$\C^d$ and $B$ is a complex Banach space. Like in \cite{Wil}, let us
define the following equivalence relation between the functions
$\J\rightarrow K(\e,\bs,\J)\;(\e,\bs\in\Omega)$: we shall write that
$K(\e,\bs,\J)\sim 0$, if $(\J\rightarrow K(\cdot,\cdot,\J))\in
Hol(\Pi_{\frac{\gamma_0}{2}}, C(\Omega\times\Omega))$ and
$K_1(\e,\bs,\J)\sim K_2(\e,\bs,\J)$, if
$K_1(\e,\bs,\J)-K_2(\e,\bs,\J)\sim 0$.

\begin{lemma}\label{lmcomp}
Assume that $d\ge 4$ and $V\in\L_{q^\prime}(\Omega)$ with
$q^\prime>\frac{d}{2}$. Consider the composition of integral kernels
\begin{equation}\label{comp}
K(\e,\bs,\J)=\circ_{j=1}^N K_j(\cdot,\cdot,\J)(\e,\bs)\quad (\J\in
\Pi_{\frac{\gamma_0}{2}}),
\end{equation}
where or $K_j(\e,\bs,\J)\sim 0$ and the composition contains at
least one term of this kind, or $K_j(\e,\bs,\J)=L_j(\e,\bs,\J)$, or
for $j<N$ $K_j(\e,\bs,\J)=L_j(\e,\bs,\J)V(\bs)$  with
\begin{equation}\label{mmbship}
(\J\rightarrow L_j(\cdot,\cdot,\J))\in Hol(\Pi_{\frac{\gamma_0}{2}},
L_{c,q}(\Omega\times\Omega))\cap Hol(\Pi_{\frac{\gamma_0}{2}},
L_{q,c}(\Omega\times\Omega)),
\end{equation}
where $q^{-1}+(q^\prime)^{-1}=1$. Then $K(\e,\bs,\J)\sim 0$.
\end{lemma}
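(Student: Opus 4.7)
The plan is to isolate one continuous factor $K_{j_0}\sim 0$, split the product as the sandwich $K = A\circ K_{j_0}\circ B$ with $A = K_1\circ\cdots\circ K_{j_0-1}$ and $B = K_{j_0+1}\circ\cdots\circ K_N$ (empty compositions reducing to trivial sub-cases), and then push the continuity of the middle factor through two H\"older estimates — one for the $\e$-modulus and one for the $\bs$-modulus of continuity of $K$. The only seam needing careful bookkeeping is the possible trailing $V$-factor of $A$: if $K_{j_0-1}$ has the form $L_{j_0-1}V$ then write $A(\e,\xi,\J) = A'(\e,\xi,\J)\,V(\xi)$ and set $V_L := V$, otherwise put $A':=A$ and $V_L:=1$. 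No analogous stripping is needed for $B$, because by hypothesis $K_N$ cannot carry a trailing $V$, so every $V$ inside $B$ is internal (it is integrated against the next kernel).

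All remaining $V$-factors inside $A'$ and $B$ therefore lie strictly between two kernels from the allowed classes. Since $\Omega$ is bounded, $C(\Omega\times\Omega)$ embeds continuously into both $L_{c,q}(\Omega\times\Omega)$ and $L_{q,c}(\Omega\times\Omega)$, so any factors of type (c) occurring inside $A'$ or $B$ need no separate treatment. Iterated application of Lemma \ref{lmholcomp}, together with its $L_{q,c}$-analog — obtained by the symmetry $\e\leftrightarrow\bs$, under which $V\in L_{q'}$ is invariant — yields
\begin{equation*}
\bigl(\J\mapsto A'(\cdot,\cdot,\J)\bigr),\ \bigl(\J\mapsto B(\cdot,\cdot,\J)\bigr)\in Hol\bigl(\Pi_{\gamma_0/2},\,L_{c,q}(\Omega\times\Omega)\bigr)\cap Hol\bigl(\Pi_{\gamma_0/2},\,L_{q,c}(\Omega\times\Omega)\bigr).
\end{equation*}

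Writing
\begin{equation*}
K(\e,\bs,\J)=\iint A'(\e,\xi,\J)\,V_L(\xi)\,K_{j_0}(\xi,\eta,\J)\,B(\eta,\bs,\J)\,d\xi\,d\eta
\end{equation*}
and applying H\"older in $\xi$ together with the embedding $L_q(\Omega)\hookrightarrow L_1(\Omega)$ in $\eta$ gives
\begin{equation*}
|K(\e,\bs,\J)-K(\e_0,\bs,\J)|\le C\,\|A'(\e,\cdot,\J)-A'(\e_0,\cdot,\J)\|_q,
\end{equation*}
with $C = \|K_{j_0}(\cdot,\cdot,\J)\|_{C(\Omega\times\Omega)}\,\|V_L\|_{q'}\,|\Omega|^{1/q'}\,\|B(\cdot,\cdot,\J)\|_{q,c}$ independent of $\bs$; the right-hand side tends to $0$ as $\e\to\e_0$ uniformly in $\bs$ because $A'\in L_{c,q}$. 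The symmetric estimate in the $\bs$-variable uses $B\in L_{q,c}$. These two uniform moduli of continuity force $K(\cdot,\cdot,\J)\in C(\Omega\times\Omega)$, and the very same chain of inequalities bounds the $C(\Omega\times\Omega)$-norm of $K$ by a constant multiple of $\|A'(\cdot,\cdot,\J)\|_{L_{c,q}}\,\|K_{j_0}(\cdot,\cdot,\J)\|_{C(\Omega\times\Omega)}\,\|B(\cdot,\cdot,\J)\|_{L_{q,c}}$. Hence the trilinear composition $(A',K_{j_0},B)\mapsto K$ is continuous as a map $L_{c,q}\times C(\Omega\times\Omega)\times L_{q,c}\to C(\Omega\times\Omega)$, and holomorphy of each of the three factors in its respective norm propagates to $K\sim 0$.

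The main technical obstacle is the routing of $V$-factors through the iteration: every intermediate $V$ must remain sandwiched between two kernels in $L_{c,q}$ (or $L_{q,c}$) so that Lemma \ref{lmholcomp} (or its transpose) applies, and the single $V$ potentially adjacent to $K_{j_0}$ must be absorbed harmlessly into the H\"older estimate against the bounded, continuous $K_{j_0}$ — which succeeds precisely because $V\in L_{q'}$ and $\|K_{j_0}\|_\infty<\infty$.
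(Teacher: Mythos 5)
Your argument is correct, and it reaches the same conclusion as the paper but by a genuinely different organization. The paper's proof proceeds in two steps: a ``Step~1'' establishing bilinear propagation of the $\sim 0$ property (showing $L V\circ K\sim 0$, $L\circ K\sim 0$, $K\circ V L\sim 0$, $K\circ L\sim 0$ whenever $K\sim 0$ and $L$ lies in the class (\ref{mmbship})), and a ``Step~2'' that re-associates the trailing $V$-factors in the suffix into leading $V$-factors and then contracts the whole chain inductively, peeling one factor at a time from the pivot $K_{j_0}$ outward. You instead first establish, by iterating Lemma \ref{lmholcomp} and its transpose (a step the paper never takes), that the full prefix $A'$ and the full suffix $B$ lie in $Hol(\Pi_{\gamma_0/2},L_{c,q})\cap Hol(\Pi_{\gamma_0/2},L_{q,c})$, and then finish with a single bounded trilinear map $(A',K_{j_0},B)\mapsto K$. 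Your route is tighter once the class-membership of $A'$ and $B$ is secured, since it collapses the whole induction to one H\"older estimate; the paper's route avoids that preliminary step but pays with a longer chain of case distinctions. Two minor points worth making explicit in a writeup: the iteration for a $V$-free junction $L\circ L'$ should be handled by Lemma \ref{lmholcomp} with $V\equiv 1\in L_{q'}(\Omega)$ (using boundedness of $\Omega$); and in the edge cases $j_0=1$ or $j_0=N$, where one of $A$ or $B$ is empty, the modulus-of-continuity estimate on the side where the factor disappears must be replaced by one that uses the uniform continuity of $K_{j_0}$ itself rather than the $L_{c,q}$ (or $L_{q,c}$) property of $A'$ (or $B$); you flag these as trivial sub-cases, which they are, but the replacement estimate is of a slightly different shape than the one you display.
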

\begin{proof}
We shall prove the lemma in two steps.

Step 1. Consider the kernel of the form
$K_1(\cdot,\cdot,\J)=L(\cdot,\cdot,\J)V(\cdot)\circ
K(\cdot,\cdot,\J)$, where the mapping $\J\rightarrow
L(\cdot,\cdot,\J)$ satisfies the condition (\ref{mmbship}) and
$K(\e,\bs,\J)\sim 0$. Let us prove that $K_1(\e,\bs,\J)\sim 0$. We
have for $\e,\e+\hb,\bs\in\Omega$:
\begin{eqnarray*}
&&|K_1(\e+\hb,\bs,\J)-K_1(\e,\bs,\J)|=\\
&&\left|\int_\Omega(L(\e+\hb,\xi,\J)-L(\e,\xi,\J))V(\xi)K(\xi,\bs,\J)\,d\xi\right|\le\\
&&\|V\|_{q^\prime}\|K(\cdot,\cdot,\J)\|_{C(\Omega\times\Omega)}
\|L(\e+\hb,\cdot,\J)-L(\e,\cdot,\J)\|_q.
\end{eqnarray*}
This estimate and the inclusion $L(\cdot,\cdot,\J)\in
L_{c,q}(\Omega\times\Omega)$ imply that for any
$\J\in\Pi_{\frac{\gamma_0}{2}}$ the function $K_1(\e,\bs,\J)$ is
continuous w.r. to $\e$ at each point $\e\in\Omega$ uniformly w.r.
to $\bs\in\Omega$. Let us estimate for $\e,\bs,\bs+\hb\in\Omega$:
\begin{eqnarray*}
&&|K_1(\e,\bs+\hb,\J)-K_1(\e,\bs,\J)|\le\\
&&\|V\|_{q^\prime}\|L(\cdot,\cdot,\J)\|_{c,q}\max_{\xi\in\Omega}|K(\xi,\bs+\hb,\J)-K(\xi,\bs,\J)|.
\end{eqnarray*}
This estimate and the continuity of $K(\xi,\bs,\J)$ in
$\Omega\times\Omega$ imply that for any fixed $\e\in\Omega$
$K_1(\xi,\bs,\J)$ is continuous at each point $\bs\in\Omega$. So, we
have proved that for any $\J\in\Pi_{\frac{\gamma_0}{2}}$
$K_1(\cdot,\cdot,\J)\in C(\Omega\times\Omega)$. The estimate
\begin{eqnarray*}
\hskip-8mm &&\|K_1(\cdot,\cdot,\J+\tb)-K_1(\cdot,\cdot,\J)-\nabla_\J
L(\cdot,\cdot,\J)\cdot\tb\,V(\cdot)\circ
K(\cdot,\cdot,\J)\|_{C(\Omega\times\Omega)}\le\|V\|_{q^\prime}\times\\
\hskip-8mm&&\left(\|(L(\cdot,\cdot,\J+\tb)-L(\cdot,\cdot,\J))-\nabla_\J
L(\cdot,\cdot,\J)\cdot\tb\|_{c,q}\,\|K(\cdot,\cdot,\J)\|_{C(\Omega\times\Omega)}+
\right.\\
\hskip-8mm&&\|L(\cdot,\cdot,\J+\tb)-L(\cdot,\cdot,\J)\|_{c,q}\,\|K(\cdot,\cdot,\J+\tb)-
K(\cdot,\cdot,\J)\|_{C(\Omega\times\Omega)}+
\|L(\cdot,\cdot,\J)\|_{c,q}\times\\
\hskip-8mm&&\left.\|(K(\cdot,\cdot,\J+\tb)-K(\cdot,\cdot,\J))-\nabla_\J
K(\cdot,\cdot,\J)\cdot\tb\|_{C(\Omega\times\Omega)}\right)\quad
(\J,\J+\tb\in\Pi_{\frac{\gamma_0}{2}})
\end{eqnarray*}
and the holomorphy of the mappings $\J\rightarrow
L(\cdot,\cdot,\J)\in L_{c,q}(\Omega\times\Omega)$ and $\J\rightarrow
K(\cdot,\cdot,\J)\in C(\Omega\times\Omega)$ in the strip
$\Pi_{\frac{\gamma_0}{2}}$ imply that the mapping $\J\rightarrow
K_1(\cdot,\cdot,\J)\in C(\Omega\times\Omega)$ is holomorphic there,
that is $K_1(\e,\bs,\J)\sim 0$. In the analogous manner we can prove
that if $L$ satisfies the condition (\ref{mmbship}) and
$K(\e,\bs,\J)\sim 0$, then $L(\cdot,\cdot,\J)\circ
K(\cdot,\cdot,\J)\sim 0$, $K(\cdot,\cdot,\J)\circ
V(\cdot)L(\cdot,\cdot,\J)\sim 0$ and $K(\cdot,\cdot,\J)\circ
L(\cdot,\cdot,\J)\sim 0$.

Step 2. By the assumption of the lemma, in the composition
(\ref{comp}) for some $j_0\in\{1,2,\dots,N\}$
$\;K_{j_0}(\e,\bs,\J)\sim 0$. Observe that if a term of (\ref{comp})
is equivalent to zero, it belongs to the set from the right hand
side of (\ref{mmbship}). Then using inductively the results of Step
1, we obtain that the composition $T(\e,\bs,\J)$ of kernels from
(\ref{comp}) which are before $K_{j_0}(\e,\bs,\J)$ and of
$K_{j_0}(\e,\bs,\J)$ itself has the property $T(\e,\bs,\J)\sim 0$.
On the other hand, the composition $\tilde T(\e,\bs,\J)$ of terms
from (\ref{comp}) which are after $K_{j_0}(\e,\bs,\J)$ can be
represented in the form $\tilde T(\e,\bs,\J)=(\circ_{j=j_0+1}^N
\tilde K_j(\cdot,\cdot,\J))(\e,\bs)$, where the kernels $\tilde
K_j(\cdot,\cdot,\J))$ have one of the forms: or $\tilde
K_j(\cdot,\cdot,\J))=V(\cdot)\tilde L_j(\cdot,\cdot,\J)$, or $\tilde
K_j(\cdot,\cdot,\J))=\tilde L_j(\cdot,\cdot,\J)$ and each $\tilde
L_j(\cdot,\cdot,\J)$ belongs to the set from the right hand side of
(\ref{mmbship}). Again using inductively the results of Step 1, we
obtain that $K(\e,\bs,\J)=(T(\cdot,\cdot,\J)\circ\tilde
T(\cdot,\cdot,\J))(\e,\bs)\sim 0$.
\end{proof}

In  what follows we need the following result about Fourier's
transform of a function with a polar singularity:

\begin{lemma}\label{lmestFour}
For the Fourier's transform $\hat\Fc(\omega)$ on $\R^d$of the
function
$\Fc(\e)=\frac{\exp(-\gamma|\e|)}{|\e|^{\beta}}\;(\beta\in[0,d),\,\gamma>0)$
the property is valid:
\begin{equation}\label{boundhatE1}
\sup_{\omega\in\R^d}\psi(\omega)|\hat\Fc(\omega)|<\infty,
\end{equation}
where
\begin{eqnarray}\label{defpsi}
\psi(\omega)=\left\{
\begin{array}{ll}
|\omega|,&\rm{if}\quad \beta\in[0,d-1),\\
\frac{\omega|}{1+\ln(1+|\omega|)},&\rm{if}\quad \beta=d-1,\\
|\omega|^{d-\beta},&\rm{if}\quad \beta\in(d-1,d).
\end{array}\right.
\end{eqnarray}
\end{lemma}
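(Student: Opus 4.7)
Since $\beta<d$ and $\Fc$ decays exponentially, $\Fc\in L^1(\R^d)$, so $\hat\Fc$ is continuous and uniformly bounded; in particular $\psi(\omega)|\hat\Fc(\omega)|$ is bounded for $|\omega|$ in any compact set disjoint from where $\psi$ is large. Hence it suffices to establish, for $|\omega|\ge 1$ say, the three refined decay estimates $|\hat\Fc(\omega)|=O(|\omega|^{-1})$ when $\beta\in[0,d-1)$, $|\hat\Fc(\omega)|=O(\ln|\omega|/|\omega|)$ when $\beta=d-1$, and $|\hat\Fc(\omega)|=O(|\omega|^{\beta-d})$ when $\beta\in(d-1,d)$.

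The main device will be a splitting at scale $1/|\omega|$: write $\hat\Fc(\omega)=I_1(\omega)+I_2(\omega)$, where $I_1$ is the integral of $(2\pi)^{-d/2}\Fc(\e)e^{-i\omega\cdot\e}$ over $\{|\e|<1/|\omega|\}$ and $I_2$ the integral over $\{|\e|>1/|\omega|\}$. The near piece is handled by the trivial bound $|e^{-i\omega\cdot\e}|\le 1$ and polar coordinates:
\begin{equation*}
|I_1(\omega)|\le C\int_0^{1/|\omega|}\rho^{d-1-\beta}\,d\rho=\frac{C}{d-\beta}\,|\omega|^{\beta-d},
\end{equation*}
which already meets the target in all three cases (for $\beta\le d-1$ we have $|\omega|^{\beta-d}\le|\omega|^{-1}$).

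For the oscillatory far piece I use the identity $\hat\omega\cdot\nabla_\e e^{-i\omega\cdot\e}=-i|\omega|e^{-i\omega\cdot\e}$, where $\hat\omega=\omega/|\omega|$. Since $\Fc$ is smooth on $\{|\e|>1/|\omega|\}$ and decays exponentially at infinity, integration by parts on this region gives
\begin{equation*}
-i|\omega|\,I_2(\omega)=(2\pi)^{-d/2}\!\!\int_{|\e|=1/|\omega|}\!\!\!\Fc(\e)(\hat\omega\cdot(-\hat\e))e^{-i\omega\cdot\e}\,dS-(2\pi)^{-d/2}\!\!\int_{|\e|>1/|\omega|}\!\!\!(\hat\omega\cdot\nabla\Fc(\e))e^{-i\omega\cdot\e}\,d\e,
\end{equation*}
where $\hat\e=\e/|\e|$. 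The boundary term is immediately $O(|\omega|^{\beta-d+1})$, since $|\Fc|=e^{-\gamma/|\omega|}|\omega|^{\beta}$ on $\{|\e|=1/|\omega|\}$ and the surface area is $s_{d-1}/|\omega|^{d-1}$. For the volume term, $|\nabla\Fc(\e)|\le C\,e^{-\gamma|\e|}(|\e|^{-\beta}+|\e|^{-\beta-1})$; the first summand integrates to a bounded quantity (since $\beta<d$), so only $\int_{1/|\omega|}^1\rho^{d-2-\beta}\,d\rho$ needs attention.

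The expected obstacle, and the heart of the proof, is the elementary but case-sensitive analysis of this last integral: it is $O(1)$ when $d-2-\beta>-1$ (i.e. $\beta<d-1$), equals $\ln|\omega|$ when $\beta=d-1$, and is of order $|\omega|^{\beta-d+1}$ when $\beta>d-1$. Dividing by $|\omega|$ gives for $I_2$ the bounds $O(|\omega|^{-1})$, $O(\ln|\omega|/|\omega|)$, and $O(|\omega|^{\beta-d})$ respectively; in the last case the boundary contribution is of the same order, and in the first two it is subdominant. Combining with the estimate for $I_1$ yields the three claimed decay rates, hence (\ref{boundhatE1}) with $\psi$ as in (\ref{defpsi}).
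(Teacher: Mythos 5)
Your proof is correct but takes a genuinely different route from the paper's. The paper's argument rests on a half-period translation trick: for each $\omega\neq 0$ one picks a shift $\hbf(\omega)$ of length $\pi/|\omega|_\infty$ along the dominant coordinate axis, so that $e^{-i\omega\cdot(\e+\hbf(\omega))}=-e^{-i\omega\cdot\e}$; averaging the integral with its shifted copy then gives $|\hat\Fc(\omega)|\le\tfrac{1}{2(2\pi)^{d/2}}\|\Fc-\Fc(\cdot+\hbf(\omega))\|_{L_1}$, reducing the lemma to an $L_1$-modulus-of-continuity estimate for $\Fc$ at scale $\sim 1/|\omega|$, which the paper then splits into $|\e|\lesssim|\hbf|$, $|\hbf|\lesssim|\e|\le 1$, and $|\e|\ge 1$ regions. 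You instead decompose $\hat\Fc$ into a near piece $|\e|<1/|\omega|$ (handled by the trivial bound) and a far piece $|\e|>1/|\omega|$, and integrate by parts once in the $\hat\omega$-direction on the far region, yielding a boundary term on the sphere $|\e|=1/|\omega|$ plus an integral involving $\nabla\Fc$. Both arguments split at the same scale and both ultimately reduce to the same case-sensitive integral $\int_{1/|\omega|}^1\rho^{d-2-\beta}\,d\rho$, which produces the three regimes of $\psi$. The paper's translation trick has the mild advantage of never differentiating $\Fc$ and keeping everything inside a single $L_1$-norm, while your integration-by-parts version is the more standard non-stationary-phase argument and makes the boundary and bulk contributions explicit; both are complete and give the claimed decay rates.
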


\begin{proof}
Since $\beta<d$, $\Fc\in L_1(\R^d)$, hence $\hat\Fc(\omega)$ is
bounded on $\R^d$. Hence it is sufficient to prove
(\ref{boundhatE1}) with $\{\e\in\R^d\,|\;|\e|\ge 1\}$ instead of
$\R^d$. Along with the Euclidean norm $|\omega|$ on $\R^d$ consider
the equivalent norm $|\omega|_\infty=\max_{1\le j\le
d}|\omega_j|\;(\omega=(\omega_1,\omega_2,\dots,\omega_d))$. Let us
take $\omega\neq 0$ and $j_0\in\{1,2,\dots,d\}$ such that
$|\omega_{j_0}|=|\omega|_\infty$. If the vector $\hbf(\omega)$ is
defined by
\begin{eqnarray*}
(\hbf(\omega))_j=\left\{\begin{array}{ll}
\frac{\pi}{|\omega|_\infty}&\rm{for}\quad j=j_0,\\
0&\rm{for}\quad j\neq j_0
\end{array}\right.
\end{eqnarray*}
$(j\in\{1,2,\dots,d\})$, then the equality
$\exp(-i\omega\cdot(\e+\hbf(\omega)))=-\exp(-i\omega\cdot\e)$ is
valid, hence we have the representation
\begin{eqnarray*}
&&\hat\Fc(\omega)=\frac{1}{2(2\pi)^{\frac{d}{2}}}\Big(\int_{\R^d}\Fc(\e)\exp(-i\omega\cdot\e)\,d\e+\\
&&\int_{\R^d}\Fc(\e+\hbf(\omega))\exp(-i\omega\cdot(\e+\hbf(\omega)))\,d\e\Big)=\\
&&\frac{1}{2(2\pi)^{\frac{d}{2}}}\int_{\R^d}\big(\Fc(\e)-\Fc(\e+\hbf(\omega))\big)\exp(-i\omega\cdot\e)\,d\e,
\end{eqnarray*}
which implies the estimate:
\begin{equation}\label{Fourmodcont}
|\Fc(\omega)|\le\frac{1}{2(2\pi)^{\frac{d}{2}}}\|\Fc(\e)-\Fc(\e+\hbf(\omega))\|_1.
\end{equation}
Here we denote by $\|\cdot\|_1$ the norm in the space $L_1(\R^d)$.
We have:
\begin{equation}\label{estmodcont}
\|\Fc(\e)-\Fc(\e+\hbf(\omega))\|_1\le I_1(\omega)+I_2(\omega),
\end{equation}
where
\begin{equation*}
I_1(\omega)=\int_{|\e|\le 1}|\Fc(\e)-\Fc(\e+\hbf(\omega))|\,d\e,
\end{equation*}
\begin{equation}\label{dfI2om}
I_2(\omega)=\int_{|\e|\ge 1}|\Fc(\e)-\Fc(\e+\hbf(\omega))|\,d\e.
\end{equation}
Let us derive the change of the variable in the integral
$I_1(\omega)$: $\by=\frac{\e}{|\hbf(\omega)|}$, denote
$\tilde\hbf=\frac{\hbf(\omega)}{|\hbf(\omega)|}$ and rotate the
space $\R^d$ such that $\tilde\hbf=(1,0,\dots,0)$. Then we obtain:
\begin{eqnarray}\label{rprI1om}
&&\hskip-8mmI_1(\omega)=|\hbf(\omega)|^{d-\beta}\int_{|\by|\le\frac{1}{|\hbf(\omega)|}}
\Big|\frac{\exp(-\gamma|\hbf(\omega)||\by+\tilde\hbf|)}{|\by+\tilde\hbf|^\beta}-
\frac{\exp(-\gamma|\hbf(\omega)||\by|)}{|\by|^\beta}\Big|\,d\by\le\nonumber\\
&&\hskip-8mm\pi|\omega|_\infty\big(I_{1,1}+I_{1,2}(\omega)\big),
\end{eqnarray}
where
\begin{equation}\label{dfI11}
I_{1,1}=\int_{|\by|\le 2} \Big(\frac{1}{|\by+\tilde\hbf|^\beta}+
\frac{1}{|\by|^\beta}\Big)\,d\by,
\end{equation}
\begin{equation}\label{dfI12om}
I_{1,2}(\omega)=\int_{2\le|\by|\le\frac{1}{|\hbf(\omega)|}}
\Big|\frac{\exp(-\gamma|\hbf(\omega)||\by+\tilde\hbf|)}{|\by+\tilde\hbf|^\beta}-
\frac{\exp(-\gamma|\hbf(\omega)||\by|)}{|\by|^\beta}\Big|\,d\by.
\end{equation}
Let us represent:
\begin{eqnarray}\label{rprdiff}
&&\hskip-15mm\frac{\exp(-\gamma|\hbf(\omega)||\by+\tilde\hbf|)}{|\by+\tilde\hbf|^\beta}-
\frac{\exp(-\gamma|\hbf(\omega)||\by|)}{|\by|^\beta}=\\
&&\hskip-15mm\int_0^1 \frac{\partial}{\partial
t}\Big(\frac{\exp(-\gamma|\hbf(\omega)||\by+t\tilde\hbf|)}{|\by+t\tilde\hbf|^\beta}\Big)\,dt=
\nonumber\\
&&\hskip-15mm-\int_0^1\Big(\beta\frac{\exp(-\gamma|\hbf(\omega)||\by+t\tilde\hbf|)}{|\by+t\tilde\hbf|^{\beta+1}}+
\gamma|\hbf(\omega)|\frac{\exp(-\gamma|\hbf(\omega)||\by+t\tilde\hbf|)}{|\by+t\tilde\hbf|^\beta}\Big
)\times\nonumber\\
&&\hskip-15mm\frac{(\by+t\tilde\hbf)\cdot\tilde\hbf}{|\by+t\tilde\hbf|}\,dt.\nonumber
\end{eqnarray}
Then we have from (\ref{dfI12om}):
\begin{eqnarray*}
&&I_{1,2}(\omega)\le\int_{2\le|\by|\le\frac{1}{|\hbf(\omega)|}}\Big(\frac{\beta}{(|\by|-1)^{\beta+1}}+
\frac{\gamma|\hbf(\omega)|}{(|\by|-1)^\beta}\Big)\,d\by=\\
&&s_{d-1}\Big(\beta\int_2^{\frac{1}{|\hbf(\omega)|}}\frac{r^{d-1}\,dr}{(r-1)^{\beta+1}}+
\gamma|\hbf(\omega)|\int_2^{\frac{1}{|\hbf(\omega)|}}\frac{r^{d-1}\,dr}{(r-1)^\beta}\Big),
\end{eqnarray*}
hence for $\beta\in[0,d)\setminus\{d-1\}$
\begin{equation}\label{estI12om}
I_{1,2}(\omega)\le s_{d-1}
2^{d-1}\Big(|\hbf(\omega)|^{\beta-d+1}+1\Big)\Big(\frac{\beta}{\beta-d+1}+\frac{\gamma}{d-\beta}\Big)
\end{equation}
and for $\beta=d-1$
\begin{equation}\label{estI12om1}
I_{1,2}(\omega)\le s_{d-1}
2^{d-1}\Big(\beta\ln\Big(\frac{1}{|\hbf(\omega)|}\Big)+\gamma\Big).
\end{equation}
Let us estimate the integral $I_2(\omega)$, defined by
(\ref{dfI2om}), using for the difference under it the
representation, analogous to (\ref{rprdiff}):
\begin{eqnarray*}
&&I_2(\omega)\le|\hbf(\omega)|\int_{|\e|\ge 1}
\Big(\frac{\beta}{(|\e|-|\hbf(\omega)|)^{\beta+1}}+\frac{\gamma}{(|\e|-|\hbf(\omega)|)^\beta}\Big)\times\\
&&\exp\big(-\gamma(|\e|-|\hbf(\omega)|)\big)\,d\e,
\end{eqnarray*}
if $|\hbf(\omega)|<1$. From this estimate and from
(\ref{Fourmodcont}), (\ref{estmodcont}), (\ref{rprI1om}),
(\ref{dfI11}), (\ref{estI12om}) and (\ref{estI12om1}) we obtain the
desired claim.
\end{proof}

In  what follows we need also the following claim:

\begin{lemma}\label{lmcmpEc}
Assume that $V\in L_{q^\prime}(\Omega)$ with $q^\prime>\frac{d}{2}$
and denote
\begin{equation}\label{defl}
l:=\left[\frac{d}{\theta}\right]+2,
\end{equation}
where
\begin{equation}\label{deftht}
\theta=\min\{1, d-q(d-2)\}.
\end{equation}
Then the composition of kernels
\begin{equation}\label{cmpEc}
\Theta(\e,\bs)=\left(\circ_{j=1}^l\Ec_j(\cdot,\cdot)\right)(\e,\bs),
\end{equation}
where $\tilde\Ec_l(\e,\bs)=\Ec_d(\e-\bs-\mb_l,\gamma_0)$ and for
$j<l$
\begin{displaymath}
\tilde\Ec_j(\e,\bs)=\left\{\begin{array}{ll} or&\Ec_d(\e-\bs-\mb_j,\gamma_0),\\
or& \Ec_d(\e-\bs-\mb_j,\gamma_0)V(\bs)
\end{array}\right.
\end{displaymath}
($\mb_j\in\Z^d$), is continuous in $\Omega\times\Omega$.
\end{lemma}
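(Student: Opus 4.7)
The proof of Lemma A.6 proceeds by iteratively applying a smoothing estimate that reduces the pointwise singularity of the composed kernel at each step, until continuity is reached after $l$ compositions.

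First I would invoke Corollary \ref{corest} to decompose each $\tilde\Ec_j(\e,\bs)$ into a purely singular part, behaving like $|\e-\bs-\mb_j|^{-(d-2)}$ (or with a logarithmic factor when $d$ is even) in a neighborhood of $\e = \bs+\mb_j$, plus an exponentially-decaying smooth remainder bounded by $C e^{-\gamma_0|\e-\bs-\mb_j|}$. Expanding the composition $\Theta$ by multilinearity produces a finite sum of terms, and any term containing at least one smooth remainder factor is easily seen to yield a continuous function on $\Omega\times\Omega$ (since the remainder is bounded and continuous while the other singular factors are integrable). Hence the task reduces to proving continuity for the purely singular composition.

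For the purely singular composition I would establish the core smoothing estimate: if $K$ is a measurable kernel on $\Omega\times\Omega$ with $|K(\e,\bs)| \leq C(1+|\e-\bs|^{-\beta})$ for some $\beta\in[0,d)$, then
\[
K'(\e,\bs) = \int_{\Omega} |\e - \xi - \mb|^{-(d-2)}\, v(\xi)\, K(\xi,\bs)\, d\xi,
\]
where $v$ is either $V\in L_{q'}(\Omega)$ or identically $1$, satisfies $|K'(\e,\bs)| \leq C'(1+|\e-\bs|^{-(\beta-\theta)})$ with $\theta$ as in \eqref{deftht}. The bound is obtained by Hölder's inequality with conjugate exponents $(q,q')$ to pull $\|V\|_{q'}$ outside, followed by the classical beta-type estimate $\int_{\R^d}|\xi|^{-a}|\xi-y|^{-b}d\xi \asymp |y|^{d-a-b}$ for $a,b\in(0,d)$ with $a+b>d$; the feasibility of this Hölder step requires $(d-2)q<d$, i.e.\ $q<d/(d-2)$, which exactly identifies $\theta$ with $\min\{1, d-q(d-2)\}$ after accounting for the boundary case $\beta-\theta\leq 0$.

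Applying this estimate iteratively, starting from the rightmost kernel $\tilde\Ec_l(\e,\bs)=\Ec_d(\e-\bs-\mb_l,\gamma_0)$ with initial singularity exponent $\beta_0=d-2$, after $k$ compositions the singularity exponent is at most $\beta_k = (d-2) - k\theta$. The choice $l = [d/\theta]+2$ guarantees $\beta_{l-1} < 0$, so the kernel after all $l-1$ of these reductions is uniformly bounded on $\Omega\times\Omega$; one final composition, involving an integrable polar factor against a uniformly bounded continuous factor, yields honest continuity in $(\e,\bs)\in\Omega\times\Omega$ via dominated convergence together with continuity of translation in $L_p$.

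The main technical obstacle lies in handling the borderline regime where $\beta_k-\theta$ crosses zero: here the beta-integral produces logarithmic corrections rather than clean cancellation, so mere iteration of the pointwise bound returns $1 + \ln|\e-\bs|^{-1}$-type estimates rather than boundedness. This is precisely why $l$ is chosen to be $[d/\theta]+2$ instead of $[d/\theta]+1$: the extra composition absorbs any residual logarithmic factor into an integrable kernel against a bounded weight, converting boundedness into genuine $C(\Omega\times\Omega)$-continuity through a uniform modulus-of-continuity argument.
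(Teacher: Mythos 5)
Your proposal takes a genuinely different route from the paper. The paper's proof iterates H\"older's inequality to reduce the bound on $\tilde\Theta$ to a convolution $\phi$ of $l-1$ translates of $\Fc(\e)=e^{-\gamma_0 q|\e|/2}/|\e|^{q(d-2)}$, and then proves $\phi$ is bounded via Fourier analysis: Lemma \ref{lmestFour} gives $|\hat\Fc(\omega)|\lesssim\psi(\omega)^{-1}$ with $\psi(\omega)\sim|\omega|^\theta$ (modulo a logarithm), so that $(l-1)\theta>d$ forces $\hat\phi\in L_1(\R^d)$ and hence $\phi\in L_\infty$. You instead propagate a pointwise singularity exponent through the chain of compositions using the real-space potential estimate $\int_{\R^d}|\xi|^{-a}|\xi-\by|^{-b}\,d\xi\asymp|\by|^{d-a-b}$, which is a legitimate and more elementary alternative; the continuity step at the end (continuity of translation in $L_q$, two-sided representations to handle each variable) is essentially the paper's.

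However, the per-step reduction you assert is not what your estimate delivers. Applying H\"older with exponents $(q',q)$ to pull out $\|V\|_{q'}$ and then the beta-integral \emph{inside the $q$-th root} gives
\begin{equation*}
\Big(\int_\Omega|\e-\xi|^{-q(d-2)}|\xi-\bs|^{-q\beta}\,d\xi\Big)^{1/q}\lesssim|\e-\bs|^{\frac{d-q(d-2)-q\beta}{q}}=|\e-\bs|^{-(\beta-\delta)},
\end{equation*}
so the gain per composition is $\delta=(d-q(d-2))/q$, not $\theta=\min\{1,\,d-q(d-2)\}$. When $q'<d$ one has $\delta=\theta/q<\theta$ (here $q>1$ since $q'<\infty$), so the asserted bound $\beta_k\le(d-2)-k\theta$ is false. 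The conclusion fortunately survives: since $q(d-2)<d$, a one-line check shows $(d-2)/\delta=q(d-2)/(d-q(d-2))<d/(d-q(d-2))\le d/\theta<l-1$, so $\beta_{l-1}<0$ still holds with the paper's $l$. You should correct the rate to $\delta$ and insert this verification, or switch to the Fourier argument, which packages the borderline logarithmic steps more uniformly. Also note that Corollary \ref{corest} supplies only pointwise bounds on $\Ec_d$, not a decomposition into a singular part plus a smooth exponentially-decaying remainder; realizing the splitting you invoke at the outset requires a cutoff of the diagonal neighborhood before expanding by multilinearity.
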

\begin{proof}
Denote
$\tilde\Theta(\e,\bs)=\left(\circ_{j=1}^{l-2}\tilde\Ec_j(\cdot,\cdot)\right)\circ\Ec_d(\e-\bs-\mb_{l-1},\gamma_0)(\e,\bs)$
and
\begin{equation}\label{dfbrTht}
\bar\Theta(\e,\bs)=\left(\circ_{j=2}^l\tilde\Ec_j(\cdot,\cdot)\right)(\e,\bs).
\end{equation}
Then we have from (\ref{cmpEc}):
\begin{equation}\label{rprTht1}
\Theta(\e,\bs)=(\tilde\Theta(\cdot,\cdot)\circ
W_{l-1}(\cdot)\circ\Ec_d(\cdot-\cdot-\mb_l,\gamma_0))(\e,\bs),
\end{equation}
\begin{equation}\label{rprTht2}
\Theta(\e,\bs)=(\Ec_d(\cdot-\cdot-\mb_1,\gamma_0)\circ
W_1(\cdot)\circ\bar\Theta(\cdot,\cdot))(\e,\bs),
\end{equation}
where
\begin{displaymath}
W_j(\xi_j)=\left\{\begin{array}{ll} or& V(\xi_j),\\
or& 1,
\end{array}\right.
\end{displaymath}
Assume that $d$ is odd. From the explicit formula
\begin{eqnarray*}
&&\tilde\Theta(\e,\bs)=\int_\Omega\Ec_d(\e-\xi_1-\mb_1,\gamma_0)W_1(\xi_1)\,d\xi_1
\int_\Omega\Ec_d(\xi_1-\xi_2-\mb_2,\gamma_0)W_2(\xi_2)\,d\xi_2\dots\\
&&W_{l-2}(\xi_{l-2})\int_\Omega\Ec_d(\xi_{l-2}-\bs-\mb_{l-1},\gamma_0)\,d\xi_{l-2}
\end{eqnarray*}
and claims (ii)-(v) of Proposition \ref{prfndsl} we obtain that
there exists $A>0$ such that the
 estimate is valid:
\begin{eqnarray}\label{estcmp}
&&\hskip-8mm|\tilde\Theta(\e,\bs)|\le\max\{\|1\|_{q^\prime},\|V\|_{q^\prime}\}
\Big(\int_\Omega|\Ec_d(\e-\xi_1-\mb_1,\gamma_0)|^q \,d\xi_1\times\nonumber\\
&&\hskip-8mm\Big|\int_\Omega\Ec_d(\xi_1-\xi_2-\mb_2,\gamma_0)W_2(\xi_2)\,d\xi_2
\dots
W_{l-2}(\xi_{l-2})\times\nonumber\\
&&\hskip-8mm\int_\Omega\Ec_d(\xi_{l-2}-\bs-\mb_{l-1},\gamma_0)\,d\xi_{l-2}\Big|^q\Big)^{\frac{1}{q}}\le
(\max\{\|1\|_{q^\prime},\|V\|_{q^\prime}\})^2\times\nonumber\\
&&\hskip-8mm\Big(\int_\Omega|\Ec_d(\e-\xi_1-\mb_1,\gamma_0)|^q \,d\xi_1\times\nonumber\\
&&\hskip-8mm\int_\Omega|\Ec_d(\xi_1-\xi_2-\mb_2,\gamma_0)|^q\,d\xi_2\Big|\int_\Omega
d\xi_3 \dots
W_{l-2}(\xi_{l-2})\times\nonumber\\
&&\hskip-8mm\int_\Omega\Ec_d(\xi_{l-2}-\bs-\mb_{l-1},\gamma_0)\,d\xi_{l-2}\Big|^q\Big)^{\frac{1}{q}}\le\dots\le\nonumber\\
&&\hskip-8mm
A\cdot(\max\{\|1\|_{q^\prime},\|V\|_{q^\prime}\})^{l-2}(\phi(\e-\bs)^{\frac{1}{q}},
\end{eqnarray}
where
\begin{equation}\label{conv}
\phi(\e)=(\Fc(\cdot-\mb_1)\star\Fc(\cdot-\mb_2)\star\dots\star
\Fc(\cdot-\mb_{l-1}))(\e)
\end{equation}
and $\Fc(\e)=\frac{\exp(-\frac{\gamma_0 q}{2}|\e|)}{|\e|^{q(d-2)}}$
(here $\star$ is the sign of the convolution of functions defined on
$\R^d$). Let us show that the function $\phi(\cdot,\gamma)$ is
bounded on $\R^d$. To this end let us calculate the Fourier
transform on $\R^d$ of the function $\phi(\e)$. We have from
(\ref{conv}):
\begin{eqnarray*}
\hat\phi(\omega)=\exp\left(-i\omega\cdot\sum_{j=1}^{l-1}\mb_j\right)
(\hat\Fc(\omega))^{l-1}\quad (\omega\in\R^d),
\end{eqnarray*}
hence by Lemma \ref{lmestFour},
$\sup_{\omega\in\R^d}(\psi(\omega))^{l-1}|\hat\phi(\omega)|<\infty$,
where $\psi(\omega)$ is defined by (\ref{defpsi}) with
$\beta=q(d-2)$. Since in view of (\ref{defl}) and (\ref{deftht}),
$(l-1)\theta>d$, these circumstances imply that $\hat\phi\in
L_1(\R^d)$. Hence the function $\phi(\e)$ is bounded in $\R^d$ and
we obtain from (\ref{estcmp}) that for an odd $d$  the function
$\tilde\Theta(\e,\bs)$ is bounded in $\Omega\times\Omega$. Consider
the case of an even $d$. From the formula (\ref{mcdn}) for the
MacDonald's function, we get that for any $\epsilon>0$
\begin{equation*}
\frac{K_\nu(\gamma_0|\e|)}{|\e|^\nu}\le
C(\epsilon)\frac{\exp(-\frac{\gamma_0|\e|}{2})}{|\e|^{\nu+\epsilon}}\quad
(\nu\ge 0),
\end{equation*}
where
$C(\epsilon)=\sup_{\tau\in[0,\infty)}\big(\tau^\epsilon\exp(-\gamma_0\tau/2)\big)\int_1^\infty\frac{dt}
{t^{\nu+\epsilon}\sqrt{t^2-1}}$. Using claims (vi)-(vii) of
Proposition \ref{prfndsl} and choosing a small enough $\epsilon>0$,
we can show in the same manner as above that also for an even $d$
the function $\tilde\Theta(\e,\bs)$ is bounded in
$\Omega\times\Omega$. In the analogous manner we obtain that the
function $\bar\Theta(\e,\bs)$, defined by (\ref{dfbrTht}), is
bounded in $\Omega\times\Omega$.

Let us prove that the function $\Theta(\e,\bs)$ is continuous in
$\Omega\times\Omega$. We have from (\ref{rprTht1}) for
$\e,\bs,\bs+\hb\in\Omega$:
\begin{eqnarray*}
&&\hskip-5mm|\Theta(\e,\bs+\hb)-\Theta(\e,\bs)|=\\
&&\hskip-5mm\left|\int_\Omega\tilde\Theta(\e,\xi)
(\Ec_d(\xi-\bs-\hb-\mb_l,\gamma_0)-\Ec_d(\xi-\bs-\mb_l,\gamma_0))\,d\xi\right|\le
\|\tilde\Theta(\cdot,\cdot)\|_{L_\infty(\Omega\times\Omega)}\\
&&\hskip-5mm\times\max\{\|1\|_{q^\prime},\,\|V\|_{q^\prime}\}
\|\Ec_d(\cdot-\bs-\hb-\mb_l,\gamma_0)-\Ec_d(\cdot-\bs-\mb_l,\gamma_0)\|_q.
\end{eqnarray*}
Since by claim (ii) of Lemma \ref{lmcnvser} the function
$\Ec_\mb(\cdot,\cdot)=\Ec_d(\cdot-\cdot-\mb,\gamma_0)$ belongs to
the class $L_{q,c}(\Omega\times\Omega)$, then the latter estimate
imply that the function $\Theta(\e,\bs)$ is continuous w.r. to $\bs$
at each point $\bs\in\Omega$ uniformly w.r. to $\e\in\Omega$. In the
analogous manner we prove, using (\ref{rprTht2}), that for any fixed
$\bs\in\Omega$ the function $\Theta(\e,\bs)$ is continuous in
$\Omega$ w.r. to $\e$. These circumstances mean that the function
$\Theta(\e,\bs)$ is continuous in $\Omega\times\Omega$. The lemma is
proven.
\end{proof}

We now turn to the main result of this section.

\begin{proposition}\label{prpwkrn}
If $d\ge 4$, $V\in L_{q^\prime}(\Omega)$ with
$q^\prime>\frac{d}{2}$, $R(\J)=(H(\J)+\gamma_0^2)^{-1}$ and the
natural $l$ is defined by (\ref{defl}), (\ref{deftht}), then for a
large enough $\gamma_0$ the operator $K(\J)=(R(\J))^l$ has the
integral kernel $K(\e,\bs,\J)$ such that $K(\e,\bs,\J)\sim 0$.
\end{proposition}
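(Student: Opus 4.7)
The plan is to expand $R(\J)^l$ via the Neumann series of Proposition~\ref{prNeuser}(i) and then resolve each summand by combining Lemmas~\ref{lmcomp} and~\ref{lmcmpEc}. Writing $R(\J) = \sum_{n\ge 0}(-L_0(\J))^n R_0(\J)$, one obtains
\begin{equation*}
R(\J)^l = \sum_{n_1,\ldots,n_l \ge 0} (-1)^{n_1+\cdots+n_l} \, L_0(\J)^{n_1} R_0(\J) \cdots L_0(\J)^{n_l} R_0(\J),
\end{equation*}
so the integral kernel of each summand is an iterated composition of $N = l + n_1+\cdots+n_l \ge l$ copies of $G_0(\cdot,\cdot,\J,\gamma_0)$ with multiplications by $V$ at the $N-l$ internal positions coming from the $L_0 = R_0 V$ blocks.

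For each such summand I apply the splitting \eqref{dfG0pr} to every occurrence of $G_0$,
\begin{equation*}
G_0(\by,\J,\gamma_0) = \tilde G_0(\by,\J,\gamma_0) + G_0^\prime(\by,\J,\gamma_0), \qquad \tilde G_0(\by,\J,\gamma_0) := \sum_{\mb\in\Nc}\Ec_d(\by-\mb,\gamma_0)\,\exp(i\J\cdot\mb),
\end{equation*}
producing $2^N$ sub-compositions. The remainder $G_0^\prime$ is $\sim 0$ by Lemma~\ref{lmcnvser}(i), while $G_0$ and $\tilde G_0$ both lie in $L_{c,q}(\Omega\times\Omega)\cap L_{q,c}(\Omega\times\Omega)$ holomorphically on $\Pi_{\gamma_0/2}$ (by Corollary~\ref{corsymm} and a termwise check for $\tilde G_0$). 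Any sub-composition containing at least one $G_0^\prime$-slot is $\sim 0$ by Lemma~\ref{lmcomp}, applied with the symmetric variants of Step~1 of its proof to accommodate a $V$ attached on either side of the $G_0^\prime$-slot. There remains the unique all-$\tilde G_0$ sub-composition; expanding each $\tilde G_0$, it reduces to a finite sum over $(\mb_1,\ldots,\mb_N)\in\Nc^N$ of compositions of $N\ge l$ translates $\Ec_d(\cdot-\cdot-\mb_j,\gamma_0)$ with the same $V$-interspersion, weighted by the entire factor $\exp\bigl(i\J\cdot\sum_j\mb_j\bigr)$. Applying Lemma~\ref{lmcmpEc} to an initial block of $l$ consecutive $\Ec_d$-factors produces a continuous kernel; continuity then propagates through any further $V\cdot\Ec_d$-blocks by H\"older applied to $\Ec_d(\cdot-\cdot-\mb,\gamma_0)\in L_{c,q}\cap L_{q,c}$. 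Combined with the entire $\J$-dependence, each summand of the Neumann series is a kernel $\sim 0$.

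The main obstacle is passing from this term-by-term verification to the full sum, i.e.\ showing that the doubly indexed Neumann series converges in the $C(\Omega\times\Omega)$-norm uniformly on $\Pi_{\gamma_0/2}$. Taking $\gamma_0$ sufficiently large, the estimates underlying Proposition~\ref{prNeuser}(iii) give $\|V\cdot G_0(\cdot,\cdot,\J,\gamma_0)\|_{L_{c,q}\to L_{c,q}} < 1/2$ uniformly on $\Pi_{\gamma_0/2}$, forcing geometric convergence in $L_{c,q}$. To upgrade to $C(\Omega\times\Omega)$-convergence, I isolate in every summand a block of $l$ consecutive $R_0$-factors -- collapsed to a continuous kernel by the argument above -- while the remaining $R_0$- and $V$-factors act as a uniformly bounded operator on $C$-valued kernels through the chain $C(\Omega)\xrightarrow{V\cdot}L_{q^\prime}(\Omega)\xrightarrow{R_0(\J)}C(\Omega)$, whose norm is controlled on $\Pi_{\gamma_0/2}$ and tends to $0$ as $\gamma_0\to\infty$. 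Since a uniform limit of $C(\Omega\times\Omega)$-valued holomorphic functions is holomorphic, this yields $K(\cdot,\cdot,\J)\in Hol(\Pi_{\gamma_0/2}, C(\Omega\times\Omega))$, i.e.\ $K(\e,\bs,\J)\sim 0$.
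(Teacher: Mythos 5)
Your proposal attacks the statement by fully expanding $R(\J)^l$ through the doubly-indexed Neumann series $\sum_{n_1,\ldots,n_l\ge 0}(-1)^{\sum n_j}L_0(\J)^{n_1}R_0(\J)\cdots L_0(\J)^{n_l}R_0(\J)$, verifying each summand is $\sim 0$, and then attempting to sum. The paper takes a different, and decisively cleaner, route: it never expands the Neumann series to infinity. Instead it writes the single exact identity
\begin{equation*}
G(\e,\bs,\J)=G_0(\e,\bs,\J)+\sum_{k=1}^{l-1}\bigl((G_0V)^{\circ k}\circ G_0\bigr)(\e,\bs,\J)+\bigl((G_0V)^{\circ(l-1)}\circ G_0\circ VG\bigr)(\e,\bs,\J),
\end{equation*}
in which the remainder involves the \emph{full} kernel $G$ rather than a tail of the series. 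Since $G$ itself is already known (Corollary \ref{corsymm}) to lie in $L_{c,q}\cap L_{q,c}$ holomorphically, Lemma \ref{lmcomp} disposes of the remainder term in one stroke; taking the $l$-fold composition of this finite expression and substituting (\ref{dfG0pr}) reduces everything to a \emph{finite} sum of compositions treated by Lemmas \ref{lmcomp} and \ref{lmcmpEc}. No infinite sum in $C(\Omega\times\Omega)$-norm ever appears. This is the structural idea your proposal misses.

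The concrete gap in your argument is the final convergence step, which you yourself flag as the main obstacle but then resolve only by a sketch. Two separate issues are left unaddressed. First, the promised "block of $l$ consecutive $R_0$-factors" inside a generic summand $L_0^{n_1}R_0\cdots L_0^{n_l}R_0$ is not a fixed pattern: depending on the exponents $n_j$, the $V$-factors land in different positions between the last $l$ copies of $R_0$, so the prefix operator changes shape from term to term and one must verify the bound uniformly over all these configurations (including the case $n_l<l-1$, where the block spills across several $L_0$-powers). Second, and more fundamentally, the estimate you invoke, namely $\|L_0\|_{C(\Omega)\to C(\Omega)}$ small, controls \emph{operator} norms, whereas the equivalence $K(\e,\bs,\J)\sim 0$ requires geometric decay of the \emph{kernel} norm $\|\cdot\|_{C(\Omega\times\Omega)}$ of each summand. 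Passing from one to the other forces you to verify joint continuity of $(A_m\circ B_l)(\e,\bs)$ and produce a bound of the type $\|A_m\circ B_l\|_{C(\Omega\times\Omega)}\le\|A_m\|_{C\to C}\,\|B_l\|_{C(\Omega\times\Omega)}$; the $\|B_l\|_{C(\Omega\times\Omega)}$ factor needs a quantitative, not merely qualitative, version of Lemma \ref{lmcmpEc} (the lemma proves continuity, but your argument needs a uniform numerical bound over all the configurations of $V$'s and over the $|\Nc|^{l}$-fold expansion of the $\tilde G_0$-block). These estimates are plausibly available from the inequality chain (\ref{estcmp}) underlying Lemma \ref{lmcmpEc}, but you do not extract them, and the combinatorial multiplicity $\binom{m+l-1}{l-1}$ of summands at level $m$ adds bookkeeping that must be balanced against the geometric gain. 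In short: your route is likely repairable, but you should recognize the paper's device of pulling out a finite expansion with the full $G$ as remainder, which eliminates the need for any of this.
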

\begin{proof}
By Proposition \ref{prNeuser}, for a large enough $\gamma_0$ and
$\J\in\Pi_{\frac{\gamma_0}{2}}$ the operator maps the set
$\B_{\J,\infty}$ into itself and for any $f\in\B_{\J,\infty}$
$R(\J)f=\int_\Omega G(\e,\bs,\J)f(\bs)\,d\bs$, where
\begin{equation}\label{serofcmp}
G(\e,\bs,\J)=G_0(\e,\bs,\J)+\left(\sum_{k=1}^\infty
(G_0(\cdot,\cdot,\J)V(\cdot))^{\circ k}\circ
G_0(\cdot,\cdot,\J)\right)(\e,\bs)
\end{equation}
and the latter series converges in the
$L_{c,q}(\Omega\times\Omega)$-norm uniformly w.r. to
$\J\in\Pi_{\frac{\gamma_0}{2}}$. Then for any $f\in\B_{\J,\infty}$
\begin{equation}\label{intpwofR}
K(\J)f=\int_\Omega K(\e,\bs,\J)f(\bs)\,d\bs,
\end{equation}
where
\begin{equation}\label{pwkrn}
K(\e,\bs,\J)=\left(G(\cdot,\cdot,\J)^{\circ l}\right)(\e,\bs).
\end{equation}
Let us represent (\ref{serofcmp}) in the form:
\begin{eqnarray}\label{rpserofcmp}
&&G(\e,\bs,\J)=G_0(\e,\bs,\J)+\left(\sum_{k=1}^{l-1}
(G_0(\cdot,\cdot,\J)V(\cdot))^{\circ k}\circ
G_0(\cdot,\cdot,\J)\right)(\e,\bs)+\nonumber\\
&&((G_0(\cdot,\cdot,\J)V(\cdot))^{\circ (l-1)}\circ
G_0(\cdot,\cdot,\J)\circ V(\cdot)G(\cdot,\cdot,\J))(\e,\bs).
\end{eqnarray}
On the other hand, by claim (i) of Lemma \ref{lmcnvser} the
representation (\ref{dfG0pr}) is valid, in which
$G_0^\prime(\e,\bs,\J)\sim 0$ and
$\Nc:=\{\mb\in\Z^d\;|\;|\mb|\le\sqrt{d}\}$. Substituting
(\ref{dfG0pr}) into (\ref{rpserofcmp}) and using Corollary
\ref{corsymm} and Lemma \ref{lmcomp}, we obtain:
\begin{eqnarray*}
&&\hskip-8mm G(\e,\bs,\J)\sim
\sum_{\mb\in\Nc}\Ec_d(\e-\bs-\mb,\gamma_0)\exp(i2\pi\J\cdot\mb)+\\
&&\hskip-8mm\sum_{k=1}^{l-2}\;\sum_{\mb_1,\dots,\mb_{k+1}\in\Nc}
\exp\left(i2\pi\J\cdot\sum_{j=1}^{k+1}\mb_j\right)\left(\circ_{j=1}^k
\Ec_d(\cdot-\cdot-\mb_j,\gamma_0)V(\cdot)\right)\circ\\
&&\hskip-8mm\Ec_d(\cdot-\cdot-\mb_{k+1},\gamma_0)(\e,\bs)+\sum_{\mb_1,\dots,\mb_l\in\Nc}
\exp\left(i2\pi\J\cdot\sum_{j=1}^l\mb_j\right)\Theta_{\mb_1,\dots,\mb_l}(\e,\bs)+\\
&&\hskip-8mm\sum_{\mb_1,\dots,\mb_l\in\Nc}
\exp\left(i2\pi\J\cdot\sum_{j=1}^l\mb_j\right)\tilde\Theta_{\mb_1,\dots,\mb_l}(\cdot,\cdot)\circ
V(\cdot)G(\cdot,\cdot,\J)(\e,\bs),
\end{eqnarray*}
where $\Theta_{\mb_1,\dots,\mb_l}(\e,\bs)$ and
$\tilde\Theta_{\mb_1,\dots,\mb_l}(\e,\bs)$ are functions of the form
described in the formulation of Lemma \ref{lmcmpEc}, hence they are
continuous in $\Omega\times\Omega$. Then by Lemma \ref{lmcomp} and
Corollary \ref{corsymm},
$\Theta_{\mb_1,\dots,\mb_l}(\cdot,\cdot)\circ
V(\cdot)G(\cdot,\cdot,\J)(\e,\bs)\sim 0$. Hence
\begin{eqnarray*}
&&\hskip-8mm G(\e,\bs,\J)\sim
\sum_{\mb\in\Nc}\Ec_d(\e-\bs-\mb,\gamma_0)\exp(i2\pi\J\cdot\mb)+\\
&&\hskip-8mm\sum_{k=1}^{l-2}\;\sum_{\mb_1,\dots,\mb_{k+1}\in\Nc}
\exp\left(i2\pi\J\cdot\sum_{j=1}^{k+1}\mb_j\right)\left(\circ_{j=1}^k
\Ec_d(\cdot-\cdot-\mb_j,\gamma_0)V(\cdot)\right)\circ\\
&&\hskip-8mm\Ec_d(\cdot-\cdot-\mb_{k+1},\gamma_0)(\e,\bs).
\end{eqnarray*}
Then we obtain from (\ref{pwkrn}), claim (i) of Lemma \ref{lmcnvser}
and Lemma \ref{lmcomp}:
\begin{eqnarray*}
&&\hskip-8mm K(\e,\bs,\J)\sim\\
&&\hskip-8mm
\left(\sum_{\mb\in\Nc}\Ec_d(\cdot-\cdot-\mb,\gamma_0)\exp(i2\pi\J\cdot\mb)+
\sum_{k=1}^{l-2}\;\sum_{\mb_1,\dots,\mb_{k+1}\in\Nc}
\exp\left(i2\pi\J\cdot\sum_{j=1}^{k+1}\mb_j\right)\times\right.\\
&&\hskip-8mm \left.\left(\circ_{j=1}^k
\Ec_d(\cdot-\cdot-\mb_j,\gamma_0)V(\cdot)\right)\circ
\Ec_d(\cdot-\cdot-\mb_{k+1},\gamma_0)\right)^{\circ l}(\e,\bs).
\end{eqnarray*}
The latter expression is a trigonometric polynomial w.r.t. $\J$,
whose coefficients are kernels satisfying the condition of Lemma
\ref{lmcmpEc}, hence they are continuous in $\Omega\times\Omega$.
Hence $K(\e,\bs,\J)\sim 0$. Since for any fixed
$\J\in\Pi_{\frac{\gamma_0}{2}}$ the operator $K(\J)$ is bounded in
$L_2(\Omega)$, the operator with the continuous integral kernel
$K(\e,\bs,\J)$ in the right hand side of (\ref{intpwofR}) is bounded
in $L_2(\Omega)$ too and the set $\B_{\J,\infty}$ is dense in
$L_2(\Omega)$, then the representation (\ref{intpwofR}) is valid for
any $f\in L_2(\Omega)$. The proposition is proven.
\end{proof}

\subsection{Proof of Theorem \ref{thmainApp}}
\label{subsec:proofmainclaim}

\begin{proof}
(i) Let us choose $\gamma_0>0$ and consider the domain
$\Pi_{\gamma_0/2}=\{\J\in \C^d:\;|\Im(\J)|\le\gamma_0/2\}$.
Furthermore, let us take $l=2$ for $d\le 3$ and assume that $l$ is
defined by (\ref{defl}), (\ref{deftht}) for $d\ge 4$.
 Like in \cite{Wil} (Lemmas 3.6 and
3.7), it is easy to show that if $\gamma_0$ is large enough, for
$\J\in\R^d$ the number $\nu(\J)=(\lambda(\J)+\gamma_0^2)^l$ is a
singular value of the operator
$K(\J)=(R(\J))^l\;(R(\J)=(H(\J)+\gamma_0^2)^{-1})$ with the
eigenfunction $b(\e,\J)$. This means that for $\J\in\R^d$ the
equality is valid:
\begin{equation}\label{eqsingvl}
b(\cdot,\J)=\nu(\J)K(\J)b(\cdot,\J),
\end{equation}
By Lemma 3.7 from \cite{Wil} (for $d=3$) and Proposition
\ref{prpwkrn} (for $d\ge 4$), if $\gamma_0$ is large enough, for
each $\J\in\Pi_{\gamma_0/2}$ $K(\J)$ is an operator with a
continuous integral kernel $K(\e,\bs,\J)$ such that the mapping
$\J\rightarrow K(\cdot,\cdot,\J)\in C(\Omega\times\Omega)$ is
holomorphic in $\Pi_{\gamma_0/2}$. Observe that this property is
valid also for $d\le 2$, because all the arguments used in
\cite{Wil} are true also in this case. Then for $\J\in\R^d$ the
equality (\ref{eqsingvl}) acquires the form:
\begin{equation}\label{rprex}
b(\e,\J)=\int_\Omega\nu(\J)K(\e,\bs,\J)b(\bs,\J)\,d\bs.
\end{equation}
This equality and the inclusion $b(\cdot,\J)\in L_2(\Omega)$ imply
that $b(\cdot,\J)\in C(\Omega)$. Claim (i) is proven.

(ii) Let us choose $\gamma_0$ such that
$\Oc(\J_0)\subset\Pi_{\frac{\gamma_0}{2}}$. In view of the
properties of $K(\e,\bs,\J)$ mentioned above, the mapping
$\J\rightarrow K(\J)\in\f(L_2(\Omega))$ is holomorphic in
$\Oc(\J_0)$. Taking into account that the function $\nu(\J)$ and the
mapping $\J\rightarrow b(\in,\J)\in L_2(\Omega)$ are holomorphic in
$\Oc(\J_0)$, we obtain by the principle of analytic continuation
that the equality (\ref{eqsingvl}) is valid for any $\J\in\Oc(\J_0)$
Furthermore, the holomorphy in $\Oc(\J_0)$ of $\nu$, of
$\J\rightarrow K(\e,\bs,\J)\in C(\Omega\times\Omega)$ and of
$\J\rightarrow b(\e,\J)\in L_2(\Omega)$ imply that for any
$\J\in\Oc(\J_0)$ the function $\nu(\J)K(\e,\cdot,\J)b(\cdot,\J)$
belongs to the class $C(\Omega,L_2(\Omega))$ and the mapping
$\J\rightarrow\nu(\J)K(\e,\cdot,\J)b(\cdot,\J)\in
C(\Omega,L_2(\Omega))$ is holomorphic in $\Oc(\J_0)$. These
circumstances and equality (\ref{rprex}) imply claim (ii) of the
theorem.
\end{proof}

\subsection{Proof of Corollary \ref{cormainApp}}
\label{subsec:proofcormainclaim}
\begin{proof}
(i) Consider the family of operators $K(\J)$ defined in the proof of
Theorem \ref{thmainApp}. Since $\lambda_0$ is a simple eigenvalue of
the operator $H(\J_0)$, then $\nu_0=(\lambda(\J_0)+\gamma_0^2)^l$ is
a simple singular number of the operator $K(\J_0)$. Let $b_0(\e)\neq
0$ be an eigenfunction of $K(\J_0)$, corresponding to $\nu_0$ (hence
it is an eigenfunction of $H(\J_0)$, corresponding to $\lambda_0$).
Then, as it has been proved in \cite{Bau} (Corollary 1, p. 376),
there exists a neighborhood $\Oc(\J_0)\subset\C^d$ of $\J_0$ such
that in it there is a branch $\nu(\J)$ of singular numbers of the
family $K(\J)\;(\J\in\Oc(\J_0))$ having the properties:
$\nu(\J_0)=\nu_0$, $\nu(\J)$ is simple for any $\J\in\Oc(\J_0)$, the
function $\nu(\J)$ is holomorphic in $\Oc(\J_0)$ and the
corresponding eigenprojections $Q(\J)$ of $K(\J)$ have the property:
the mapping $\J\rightarrow Q(\J)\in\f(L_2(\Omega))$ is holomorphic
in $\Oc(\J_0)$. Hence the eigenfunctions $\tilde
b(\cdot,\J)=Q(\J)b_0$ of $K(\J)$, corresponding to $\nu(\J)$, have
the property: the mapping $\J\rightarrow \tilde b(\e,\J)\in
L_2(\Omega)$ is holomorphic in $\Oc(\J_0)$. Furthermore, we can
restrict the neighborhood $\Oc(\J_0)$ such that $\tilde
b(\cdot,\J)\neq 0$ for any $\J\in\Oc(\J_0)$.  Hence by Theorem
\ref{thmainApp}, $\tilde b(\e,\J)\in C(\Omega)$ for any
$\J\in\Oc(\J_0)$ and the mapping $\J\rightarrow \tilde b(\e,\J)\in
C(\Omega)$ is holomorphic in $\Oc(\J_0)$. Observe that $\nu(\J)$ is
real for any $\J\in\Oc(\J_0)\cap\R^d$, because in this case $K(\J)$
is self-adjoint. Recall that for any $\J\in\R^d$ $JH(\J)J=H(-\J)$,
where $J$ is the conjugation operator $(Jf)(f)(\e):=\overline
{f(\e)}$. Hence $JK(\J)J=K(-\J)$ for any $\J\in\R^d$, therefore in
this case $\sigma(K(\J))=\sigma(K(-\J))$ and for any
$\mu\in\sigma(K(\J))$ the corresponding eigenprojections $Q_\mu$ of
$K(\J)$ and $\tilde Q_\mu$ of $K(-\J)$ are connected in the
following manner: $\tilde Q_\mu=JQ_\mu J$. In particular, for any
$\J\in\Oc(\J_0)$ the number $\nu(\J)$ is a simple characteristic
number of $K(-\J)$ and the operator $JQ(\J)J$ is the eigenprojection
of $K(-\J)$ corresponding to $\nu(\J)$, hence $\overline{\tilde
b(\e,\J)}$ is an eigenfunction of $K(-\J)$ corresponding to
$\nu(\J)$.  These circumstances and the arguments used above imply
that the mapping $\J\rightarrow\overline{\tilde b(\e,\J)}\in
C(\Omega)$ is real analytic in $\Oc(\J_0)\cap\R^d$. Therefore the
function $\Vert b(\cdot,\J)\Vert_2=\Big(\int_{\Omega}\tilde
b(\bs,\J)\overline{\tilde b(\bs,\J)}\,d\e\Big)^{1/2}$ is real
analytic in $\Oc(\J_0)\cap\R^d$. Hence the mapping $\J\rightarrow
b(\e,\J)=\frac{\tilde b(\e,\J)}{\Vert\tilde b(\cdot,\J)\Vert_2}\in
C(\Omega)$ is real-analytic in $\Oc(\J_0)\cap\R^d$. Returning from
the family $K(\J)$ to the family $H(\J)$, we obtain the desired
claim (i) of the corollary.\vskip2mm

(ii) The claim follows from the representation
\begin{equation*}
(Q(\J)f)(\e)=(f,\,b(\cdot,\J))_2b(\e,\J)=\int_\Omega
b(\e,\J)\overline{b(\bs,\J)}f(\bs)\,d\bs\quad (f\in L_2(\Omega))
\end{equation*}
and the results obtained in the process of the proof of claim (i).
\vskip2mm

(iii) Since for any $\J\in\Oc(\J_0)\cap\R^d$, the eigenvalue
$\lambda(\J)$ is simple, then another choice $b_1(\e,\J)$ of the
branch of the eigenfunctions corresponding to $\lambda(\J)$ (with
$\Vert b_1(\cdot,\J))\Vert_2=1$) is connected with the previous one
in the following manner: $b_1(\e,\J)=e^{i\,\theta(\J)}b(\e,\J)$ with
a real-valued function $\theta(\J)$. Then
$b_1(\e,\J)\overline{b_1(\bs,\J)}=b(\e,\J)\overline{b(\bs,\J)}$.
This proves claim (iii).
\end{proof}

\bigskip
\bigskip

Department of Mathematics\hskip 3.5truecm

University of Haifa\hskip 4.9truecm

31905 Haifa, Israel\hskip 5truecm

{\it E-mail address:}\hskip 5.5truecm

zelenko@math.haifa.ac.il

\end{document}